\documentclass[12pt]{amsart}

\usepackage[usenames,dvipsnames]{xcolor}

\usepackage{fancyhdr}
\pagestyle{fancyplain}

\usepackage{amsmath,amsthm,amsfonts,graphicx,amssymb,amscd}
\usepackage[all,cmtip]{xy}
\usepackage{graphicx, overpic, float}
\usepackage[mathscr]{euscript}
\usepackage{hyperref}
\usepackage{enumerate}
\usepackage{mathpazo}

\newtheorem{thm}{Theorem}[section]

\newtheorem{cor}[thm]{Corollary}
\newtheorem{lem}[thm]{Lemma}

\newtheorem{prop}[thm]{Proposition}

\theoremstyle{definition}
\newtheorem{defn}[thm]{Definition}

\setcounter{tocdepth}{1}
\headheight=0.0in
\headsep 0.4in
\textwidth=5.75in
\oddsidemargin=0.5in
\evensidemargin=0.5in

 % Pants complex

%\newcommand{\tc}{\mathcal{TC}}

\newcommand{\T}{\mathscr{T}}

\renewcommand{\P}{\mathscr{P}}
\newcommand{\PP}{\mathcal{P}}
\newcommand{\CC}{\mathcal{C}}

\newcommand{\Q}{\mathcal{Q}}

\newcommand{\C}{\mathbb{C}}
\newcommand{\D}{\mathbb{D}}

\renewcommand{\H}{\mathbb{H}}

\newcommand{\ep}{\epsilon}

\usepackage{fancybox}

%\definecolor{darkblue}{cmyk}{1,0, 0,.7}

%

%\renewcommand\note[1]{}
%

\pagestyle{myheadings} 
\usepackage{ulem}

\renewcommand{\emph}[1]{{\it #1}}% to make emph{} not underline.

\usepackage{color}

%\markright{\today}

\title{Harmonic maps and wild Teichm\"{u}ller spaces}

%\title[Harmonic maps and crowned hyperbolic surfaces]{Harmonic maps and Teichm\"{u}ller spaces of crowned hyperbolic surfaces}

%\title[Harmonic maps from a punctured Riemann surface]{Harmonic maps from a punctured Riemann surface to a crowned hyperbolic surface}

%\title{A harmonic maps parametrization for bordered cusped Teichm\"{u}ller spaces}

%\title{Harmonic maps with meromorphic Hopf differentials, and crowned hyperbolic surfaces}

\author{Subhojoy Gupta}

%\email{subhojoy@math.iisc.ernet.in}

\address{Department of Mathematics, Indian Institute of Science, Bangalore 560012, India.} 

\email{subhojoy@math.iisc.ernet.in}

%\date{\today}

\begin{document}
\setcounter{tocdepth}{4}
%\tableofcontents
%\newpage
\maketitle

%A crowned hyperbolic surface has boundaries that comprise cyclic chains of bi-infinite geodesics, and is obtained on uniformizing a compact Riemann surface with marked points on its boundary components. In this article we use meromorphic quadratic differentials with higher order poles to parametrize the Teichm\"{u}ller space of such surfaces. This extends Wolf's parametrization of  the Teichm\"{u}ller space of a closed surface using holomorphic quadratic differentials. Our proof involves showing the existence of a harmonic map from a punctured Riemann surface to a crowned hyperbolic surface, with prescribed principal parts of its Hopf differential.

\begin{abstract} We use meromorphic quadratic differentials with higher order poles to parametrize the Teichm\"{u}ller space of crowned hyperbolic surfaces.  Such a surface is obtained on uniformizing a compact Riemann surface with marked points on its boundary components, and has non-compact ends with boundary cusps. This extends Wolf's parametrization of  the Teichm\"{u}ller space of a closed surface using holomorphic quadratic differentials. Our proof involves showing the existence of a harmonic map from a punctured Riemann surface to a crowned hyperbolic surface, with prescribed principal parts of its Hopf differential which determine the geometry of the map near the punctures.
\end{abstract}

\section{Introduction}

The Teichm\"{u}ller space $\T$ of a closed surface $S_g$ of genus $g\geq 2$ is the space of marked conformal or hyperbolic structures on $S_g$, and admits parametrizations that reflect various aspects (metric, complex-analytic, symplectic) of its geometry.  Many of these naturally extend to the case when the surface has boundaries and punctures, in which case the uniformizing hyperbolic metrics are such that the boundaries are totally geodesic, and the punctures are cusps. 

This paper deals with the case when each boundary component has an {additional} ``decoration", namely,  finitely many distinguished points.  These arise naturally in various contexts involving the arc complex (see for example \cite{Kaufmann-Penner}), and such a surface can be uniformized to a non-compact hyperbolic surface with \textit{crown ends} (Definition \ref{pend}) where the distinguished boundary points become ``boundary cusps" (see \cite{Chekhov}). 

The associated Teichm\"{u}ller spaces, and their symplectic and algebraic structures have been studied  before (see \cite{Chekhov}, \cite{YHuang}, \cite{Penner}).
In this paper, we provide  a parametrization of the Teichm\"{u}ller space of such crowned hyperbolic surfaces using meromorphic quadratic differentials with higher order poles.  
Holomorphic quadratic differentials and their geometry play a crucial role in Teichm\"{u}ller theory, and our result provides a link with this analytical aspect of the subject. \\

In the case of a closed surface, the work of M. Wolf (\cite{Wolf0}) parametrized Teichm\"{u}ller space $\T$ by the vector space $\Q$ of holomorphic quadratic differentials on any fixed Riemann surface $X$. 

Namely, Wolf proved that we have a homeomorphism 
\begin{equation*}
\Psi: \T \to \Q
\end{equation*}
that assigns to a hyperbolic surface $Y$ the Hopf differential of the unique harmonic diffeomorphism $h:X\to Y$ that preserves the marking. 

This turns out to be equivalent to Hitchin's parametrization of $\T$ in \cite{Hitchin} using self-dual connections on $X$ (see \cite{Nag}); both these approaches rely on a fundamental existence results for certain non-linear PDE (see \S2.3 and Theorem \ref{fund-thm}) that crucially depend on the compactness of the surfaces. \\

In this paper, we shall establish what can be described as a ``wild" analogue of the above correspondence - see \cite{Sabbah} for the far more general context of what is called the ``wild Kobayashi-Hitchin correspondence". 

That is,  $X$  shall  be a Riemann surface \textit{with punctures}, and the target for the harmonic maps would be the \textit{non-compact} crowned hyperbolic surfaces introduced earlier. It turns out that the Hopf differential $q$ of such a map has poles of higher orders (greater than two) at the punctures; if the order of the pole is  $n\geq 3$,  the number of boundary cusps of the corresponding crown end is $n-2$. Further, the analytic residue of the differential at the pole determines the ``metric residue" of the crown (see Definition \ref{metres}). This residue is part of the data of a \textit{principal part} $\textit{Pr}(q)$ that is a meromorphic $1$-form comprising the negative powers of $\sqrt q$ in a Laurent expansion with respect to a choice of coordinate chart  $U$ around the pole.  (See \S2 for details.) \\

Our key theorem is an existence result for such harmonic maps:

%For a meromorphic quadratic differential $q$ with a pole of order $n\geq 3$, the \textit{principal part} $\textit{Pr}(q)$ comprises the negative powers of $\sqrt q$ in a Laurent expansion with respect to a choice of coordinate chart around the pole.  (See \S2 for details.) The space of possible principal parts shall be $\text{Princ}(n)$. \\\

%The purpose of this note is to prove the following:

\begin{thm}[Existence]\label{thm1} Let $(X,p_1,\ldots p_k)$ be a closed Riemann surface with a non-empty set of marked points $D = \{p_1,\ldots p_k\}$ with fixed coordinate disks around each.  For $i=1,2,\ldots k$ let $n_i\geq 3$, and  let $Y$ be a crowned hyperbolic surface having crowns $\CC_i$ with $(n_i-2)$ vertices, together with a homeomorphism (or a marking) $f:X\setminus D \to Y$.

Then there exists a harmonic diffeomorphism $$h:X\setminus D \to Y$$ that is homotopic to $f$, taking a neighborhood of each $p_i$ to $\CC_i$. 

%The corresponding Hopf differential $q$ has a pole of order $(n+2)$ at $p$.

Moreover, such a map is unique if one prescribes, in addition, the principal parts at the poles,  having  residues  compatible with the metric residues of the corresponding crowns.

\end{thm}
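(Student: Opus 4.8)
The plan is to produce $h$ as a limit of solutions of Dirichlet problems on a compact exhaustion of $X\setminus D$, with the boundary data and the a priori control near the punctures dictated by explicit model maps carrying the prescribed singular behaviour; this is the non-compact, ``wild'' counterpart of Wolf's argument. For each $i$ fix the coordinate disk $\Delta_i=\{|z_i|<1\}$ and, using the local analysis of \S2, construct a model harmonic map $\mu_i$ that is a diffeomorphism from $\Delta_i\setminus\{p_i\}$ onto a model of the crown end $\CC_i$, whose Hopf differential is meromorphic with a pole of order $n_i$ at $p_i$, with principal part the prescribed $\textit{Pr}(q)$ and residue compatible with the metric residue of $\CC_i$ (Definition \ref{metres}). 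Regard $Y$ as embedded with convex boundary in a complete nonpositively curved surface $\widehat Y$, and glue the $\mu_i$ to a fixed smooth map $X\setminus D\to Y$ homotopic to $f$ to obtain a comparison map $w_0\colon X\setminus D\to\widehat Y$ that equals $\mu_i$ on each $\Delta_i\setminus\{p_i\}$. For small $\ep>0$ set $X_\ep=X\setminus\bigcup_i\{|z_i|<\ep\}$, a compact Riemann surface with boundary; by the existence and uniqueness theory for the harmonic map Dirichlet problem into a complete nonpositively curved target (Hamilton, Schoen--Yau) there is a harmonic map $h_\ep\colon X_\ep\to\widehat Y$ agreeing with $w_0$ on $\bdr X_\ep$ and homotopic rel boundary to $w_0|_{X_\ep}$, and convexity barriers confine $h_\ep$ to a fixed neighbourhood of $Y$.

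I would then establish estimates uniform in $\ep$. The Hopf differential $\phi_\ep$ of $h_\ep$ is holomorphic on $X_\ep$; comparing $h_\ep$ with $\mu_i$ on the annulus $\{\ep<|z_i|<1\}$, where both are harmonic and agree on $\{|z_i|=\ep\}$, and using subharmonicity of the distance function to a nonpositively curved target together with interior Schauder estimates, one bounds $\phi_\ep$ and the energy density of $h_\ep$ uniformly on compact subsets of $X\setminus D$, and moreover $\phi_\ep$ converges to a meromorphic differential $\phi$ on $X$ with poles of order $n_i$ and principal part $\textit{Pr}(q)$. A diagonal subsequence of $\{h_\ep\}$ then converges in $C^\infty_{\mathrm{loc}}$ to a harmonic map $h\colon X\setminus D\to\widehat Y$, homotopic to $f$, with image in $Y$ and Hopf differential $\phi$. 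To see $h$ is a diffeomorphism onto $Y$, recall that for a harmonic map between surfaces the Jacobian is $J=\|\bdr h\|^2-\|\bar\bdr h\|^2$ and that $|\nu|^2:=\|\bar\bdr h\|^2/\|\bdr h\|^2$ satisfies a Bochner-type differential inequality; since $h$ is asymptotic to the diffeomorphism $\mu_i$ near each $p_i$, we have $|\nu|\to 0$ there, so the maximum principle gives $|\nu|<1$ throughout and hence $J>0$. Thus $h$ is a local diffeomorphism, and a degree/properness argument, using that near $p_i$ the map $h$ is carried properly onto the model of $\CC_i$, shows that $h$ is a diffeomorphism $X\setminus D\to Y$ taking a neighbourhood of each $p_i$ onto $\CC_i$.

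For the uniqueness assertion, suppose $h_1,h_2\colon X\setminus D\to Y$ are harmonic diffeomorphisms homotopic to $f$ with the same prescribed principal parts. Then $u(z)=d_{\widehat Y}(h_1(z),h_2(z))$ is a nonnegative subharmonic function on $X\setminus D$, and the end analysis above, applied to both maps (which near $p_i$ are asymptotic to the common model $\mu_i$), shows $u(z)\to 0$ as $z\to p_i$ for every $i$; in particular $u$ is bounded and extends continuously by $0$ across $D$. A bounded subharmonic function has removable isolated singularities, so $u$ extends to a subharmonic function on the compact surface $X$, which is therefore constant; since it vanishes at the punctures it is identically zero, so $h_1=h_2$.

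The main obstacle is the analysis at the punctures underlying the end-behaviour claims in the last two paragraphs: proving that the Dirichlet solutions, and hence the limit $h$, have \emph{exactly} the prescribed singular Hopf differential, and that the principal part --- notably its residue --- pins down the geometry of the map near each $p_i$ tightly enough that two maps sharing a principal part are genuinely asymptotic there, so that $u\to 0$ rather than merely staying bounded. Making this ``wild'' rigidity quantitative requires barrier and comparison constructions finely adapted to the incomplete crowned target and to the $z^{\,n_i-2}\,dz^2$-type singularity of the Hopf differential; the remainder of the scheme is a fairly standard, if lengthy, adaptation of Wolf-type arguments.
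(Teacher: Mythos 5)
Your overall scheme (model maps near the punctures, Dirichlet problems on a compact exhaustion with boundary data from the models, sub-convergence, then uniqueness via a subharmonic distance function) is the same skeleton as the paper's, but the two places where the real work lies are left as assertions, and one of them you handle by a step that is not actually available. First, you take the model maps $\mu_i$ for granted: producing a harmonic diffeomorphism from $\mathbb{D}^\ast$ onto the \emph{given} crown end $\CC_i$ whose Hopf differential has the prescribed principal part is the Asymptotic Models Theorem (Theorem \ref{asm}), which occupies all of \S3 of the paper. It requires symmetrizing the differential to $\C^\ast$, lifting to $\C$ to invoke the Tam--Wan/HTTW theory, proving equivariance and identifying the image as a polygonal end, and then establishing surjectivity onto $\mathsf{Poly}_a(n-2)$ by an invariance-of-domain argument (injectivity via a bounded-distance rigidity statement and a Poincar\'e--Hopf index argument, plus properness). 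This is not ``local analysis of \S2''. Second, your uniform-in-$\ep$ estimates are not justified: comparing $h_\ep$ with $\mu_i$ on the annulus $\{\ep<|z_i|<1\}$ via subharmonicity of the distance only controls $d(h_\ep,\mu_i)$ if you control it on \emph{both} boundary circles, and on the outer circle $\{|z_i|=1\}$ that distance depends on $h_\ep$ on the thick part, which is exactly what you are trying to bound; nothing prevents the whole sequence from drifting out the (non-compact, incomplete) end or the energy from escaping. The paper's resolution of this is the key new ingredient: a uniform energy comparison between the model map and the solution of a \emph{partially free} boundary problem on the annuli (Proposition \ref{comparison}), proved with A.~Huang's Doubling Lemma and a twist-energy estimate, which is then fed into the Wolf/Jost--Zuo candidate-map argument to get uniform energy bounds on compacta, followed by a topological argument that the images of a fixed subsurface must meet a fixed compact set of $Y$.

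The uniqueness argument has a genuine gap that you yourself flag: you need $u(z)=d(h_1(z),h_2(z))\to 0$ at the punctures, i.e.\ that a common principal part forces the two maps to be \emph{asymptotic}, and this is neither proved nor the route the paper takes. What the paper actually proves (Proposition \ref{inj}, via the $q$-metric distance comparison of Lemma \ref{ageom1} and the harmonic-map length estimates) is only that the two maps are a \emph{bounded} distance apart; then parabolicity of the punctured surface makes the bounded subharmonic function $u$ constant, and the constant is shown to be zero by a separate argument: a nonzero constant would yield a nowhere-vanishing vector field on $Y$ tangent to the crown geodesics, pointing alternately into and out of the boundary cusps, contradicting the Poincar\'e--Hopf theorem on the doubled surface. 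Without either the asymptotic decay $u\to 0$ (which you concede is the ``main obstacle'') or a substitute like this index argument, your uniqueness proof does not close; note also that the related remark after Proposition \ref{prop-pdiff} shows maps can share the same image yet diverge in the cusps, so pointwise asymptotic agreement is a genuinely delicate claim rather than a routine refinement of boundedness.
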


(See Definition \ref{compat} for the notion of compatible residues.)

As a consequence, we establish the following extension of the Wolf-Hitchin parametrization:

\begin{thm}[Parametrization]\label{thm2} Let $X$ be a closed Riemann surface with a set of marked points $D$ with coordinate disks around them as above.  For a collection of principal parts $\mathscr{P}= \{P_1,P_2,\ldots P_k\}$ having poles of orders $n_i\geq 3$ for $i=1,2,\ldots k$,  let 
\begin{itemize}

\item $Q(X, D, \mathscr{P})$ 
be the space of meromorphic quadratic differentials on $X$  with principal part ${P}_i $ at $p_i$, and 

\item $\T(\mathscr{P})$
be the space of marked {crowned} hyperbolic surfaces homeomorphic to $X\setminus D$ , with $k$ crowns, each having $(n_i-2)$ boundary  cusps, with metric residues compatible with the residues of the principal parts $P_i$.
 
 \end{itemize}
 
  Then we have a homeomorphism
\begin{equation*}
\widehat{\Psi}:   \T(\mathscr{P}) \to Q(X, D, \mathscr{P}) 
\end{equation*}

that assigns, to any marked crowned hyperbolic surface $Y$, the Hopf differential of the unique harmonic map from $X$ to $Y$ with prescribed principal parts.
\end{thm}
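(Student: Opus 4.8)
The plan is to realize $\widehat{\Psi}$ as a well-defined, continuous, injective, and proper map between manifolds of the same dimension, and then to conclude that it is a homeomorphism by invariance of domain; essentially all of the analytic input is imported from Theorem \ref{thm1}. For \emph{well-definedness}, fix $Y\in\T(\mathscr{P})$. By definition of $\T(\mathscr{P})$ the metric residues of its crowns are compatible (Definition \ref{compat}) with the residues of the principal parts $P_i$, so Theorem \ref{thm1} provides a harmonic diffeomorphism $h_Y\colon X\setminus D\to Y$ in the homotopy class of the marking, unique once we require its Hopf differential to have principal part $P_i$ at each $p_i$. Since the Hopf differential of a harmonic map between surfaces is holomorphic wherever the map is harmonic, the Hopf differential $q_Y$ of $h_Y$ is meromorphic on $X$ with principal part $P_i$ at $p_i$, i.e. $\widehat{\Psi}(Y):=q_Y\in Q(X,D,\mathscr{P})$.

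For \emph{injectivity}, suppose $\widehat{\Psi}(Y_1)=\widehat{\Psi}(Y_2)=q$. The conformal structure on $X\setminus D$ together with $q$ determines the harmonic map up to post-composition by an isometry of the target: the logarithm $w$ of the ratio of the pulled-back hyperbolic metric to the flat metric $|q|$ satisfies a sinh--Gordon type (Bochner) equation, and the solution with the singular asymptotics near the poles forced by the principal parts of $q$ is unique -- this is exactly the uniqueness underlying Theorem \ref{thm1}. Hence the pulled-back metrics of $h_1$ and $h_2$ coincide, $h_2\circ h_1^{-1}$ is a marking-preserving isometry $Y_1\to Y_2$, and $Y_1=Y_2$ in $\T(\mathscr{P})$.

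For \emph{continuity}, if $Y_n\to Y$ in $\T(\mathscr{P})$ then the a priori estimates in the proof of Theorem \ref{thm1} yield subconvergence of the maps $h_{Y_n}$ to a harmonic map into $Y$ with principal parts $\mathscr{P}$, which by the above uniqueness is $h_Y$; hence $q_{Y_n}\to q_Y$. For \emph{properness}, suppose $q_n:=\widehat{\Psi}(Y_n)$ ranges in a compact subset of $Q(X,D,\mathscr{P})$: a bound on $q_n$ together with the fixed principal parts bounds the energy of $h_{Y_n}$ on compact subsets and, via the near-pole estimates developed for Theorem \ref{thm1}, prevents the crowned targets from degenerating, so the $Y_n$ stay in a compact subset of $\T(\mathscr{P})$ and any subsequential limit $Y_\infty$ satisfies $\widehat{\Psi}(Y_\infty)=\lim_n q_n$.

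Finally, $\T(\mathscr{P})$ is a cell (see \cite{Chekhov}, \cite{YHuang}, \cite{Penner}) whose dimension -- a count of the moduli of the crown ends against the constraints imposed by fixing the metric residues -- equals the real dimension of the complex affine space $Q(X,D,\mathscr{P})$ (an affine space over the holomorphic quadratic differentials on $X$ vanishing to the appropriate orders along $D$). A proper, continuous, injective map between manifolds of the same dimension with connected target is a homeomorphism: properness makes it a closed embedding and invariance of domain makes it open, so its image is clopen and therefore everything. This gives surjectivity as well; equivalently, one can see directly that every $q\in Q(X,D,\mathscr{P})$ is a Hopf differential by solving the sinh--Gordon equation with $q$ as data and reading the crown structure of the target ($n_i-2$ boundary cusps, metric residue equal to the analytic residue of $q$) off the local normal forms of \S2. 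I expect \emph{properness} to be the main obstacle: one must show that keeping the Hopf differentials bounded with their principal parts pinned down forbids short geodesics and other degenerations in the crowned hyperbolic targets, which relies precisely on the uniform geometric control near the higher-order poles established in the proof of Theorem \ref{thm1}.
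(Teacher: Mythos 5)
Your proposal follows essentially the same route as the paper's \S5: well-definedness from Theorem \ref{thm1}, injectivity via the Bochner-equation/maximum-principle comparison of the two harmonic maps, continuity, and properness (uniform energy bounds on compacta from the bounded Hopf differentials plus near-pole control from the pinned principal parts, which is exactly how the paper rules out degeneration of the crowns via Lemma \ref{ageom1} and Theorem \ref{asm}), followed by the dimension count and Invariance of Domain. One small caution: the fiber of $Q(X,D,\mathscr{P})$ is not the space of holomorphic differentials vanishing along $D$ but rather meromorphic ones with poles of order at most $n_i-\lfloor n_i/2\rfloor$ at $p_i$ (cf.\ Lemmas \ref{dims} and \ref{qxdim}); the dimension equality you assert is nonetheless correct, and is what the paper verifies against Corollary \ref{wtdim}.
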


%\textit{Remarks.} 
\textit{Remark.} Throughout this article, we shall assume that the punctured surface $X \setminus D$ has negative Euler characteristic, though this is not strictly necessary. Indeed, we shall discuss the case of ideal polygons (genus zero and one puncture) in \S3.3.\\

We briefly recount some previous related work:

  In the case of meromorphic quadratic differentials with poles of order one, the appropriate Teichm\"{u}ller space is that of \textit{cusped} hyperbolic surfaces; the analogue of Wolf's parametrization was established in \cite{Lohk}; see also \S8.1 of the recent work \cite{Oscar-et-al}.
 
  When the hyperbolic surface in the target has geodesic boundary, the analogue of the existence result (Theorem \ref{thm1})  is implicit in the work of Wolf in \cite{Wolf3}. There, he proves the existence of harmonic maps from a noded Riemann surface to hyperbolic surfaces obtained by ``opening" the node; the Hopf differential of such a map has a pole of order two at either side of the node. 
 
 The existence of harmonic maps from a punctured Riemann surface to ideal polygonal regions in the Poincar\'{e} disk was recently shown by A. Huang in \cite{AHuang}. This was the first major advance since the work \cite{Au-Wan}, \cite{Au-Tam-Wan}, \cite{HTTW}, \cite{TamWan} which dealt with the case when the domain surface was the complex plane $\C$, and the Hopf differentials were polynomial quadratic differentials (see Theorem \ref{image}). We point out a description of the subspace of  polynomial differentials that correspond to a \textit{fixed} ideal polygonal image - see Proposition \ref{prop-pdiff}. 
 
 Furthermore, the work in \cite{GW2} established a generalization of the Hubbard-Masur Theorem by  proving the existence of harmonic maps to leaf-spaces of measured foliations with pole singularities; as in Theorem \ref{thm1}, such a harmonic map is unique if one further prescribes a principal part that is ``compatible" with the foliation. In analogy with the case of a  compact surface in the work of Wolf, we expect that that such foliations form the analogue of the Thurston compactification for the ``wild" Teichm\"{u}ller space that we describe in this paper, and our main result in \cite{GW2} is  a ``limiting" case of the results in this paper. We hope to pursue this in later work. \\
 
 Another key inspiration for the present work is the seminal work of Sabbah (\cite{Sabb}) and Biquard-Boalch (\cite{BiqB}) who prove the existence of the corresponding harmonic metrics for self-dual $\text{SL}_n(\C)$-connections with irregular singularities, that lead to a ``wild" Kobayashi-Hitchin correspondence for that complex Lie-group.
The work in this paper pertains, instead, to the ``wild" theory for the real Lie group $\text{SL}_2(\mathbb{R})$; implicit in this paper is a geometric understanding of the associated equivariant harmonic maps to the symmetric space, which in this case is the Poincar\'{e} disk.
It should be possible to generalize the methods in this paper to the case of other real Lie groups of higher rank (for the case of \textit{tame} singularities see the recent work of \cite{Oscar-et-al}); and in particular explore the structure of the spaces of ``wild" geometric structures whose monodromies would lie in the corresponding wild character variety.  Our hope is that establishing an analytic parametrization of such spaces will shed light on compactifications of the usual ``tame" spaces.\\

% In the case of higher order poles, there is further analytical data coefficients of the negative powers in the Laurent series.

  % some of the extra parameters then give rise to Stokes parameters. 
  
  %However, here the pertaining analysis is for the Lie group $\text{PSL}_2(\mathbb{R})$, and the data at the pole is either geometric, pertaining to the hyperbolic crown, or is analytic, namely the principal part. \\

\textit{Strategy of the proof.} 
%Apart from the non-compactness of the surfaces, a key difficulty in proving the existence result in Theorem \ref{thm1} is the fact that the harmonic maps under consideration have infinite energy.  
Existence results for harmonic maps to \textit{non-compact} targets is, in general, difficult as one needs to prevent the escape of energy out an end. Moreover, the traditional methods of existence results include heat-flow or sub-convergence of a minimizing sequence which crucially use finite energy (see, for example, \cite{LiTam}).
The key difficulty in proving the existence result in Theorem \ref{thm1} is the fact that the desired harmonic map has infinite energy as its Hopf differential has higher order poles. 

We start by proving an existence result for harmonic maps from a punctured disk  to hyperbolic crowns (see Theorem \ref{asm}) having Hopf differentials with prescribed principle parts. This is crucial to the subsequent argument as these maps serve as models of the asymptotic behavior of the desired map on the punctured Riemann surface. 
Our proof of this Asymptotic Models Theorem relies heavily on the technical results for the work concerning harmonic maps from $\C$ to ideal polygons that we mentioned above; in particular, on estimates on the geometry of the harmonic map that is inspired by the work of Wolf (\cite{Wolf3}) and Minsky (\cite{Minsky}).

The strategy of the proof of Theorem \ref{thm1} then is to consider an energy minimizing sequence defined on a compact exhaustion of the punctured surface $X\setminus D$, restricting to a desired model map around the punctures. An important step to ensure convergence is to provide a uniform energy estimate. Our previous work  in \cite{GW2}, required a ``symmetry" of the model map, that was a crucial to obtain a spectral decay of harmonic functions along a cylinder.   Here, a new argument using  ideas of A. Huang (see the Doubling Lemma in \S4.2), dispenses with the need for this symmetry.
Moreover, our previous work involved equivariant harmonic maps to certain augmented $\mathbb{R}$-trees, here the target is two-dimensional; this necessitates various modifications. \\

\textbf{Acknowledgments.} The idea of the project arose from a conversation with Yi Huang at a conference at the Chern Institute of Mathematics at Tianjin, and I thank him for his interest, and the conference organizers for their invitation. It is a pleasure to thank J\'{e}r\'{e}my Toulisse for his comments on a previous version of this article, Andrew Huang for conversations, and Michael Wolf, as some of these ideas emerged from a collaboration with him on a related project.  I also acknowledge the support by the center of excellence grant 'Center for Quantum Geometry of Moduli Spaces' from the Danish National Research Foundation (DNRF95) - this work was completed during a stay at QGM at Aarhus, and I am grateful for their hospitality.  The visit was supported by a Marie Curie International Research Staff Exchange Scheme Fellowship within the 7th European Union Framework Programme (FP7/2007-2013) under grant agreement no. 612534, project MODULI - Indo European Collaboration on Moduli Spaces.

\section{Preliminaries}

\subsection{Principal part}

We begin by defining the relevant space of quadratic differentials on the punctured disk that we shall use later:

\begin{defn}\label{qd-def} Let $n\geq 3$ and let $Q(n) $ be the space of meromorphic quadratic differentials on $\mathbb{D}^\ast$. Any such differential has a representative of the form:
\begin{equation}\label{qform}
\left(\frac{a_n}{z^n} + \frac{a_{n-1}}{z^{n-1}} + \cdots + \frac{a_3}{z^3} + \frac{a_2}{z^2} \right)dz^2
\end{equation}
where the coefficients are complex numbers.

%where $a_n=1$ and the rest of the coefficients are complex numbers.
%the leading coefficient non-zero.
%the leading coefficient is normalized (by a conformal rotation of $\D^\ast$) to be a real number.
%the leading coefficient  is normalized such that $a_n=1$.

%(Note that the conformal rescaling $z\mapsto \lambda^{-1} z$ changes the coefficients by $(a_n,a_{n-1},\ldots, a_2) \mapsto (\lambda^{n+2}a_n, \lambda^{n+1}a_{n-1}, \ldots, \lambda^4a_2)$.) 

%Thus, we have $Q(n) \cong \mathbb{R}^{2n-3}$  via the map to the real and imaginary parts of the coefficients $(a_n,a_{n-1}, \ldots, a_2)$.

%Moreover, let $QD^0(n)$ be the space of such quadratic differentials which have the real part of the quadratic residue equal to zero.  Note that this is not an additional constraint in the case that $n$ is odd, and hence $QD^0(n) \cong \mathbb{R}^{2n-6}$ if $n$ is odd, and $\mathbb{R}^{2n-7}$ if $n$ is even. 

\end{defn}

The fundamental complex invariant one can associate with such a differential is:

\begin{defn}[Analytic Residue]\label{ares}
The \textit{residue} of a meromorphic differential ($1$-form) $w$ at a pole $p$ is the integral $$ \text{Res}_p(q) = \pm \displaystyle\int\limits_\gamma w$$ where $\gamma$ is a loop around $p$. (Note that this is independent of the choice of such a loop).  We shall ignore the ambiguity of sign.

For a meromorphic quadratic differential $q$, the \textit{(quadratic) residue} shall be the residue of a choice of square root $\sqrt q$, that can be defined locally around the pole $p$.
\end{defn}

\bigskip
However in this paper we shall need the finer information of a \textit{principal part}, that we shall soon define.

\begin{defn}[Principal differential]\label{princ-def}
A \textit{principal (quadratic) differential} on $\mathbb{D}^\ast$ of order $n\geq 3$ is  defined to be a holomorphic quadratic differential of the form 
\begin{equation}\label{pp-eq}
z^{-\ep} \left(\frac{\alpha_r}{z^r} + \frac{\alpha_{r-1}}{z^{r-1}} + \cdots +  \frac{\alpha_1}{z} \right)^2dz^2
\end{equation}
 on $\mathbb{D}^\ast$ where $r = \lfloor n/2 \rfloor$ and $\ep = 0$ if $n$ is even and $1$ if $n$ is odd. 
 The space  of such principal differentials shall be  $$\mathrm{Princ}(n) \cong \mathbb{C}^{r-1}\times \mathbb{C}^\ast$$  via the map to the coefficients $(\alpha_1,\alpha_2,\ldots, \alpha_{r})$.
 %, where as before the leading coefficient is normalized to equal $1$.
 %We say that a principal differential is \textit{normalized} if the leading coefficient $a_k = 1$; the space of normalized differentials will be denoted $\mathrm{Princ}_1(n)$.
 \end{defn}

\begin{lem}\label{princ-lem}
Given a meromorphic  quadratic differential $q$ on $\mathbb{D}^\ast$ with a pole of order $n\geq 3$ at the origin, there is a unique principal differential  $P \in \mathrm{Princ}(n)$ such that $q - P$ is a holomorphic quadratic differential on $\mathbb{D}$.
\end{lem}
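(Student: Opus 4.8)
The plan is to match the leading negative powers of $q$ by a principal differential term by term, and then check that the remaining error is holomorphic. Write $q = \left(\frac{a_n}{z^n} + \cdots + \frac{a_2}{z^2}\right)dz^2$ as in \eqref{qform}, with $a_n \neq 0$. First I would treat the even case $n = 2r$. Then a principal differential $P = \left(\frac{\alpha_r}{z^r} + \cdots + \frac{\alpha_1}{z}\right)^2 dz^2$ has a Laurent expansion whose most singular term is $\alpha_r^2 z^{-2r}dz^2$, and more generally the coefficient of $z^{-(r+j)}dz^2$ in $P$ (for $1 \le j \le r$) is $\sum_{l} \alpha_l \alpha_{j-l+ \text{(appropriate shift)}}$, a polynomial in $\alpha_r, \alpha_{r-1}, \ldots$ that is \emph{affine-linear in the ``next'' unknown} once the higher-indexed $\alpha$'s are fixed: the coefficient of $z^{-2r+m}dz^2$ involves $2\alpha_r \alpha_{r-m} + (\text{terms in } \alpha_r,\ldots,\alpha_{r-m+1})$. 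So I would solve recursively: set $\alpha_r$ to be a square root of $a_n$ (here $a_n \neq 0$ forces $\alpha_r \in \mathbb{C}^\ast$, consistent with $\mathrm{Princ}(n) \cong \mathbb{C}^{r-1}\times\mathbb{C}^\ast$), then having determined $\alpha_r, \ldots, \alpha_{r-m+1}$, the equation matching the $z^{-2r+m}dz^2$ coefficient reads $2\alpha_r\alpha_{r-m} + (\text{known}) = a_{n-m}$, which has a unique solution for $\alpha_{r-m}$ since $\alpha_r \neq 0$. Running $m$ from $1$ to $r-1$ pins down all coefficients and produces a principal differential $P$ agreeing with $q$ in all Laurent coefficients of order $z^{-n}dz^2$ through $z^{-(r+1)}dz^2$.

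For the odd case $n = 2r+1$, the same recursion applies to $P = z^{-1}\left(\frac{\alpha_r}{z^r} + \cdots + \frac{\alpha_1}{z}\right)^2 dz^2$, whose leading term is $\alpha_r^2 z^{-(2r+2)}dz^2 = \alpha_r^2 z^{-(n+1)}dz^2$; wait — I should instead note that the exponents appearing in $z^{-1}(\sum \alpha_l z^{-l})^2$ are $z^{-(2r+1)}, z^{-2r}, \ldots, z^{-3}$, i.e. exactly the \emph{odd} range needed, with the coefficient of $z^{-(2r+1)}dz^2$ being $\alpha_r^2$; again one solves recursively, $\alpha_r = \sqrt{a_n} \in \mathbb{C}^\ast$, then $\alpha_{r-m}$ from an affine-linear equation with invertible leading coefficient $2\alpha_r$. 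In both parities, the recursion determines $P$ uniquely from the first $r$ coefficients $a_n, a_{n-1}, \ldots, a_{n-r+1}$ of $q$ (equivalently the ``strictly singular'' half of the principal part), and $\alpha_r \ne 0$ is exactly the condition $a_n \ne 0$.

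It then remains to check that $q - P$ is holomorphic on all of $\mathbb{D}$, not merely at the origin. By construction $q - P$ is a meromorphic quadratic differential on $\mathbb{D}^\ast$ whose Laurent expansion at $0$ has vanishing coefficients for $z^{-n}dz^2$ down through $z^{-(r+2)}dz^2$ in the even case (respectively $z^{-3}dz^2$ in the odd case) — i.e. all the genuinely singular terms cancel, leaving at worst a pole of order $\le 2$... but I must be more careful: $P$ only contributes terms of order $\ge z^{-(r+1)}dz^2$ (even) or $\ge z^{-3}dz^2$ (odd), and $q$ has \emph{no} terms of order lower than $z^{-2}dz^2$ beyond what we cancelled, so the point is that the residual terms of $q$ between $z^{-(r+1)}dz^2$ (or $z^{-3}$) and $z^{-2}dz^2$ are also picked up by the non-leading Laurent terms of $P$ — and indeed the recursion above was run over the full range $m = 1, \ldots, r-1$, which exhausts exactly all negative-power coefficients of $q$ down to $z^{-2}dz^2$. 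Hence all negative Laurent coefficients of $q - P$ vanish, so $q - P$ extends holomorphically over the origin, and since it was already holomorphic on $\mathbb{D}^\ast$ it is a holomorphic quadratic differential on $\mathbb{D}$. Uniqueness of $P$ is immediate: if $P, P'$ both work then $P - P'$ is both a difference of principal differentials (hence a specific finite sum of negative powers) and holomorphic at $0$, forcing all its coefficients, and thus $P = P'$; the recursion above already exhibits this, since at each stage the coefficient was forced.

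The only genuinely delicate point is the bookkeeping in the odd case — making sure the squaring of $z^{-\epsilon}(\sum \alpha_l z^{-l})$ produces precisely the exponent range $\{-n, -(n-1), \ldots, -2\}$ dictated by \eqref{qform} and that the leading coefficient of each recursive equation is the nonzero quantity $2\alpha_r$ (or $\alpha_r^2$ for the top one); once the index arithmetic with $r = \lfloor n/2\rfloor$ and $\epsilon \in \{0,1\}$ is laid out, everything reduces to triangular linear algebra with invertible diagonal. I expect this indexing to be the main obstacle only in the sense of care required, not of conceptual difficulty.
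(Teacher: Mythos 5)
Your recursion is precisely the paper's argument: expand $z^{-\epsilon}\bigl(\alpha_r z^{-r}+\cdots+\alpha_1 z^{-1}\bigr)^2$, compare coefficients, and solve the triangular system $a_n=\alpha_r^2$, $a_{n-1}=2\alpha_r\alpha_{r-1}$, \dots, using $\alpha_r\neq 0$ at each stage; this correctly and uniquely determines $P$ from the top $r$ coefficients $a_n,\ldots,a_{n-r+1}$, where $r=\lfloor n/2\rfloor$. The gap is in your closing ``holomorphicity check''. Running $m=1,\ldots,r-1$ does \emph{not} exhaust all negative powers of $q$ down to $z^{-2}dz^2$: it matches only the exponents $z^{-n},\ldots,z^{-(n-r+1)}$. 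The remaining coefficients $a_{n-r},\ldots,a_2$ of $q$ are never touched, while the expansion of $P$ already has fixed coefficients in that range, and there is no freedom left to cancel them ($P$ has $r$ parameters against the $n-1$ negative coefficients of $q$). Concretely, for $q=(z^{-4}+z^{-2})\,dz^2$ the matching forces $\alpha_2=1$, $\alpha_1=0$, so $q-P=z^{-2}dz^2$; and no principal differential does better, since $q-P$ holomorphic would require $2\alpha_1=0$ and $\alpha_1^2=1$ simultaneously. In the odd case the failure is even more immediate: $z^{-1}(\cdots)^2dz^2$ contains no $z^{-2}$ term at all, so it can never absorb $a_2$.

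To be fair, the literal wording of the Lemma overstates what is true, and the paper's own proof stops exactly where your recursion stops. The statement that is actually proved (and used later, cf.\ Lemma \ref{dims}, where the coefficients $a_2,\ldots,a_{n-r}$ remain free after the principal part is fixed, and the remark following the Lemma about $\sqrt{q}$) is: there is a unique $P\in\mathrm{Princ}(n)$ whose Laurent coefficients in the exponents $z^{-n},\ldots,z^{-(n-r+1)}$ agree with those of $q$; equivalently, writing $q=z^{-n}g(z)\,dz^2$ with $g(0)\neq 0$, the $\alpha_i$ are the first $r$ Taylor coefficients of a branch of $\sqrt{g}$, so that $q-P$ has a pole of order at most $n-r=\lceil n/2\rceil$ rather than being holomorphic. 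So you should delete the full-cancellation paragraph (it is false as stated) and replace its conclusion by this weaker one; with that correction your proof coincides with the paper's, including the observation that uniqueness follows because each matching equation has the invertible leading coefficient $2\alpha_r$ (and the sign ambiguity $\alpha_i\mapsto-\alpha_i$ does not change $P$).
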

\begin{proof}

%After rescaling by an $\alpha^2 \in \mathbb{C}^\ast$, 

Suppose the quadratic differential $q$ is of the form (\ref{qform}) up to an addition of a holomorphic quadratic differential on $\mathbb{D}$.

Let $r = \lfloor n/2 \rfloor$.

Then on completing the square in the expression \eqref{pp-eq}  and comparing coefficients, we have:
$a_n = \alpha_r^2$, $a_{n-1}^2 = 2\alpha_r\alpha_{r-1}$, $a_{n-2} = \alpha_{r-1}^2 + 2 \alpha_{r} \alpha_{r-2}$, and in general $$a_{n-i}  = (\text{a quadratic expression involving }\alpha_{r-1}, \alpha_{r-2}, \ldots \alpha_{r-i+1}) + 2\alpha_r\alpha_{r-i}$$
for each $3\leq i\leq r-1$. 
Inductively,  $a_n$ determines $\alpha_r$, $a_{n-1}$ determines $\{\alpha_r,\alpha_{r-1}\}$, and  $a_{n-i}$ determines  $\{\alpha_r,\ldots \alpha_{r-i}\}$.
Thus the map $$(a_n,a_{n-1},\ldots a_{n-r+1}) \mapsto (\alpha_r,\alpha_{r-1},\ldots \alpha_1)$$ is well-defined and injective,
and thus $q$ determines $P$ such that the lemma holds.
\end{proof}

\textit{Remark.} For a $q,P$ as in the above Lemma, we shall sometimes say: 
\begin{equation*}
\sqrt q  = z^{-\ep} \left(\frac{\alpha_r}{z^r} + \frac{\alpha_{r-1}}{z^{r-1}} + \cdots +  \frac{\alpha_1}{z} \right)dz + \textit{a holomorphic differential}
\end{equation*}
although this only makes sense formally (as it depends on a suitable choice of a branch of the square root).  

\medskip
Using this lemma, we can finally define:

\begin{defn}[Principal part]
Given a meromorphic quadratic differential $q$ on $X$ with a pole of order $n\geq 3$ at $p$, and a choice of coordinate disk $U\cong \mathbb{D}$ around the point $p$, we define its \textit{principal part} $\text{Pr}(q)$  (relative to the choice of $U$)   to be the unique element of $\text{Princ}(n)$ such that $ q\vert_U - \text{Pr}(q)$ is a holomorphic quadratic differential on $U$.
\end{defn}

%As a consequence of the computation in the proof of Lemma \ref{princ-lem}, we also observe:

%\begin{cor} For a quadratic differential of the form (\ref{qform}), the principal part $\text{Pr}(q)$ 
%\end{cor}

Recalling the definition of the space of quadratic differentials (Definition \ref{qd-def}), we have:

\begin{lem}\label{dims}
Let $n\geq 3$ and fix a principal differential $P \in \text{Princ}(n)$. Then the subspace $Q(P,n)$ of the quadratic differentials in $Q(n)$ with principal part $P$ is homeomorphic to $\mathbb{R}^{n-1}$ if $n$ is odd, and $\mathbb{R}^{n-2}$ if $n$ is even.
\end{lem}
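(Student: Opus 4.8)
The plan is to count parameters directly. Recall from Definition \ref{qd-def} that a differential in $Q(n)$ is determined by the tuple of coefficients $(a_n, a_{n-1}, \ldots, a_3, a_2) \in \C^{n-1}$, so $Q(n) \cong \C^{n-1} \cong \R^{2(n-1)}$. Fixing the principal part $P$ imposes constraints on the ``leading'' coefficients: by Lemma \ref{princ-lem} and the inductive comparison of coefficients carried out in its proof, the principal differential $P$ is determined by, and determines, the coefficients $a_n, a_{n-1}, \ldots, a_{n-r+1}$ where $r = \lfloor n/2 \rfloor$. So to specify $Q(P,n)$ one fixes these $r$ complex coefficients and lets the remaining coefficients $a_{n-r}, a_{n-r-1}, \ldots, a_3, a_2$ vary freely over $\C$. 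The number of free complex parameters is therefore $(n-1) - r$.

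The remaining step is just arithmetic in the two parities. If $n = 2m$ is even, then $r = m = n/2$, so the number of free complex parameters is $(n-1) - n/2 = n/2 - 1$, giving $Q(P,n) \cong \C^{n/2-1} \cong \R^{n-2}$. If $n = 2m+1$ is odd, then $r = m = (n-1)/2$, so the number of free complex parameters is $(n-1) - (n-1)/2 = (n-1)/2$, giving $Q(P,n) \cong \C^{(n-1)/2} \cong \R^{n-1}$. In both cases the identification is a linear isomorphism of the affine subspace of $\C^{n-1}$ cut out by fixing the leading coefficients onto a coordinate subspace, hence in particular a homeomorphism, which is what is claimed.

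There is essentially no obstacle here; the only point requiring a little care is confirming that fixing $P$ is \emph{equivalent} to fixing exactly the coefficients $a_n, \ldots, a_{n-r+1}$ and imposes no further constraint on the lower-order coefficients — but this is immediate from the proof of Lemma \ref{princ-lem}, where the map $(a_n, \ldots, a_{n-r+1}) \mapsto (\alpha_r, \ldots, \alpha_1)$ was shown to be a bijection onto $\mathrm{Princ}(n)$, and where the coefficients $a_{n-r}, \ldots, a_2$ play no role in determining $P$. One should also note the mild abuse in Definition \ref{qd-def}: a general meromorphic quadratic differential on $\mathbb{D}^\ast$ of pole order $n$ differs from an expression of the form \eqref{qform} by a holomorphic quadratic differential on $\mathbb{D}$, so $Q(n)$ as defined is genuinely the finite-dimensional space $\C^{n-1}$, and the count above is unaffected.
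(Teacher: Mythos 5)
Your proposal is correct and follows essentially the same route as the paper: invoke the coefficient comparison from Lemma \ref{princ-lem} to see that fixing $P$ fixes exactly the top $r=\lfloor n/2\rfloor$ coefficients $a_n,\ldots,a_{n-r+1}$, then count the remaining $(n-1)-r$ free complex coefficients in the two parities. The extra remarks on bijectivity and the normalization of $Q(n)$ are fine but not needed beyond what the paper already does.
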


\begin{proof}
As a consequence of the computations in the proof of Lemma \ref{princ-lem}, we have $P$ is uniquely determined by the $r$ coefficients $a_n, a_{n-1}, a_{n-2}, \ldots a_{n- r+1}$ where $r = \lfloor n/2 \rfloor$.  The remaining coefficients $a_2,a_3,\ldots a_{n-r}$ are a set of $(n-r-1)$ complex numbers, which contributes  $(n-2)$ real parameters if $n$ is even, and $(n-1)$ real parameters if $n$ is odd.
% We lose a real dimension when we consider projective classes: we can choose to scale such that $a_n = 1$.
\end{proof}

%What shall be relevant later shall be the \textit{projectivized} space 
%$$\mathbb{P}Q(P,n) = Q(P,n)/\sim$$ where $ q_1 \sim q_2$ if and only if $q_2 = \lambda q_1$ for some $\lambda \in \mathbb{R}^+$.

%\begin{cor}\label{cor-dims} For $n\geq 3$ and a choice of a differential $P \in \text{Princ}(n)$, we have $\mathbb{P}Q(P,n)$ is homeomorphic to $\mathbb{R}^{n-2}$ if $n$ is odd, and $\mathbb{R}^{n-3}$ if $n$ is even.
%\end{cor}

\subsection{Hyperbolic crowns}

Throughout this paper, $(\mathbb{D}, \rho)$ will be the  Poincar\'{e} disk, or equivalently, the hyperbolic plane with the hyperbolic metric $\rho$. The hyperbolic distance function will be denoted by $d_\rho(\cdot, \cdot)$.

\begin{defn}[Crown, polygonal end]\label{pend} A \textit{crown}  $\CC$ with $m\geq 1$ ``boundary cusps"  is an incomplete hyperbolic surface bounded by a closed geodesic boundary $c$, and  a \textit{crown end} comprising bi-infinite geodesics $\{\gamma_i\}_{1\leq i\leq m}$ arranged in a cyclic order, such that the right half-line of the geodesic $\gamma_i$ is asymptotic to the left half-line of geodesic $\gamma_{i+1}$, where $\gamma_{m+1} =\gamma_1$.  A crown comes equipped with a labelling of the boundary cusps inbetween adjacent geodesic lines; the labels are $\{1,2,\ldots m\}$ in a cyclic order. 

A \textit{polygonal end} $\PP$ of a crown is the $\Gamma$-invariant bi-infinite chain of geodesic lines in $(\mathbb{D}, \rho)$ obtained by lifting the  cyclically ordered collection of geodesics $\{\gamma_i\}_{1\leq i\leq m}$ in $\CC$ to its universal cover, where $\Gamma =  \mathbb{Z}$ is the group generated by the hyperbolic translation corresponding to the geodesic boundary $c$.  A post-composition by an isometry ensures that the lift of $c$ is the bi-infinite geodesic $\alpha$ with endpoints at $\pm 1 \in \partial \D$, and that the cusp labelled ``1" is at $i\in \partial \D$; this normalization specified the polygonal end associated with the crown uniquely. 

\end{defn}

\textit{Remark.} The ideal endpoints of the chain of geodesics of the polygonal end limit to two points $p_\pm \in \partial \mathbb{D}$; the axis $\alpha$ of the lift of $c$ is between these two limit points. Thus, a polygonal end together with the data of the hyperbolic translation that it is invariant under, determines the crown $\CC$ by taking a quotient. 
See Figure 1. \\

\begin{figure}
  % Requires \usepackage{graphicx}
  \centering
  \includegraphics[scale=0.45]{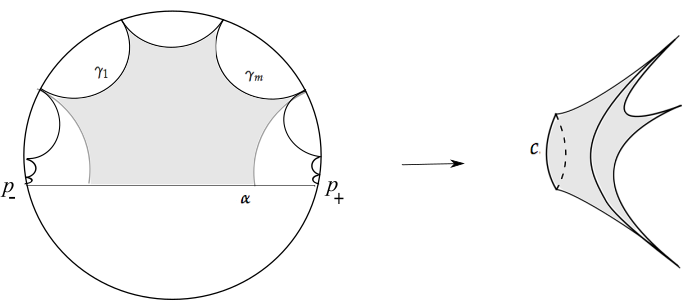}\\
  \caption{A polygonal end (left) is obtained as the universal cover of a hyperbolic crown (right).}
\end{figure}

The following associated notions will be useful later:

\begin{defn}[Truncation]\label{trunc} A \textit{truncation} of a crown $\CC$ with $m$ boundary cusps is obtained by removing a choice of disjoint neighborhoods $H_1,H_2,\ldots H_{m}$ at each ideal vertex of a $\CC$,  to obtain a subset $\CC \cap (H_1 \cup H_2 \cup \cdots H_{m})$ that is convex in the hyperbolic metric. 
The \textit{height} of the truncation of a cusp is the distance of the boundary of the deleted neighborhood, from the boundary of a maximal neighborhood of the cusp. 
\end{defn}

\textit{Remark.} The convexity of the truncated crown would be useful later (see the remark following Theorem \ref{fund-thm}). This means we need to truncate along geodesic arcs across the cusps, rather than horocycles. 

\begin{defn}[Metric residue]\label{metres} The \text{metric residue} of the hyperbolic crown $\CC$ with $m$ boundary cusps is defined to be $0$ when $m$ is odd, and equal to the alternating sum of lengths of geodesic sides of a truncation, when $m$ is even; like the analytic residue (see Den. \ref{ares}), we ignore the ambiguity of sign.
\end{defn}

It is easy to see  that:

\begin{lem} For  even $m$ the metric residue is well-defined, that is, it is independent of the choice of truncation.
\end{lem}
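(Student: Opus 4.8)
The plan is to show that the alternating sum of the lengths of the geodesic sides of a truncation does not depend on which disjoint convex neighborhoods $H_1,\dots,H_m$ of the ideal vertices we remove. Since any two truncations can be connected by a finite sequence of moves in which we push the truncating arc across a single cusp (while keeping the others fixed), it suffices to prove invariance under one such elementary move. So fix all the deleted neighborhoods except the one at a single cusp, say the cusp labelled $j$, and compare two truncating geodesic arcs $\sigma$ and $\sigma'$ across that cusp, with $\sigma'$ closer to the ideal vertex than $\sigma$.

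The key geometric fact to invoke is the following elementary hyperbolic-geometry computation: if $\sigma$ and $\sigma'$ are two geodesic arcs both meeting the two bi-infinite geodesic sides $\gamma_j$ and $\gamma_{j+1}$ of the cusp (here I use the indexing of Definition \ref{pend}, so the cusp $j$ sits between $\gamma_j$ and $\gamma_{j+1}$), then the two segments these arcs cut off along $\gamma_j$ and along $\gamma_{j+1}$ have \emph{equal length}. This is the standard fact that in an ideal triangle-like region with one ideal vertex, the horocyclic/geodesic truncations are symmetric; concretely, placing the ideal vertex at $\infty$ in the upper half-plane with $\gamma_j,\gamma_{j+1}$ vertical lines $x = a$ and $x=b$, a geodesic arc joining them is an arc of a Euclidean semicircle, and an elementary computation shows the hyperbolic lengths of the two truncation segments $[\sigma\cap\gamma_j,\ \sigma'\cap\gamma_j]$ and $[\sigma\cap\gamma_{j+1},\ \sigma'\cap\gamma_{j+1}]$ agree. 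Call this common length $\ell$.

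Now I track the effect on the alternating sum. Moving the truncating arc at cusp $j$ from $\sigma$ to $\sigma'$ lengthens the two geodesic sides of the truncation that abut cusp $j$ — these are the side lying along $\gamma_j$ and the side lying along $\gamma_{j+1}$ — each by the same amount $\ell$, and changes no other side. Because $m$ is even, in the cyclic alternating sum these two sides $\gamma_j$ and $\gamma_{j+1}$ are consecutive and therefore receive \emph{opposite} signs; hence the two contributions $+\ell$ and $-\ell$ cancel, and the alternating sum is unchanged. Iterating over a sequence of elementary moves shows any two truncations give the same value, so the metric residue is well-defined.

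The main obstacle is purely the bookkeeping of signs: one must check that the two sides affected by a single elementary move are genuinely adjacent in the cyclic order and thus carry opposite signs in the alternating sum, and that the parity hypothesis ($m$ even) is exactly what makes this cancellation global rather than merely local. (For $m$ odd the same move would change the alternating sum by $\pm 2\ell$, which is why the metric residue is declared to be $0$ by fiat in that case.) The underlying hyperbolic length computation is routine and can be done in the upper half-plane model as indicated; the only subtlety is ensuring the truncating arcs can be taken to be geodesic arcs meeting both bounding geodesics, which is guaranteed by the convexity requirement in Definition \ref{trunc}.
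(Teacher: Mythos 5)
Your reduction to changing the truncation at one cusp at a time, and the sign bookkeeping (the two affected sides are consecutive in the cyclic order, hence carry opposite signs when $m$ is even), is exactly the paper's argument. The gap is in the ``elementary computation'' you invoke to get that the two sides change by the \emph{same} amount. Place the ideal vertex of the cusp at $\infty$, so that $\gamma_j$ and $\gamma_{j+1}$ become the vertical lines $x=a$ and $x=b$. A geodesic arc across the cusp is an arc of a semicircle centred at some $c\in\mathbb{R}$ of radius $r$, meeting the two lines at heights $\sqrt{r^2-(c-a)^2}$ and $\sqrt{r^2-(c-b)^2}$. If $\sigma$ meets the two lines at heights $(s,t)$ and $\sigma'$ at heights $(s',t')$, the two cut-off segments have hyperbolic lengths $\lvert\ln(s'/s)\rvert$ and $\lvert\ln(t'/t)\rvert$, and these agree only when $s'/s=t'/t$. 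That fails for general pairs of arcs: with $a=0$, $b=1$, take $\sigma$ the semicircle with $(c,r)=(\tfrac12,1)$, so $s=t=\tfrac{\sqrt3}{2}$, and $\sigma'$ the semicircle with $(c,r)=(\tfrac45,2)$, so $s'=\sqrt{3.36}$ and $t'=\sqrt{3.96}$; both are admissible truncating arcs (one lies entirely above the other, and convexity is preserved), yet $\ln(s'/s)\neq\ln(t'/t)$, so this single elementary move changes the alternating sum. The genuinely standard equal-length fact is for \emph{concentric horocycles}, not for arbitrary geodesic arcs across the cusp.

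So the equal-change property is not automatic for arbitrary convex geodesic truncations; it holds when the truncation at each cusp varies inside a natural one-parameter family, which is what the single ``height'' parameter of Definition \ref{trunc} picks out --- for instance the arcs orthogonal to the bisecting geodesic of the cusp, equivalently the arcs cutting off segments of equal length on $\gamma_j$ and $\gamma_{j+1}$. That restricted statement is precisely what the paper's one-line proof uses (``truncating at a different height changes the two adjacent side lengths by the same amount''), and with it your cancellation argument closes. As written, however, your proof asserts the equal-length property for \emph{any} two truncating geodesic arcs, which is false; you need either to build this normalization into the class of truncations being compared, or to justify why the truncations relevant to the definition of the metric residue can be taken in that symmetric family before running the cancellation.
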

\begin{proof}
Truncating at a different height changes the length of the geodesic sides associated to that truncated sum by the same amount. However, in the alternating sum, the lengths of these sides appear with opposite signs, and hence the metric residue remains unchanged.
\end{proof}

The final notion we need for our crowns is:

\begin{defn}[Boundary twist] The hyperbolic crowns we shall consider shall come with an additional real parameter, the \textit{boundary twist}, that we associate with the geodesic boundary.  In the corresponding polygonal end in the universal cover, this can be thought of as the choice of a marked point on the bi-infinite line $\alpha$  that is the lift of the geodesic boundary; the parameter is then the signed distance of this point, from the foot of the orthogonal arc from the cusp labelled ``1" to $\alpha$.
\end{defn}

We can now define the relevant spaces of geometric objects:

\begin{defn} [Spaces of crowns]\label{poly-def} For $m\geq 1$, the space of hyperbolic crowns with $m$ labelled boundary cusps and a boundary twist
is denoted by $\mathsf{Poly}(m)$.  For any $a\in \mathbb{R}$, the subspace ${\mathsf{Poly}_a}(m)$ will comprise crowns with the specified metric residue $a$.
\end{defn}

\begin{lem}[Parametrizing the spaces]\label{poly1}
For $m\geq 1$, the space of polygonal ends ${\mathsf{Poly}}(m)\cong \mathbb{R}^{m+1}$, and  the space ${\mathsf{Poly}_a}(m)$ of those having metric residue equal to $a$, is homeomorphic to $\mathbb{R}^{m}$.
%\cm{define pointed hyperbolic crowns?}
%Since one of these points can be fixed to be $1$ by a rotation (a conformal automorphism of $\mathbb{D}^\ast$), this space is homeomorphic to $\mathbb{R}^{n-3}$.
%Since three of these points can be fixed to be $\pm 1, i$ by conformal automorphism of $\mathbb{D}$, this space is homeomorphic to $\mathbb{R}^{n-5}$.
\end{lem}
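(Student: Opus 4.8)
The plan is to parametrize a crown (equivalently, a normalized polygonal end) by reading off intrinsic geometric data along its end, then account for the one extra degree of freedom coming from the closed geodesic boundary and the boundary twist. First I would set up the normalization from Definition \ref{pend}: after post-composing by an isometry, the lift $\alpha$ of the geodesic boundary $c$ is the geodesic with endpoints at $\pm 1 \in \partial\D$, and the cusp labelled ``$1$'' sits at $i \in \partial\D$. With this normalization fixed, a polygonal end is a bi-infinite $\Gamma$-periodic chain of geodesic lines $\gamma_1,\ldots,\gamma_m,\gamma_{m+1}=T\gamma_1,\ldots$ where $\Gamma=\langle T\rangle$ and $T$ is the hyperbolic translation along $\alpha$; consecutive lines share an ideal endpoint (the boundary cusps).

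The key step is to choose good coordinates on the chain of geodesics modulo $\Gamma$. Pick a truncation (a convex choice of horocyclic-type, actually geodesic, cross-arcs at each ideal vertex, as in Definition \ref{trunc}), and record the $m$ signed lengths $\ell_1,\ldots,\ell_m$ of the geodesic sides $\gamma_1,\ldots,\gamma_m$ between consecutive truncating arcs, together with the $m$ ``bending'' parameters measuring how one geodesic side turns into the next across a cusp — but these are not all independent, because going once around must be realized by the single isometry $T\in\mathrm{PSL}(2,\mathbb{R})$, which is $3$-dimensional, and the normalization (axis $=\alpha$, cusp ``$1$'' at $i$) has already pinned down $2$ of those $3$ parameters, leaving the translation length of $T$ as the only remaining global constraint/freedom. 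Concretely, I would argue that the free data is: the cyclic sequence of side-lengths and turning angles subject to the single closing-up relation, which after the normalization gives $m+1$ real parameters for $\mathsf{Poly}(m)$. An efficient way to make this rigorous is to build the polygonal end inductively: start at the cusp labelled ``$1$'' (forced to be at $i$), and at each step the position of the next geodesic line relative to the current one is governed by one real parameter (the location of the shared ideal endpoint, or equivalently a cross-ratio); after $m$ steps one obtains a candidate translation $T$, and the requirement that $T$ be hyperbolic with axis $\alpha$ imposes the normalization — but since we are free to rescale, exactly one combination survives, yielding the count $m+1$. The Jacobian of this construction is triangular and non-vanishing, giving the homeomorphism $\mathsf{Poly}(m)\cong\R^{m+1}$; one should also check properness/openness so that it is a genuine homeomorphism and not merely a continuous bijection, which follows because side-lengths and turning data vary continuously and the inverse construction is explicit and continuous.

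For the second assertion, recall from Definition \ref{metres} that when $m$ is even the metric residue is the alternating sum $\sum_{j=1}^m (-1)^{j}\ell_j$ of the geodesic side-lengths of a truncation (well-defined by the preceding Lemma), and is identically $0$ when $m$ is odd. When $m$ is odd there is nothing to prove: $\mathsf{Poly}_a(m)$ is empty unless $a=0$, in which case it equals all of $\mathsf{Poly}(m)\cong\R^{m+1}$ — so here I expect the statement of the Lemma is implicitly taken with $m$ even (or the odd case should be noted separately). When $m$ is even, fixing the metric residue is a single affine (in fact linear in the $\ell_j$) condition on the $m+1$ coordinates above; since the residue functional is a nonzero linear functional in these coordinates (its coefficient on $\ell_1$ is $\pm1\neq0$), its level set $\mathsf{Poly}_a(m)$ is homeomorphic to $\R^{m}$.

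The main obstacle I anticipate is making the parameter count fully rigorous, i.e. verifying that the ``closing-up'' constraint imposed by requiring the monodromy isometry $T$ to be a hyperbolic translation along the normalized axis $\alpha$ cuts down the naive $2m$-dimensional space of side-length-and-turning data to exactly $m+1$ dimensions, and that the resulting map is a homeomorphism rather than just a bijection. This is really a careful bookkeeping argument in $\mathrm{PSL}(2,\mathbb{R})$: one must track how the $3$ dimensions of the ambient isometry group interact with the $2$ normalization conditions and confirm the residual freedom, and one must exhibit explicit continuous inverses in both directions. A secondary, minor obstacle is the degenerate behaviour as a truncation is pushed toward the cusps (side-lengths $\to\infty$); since the metric residue is truncation-independent this does not affect the count, but it should be remarked on so the coordinates are unambiguous.
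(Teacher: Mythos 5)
Your core argument is the paper's: normalize so that the lift $\alpha$ of the geodesic boundary has endpoints $\pm 1$ and the cusp labelled ``1'' is at $i$, then parametrize the chain by the ideal positions of the successive vertices of a fundamental domain, built up one real parameter at a time, and add the boundary twist; for fixed residue one vertex is eliminated. Two places where your bookkeeping is looser than (or at odds with) the paper. First, the ``$m$ side-lengths plus $m$ bending parameters subject to a closing-up relation'' framing does not produce the right count ($2m$ minus the conditions pinning the axis of $T$ is not $m+1$ in general), and the phrase ``one extra degree of freedom coming from the closed geodesic boundary and the boundary twist'' conflates two different things: the translation length of $T$ is already recorded among the vertex parameters (via the position of $T(i)$, the translate of the first cusp), while the boundary twist is the single genuinely additional parameter. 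Your inductive cross-ratio construction is the correct repair and is precisely the paper's parametrization: the $m$ ideal points $\theta_2,\dots,\theta_{m+1}$ of a fundamental domain plus the twist give $\mathbb{R}^{m+1}$.

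Second, for $\mathsf{Poly}_a(m)$ you pass to a level set of the residue viewed as a nonzero linear functional ``in the $\ell_j$,'' but the coordinates you actually established are the ideal-vertex (cross-ratio) coordinates, in which the residue is not linear, and the truncation-length data was never set up as a global chart (it also depends on the choice of truncation). The paper argues differently: it keeps $m-1$ vertices free and shows the remaining ideal vertex is uniquely determined by the prescribed residue, using the hyperbolic-geometry fact that for fixed disjoint horoballs $H_l,H_r$ and any $a\in\mathbb{R}$ there is a unique ideal point $x$ such that horoballs $H_m$ based at $x$ satisfy $d_\rho(H_l,H_m)-d_\rho(H_r,H_m)=a$; that monotonicity/uniqueness statement is exactly what your level-set argument needs in the coordinates you actually have, so you should either prove it or honestly set up length-type coordinates. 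Your remark about odd $m$ (the metric residue is identically $0$ there, so the second assertion really concerns even $m$) is a fair reading of the definitions, which the paper's proof does not address.
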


\begin{proof}
Consider a crown $\CC$ with $m$ boundary cusps. As described earlier, its universal cover $\tilde{\CC}$ can be thought of as  the region in $(\mathbb{D}, \rho)$ bounded between a geodesic line $\alpha$ and a chain of geodesics invariant under the hyperbolic translation $\gamma$. One can normalize such that $\alpha$ is along an axis with end points $\pm 1 \in \partial \mathbb{D}$, and one of the lifts of a boundary cusp has an ideal vertex at $i \in \partial \mathbb{H}^2$.  A fundamental domain of the $\mathbb{Z}$-action is then uniquely determined by the $m$ points on the ideal boundary that are the endpoints of the remaining boundary cusps. These $m$ real parameters thus determine the hyperbolic crown $\CC$ with labelled cusps. 
The final real parameter is the choice of the basepoint which is anywhere along the axis $\alpha$, which is the boundary twist. 

For a fixed residue $a$, in the universal cover, a fundamental domain is in fact determined by the positions of $m-1$ ideal vertices (where one is already fixed at $i$ by the usual normalization).This is because the remaining ideal vertex is then uniquely determined from the metric residue, by the following hyperbolic geometry fact: If you fix two disjoint horodisks $H_l$ and $H_r$,  and let $a \in \mathbb{R}$, then there is a unique choice of an ideal point $x$ such that any horodisk $H_m$  based at $x$ satisfies $d_{\rho}(H_l,H_m) - d_{\rho}(H_r,H_m) = a$.
We leave the verification of this to the reader.
\end{proof}

\subsection{Crowned hyperbolic surface}

Throughout this paper, let $S$ be a compact orientable surface of genus $g\geq 1$ and $k\geq 1$ boundary components.

%\textit{Remark.} It is evident from the above definition that 

\begin{defn}\label{chs}
A \textit{crowned hyperbolic surface} is obtained by attaching crowns to a compact hyperbolic surface with geodesic boundaries by isometries along their closed boundaries. This results in an incomplete hyperbolic metric of finite area on the surface. Topologically, the underlying surface is $S\setminus \mathsf{P}$, where $\mathsf{P}$ is a set of finitely many points on each boundary component. 
\end{defn}

\begin{figure}
  % Requires \usepackage{graphicx}
  \centering
  \includegraphics[scale=0.35]{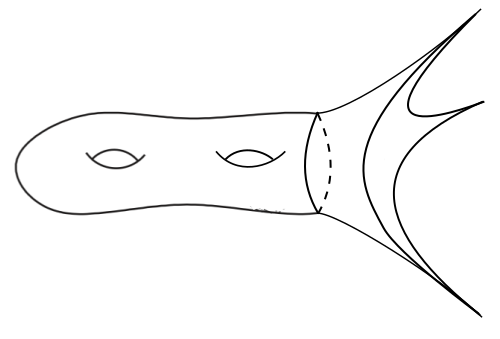}\\
  \caption{A crowned hyperbolic surface.}
\end{figure}

\textit{Remark.} On doubling a crowned hyperbolic surface, one obtains a cusped hyperbolic surface with an involutive symmetry. A compact Riemann surface with finitely many punctures admits a unique uniformizing metric that is hyperbolic and has cusps at the punctures. Thus a crowned hyperbolic surface can be thought of as the unique uniformizing metric for a compact Riemann surface with boundary, having  distinguished points on each boundary component.  

%Topologically, a crown  is an annulus with $n$ distinguished points on a boundray component deleted.A crown $C$ is determined by $n$ real parameters, namely the length of the geodesic boundary, and the relative positions of the $n$ times, which can be measured thus:

%In later sections we shall often pass to the universal cover of the surface, and we shall use the following terminology:

%\begin{defn}[Polygonal ends] An \textit{polygonal end} is a lift of a hyperbolic crown to the upper-half-plane, namely, a region between a geodesic line $l$ and a chain of bi-infinite geodesics, that is invariant under an infinite cyclic group of isometries that acts by translation along $l$, such that the fundamental domain comprises $m$ bi-infinite geodesics.

%Let $\text{Poly}(m)$ be the space of polygonal ends corresponding to crowns with  $m\geq 1$ boundary cusps. Here two such $m$-gonal ends are considered the same if there is an isometry between them that commutes with the $\mathbb{Z}$-action.
%\end{defn}

%\begin{defn}[Marking] A \textit{marking} on a crowned hyperbolic surface $X$ with $k$ crown ends refers to a homeomorphism $f:S\to X$ that takes each boundary component to the crown ends. The pair $(X,f)$ is a \textit{marked} crowned hyperbolic surface; in what follows only the homotopy class of the marking would be important. 
%A homeomorphism is said to \textit{twist} around a crown if it Dehn twists about any boundary component. \cm{define this better.}
%\end{defn}

\begin{defn}[Teichm\"{u}ller space of crowned hyperbolic surfaces]\label{T-chs} Let 
$$\mathcal{T}(S, m_1,m_2,\ldots m_k) = \{ (X,f)  \text{ }  \vert \text{ } X \text{ is a marked hyperbolic surface with }k\text{ crowns}\}/\sim$$ 
such that the $i$-th crown has $m_i\geq 1 $ boundary cusps, for each $i=1,2,\ldots k$. 

Here, the marking $f:S\to X$ is a homeomorphism that takes each boundary component to the crown ends. Two marked surfaces are equivalent, that is, $(X,f) \sim (Y,g)$ if there is an isometry $i:X\to Y$ that is homotopic to $g\circ f^{-1}$ via a homotopy that keeps each boundary component fixed,  and $g\circ f^{-1}$ is a homeomorphism that does not Dehn-twist around any crown end. 
\end{defn}

We then have the following parametrization:

\begin{lem} Let $k\geq 1$ and fix integers $m_1,m_2,\ldots m_k \geq 1$.  The Teichm\"{u}ller space of crowned hyperbolic surfaces $\mathcal{T}(S, m_1,m_2,\ldots m_k)$ is homeomorphic to $\mathbb{R}^{\chi}$ where $\chi = 6g-6 +  \sum\limits_{i=1}^k (m_i + 3)$.
\end{lem}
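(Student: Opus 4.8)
The plan is to build a crowned hyperbolic surface out of two pieces of data: the underlying compact hyperbolic surface with geodesic boundary, and the crowns attached along those boundaries, and then to count parameters using the gluing. First I would recall the classical fact that the Teichm\"uller space of a compact surface of genus $g$ with $k$ geodesic boundary components of \emph{prescribed} lengths $(\ell_1,\dots,\ell_k)$ is homeomorphic to $\mathbb{R}^{6g-6+2k}$ (Fenchel--Nielsen coordinates: $3g-3+k$ length parameters and $3g-3+k$ twist parameters for the interior pants curves, with the boundary lengths fixed). Allowing the boundary lengths $\ell_i$ to vary freely in $\mathbb{R}_{>0}^k$ then gives a space homeomorphic to $\mathbb{R}^{6g-6+2k}\times \mathbb{R}^k = \mathbb{R}^{6g-6+3k}$ for the compact hyperbolic surface with geodesic boundary and no marked points.

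Next I would analyze the crown attached at the $i$-th boundary. A crown $\CC_i$ with $m_i$ boundary cusps has, by Lemma~\ref{poly1}, the space $\mathsf{Poly}(m_i)\cong \mathbb{R}^{m_i+1}$, but this includes the boundary twist parameter and implicitly records the length of its closed geodesic boundary $c$. To glue $\CC_i$ isometrically along its closed boundary to the $i$-th boundary component of the compact piece, the two boundary lengths must agree; so the length of $c$ is not a free parameter of the crown once the compact part is chosen, and moreover the twist parameter of the crown combines with the twist of the compact surface's boundary into a single gluing-twist parameter (the Teichm\"uller definition, Definition~\ref{T-chs}, forbids Dehn twisting around the crown end, so there is exactly one twist degree of freedom per crown). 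Concretely: the space of crowns with $m_i$ cusps and a \emph{fixed} boundary length is homeomorphic to $\mathbb{R}^{m_i+1}/(\text{length fixed}) $; since fixing the length of $c$ is one real condition on $\mathsf{Poly}(m_i)\cong\mathbb{R}^{m_i+1}$ and the resulting slice is a cell, it is homeomorphic to $\mathbb{R}^{m_i}$, of which one coordinate is the boundary twist. Thus each crown contributes $m_i$ parameters \emph{after} the length has been matched to the compact piece, and the boundary twist of the crown fuses with the boundary twist of the compact piece rather than adding a new dimension.

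Assembling: start from the $\mathbb{R}^{6g-6+3k}$ of the compact hyperbolic surface with variable boundary lengths. For each $i$, the boundary length $\ell_i$ is consumed in the gluing but is genuinely free (we may choose any positive $\ell_i$ and then build a crown with that boundary length), so it is accounted for; attaching the $i$-th crown adds the $m_i$ parameters of $\mathsf{Poly}_{\ell_i\text{-fixed}}(m_i)$. But the boundary twist of the crown and the Fenchel--Nielsen twist at the $i$-th boundary are not independent: only their ``difference'' (the relative rotation at the gluing) is a genuine parameter, so we must not double-count. The cleanest bookkeeping is: parameters from interior of the compact surface $= 6g-6+2k$ (the $3g-3+k$ interior lengths and $3g-3+k$ interior twists, \emph{excluding} boundary data); the $k$ boundary lengths $\ell_i$; and for each crown the $m_i$ parameters, one of which I will designate as the gluing twist at boundary $i$. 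This yields
\[
(6g-6+2k) + k + \sum_{i=1}^k m_i \;=\; 6g-6+3k+\sum_{i=1}^k m_i \;=\; 6g-6+\sum_{i=1}^k(m_i+3),
\]
which is the claimed $\chi$. I would also verify that the resulting identification is a homeomorphism by noting each step (Fenchel--Nielsen for the compact piece, Lemma~\ref{poly1} for the crowns, and the gluing map) is a homeomorphism of cells, and that no twisting ambiguity remains by invoking Definition~\ref{T-chs}.

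\textbf{Main obstacle.} The routine part is the additive count; the delicate point is making the gluing bookkeeping of twist parameters airtight --- showing precisely that the boundary twist built into $\mathsf{Poly}(m_i)$ and the Fenchel--Nielsen twist at the matching boundary component combine to a \emph{single} real degree of freedom under the equivalence relation of Definition~\ref{T-chs} (which quotients by homotopies fixing each boundary component and forbids Dehn twists around crown ends), so that neither an extra dimension is spuriously added nor one is lost. I would handle this by describing the gluing explicitly on the level of marked hyperbolic structures and checking that changing the crown's internal twist by $t$ and the compact surface's boundary twist by $-t$ produces isometric marked surfaces, so the product over the gluing locus descends to a cell of the expected dimension.
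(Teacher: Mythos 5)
Your proposal follows essentially the same route as the paper: decompose the crowned surface into a bordered hyperbolic surface (Fenchel--Nielsen count $6g-6+3k$) and crowns parametrized by $\mathsf{Poly}(m_i)\cong\mathbb{R}^{m_i+1}$, with one boundary-length--matching condition per crown and the crown's boundary twist serving as the gluing twist, so each crown contributes $m_i$ parameters. Your worry about double-counting twists is in fact moot, since the bordered surface carries no intrinsic twist parameter at its boundary, and your final bookkeeping coincides with the paper's count.
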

\begin{proof}
It is well-known that a hyperbolic surface with $k$ geodesic boundary components  is determined by $6g-6 + 3k$ real parameters. This can be seen, for example, by considering Fenchel-Nielsen parameters on the bordered surface.

By Definition \ref{chs}, a  crowned hyperbolic surface is obtained by an isometric identification of a bordered hyperbolic surface and a collection of crowns along their geodesic boundaries. (See Figure 2.) 

By Lemma \ref{poly1} each hyperbolic crown  is determined by $m_i$ real parameters.  The additional  ``boundary twist" parameter determines the ``twist" in the identification of the crown boundary with that of the bordered hyperbolic surface. However,  the length of the geodesic boundary must match with that of the bordered hyperbolic surface to achieve the isometric identification. 

Thus, each crown adds $m_i$ real parameters to the crowned hyperbolic surface, and we obtain a total of $\chi$ parameters, as claimed. 
\end{proof}

\textit{Remark.} Our Definition \ref{T-chs} differs from that in Y.Huang  \cite{YHuang} or Chekhov-Mazzocco \cite{C-M}  in that the parameter spaces they define are ``decorated", and in particular also record the data of a choice of truncation  for each boundary cusp; this results in an additional $\sum\limits_{i=1}^k m_i$ real parameters in the notation above.  In any case, the above parameterization can also be derived from their work.\\

Finally, we note the following parametrization of marked crowned hyperbolic surfaces when we fix the metric residues for crowns with an even number of boundary cusps. The proof follows from that of Lemma \ref{poly1}, and we leave the details to the reader:

\begin{cor}\label{wtdim}  Let $\mathcal{I} \subset \{1,2,\ldots k\}$  be the subset of indices such that $m_i$ is even for $i\in \mathcal{I}$. Fix an ordered tuple of real numbers $A_\mathcal{I} = \{a_\iota\}_{\iota \in \mathcal{I}}$. Let $\mathcal{T}(A_\mathcal{I})$ be the subspace of $\mathcal{T}(S, m_1,m_2,\ldots m_k)$ comprising marked hyperbolic surfaces with the crown end $\CC_\iota$ having metric residue $m_\iota$, for each $\iota \in \mathcal{I}$. Then $\mathcal{T}(A_\mathcal{I})\cong \mathbb{R}^{\chi - \lvert \mathcal{I} \rvert }$ where $\chi = 6g-6 +  \sum\limits_{i=1}^k (m_i + 3)$. 
\end{cor}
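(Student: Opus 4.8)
\textbf{Proof proposal for Corollary \ref{wtdim}.}

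The plan is to mimic the parameter count in the proof of the preceding Lemma, but to track the effect of fixing a metric residue on each crown whose number of boundary cusps is even. First I would recall the decomposition furnished by Definition \ref{chs}: a marked crowned hyperbolic surface is built from a marked bordered hyperbolic surface of genus $g$ with $k$ geodesic boundary curves, contributing $6g-6+3k$ real parameters, together with $k$ crowns glued isometrically along their closed boundaries. By Lemma \ref{poly1}, an \emph{unconstrained} crown with $m_i$ cusps and a boundary twist contributes $m_i$ net parameters to the surface once the length-matching constraint and the twist bookkeeping are accounted for, exactly as in the proof just given; this is the source of the count $\chi = 6g-6+\sum_{i=1}^k(m_i+3)$.

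The next step is the key point: for an index $\iota \in \mathcal{I}$, the crown $\CC_\iota$ has an even number $m_\iota$ of boundary cusps, so its metric residue is a genuine real-valued invariant (Definition \ref{metres} and the lemma establishing well-definedness), and by the second half of the proof of Lemma \ref{poly1} a crown with $m_\iota$ cusps and a \emph{prescribed} metric residue $a_\iota$ is determined by only $m_\iota - 1$ ideal-vertex parameters rather than $m_\iota$: once $m_\iota - 1$ of the ideal vertices of a fundamental domain in the polygonal end are chosen, the remaining ideal vertex is pinned down by the hyperbolic-geometry fact recalled there (fixing two disjoint horodisks and a real number $a$ determines a unique third ideal point realizing the prescribed alternating distance). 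The boundary twist parameter is unaffected by fixing the residue, so each such crown now contributes $m_\iota - 1$ parameters in place of $m_\iota$, i.e. one fewer.

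Summing up, I would conclude that fixing the residues $A_\mathcal{I} = \{a_\iota\}_{\iota\in\mathcal{I}}$ imposes exactly one independent real condition per index in $\mathcal{I}$, and these conditions are transverse to the remaining parameters (they involve only the internal moduli of the respective crowns, which are independent of the bordered-surface moduli and of the other crowns). Hence $\mathcal{T}(A_\mathcal{I})$ is cut out inside $\mathcal{T}(S,m_1,\ldots,m_k)\cong \mathbb{R}^\chi$ by $\lvert \mathcal{I}\rvert$ independent equations, giving $\mathcal{T}(A_\mathcal{I}) \cong \mathbb{R}^{\chi - \lvert \mathcal{I}\rvert}$. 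The only mildly delicate point — and the step I would expect to require the most care — is checking that the residue functions are smooth and submersive (in particular, independent of each other and of the boundary-twist and length parameters) so that the level set really is a manifold of the expected dimension rather than merely a set of the expected "formal" dimension; this follows from the explicit description in Lemma \ref{poly1} of how the last ideal vertex depends real-analytically on the residue, so one gets an explicit homeomorphism onto $\mathbb{R}^{\chi-\lvert\mathcal{I}\rvert}$ rather than needing an implicit-function argument.
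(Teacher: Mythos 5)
Your proposal is correct and follows essentially the same route as the paper, which simply notes that the statement follows from the proof of Lemma \ref{poly1} (each even-cusped crown with prescribed residue loses exactly one ideal-vertex parameter, via the horodisk fact) combined with the crown-plus-bordered-surface parameter count. Your additional remark about the residue condition being realized by an explicit coordinate description, rather than an implicit-function argument, is exactly the detail the paper leaves to the reader.
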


%Throughout this section, fix $n\geq 3$. For an ideal polygon $\PP$  in $(\mathbb{D},\rho)$ with $(n-2)$ vertices at the boundary at infinity,  one can associate the following notions:

\subsection{Harmonic maps between surfaces}

In this section we shall recall some basic facts about harmonic maps with negatively curved targets.

Let $(X, \sigma \lvert dz\rvert^2)$ and  $(Y, \rho \lvert dw\rvert^2)$  be  Riemann surfaces with a conformal metrics. 

Throughout this paper, $\rho$ shall be a hyperbolic metric, that is, has constant negative curvature $-1$.

%Throughout this note, $(X, \sigma)$ shall be a compact hyperbolic surface with geodesic boundary, and $Y= \mathbb{D}$ and $\rho$ shall be the Poincar\'{e} metric. 

\begin{defn}[Harmonic map] \label{hmap}

A {harmonic map} 
\begin{center}
$h: (X, \sigma \lvert dz\rvert^2) \to (Y, \rho \lvert dw\rvert^2)$ 
\end{center}
 is a critical point of the energy functional
\begin{equation*}
\mathcal{E}(f) = \int\limits_X  e(f) dzd\bar{z}
\end{equation*}
which is defined on all maps from $X$ to $Y$ with locally square-integrable derivatives.
Here,
\begin{equation}\label{energy}
e(f) = \lVert \partial h_z\rVert^2 + \lVert \partial h_{\bar{z}}\rVert^2
\end{equation}
is the \textit{energy density} of $f$, where
\begin{center}
$\lVert \partial h\rVert^2  = \rho^2 \lvert h_z\rvert^2$, $\lvert \bar{\partial} h\rvert^2  = \rho^2  \lVert h_{\bar{z}}\rVert^2$
\end{center}

Note that the energy depends only on the conformal class of the metric on the domain $X$; in what follows we shall often drop specifying the choice of such a metric.

The corresponding Euler-Lagrange equation that $h$ satisfies is:
\begin{equation}\label{har}
h_{z\bar{z}} +  (\ln \rho)_w h_z h_{\bar{z}}= 0
\end{equation}
where $z$ and $w$ are the local coordinates on $X$ and $Y$ respectively.

\end{defn}

For compact surfaces, we have the following fundamental existence result:

\begin{thm}[Eells-Sampson, Hartman, Al'ber, Sampson, Schoen-Yau]\label{fund-thm}
Let $(X, \sigma \lvert dw\rvert^2)$ and  $(Y, \rho \lvert dw\rvert^2)$  be compact Riemann surfaces with conformal metrics such that $Y$ is negatively curved. Then there exists a unique  harmonic diffeomorphism $h:X\to Y$ in the homotopy class of any diffeomorphism, that is a minimizer of the energy functional \eqref{energy}.
\end{thm}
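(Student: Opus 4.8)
The statement to prove is Theorem \ref{fund-thm}, the classical existence and uniqueness of harmonic diffeomorphisms between compact surfaces when the target is negatively curved. Let me sketch a proof plan.

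\textbf{Plan of proof.} The strategy has three parts: existence of an energy-minimizing harmonic map in the given homotopy class, upgrading this map to a diffeomorphism, and uniqueness. For existence, I would follow the Eells--Sampson heat-flow approach: start with any smooth map $h_0 : X \to Y$ in the prescribed homotopy class, and consider the harmonic map heat flow $\partial_t h = \tau(h)$, where $\tau(h)$ is the tension field (the left-hand side of the intrinsic version of \eqref{har}). Since $Y$ has non-positive — here strictly negative — curvature, the Bochner--Eells--Sampson formula gives a differential inequality for the energy density $e(h_t)$ of the form $(\partial_t - \Delta) e(h_t) \leq C\, e(h_t)^2$ with the crucial sign: the curvature term of the target enters with a favorable sign and the domain curvature contributes a bounded term. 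Combined with the fact that total energy $\mathcal{E}(h_t)$ is non-increasing along the flow (it is the gradient flow of $\mathcal{E}$), one obtains uniform a priori bounds on $e(h_t)$, hence on all higher derivatives via Schauder estimates, so the flow exists for all time and subconverges as $t \to \infty$ to a harmonic map $h_\infty$ homotopic to $h_0$. Alternatively — and this is closer to the direct-method spirit used later in the paper — one minimizes $\mathcal{E}$ directly over the homotopy class: a minimizing sequence has bounded energy, compactness of $X$ gives no loss of energy, and the limit is harmonic; Hartman's uniqueness then shows the limit does not depend on the minimizing sequence.

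\textbf{Diffeomorphism.} Given an energy-minimizing (hence harmonic) map $h$ homotopic to a diffeomorphism, one shows $h$ is itself a diffeomorphism. The key is the Bochner-type identity, due to Sampson and Schoen--Yau in this setting: writing $\mathcal{H} = \|\partial h\|^2$ and $\mathcal{L} = \|\bar\partial h\|^2$ for the holomorphic and antiholomorphic energy densities, the Jacobian is $\mathcal{H} - \mathcal{L}$, and on the zero set of the Jacobian one derives, using harmonicity and $K_Y < 0$, that $\log(\mathcal{H}/\mathcal{L})$ (where defined) is subharmonic/superharmonic in a way forcing the Jacobian to have a definite sign. More precisely, the Hopf differential $\Phi = \rho^2 h_z \overline{h_{\bar z}}\, dz^2$ is holomorphic, so its zeros are isolated; away from these zeros the map is an immersion, and a degree/winding argument using that $h$ is homotopic to a diffeomorphism (so has degree $\pm 1$) shows $h$ is a local diffeomorphism everywhere, and then a covering-space argument gives that it is a global diffeomorphism. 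This is the step I expect to be the most delicate to write carefully, since it requires the Bochner formula for the Jacobian together with a maximum-principle argument of Sampson's, plus the topological input that a degree-one harmonic map between closed surfaces with negatively curved target is an embedding.

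\textbf{Uniqueness.} This is Hartman's theorem: if $h_0$ and $h_1$ are both harmonic and homotopic, connect them by the geodesic homotopy $h_s$ (using that $Y$ is negatively curved, so geodesics between homotopic points are unique and vary smoothly). The function $s \mapsto \mathcal{E}(h_s)$ is convex — this follows from the second-variation formula and $K_Y < 0$ — and both endpoints are critical points, forcing $\mathcal{E}$ to be constant along the path; strict negative curvature then forces $h_0 = h_1$ (the geodesic homotopy must be stationary, i.e. each point is joined to itself). Uniqueness also immediately implies the minimizing map from the heat flow is independent of choices. The main obstacle overall is the diffeomorphism step; the existence and uniqueness pieces are by now standard applications of the heat-flow machinery and the convexity of energy in non-positive curvature, both of which rely essentially on the compactness of $X$ (to get the a priori energy-density bounds and to integrate by parts freely) — exactly the feature that fails in the non-compact setting the rest of the paper must circumvent.
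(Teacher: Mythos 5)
The paper does not prove this statement at all: Theorem \ref{fund-thm} is quoted as classical background, attributed to Eells--Sampson, Hartman, Al'ber, Sampson and Schoen--Yau, and is used later only as a black box (together with the Hamilton/Schoen--Yau boundary version mentioned in the remark that follows it). Your sketch reproduces exactly the classical route that the attribution points to — heat flow (or direct minimization) for existence, the Sampson/Schoen--Yau Jacobian and degree argument for the diffeomorphism property, and Hartman's convexity of energy along geodesic homotopies for uniqueness — so in approach it matches the intended provenance of the result. Two small imprecisions worth fixing if you were to write it out: with a negatively curved target the Eells--Sampson--Bochner inequality that survives is linear, $(\partial_t-\Delta)e \leq C e$ with $C$ controlled by the domain curvature (the quadratic term is precisely the target-curvature term you drop by sign, so writing $\leq C e^2$ would not give the a priori bound you need); and Hartman's uniqueness in strictly negative curvature allows the exceptional cases of maps onto a point or a closed geodesic, which you must rule out here by noting the map has degree $\pm 1$, hence is surjective. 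Neither affects the correctness of the overall plan.
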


\textit{Remark.} The work of Hamilton and Schoen-Yau also extends this to the case when the surfaces have boundary;  namely if the boundary of $Y$ is convex (alternatively, having non-negative geodesic curvature), there exists a harmonic map in the homotopy class as above, with any prescribed boundary map.  (See Theorem 4.1 of \cite{SchoenYau}, and also pg. 157-8 of \cite{Hamilton}.)

%This is the Euler-Lagrange equation for the critical point of the energy functional 
%\begin{equation*}
%\mathcal{E}(f) = \displaystyle\int\limits_\math
%\end{equation*}
%where the energy density is...\\

%Note that harmonicity  does not depend on conformal class of metric on the domain. In this particle we shall only consider the case when $\sigma$ yields the hyperbolic metric of constant negative curvature, though the arguments also hold for more general  non-positively curved metrics (see \cite{??}).\\

\subsection*{Prescribing the Hopf differential} 
\begin{defn}
The \textit{Hopf differential} of such a map is the quadratic differential given by the local expression
\begin{equation*}
\text{Hopf}(h)(z) = \phi(z) dz^2 :=   \rho(h(z))h_z\overline{h_{{z}}} dz^2 
\end{equation*}

and it is well-known that it is holomorphic if and only if $h$ is harmonic  (see for example, \cite{Sampson}). 
\end{defn}

The existence and uniqueness theorems for harmonic maps with prescribed Hopf differentials have been proven in much more general settings ( see \cite{TamWan}, \cite{HTTW}, \cite{Au-Wan2}). The statement we shall need, are given below:

\begin{thm}[Theorem 3.2 and Proposition 4.6 of \cite{TamWan}]\label{tamwan}  Let $q$ be any holomorphic quadratic differential on $\mathbb{C}$. Then there is a harmonic map  $$h:\mathbb{C} \to (\mathbb{D},\rho)$$ that  is a diffeomorphism to its image, and has Hopf differential $q$.

Moreover, if $h_1$ and $h_2$ are two such orientation-preserving harmonic diffeomorphisms, then $h_2 \circ h_1^{-1}$ is an isometry from $h_1(\mathbb{C})$ to $h_2(\mathbb{C})$. In fact, $h_2 = A\circ h_1$ for an isometry $A:(\mathbb{D},\rho) \to (\mathbb{D},\rho)$.
\end{thm}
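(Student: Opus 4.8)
\emph{This statement is a restatement of Theorem 3.2 and Proposition 4.6 of \cite{TamWan}; here I sketch the argument.}

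\textbf{Reduction to a scalar PDE.} Away from the zeros of $q = \phi\, dz^2$, write the holomorphic and anti-holomorphic energy densities of a prospective harmonic map $h$ as $\mathcal H = \lVert \partial h\rVert^2$ and $\mathcal L = \lVert \bar\partial h\rVert^2$; since $|\phi|^2 = \mathcal H\mathcal L$ pointwise, one may set $\mathcal H = e^{2w}|\phi|$, and the harmonicity equation \eqref{har} together with the Bochner ("curvature") identity for the target curvature $-1$ becomes
\begin{equation*}
\Delta w = 2|\phi|\sinh(2w)
\end{equation*}
on $\mathbb C\setminus\phi^{-1}(0)$, where $\Delta$ is the Euclidean Laplacian. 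Conversely, a positive solution $w$ of this equation that tends to $+\infty$ at each zero of $\phi$ at exactly the rate keeping $\mathcal H = e^{2w}|\phi|$ finite and nonzero reconstructs, by integrating the first-order system for $h_z,h_{\bar z}$, a harmonic map with Hopf differential $q$; its Jacobian $\mathcal H-\mathcal L = 2|\phi|\sinh(2w)$ is then positive, so $h$ is an orientation-preserving immersion whose singular locus and ramification data are pinned down by the zeros of $q$.

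\textbf{Existence.} I would produce $w$ by a sub/supersolution scheme. Near a zero of $\phi$ of order $k$ at $z_0$, the harmonic function $\tfrac k2\log\tfrac1{|z-z_0|}$ plus a large constant is a supersolution carrying the required logarithmic pole; gluing such local supersolutions to a bounded supersolution elsewhere gives a global supersolution on $\mathbb C$, and a matching subsolution $\ge 0$ with the same leading singularities is built similarly (or obtained from the Dirichlet solutions on an exhaustion $D_R\uparrow\mathbb C$, which are monotone in $R$ and dominated by the supersolution). Monotone iteration between these barriers yields a solution $w$ with the prescribed singular behaviour, and the strong maximum principle upgrades $w\ge0$ to $w>0$, so the reconstructed $h$ is an immersion. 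Finally, $h$ is a diffeomorphism onto its image because a locally injective harmonic map of the plane into the nonpositively curved disk is proper onto its image, hence a covering map, and the image being a connected surface into which $\pi_1$ injects forces that covering to be trivial.

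\textbf{Uniqueness up to isometry.} If $h_1,h_2\colon\mathbb C\to(\mathbb D,\rho)$ are orientation-preserving harmonic diffeomorphisms onto their images with the same Hopf differential $q$, write $\mathcal H_i = e^{2w_i}|\phi|$. Subtracting the scalar equations,
\begin{equation*}
\Delta(w_1-w_2) = 2|\phi|\bigl(\sinh(2w_1)-\sinh(2w_2)\bigr),
\end{equation*}
whose right-hand side has the same sign as $w_1-w_2$ since $\sinh$ is increasing. As $w_1-w_2$ is bounded near $\phi^{-1}(0)$ (the singular parts cancel) and, by a priori growth control on the $w_i$, attains no positive supremum "at infinity", the maximum principle forces $w_1\equiv w_2$. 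Then $\mathcal H_1=\mathcal H_2$ and $\mathcal L_1=\mathcal L_2$, so the pullback metrics coincide, $h_1^*\rho = h_2^*\rho$. Hence $A:=h_2\circ h_1^{-1}$ satisfies $A^*\rho=\rho$ on $h_1(\mathbb C)$, i.e.\ $A$ is a local isometry of hyperbolic surfaces; since $h_1(\mathbb C)$ is connected this extends by analytic continuation to an isometry of $(\mathbb D,\rho)$, and $h_2 = A\circ h_1$.

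\textbf{Main obstacle.} In both halves the delicate point is the passage to the non-compact domain $\mathbb C$: for existence one must manufacture global barriers that simultaneously carry the correct logarithmic singularities along $\phi^{-1}(0)$ and the correct growth at infinity so that the exhaustion limit is a genuine, non-degenerate, proper harmonic map; for uniqueness one must legitimately run the maximum principle on $\mathbb C$, which rests on the same kind of growth estimates for $w_1$ and $w_2$.
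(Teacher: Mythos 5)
The paper does not prove this statement at all: Theorem \ref{tamwan} is imported verbatim from Tam--Wan (their Theorem 3.2 and Proposition 4.6) and is used as a black box, so there is no internal proof to compare yours against. Your sketch does follow the correct general strategy of that literature (reduce to the scalar equation $\Delta w = 2|\phi|\sinh 2w$, reconstruct the map from a solution on the simply connected domain, and prove uniqueness of the solution), but as written it has genuine gaps at exactly the points you flag as ``delicate''. For existence, the barrier scheme does not work as described: a global supersolution must dominate the (positive) solution, but your glued barrier is either capped by the bounded piece near the zeros of $\phi$ (taking minima kills the required logarithmic poles, and maxima of supersolutions are not supersolutions), or, if you sum the harmonic log terms, it becomes negative near infinity where it is neither a supersolution nor an upper barrier; moreover nothing in the scheme excludes the degenerate solution $w\equiv 0$ (for zero-free $\phi$, e.g.\ $\phi= dz^2$ or $e^{z}dz^2$, it sits between your barriers and corresponds to a map onto a geodesic, not a diffeomorphism). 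Note also that the $|\phi|$-metric on $\mathbb{C}$ need not be complete for an arbitrary entire $\phi$, and $\mathbb{C}$ carries no complete metric of curvature $\leq -1$, so the usual Ahlfors--Schwarz-type comparison is unavailable; this is precisely the difficulty Tam--Wan's actual argument (exhaustion by disks plus non-degeneracy of the limit) is designed to overcome.

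Two further steps would fail as stated. First, injectivity: ``a locally injective harmonic map of the plane into the nonpositively curved disk is proper onto its image, hence a covering'' is false as a topological claim --- $z\mapsto e^{z}$ followed by a diffeomorphism of $\mathbb{C}$ into $\mathbb{D}$ is a local diffeomorphism that is neither injective nor proper onto its image, and the image of your reconstructed map is not known a priori to be simply connected. Injectivity in the literature comes from a genuine argument (the correspondence with entire spacelike CMC graphs in Minkowski space, or the degree/maximum-principle arguments of Han--Tam--Treibergs--Wan), not from this covering-space shortcut. Second, in the uniqueness half, applying the maximum principle to $w_1-w_2$ on the noncompact plane requires precisely the ``a priori growth control'' you invoke but never supply (e.g.\ boundedness of $w_1-w_2$, so that parabolicity of $\mathbb{C}$ applies); for Hopf differentials whose induced metric is not complete, the decay estimate $0\leq w_i\leq e^{-Cr}$ quoted in \S 2.5 of the paper is not available, so this step needs a substitute argument rather than an appeal to it.
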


In fact, as we shall see in the next subsection, one can deduce more about the image in terms of the quadratic differential $q$.

%Note that if $h$ is an orientation-preserving diffeomorphism, then
%\begin{equation}\label{compl}
%J(h) > 0 \implies \lvert \partial h\rvert >0 \text{ everywhere } \implies e^{2u} > \lvert \phi \rvert \text{ .}
%\end{equation}

\subsection{Geometric estimates}

%Following the work of Wolf, Minsky and others it is possible to say more of the geometry of the harmonic map and its image in the Poincar\'{e} disk, in terms of its Hopf differential $q$, as we shall now describe.  

The estimates on image of the harmonic map are in terms of the metric induced by the Hopf differential defined below. The results in this section have been culled from the work of \cite{Han-Remarks}, \cite{Minsky}, \cite{Wolf2}, \cite{Au-Wan}, \cite{HTTW}, \cite{Au-Tam-Wan}.

\begin{defn}[$q$-metric] 
The metric induced by a quadratic differential $q$ defined on a Riemann surface $X$  (also referred to as the $q$-metric)  is a conformal metric given by the local expression $\lvert q(z) \rvert \lvert dz\rvert^2$, which is singular at the zeroes of the quadratic differential $\phi$.  The holomorphicity of $q$ then implies that the curvature vanishes away from these singularities, and hence the metric is a \textit{singular flat metric}.
\end{defn}

We also have the following notion:

\begin{figure}
  % Requires \usepackage{graphicx}
  \centering
  \includegraphics[scale=0.5]{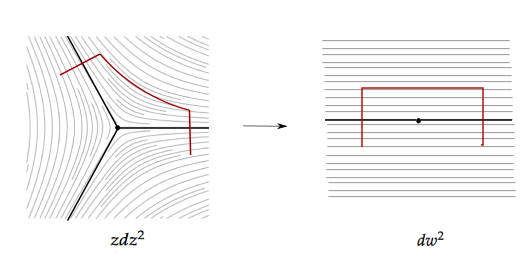}\\
  \caption{The horizontal foliation induced by a quadratic differential is acquired via the canonical chart. Horizontal and vertical segments (shown in red) have lengths in the $q$-metric that are the Euclidean distances in the $w$-plane.}
\end{figure}

\begin{defn}\label{h-v}
In any local chart there is a change of coordinates $z\mapsto w = x+iy$ such that the quadratic differential $\phi(z)dz^2$ transforms to $dw^2$. 
The ``{horizontal}" (\textit{resp.} ``{vertical}") direction is the $x$ -(\textit{resp}. $y$-) direction in these canonical charts. 
More intrinsically, they  can be defined to be the directions in which the quadratic differential takes real and positive  (\textit{resp.} real and negative) values.  
%hus, they make sense \textit{globally} on the surface, and not just on a coordinate chart.
A differentiable arc on $X$ is said to be {horizontal} (\textit{resp.} {vertical}) if its tangent directions are so.  See Figure 3. 
\end{defn}

\textit{Example}. For $d\geq 0$,  the quadratic differential $z^ddz^2$ on $\mathbb{C}$ with a pole of order $d+4$ at infinity, the horizontal and vertical directions are obtained by pulling back the horizontal and vertical lines in $\mathbb{C}$. In particular, each ray $\gamma_j(t) = \{ te^{i2\pi \frac{j}{d+2}} \}$  for $1\leq j \leq d+2$ is  horizontal. By a change of coordinate $z\mapsto 1/z$, we see that these are the \textit{horizontal rays} at the pole at infinity. More generally, the meromorphic quadratic differential of the form \eqref{qform} can be thought of as a perturbation of this; its horizontal leaves are asymptotic to the above directions at the pole.\\

\textit{Geometric interpretation.} A computation shows that the pullback of the metric in the target satisfies: 
\begin{equation}\label{pullb}
h^\ast(\rho(u) \lvert du\rvert^2) =  (e+2) dx^2 + (e -2)dy^2
\end{equation}
where $e$ is the energy density of $h$ (as in Definition \ref{hmap}) with respect to the metric induced by $\phi$. See formula (3.6) in \cite{HTTW}.
Then the horizontal and {vertical directions} (in the $xy$-coordinate chart) are the directions of maximal and minimal stretch of the harmonic map.\\

We shall state the estimates  for a planar domain though the  estimates for a harmonic map  $h:X \to (Y, \rho)$  with Hopf differential $q$ work for any  Riemann surfaces $X$ and $Y$, under the assumptions that 
\begin{itemize}
\item the conformal metric $\rho$ on $Y$ has constant negative curvature $-1$, and 
\item the \textit{singular-flat $q$-metric} induced by the Hopf differential $q$ is \textit{complete}.
\end{itemize}

\textit{Remark.} The first requirement above is relaxed in the work in \cite{HTTW}, to encompass all Cartan-Hadamard spaces.  The second requirement, that is automatic in the case $X= \mathbb{C}$, is needed to apply the method of sub-and-super solution in the proof of the following estimate.

\begin{prop}[Horizontal and vertical segments]\label{prop-est} 
Let $h:\mathbb{C}\to (\mathbb{D},\rho)$ be a harmonic map that is a diffeomorphism to its image, with Hopf differential $q$.

Let $\gamma_{h}$ and $\gamma_{v}$ on $\mathbb{C}$  be horizontal and vertical segments in the $q$-metric , each of length $L$ and a distance $R>0$, in the $q$-metric,  from the singularities of $q$.

  Then their images $h(\gamma_h)$ and $h(\gamma_v)$ have lengths $2L + O\left(e^{-\alpha R}\right)$ and $O\left(L e^{-\alpha R}\right)$ respectively, where $\alpha>0$ is a universal constant. 
  
  Moreover, the image of $\gamma_{h}$ is an arc with geodesic curvature $O\left(e^{-\alpha R}\right)$. In particular, it is a distance $\ep(R)$ away from a geodesic segment, where $\ep(R) \to 0$ as $R\to \infty$.

\end{prop}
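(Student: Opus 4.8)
\textbf{Proof proposal for Proposition \ref{prop-est}.}

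The plan is to reduce everything to a one-dimensional ODE comparison argument along the segments, using the canonical $q$-coordinate $w = x+iy$ in which $q = dw^2$. Introduce the standard ``Beltrami-type'' function $u = \tfrac12 \log \lVert \partial h \rVert^2 - \tfrac12 \log \lvert q \rvert$ (equivalently, write the pullback metric as in \eqref{pullb} and set $e = 2\cosh(2u)$), so that $e+2 = 4\cosh^2 u$ and $e-2 = 4\sinh^2 u$. The harmonic map equation \eqref{har}, rewritten in the $w$-coordinate where $q \equiv 1$, becomes the scalar equation $\Delta u = 2\sinh(2u)$ on the region where $q$ is nonzero, i.e.\ $u$ is a solution of the sinh-Gordon equation. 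The key input from the earlier subsections (and from \cite{Minsky}, \cite{Wolf2}, \cite{HTTW}) is the decay estimate: since the $q$-metric is complete and the target has curvature $-1$, one has $u(p) = O(e^{-\alpha\, d_q(p,\,\mathrm{Sing}(q))})$ for a universal $\alpha > 0$; this is exactly the content of the sub-and-super-solution argument referenced in the remark before the proposition, comparing $u$ against the radial solution $u_0(r)$ of the ODE $u_0'' + \tfrac1r u_0' = 2\sinh(2u_0)$ on a disk of radius $R$ in the flat $q$-metric. I would state this as the starting point.

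Granting the pointwise bound $\lvert u \rvert \le C e^{-\alpha R}$ on any point at $q$-distance $\ge R$ from the singular set, the length computations are then immediate from \eqref{pullb}. For a horizontal segment $\gamma_h$ parametrized by arclength $x \in [0,L]$ in the $q$-metric, its image length is $\int_0^L \sqrt{e+2}\, dx = \int_0^L 2\cosh u\, dx = \int_0^L 2(1 + O(u^2))\, dx = 2L + O(L\, e^{-2\alpha R})$; absorbing constants and noting $L e^{-2\alpha R} = O(e^{-\alpha R})$ when, say, $L$ is comparable to or smaller than $e^{\alpha R}$ — or, more carefully, keeping the bound as $2L + O(Le^{-\alpha R})$ which the statement's $O(e^{-\alpha R})$ should be read as (after adjusting $\alpha$) — gives the horizontal claim. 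For the vertical segment, image length is $\int_0^L \sqrt{e-2}\, dx = \int_0^L 2\lvert\sinh u\rvert\, dx \le \int_0^L 2\lvert u\rvert(1+o(1))\, dx = O(L e^{-\alpha R})$, which is the stated bound. One should also verify that moving along $\gamma_h$ or $\gamma_v$ one stays at $q$-distance $\Omega(R)$ from the singularities, so the pointwise bound applies uniformly along the whole segment — this is where the hypothesis that $\gamma_h,\gamma_v$ are entire segments at distance $R$ from $\mathrm{Sing}(q)$ is used (the segments have length $L$, so a priori points could drift, but in the $q$-metric a segment of any length at distance exactly $R$ stays at distance $R$ from a fixed singular point along the perpendicular, and in general the hypothesis is that the whole segment is at distance $\ge R$).

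For the geodesic curvature statement, I would compute the geodesic curvature of $h(\gamma_h)$ in $(\mathbb{D},\rho)$ directly in terms of $u$ and its first $w$-derivatives. Writing $h$ in the $w$-coordinate, the second fundamental form / geodesic curvature of the image of a horizontal line is governed by $\partial_y u$ and the torsion term $\bar\partial(\text{phase})$; the Bochner-type / Codazzi computation (as in \cite{Minsky} §3 or \cite{HTTW}) shows the geodesic curvature is bounded by a universal constant times $(\lvert u_x \rvert + \lvert u_y \rvert + \lvert u \rvert)$. Gradient estimates for the sinh-Gordon equation — interior elliptic estimates on a $q$-ball of definite size around each point, which upgrade the $C^0$ decay $\lvert u \rvert = O(e^{-\alpha R})$ to $\lvert \nabla u \rvert = O(e^{-\alpha R})$ (possibly shrinking $\alpha$) — then give geodesic curvature $O(e^{-\alpha R})$ along $\gamma_h$. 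Finally, an arc of length $2L + O(e^{-\alpha R})$ with geodesic curvature everywhere $\le \kappa$ in the hyperbolic plane lies within Hausdorff distance $\varepsilon(\kappa, L)$ of a geodesic segment, with $\varepsilon \to 0$ as $\kappa \to 0$ (for bounded $L$; the dependence on $L$ is harmless here, or one invokes that curves of small geodesic curvature are uniformly close to geodesics on compact length scales) — a standard comparison-geometry fact I would cite or sketch via the Jacobi equation.

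The main obstacle, and the step deserving the most care, is the \emph{gradient} decay estimate for $u$: the pointwise decay of $u$ is the quoted input, but upgrading it to decay of $u_x, u_y$ (needed for the geodesic-curvature conclusion) requires running interior Schauder or $W^{2,p}$ estimates for the semilinear equation $\Delta u = 2\sinh(2u)$ on balls of fixed $q$-size centered at points far from $\mathrm{Sing}(q)$, and checking the constants are universal (independent of $R$ and of the particular $q$). This is routine elliptic theory but must be set up correctly; everything else is bookkeeping with \eqref{pullb} and a hyperbolic-geometry lemma on curves of small geodesic curvature.
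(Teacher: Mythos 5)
Your proposal is correct and follows essentially the same route as the paper: pass to the scalar quantity $w_1=u$ solving the sinh-Gordon equation in the $q$-coordinate, invoke the sub-/super-solution decay $|u|=O(e^{-\alpha R})$ and the interior gradient estimate, then read off the lengths from \eqref{pullb} and the geodesic curvature from the HTTW-type formula. Your extra care about the $L$-dependence of the horizontal error term and about the elliptic estimates needed to upgrade $C^0$ decay to gradient decay is, if anything, slightly more precise than the paper's sketch.
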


For the convenience of the reader, we provide a sketch of the proof; for details  we refer to Lemma 2.1 and Corollary 2.2 of \cite{Au-Wan}. A good overview of the methods can be obtained in \cite{Han-Remarks}. 

\begin{proof}[Sketch of the proof]
The basic analysis concerns the following Bochner equation for the harmonic map :
\begin{equation}\label{boch}
\Delta w =   e^{2w} - e^{-2w} \lvert q(z)\rvert^2  
\end{equation}
where $w= \ln \lVert \partial h \rVert$ and  $\Delta$ is the usual Laplacian. (See \cite{SchoenYau}, and the discussion in \S1 of \cite{HTTW}.)

Setting $$w_1(z) = w(z) - \frac{1}{2} \ln \lvert q(z)\rvert$$
and using the Laplacian $\Delta_q$ with respect to the $q$-metric, we obtain
\begin{equation}\label{boch2}
\Delta_q w_1 =   e^{2w_1} - e^{-2w_1}
\end{equation}
The technique of sub- and super-solutions then gives us the following decay estimate on solutions: 
For any $z\in X$ we have
\begin{equation}\label{est-sol}
0 \leq w_1(z)  \leq e^{-Cr(z)}
\end{equation}
where $r(z)$ is the distance from the singularity set  in the $\phi$-metric on $X$, and $C>0$ is an absolute constant. 
%The length of a horizontal arc is given by the integral \cm{add details}

Moreover by a standard gradient estimate we obtain:
\begin{equation}\label{grad}
\lvert \nabla w_1 \rvert  = O(e^{-\alpha R})
\end{equation}
where the gradient is with respect to the $q$-metric.

From a simple calculation, the energy density $$e = 2\cosh (2w_1) \approx 2 \cdot \left( 1 + O(e^{-Cr(z)}) \right)$$  so the length estimates follow by considering \eqref{pullb}, namely
\begin{equation}\label{lhh}
L(h(\gamma_h)) = \displaystyle\int_0^L  \sqrt{e+2} dx  = 2L + O(e^{-\alpha R})
\end{equation}
\begin{equation}\label{lhv}
L(h(\gamma_v)) = \displaystyle\int_0^L  \sqrt{e-2} dy = O(Le^{-\alpha R}) 
\end{equation}
where we have used \eqref{est-sol}.

Moreover, by the formula (3.7) of \cite{HTTW}, the geodesic curvature

\begin{equation}\label{geodk}
\kappa(h(\gamma_h)) = -\frac{1}{2}(e-2)^{1/2}(e+2)^{-1} \frac{\partial e}{\partial y} = O(e^{-\alpha R})
\end{equation}
where the final estimate follows from \eqref{grad}.
\end{proof}

The proofs of Lemmas 3.2-4   of \cite{HTTW} show that:

\begin{prop}[Images of horizontal rays]\label{rays} Let $H = \{z(t)\}_{t\geq 0}$ be a horizontal ray in the $q$-metric on $\mathbb{C}$, and $h:\mathbb{C} \to (\mathbb{D},\rho)$ a harmonic map as before. Then the image of the ray is asymptotic to an ideal boundary point $\theta \in \partial \mathbb{D}$, that is, $\lim\limits_{t\to \infty} h(z(t)) = \theta$. 
Moreover, horizontal rays that are asymptotic to the same direction in $\mathbb{C}$ have images that limit to the same point in the ideal boundary, and different asymptotic directions give rise to distinct ideal points. 
\end{prop}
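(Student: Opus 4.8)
The plan is to analyze the behavior of a horizontal ray under the harmonic map $h$ by combining the length and curvature estimates of Proposition \ref{prop-est} with a coordinate description near the pole at infinity. First I would set up coordinates: by the Example following Definition \ref{h-v}, near the pole of order $d+4$ at infinity the horizontal trajectory structure of $q$ looks like that of $z^d\,dz^2$, whose horizontal rays are asymptotic to the $d+2$ directions $e^{i2\pi j/(d+2)}$. Fix one horizontal ray $H=\{z(t)\}_{t\geq 0}$, parametrized by $q$-arclength, and consider its image curve $c(t)=h(z(t))$ in $(\mathbb{D},\rho)$. The key point is that for $t$ large, $z(t)$ is at $q$-distance $R(t)\to\infty$ from the zeros of $q$, so Proposition \ref{prop-est} applies on each unit horizontal subsegment: the image of a length-$1$ piece of $H$ far out has hyperbolic length $2+O(e^{-\alpha R})$ and geodesic curvature $O(e^{-\alpha R})$. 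Since $\sum_t e^{-\alpha R(t)}<\infty$ (the distance $R(t)$ grows at least linearly in $t$ along the ray, as one checks from the model $z^d\,dz^2$ where the $q$-metric is complete and the ray is a genuine geodesic ray), the total turning of $c$ is finite and $c$ has finite ``defect'' from being a hyperbolic geodesic ray on each tail; a curve of bounded total geodesic curvature that is eventually nearly geodesic must converge to a single ideal point $\theta\in\partial\mathbb{D}$. This gives $\lim_{t\to\infty}h(z(t))=\theta$.

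Next I would prove that two horizontal rays asymptotic to the same direction in $\mathbb{C}$ have images limiting to the same $\theta$. Take two such rays $H_1,H_2$; because they are asymptotic in $\mathbb{C}$, for large parameter one can join a point of $H_1$ to a point of $H_2$ by a vertical segment $\gamma_v$ of bounded $q$-length $L$ lying at $q$-distance $R\to\infty$ from the zeros (this uses the product-like structure of the horizontal/vertical foliation in the canonical chart near the pole). By Proposition \ref{prop-est}, $h(\gamma_v)$ has hyperbolic length $O(Le^{-\alpha R})\to 0$, so the two image rays stay within a hyperbolic distance tending to $0$ of each other and hence accumulate at the same boundary point. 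Conversely, for rays asymptotic to \emph{distinct} directions, I would argue that their images limit to \emph{distinct} ideal points: two distinct asymptotic directions at the pole are separated by a ``sector'' containing a third direction, and one can find a horizontal ray $H_3$ in between whose image, by the above, exits at some $\theta_3$; then the union of the image rays together with a transversal arc bounds a region, and the image of the sector between $H_1$ and $H_2$ is a region whose boundary-at-infinity must contain a nondegenerate arc — more concretely, the energy-density formula \eqref{pullb} shows the horizontal direction is the direction of maximal stretch with stretch factor $\to\sqrt{2}\cdot(\text{something bounded below})$, so the image of a long horizontal arc crossing the sector has length growing linearly and cannot degenerate to a point, forcing $\theta_1\neq\theta_2$. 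This is essentially the content extracted from Lemmas 3.2--3.4 of \cite{HTTW}, which I would cite for the detailed bookkeeping.

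I expect the main obstacle to be the convergence argument at infinity, i.e.\ upgrading ``bounded total geodesic curvature plus eventually-nearly-geodesic'' into genuine convergence to a single ideal point with the right separation properties. The subtlety is that Proposition \ref{prop-est} is stated for segments at $q$-distance $R$ from the singularities, and along a horizontal ray near the pole $R$ grows, so one must check that the error terms are summable (not just individually small) and that the accumulated small errors do not allow the image curve to spiral or to oscillate between two boundary points. I would handle this by a telescoping/Gronwall-type estimate: write the image curve as a concatenation of unit pieces, each within $\ep_n=O(e^{-\alpha R_n})$ of a hyperbolic geodesic segment, and show the endpoints of these geodesic segments form a Cauchy sequence in the compactification $\overline{\mathbb{D}}$ because consecutive geodesic directions differ by $O(\ep_n)$ and $\sum\ep_n<\infty$. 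The completeness of the $q$-metric on $\mathbb{C}$ (automatic here) guarantees $R_n\to\infty$ at least linearly, which is what makes the series converge. The separation of distinct directions then follows from the maximal-stretch property in \eqref{pullb} as sketched above.
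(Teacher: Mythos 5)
Your treatment of the first two assertions matches the paper's route: convergence of the image of a horizontal ray to a single ideal point comes from the geodesic curvature estimate \eqref{geodk} plus a hyperbolic-geometry fact (this is Lemma 3.2 of \cite{HTTW}), and rays asymptotic to the same direction are handled exactly as you do, by joining them with vertical segments of bounded $q$-length far from the zeroes and invoking \eqref{lhv}. The gap is in the third assertion, that distinct asymptotic directions give distinct ideal points. Your mechanism --- the horizontal direction is stretched by a factor near $2$ by \eqref{pullb}, so the image of a long horizontal arc crossing the sector has length growing linearly and ``cannot degenerate to a point'' --- does not yield the conclusion. In $(\mathbb{D},\rho)$ a curve of arbitrarily large length, even one that is pointwise nearly geodesic along its far ends, can have both ends accumulating at the \emph{same} ideal point if the curve as a whole drifts toward the boundary; hyperbolic length alone never separates ideal endpoints. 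Similarly, the claim that the image of the sector ``must'' have a nondegenerate arc at infinity is precisely what has to be proved, not something you may assume: a priori the entire image region could be asymptotic to a single boundary point.

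What is missing is a way to prevent the images of the horizontal leaves separating the two rays from escaping to infinity. The paper's proof takes a sequence of horizontal leaves, asymptotic to the two directions, at increasing vertical height; by Proposition \ref{prop-est} their images converge to a bi-infinite curve of vanishing geodesic curvature, hence to a geodesic $\gamma$, and the crux is to show that $\gamma$ remains a \emph{finite} distance from a fixed basepoint. That finiteness comes from Lemma 1.1 of \cite{Au-Tam-Wan}: the image of the (infinitely long) vertical segment from the basepoint out to those leaves has finite hyperbolic length, because the integrand $\sqrt{e-2}$ decays exponentially in the $q$-distance from the zeroes, so the integral along the vertical ray converges. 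Once $\gamma$ is at bounded distance from the basepoint it is a genuine geodesic with two distinct ideal endpoints, and these are $\theta_1$ and $\theta_2$. This finite-vertical-length estimate is a separate analytic input, not the ``detailed bookkeeping'' of your stretch computation, so as written your argument for distinctness does not close; the rest of the proposal is sound.
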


\begin{proof}[Sketch of the proof]
The first statement follows from the estimate of geodesic curvature \eqref{geodk}, and a hyperbolic geometry fact, as in Lemma 3.2 of \cite{HTTW}.
If they are asymptotic to the same direction in $\mathbb{C}$, then the vertical distance between them is uniformly bounded, and hence by \eqref{lhv} the $\rho$-distance between their images tends to zero.
For a pair of distinct asymptotic directions in $\mathbb{C}$, pick a sequence of horizontal leaves asymptotic to these directions, with increasing vertical height from a basepoint. 
By Proposition \ref{prop-est} the images of these limit to a bi-infinite geodesic line $\gamma$. 
To show that the limit points are distinct, it suffices to show that  $\gamma$ is a finite distance away from the basepoint, which in turn follows from Lemma 1.1 of \cite{Au-Tam-Wan} which implies that the image of the vertical segments (from the basepoint to the horizontal leaves) have finite length.
\end{proof}

%\textit{Remark.} The notion of a ``{horizontal}" (\textit{resp.} ``{vertical}") direction can be defined to be the directions in which the quadratic differential takes real and positive  (\textit{resp.} real and negative) values. Thus, they make sense \textit{globally} on the surface, and not just on a coordinate chart.

%We shall use Proposition \ref{est}, and the technique of the proof above, in \S4, for the case when the domain $X$ is a punctured Riemann surface and $Y$ is a crowned hyperbolic surface. 

%A  consequence that shall be useful later is:

\subsection*{Example: Polynomial quadratic differentials}
This  was studied in \cite{Wan} (see also \cite{Au-Wan}), and in \cite{HTTW}, where they showed:

\begin{thm}[Han-Tam-Treibergs-Wan]\label{image}
Let $n\geq 5$ and let 
\begin{equation}\label{polydiff}
 (z^d + a_{d-2}z^{d-2} + \cdots + a_1z + a_0) dz^2
 \end{equation}
  be a (polynomial) quadratic differential on $\mathbb{C}$, where the degree $d = n-4$ and coefficients $a_i \in \mathbb{C}$ for $0\leq i\leq d-2$. 
%Let $n\geq 4$ and let $q\in \mathcal{PQ}(n-4)$. 
Then there exists a harmonic map 
\begin{equation*}
h:\mathbb{C}\to (\mathbb{D}, \rho)
\end{equation*}
 which is a diffeomorphism to an ideal polygon with $(n -2)$ vertices in the boundary at infinity, and whose Hopf differential is $q$. Moreover, the harmonic map is unique if three of the ideal vertices are prescribed to be $\pm 1, i$.
 \end{thm}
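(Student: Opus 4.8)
\textbf{Proof proposal for Theorem \ref{image}.}

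The plan is to establish existence by combining the general existence result for harmonic maps with arbitrary holomorphic Hopf differential on $\mathbb{C}$ (Theorem \ref{tamwan}) with the geometric estimates of \S2.5 to identify the image precisely. First I would invoke Theorem \ref{tamwan} to obtain a harmonic map $h:\mathbb{C}\to(\mathbb{D},\rho)$, a diffeomorphism onto its image, whose Hopf differential is the prescribed polynomial quadratic differential $q$ in \eqref{polydiff}. Since $q$ is a polynomial of degree $d=n-4$, the induced $q$-metric on $\mathbb{C}$ is complete (the metric $|q|\,|dz|^2$ has a pole of order $d+4=n$ at infinity), so the estimates of Proposition \ref{prop-est} and Proposition \ref{rays} all apply. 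The horizontal foliation of $q$ near infinity is, as in the Example following Definition \ref{h-v}, asymptotic to the $d+2 = n-2$ rays in the directions $e^{2\pi i j/(n-2)}$; equivalently, the complement of a large $q$-disk is divided into $n-2$ horizontal half-plane-like sectors separated by vertical trajectories.

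The core of the argument is to show the image $h(\mathbb{C})$ is an ideal polygon with exactly $n-2$ vertices. By Proposition \ref{rays}, each family of horizontal rays asymptotic to one of the $n-2$ asymptotic directions maps to rays limiting to a single ideal point $\theta_j\in\partial\mathbb{D}$, and distinct asymptotic directions give distinct $\theta_j$; this produces $n-2$ distinct boundary points. Next I would argue that consecutive points $\theta_j,\theta_{j+1}$ are joined in $\partial h(\mathbb{C})$ by a complete geodesic: take the vertical trajectory $\ell$ separating the $j$-th and $(j+1)$-th sectors, exhaust it by vertical segments of length $L\to\infty$ sitting at $q$-distance $R\to\infty$ from the zeros of $q$, and apply Proposition \ref{prop-est} — the images are arcs of geodesic curvature $O(e^{-\alpha R})\to 0$, hence converge to a bi-infinite geodesic; Lemma 1.1 of \cite{Au-Tam-Wan} (finite length of the transverse vertical segments, as used in the proof of Proposition \ref{rays}) shows this limiting geodesic stays a bounded distance from a basepoint, so its endpoints are precisely $\theta_j$ and $\theta_{j+1}$. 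Since $h$ is a diffeomorphism onto its image and $\mathbb{C}$ is exhausted by the central region together with the $n-2$ sectors, the boundary $\partial h(\mathbb{C})$ consists exactly of these $n-2$ geodesic sides with ideal vertices $\theta_1,\dots,\theta_{n-2}$, i.e.\ $h(\mathbb{C})$ is an ideal $(n-2)$-gon.

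For uniqueness up to the normalization $\pm 1, i$: given two such harmonic diffeomorphisms $h_1,h_2$ with the same Hopf differential $q$, Theorem \ref{tamwan} gives $h_2 = A\circ h_1$ for an isometry $A$ of $(\mathbb{D},\rho)$. Then $A$ carries the ideal polygon $h_1(\mathbb{C})$ to $h_2(\mathbb{C})$, hence permutes their ideal vertices; prescribing three of the vertices to be $\pm 1, i$ for both maps forces $A$ to fix three distinct boundary points, so $A=\mathrm{id}$ and $h_1=h_2$. (One should check the orientation/cyclic-order bookkeeping so that "three prescribed vertices" is a consistent normalization — the asymptotic directions come in a fixed cyclic order, so a Möbius transformation matching three of them in order is unique.)

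The main obstacle I anticipate is the middle step: ruling out any ``extra'' boundary behavior and showing the image boundary is \emph{exactly} the union of these $n-2$ geodesics with no additional ideal points or non-geodesic pieces — in other words, that the central compact part of $\mathbb{C}$ contributes nothing to $\partial h(\mathbb{C})$ and that the exhaustion of $h(\mathbb{C})$ by the images of the sectors and the geodesics is genuinely onto a convex ideal polygon. This requires a careful Gauss–Bonnet or convexity argument controlling the image of large $q$-circles (length $\sim 2\cdot(\text{perimeter})$ concentrated along the $n-2$ horizontal sectors, with exponentially small contributions from the $n-2$ vertical separatrices), and matching it against the total turning of an ideal $(n-2)$-gon. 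This is essentially the content of Lemmas 3.2–3.4 of \cite{HTTW}, which I would cite rather than reprove in detail.
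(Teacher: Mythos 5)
Your overall architecture --- existence of a harmonic diffeomorphism with prescribed Hopf differential from Theorem \ref{tamwan}, identification of the image via the estimates of \S 2.5, the delicate boundary-identification step delegated to Lemmas 3.2--3.4 of \cite{HTTW}, and uniqueness by observing that two maps with the same Hopf differential differ by an isometry that must fix the three prescribed ideal points --- is the same route as the source the paper itself relies on (the paper states this theorem as a quotation of \cite{HTTW} and does not reprove it, but its Propositions \ref{prop-est}, \ref{rays} and \ref{poly-ex} sketch exactly this mechanism). The uniqueness step is fine provided the normalization is read as fixing the vertices attached to three specified asymptotic horizontal directions, so that the isometry $A$ with $h_2=A\circ h_1$ fixes those three boundary points rather than merely permuting the vertex set; this is the bookkeeping point you flag yourself.

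However, your middle step is wrong as written, in a concrete way. The geodesic sides of the ideal polygon do not come from vertical trajectories: Proposition \ref{prop-est} gives the geodesic-curvature decay $O(e^{-\alpha R})$ only for images of \emph{horizontal} segments, whereas a vertical segment of length $L$ at distance $R$ from the zeros has image of length $O(Le^{-\alpha R})$ --- it is exponentially contracted, with no curvature control, and (as in Proposition \ref{poly-ex}) its image is an almost-horocyclic arc deep inside a cusp, not a long geodesic arc. Moreover, the $n-2$ half-plane-like sectors at the pole are separated by the \emph{horizontal} separatrix rays $te^{2\pi i j/(n-2)}$, not by vertical trajectories, and those separatrices are precisely the curves whose images limit to the vertices $\theta_j$ (Proposition \ref{rays}). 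The side joining $\theta_j$ and $\theta_{j+1}$ arises instead as the limit of the images of horizontal leaves lying deeper and deeper in a single sector, each such leaf being asymptotic to the two adjacent directions; this is the mechanism in the paper's sketch of Proposition \ref{rays}, and in Proposition \ref{poly-ex}, where the long horizontal sides of $\partial P_j$ map close to the geodesic sides while the vertical sides collapse into the cusps. So the explicit argument you give for the sides would fail; since you cite Lemmas 3.2--3.4 of \cite{HTTW} for the image identification in any case, the repair is simply to replace that paragraph with the horizontal-leaf limiting argument.
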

 
 \textit{Example.} The quadratic differential $(z^2 +a)dz^2$ for $a\in \mathbb{C}$,  has a pole of order $6$ at $\infty$, with residue $\sqrt{a}$ and the image of the harmonic map $h$ above is an ideal quadrilateral with a cross ratio, and metric residue,  determined by the imaginary part of $a$. See Figure 4, and  \cite{Au-Wan} for details.\\

In fact, as a consequence of the geometric estimates of \S2.5, we have the following picture.

We first define:

\begin{defn}[Polygonal exhaustion]\label{polyex} Given a meromorphic quadratic differential $q$  with a pole of order $n\geq 3$, 
a \textit{polygonal exhaustion} at the pole is a nested sequence of annular regions
$$P_1 \subset P_2 \subset \cdots P_j \subset \cdots$$

such that their union is a neighborhood of the pole, and each boundary $\partial P_j$ is an $(n-2)$-sided polygon with sides that are alternately horizontal and vertical segments.   (\textit{cf.} Figure 3.) 

\end{defn}

We first note the following link between the geometry of the $q$-metric induced by the quadratic differential, and the quadratic analytical residue at the pole  (see Definition \ref{ares}):

\begin{lem}[Residue and $q$-metric]\label{qres}  Let $q$ be a  meromorphic quadratic differential with a pole of even order $n\geq 4$, and let $\{P_j\}_{j\geq 1}$ be a polygonal exhaustion, as in the preceding definition, of a neighborhood $U$ of the pole.

Then for any $j\geq 1$ the alternating sum of the lengths of the horizontal sides of  $\partial P_j$  equals the real part of the (quadratic) residue $\alpha$  at the pole (up to sign).  That is, we have
\begin{equation}\label{rea}
\Re(\alpha) = \pm \sum\limits_{i=1}^{(n-2)/2} (-1)^i l_i.
\end{equation}
\end{lem}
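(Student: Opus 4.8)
\textbf{Plan for the proof of Lemma \ref{qres}.}

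The plan is to compute the residue $\alpha$ of $\sqrt{q}$ as a contour integral along a loop around the pole, and deform that loop to the boundary polygon $\partial P_j$. First I would fix a local coordinate $z$ near the pole, so that $q$ has the form \eqref{qform} with $n$ even, and choose a branch of $\sqrt{q}$ away from the pole; writing $\sqrt{q} = \psi(z)\,dz$ locally, the residue is $\alpha = \frac{1}{2\pi i}\oint_\gamma \psi(z)\,dz$ for a small loop $\gamma$. Since $\psi$ is holomorphic on the punctured disk, the integral is unchanged if I replace $\gamma$ by the boundary $\partial P_j$ of the $j$-th region in the polygonal exhaustion, oriented appropriately. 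The key point is that the integrand $\sqrt{q}$ is \emph{exact} in the natural flat coordinate: along each edge of $\partial P_j$, passing to a canonical coordinate $w$ with $q = dw^2$ (locally, on a simply-connected neighborhood of the edge), we have $\sqrt{q} = \pm\,dw$, so $\int_{\text{edge}} \sqrt{q} = \pm\,\Delta w$, the signed change of the flat coordinate along that edge.

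Next I would analyze the real and imaginary parts separately. By Definition \ref{polyex}, the edges of $\partial P_j$ are alternately horizontal and vertical segments. On a horizontal edge, $w$ changes by a real amount equal (up to sign) to the $q$-length $l_i$ of that edge, and $w$ has constant imaginary part; on a vertical edge, $w$ changes by a purely imaginary amount equal to $\pm i$ times the $q$-length of that edge. Therefore, summing $\int_{\partial P_j}\sqrt{q}$ edge by edge, the \emph{real} part of $2\pi i\,\alpha$ — equivalently, after accounting for the factor of $i$, suitably interpreted — picks up exactly the contributions of the horizontal edges, giving an alternating sum $\sum_i (-1)^i l_i$ of their $q$-lengths. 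The alternating signs arise because consecutive horizontal edges are traversed in ``opposite'' horizontal directions: going around the polygon, the branch of $\sqrt{q}$ together with the orientation of successive horizontal sides forces the sign of $\Delta w$ to alternate. (Concretely, one can see this from the model $q \approx (\alpha_r/z^r + \cdots)^2\,dz^2$ of Lemma \ref{princ-lem}, whose canonical coordinate near the pole behaves like $w \sim -\alpha_r/((r-1)z^{r-1}) + \cdots$, so that the $n-2 = 2(r-1)$ horizontal directions at the pole are the $2(r-1)$-th roots giving alternating real parts.) This yields \eqref{rea}.

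I should also check consistency with Lemma \ref{princ-lem}'s formal expansion $\sqrt{q} = z^{-\ep}(\alpha_r z^{-r} + \cdots + \alpha_1 z^{-1})\,dz + (\text{holomorphic})$: with $n$ even, $\ep = 0$, and the residue (coefficient of $dz/z$) is precisely $\alpha_1$, so $\alpha = \alpha_1$ in the notation there, matching the ``$\alpha$'' in the statement. The independence of $j$ is immediate since $\sqrt{q}$ is holomorphic on the punctured neighborhood and all the $\partial P_j$ are homologous there; this also explains why only the real part is pinned down by horizontal side-lengths, with the imaginary part of $\alpha$ similarly computed from the vertical sides (which is the content one would use to relate $\Im(\alpha)$ to the metric residue of the crown via Definition \ref{metres}).

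\textbf{Main obstacle.} The routine parts are the contour deformation and the length bookkeeping; the genuinely delicate step is pinning down the \emph{signs} — establishing rigorously that the branch of $\sqrt{q}$, transported continuously around $\partial P_j$, assigns alternating signs $(-1)^i$ to the horizontal edges (and that the vertical edges contribute nothing to the real part). This requires tracking the monodromy of $\sqrt{q}$: since $n$ is even, $\sqrt{q}$ is single-valued on the punctured disk, but one must still verify that consecutive horizontal sides of the polygon correspond to the flat coordinate $w$ moving in opposite real directions. The cleanest way to handle this is to work in the canonical coordinate of the model differential and count directions of horizontal separatrices at a pole of even order, as in the Example following Definition \ref{h-v}; the ``$\pm$'' in \eqref{rea} then simply absorbs the overall choice of branch of $\sqrt{q}$ and the orientation of $\partial P_j$.
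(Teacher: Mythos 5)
Your approach is essentially the paper's: write the quadratic residue as the contour integral of $\sqrt q$ over a loop homotopic to the puncture, deform that loop to $\partial P_j$, pass on each edge to the canonical flat coordinate in which $\sqrt q = \pm\,dw$, and observe that horizontal edges contribute real increments with alternating sign (consecutive horizontal sides being traversed in opposite real directions) while vertical edges contribute purely imaginary increments. The one point you must fix is the normalization: you set $\alpha = \frac{1}{2\pi i}\oint_\gamma \sqrt q$, whereas the paper's Definition \ref{ares} takes the residue to be $\pm\int_\gamma \sqrt q$ itself, with no factor of $\frac{1}{2\pi i}$. With your normalization, if $\int_{\partial P_j}\sqrt q = A+iB$ with $A$ the alternating sum of horizontal lengths, then $\Re(\alpha) = B/2\pi$ and $\Im(\alpha) = -A/2\pi$, so your bookkeeping would identify the horizontal side-lengths with a multiple of $\Im(\alpha)$, not $\Re(\alpha)$, and the phrase ``after accounting for the factor of $i$, suitably interpreted'' does not resolve this; the remedy is simply to drop the $\frac{1}{2\pi i}$ and use the paper's definition, after which your edge-by-edge argument gives \eqref{rea} exactly as in the paper's proof. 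The same caveat applies to your side remark identifying $\alpha$ with the coefficient $\alpha_1$ of $dz/z$, and to your closing comment that the vertical sides compute the quantity relevant to the metric residue: in the paper it is $\Re(\alpha)$, computed from the horizontal sides, that enters Proposition \ref{poly-ex} and Definition \ref{compat}.
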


%Let $q \in \text{QD}(n)$, that is,  it is  a meromorphic quadratic differential on $\mathbb{D}^\ast$ with a pole of order $n\geq 3$ at the origin, see \eqref{qform}. 

%Suppose there exists  $\hat{h}:\mathbb{D}^\ast \to \CC$ be a harmonic map with Hopf differential $q$.
%where $\CC$ is a hyperbolic crown. 

%Lifting to the universal cover, via the covering
%$$\pi:\mathbb{H}^2 \to \mathbb{D}^\ast$$
%defined by
 %$\pi(w) = e^{2\pi i w}$, we obtain a $\mathbb{Z}$-equivariant harmonic map
%$$\tilde{h}:\mathbb{H}^2  \o 

\begin{proof}

The following argument  culled from the proof of Theorem 2.6 in \cite{Gup2}.

Recall from Defn \ref{ares} that the quadratic residue
\begin{equation}\label{asum}
\alpha =  \pm \displaystyle \int\limits_{\partial P_j} \sqrt q 
\end{equation}

since $\partial P_j$ is  a curve homotopic to the puncture. 

By a change of coordinates $z\mapsto \zeta$ that takes the quadratic differential to $d\zeta^2$, the integral over three successive edges (horizontal-vertical-horizontal) of $\partial P_j$ equals integrating the form $d\zeta$ on the $\zeta$-plane over a horizontal  edge that goes from the right to left on the upper half-plane followed by one over a vertical edge followed by one over a horizontal edge that goes from left to right in the lower half-plane. The integral in \eqref{asum} over the horizontal sides picks up the horizontal lengths that contributes to the real part of the integral, while the integral over the vertical side contributes to the imaginary part of the integral. 

Note that the sign switches  over the two successive horizontal edges, and hence we obtain \eqref{rea}. 
\end{proof}

\bigskip

\begin{figure}
  % Requires \usepackage{graphicx}
  \centering
  \includegraphics[scale=0.5]{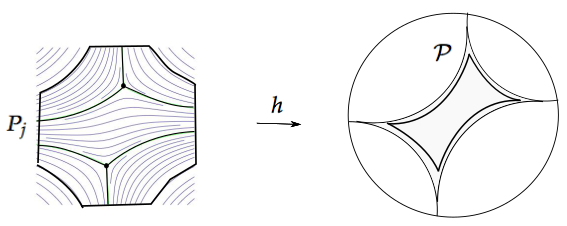}\\
  \caption{ The boundary of a polygonal region $P_j$ (left) comprises alternate horizontal and vertical segments. This maps close to a truncation of the ideal polygon $\PP$. See Proposition \ref{poly-ex}.}
\end{figure}

For the following Proposition, note that the \textit{metric residue} of an ideal polygon $\PP$ with an even number of sides can be defined like that in the case of a hyperbolic crown (Defn \ref{metres}): namely, it is the real number one obtains by taking an alternating sum of the hyperbolic lengths of the geodesic segments obtained by truncating $\PP$. (For an ideal polygon with an odd number of sides, the metric residue is defined to be zero.)

\begin{prop}[Asymptotic image]\label{poly-ex}
Let $q$ be a polynomial Hopf differential of a harmonic map $\hat{h}:\mathbb{C} \to \mathbb{D}$.
Let $\PP$ be the ideal polygon with $(n-2)$ ideal vertices that is the image of $h$ (see Theorem \ref{image}).

Construct a polygonal exhaustion $\{P_j\}_{j\geq 1}$ at the pole at infinity (see Definition \ref{polyex}) where each boundary $\partial P_j$ has horizontal and vertical sides of length $L_j \pm O(1)$  in the $q$-metric such that $L_j \to \infty$.

Then the image $h(P_j)$ is  $\ep$-close to the boundary of a truncation of $\PP$, where $\ep\to 0$ as $j \to \infty$.

Moreover, the metric residue of $\PP$ equals the twice the real part of the analytical  residue of $q$ at the pole at infinity. 
\end{prop}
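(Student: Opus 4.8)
\textbf{Proof proposal for Proposition \ref{poly-ex}.}

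The plan is to combine the length and geodesic-curvature estimates of Proposition \ref{prop-est} (applied on $\mathbb{C}$ with the complete $q$-metric) with the identification of ideal endpoints from Proposition \ref{rays}, and then to match the two notions of residue via Lemma \ref{qres}. First I would observe that the boundary $\partial P_j$ consists of $(n-2)$ edges, alternately horizontal and vertical, each of $q$-length $L_j \pm O(1)$, and that each such edge lies at $q$-distance $R_j \to \infty$ from the finite zeros of $q$ (here one uses that $P_j$ is an exhausting sequence so the inner radius grows). Applying Proposition \ref{prop-est} edge by edge: the image of each horizontal edge has length $2L_j + O(e^{-\alpha R_j})$ and geodesic curvature $O(e^{-\alpha R_j})$, hence is $\epsilon_j$-close to a geodesic segment with $\epsilon_j \to 0$; the image of each vertical edge has length $O(L_j e^{-\alpha R_j}) \to 0$. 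Thus $h(\partial P_j)$ is, up to error $\epsilon_j \to 0$, a hyperbolic geodesic polygon whose "long" sides come from the horizontal edges and whose "short" sides (degenerating to points) come from the vertical edges.

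Next I would identify the limiting configuration with a truncation of $\mathcal{P}$. Each vertical edge of $\partial P_j$ lies between two consecutive horizontal leaves that, as $j \to \infty$, run off to two of the asymptotic directions of $q$ at infinity; by Proposition \ref{rays} the images of those leaves converge to two distinct ideal vertices of $\mathcal{P}$, and the image of the vertical edge itself, having length $\to 0$, collapses toward a point lying on (or converging to) the geodesic arc joining those two ideal vertices. Meanwhile the image of a horizontal edge is a near-geodesic arc whose two ends approach the feet of the two adjacent collapsing vertical edges. Assembling these, $h(\partial P_j)$ limits to the boundary of a truncation of $\mathcal{P}$ in the sense of Definition \ref{trunc}: a convex region obtained by cutting each ideal vertex along a short arc. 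This gives the first assertion, that $h(P_j)$ is $\epsilon$-close to such a truncation boundary with $\epsilon \to 0$.

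For the residue statement I would compute the metric residue of $\mathcal{P}$ directly from this approximation. The metric residue of $\mathcal{P}$ is the alternating sum of the hyperbolic lengths of the geodesic truncation sides; by the previous paragraph these sides are $\epsilon_j$-approximated by the images of the horizontal edges of $\partial P_j$, whose hyperbolic lengths are $2L_i + O(e^{-\alpha R_j})$ where $L_i$ is the $q$-length of the $i$-th horizontal edge. Hence the alternating sum of the geodesic-side lengths equals $2\sum_i (-1)^i L_i + O(\epsilon_j)$; letting $j \to \infty$ (and noting that the left side, the metric residue of $\mathcal{P}$, is independent of $j$ by the well-definedness lemma following Definition \ref{metres}, while the $O(\epsilon_j)$ error vanishes) we get that the metric residue of $\mathcal{P}$ equals $2\sum_i (-1)^i L_i$, which by Lemma \ref{qres} is exactly $2\Re(\alpha)$, twice the real part of the analytic residue of $q$ at infinity.

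\textbf{Main obstacle.} The delicate point is making rigorous the claim that the images of the horizontal edges converge to the \emph{truncation} sides rather than merely to some geodesic segments — i.e.\ controlling the behavior at the corners where a horizontal and a vertical edge of $\partial P_j$ meet, and showing the collapsing vertical-edge images actually sit on the correct geodesic joining the two limiting ideal vertices. This requires pairing the uniform geodesic-curvature bound \eqref{geodk} with the vertical-length estimate \eqref{lhv} to pin down not just the lengths but the \emph{positions} of the near-geodesic arcs, and invoking Proposition \ref{rays} to guarantee that consecutive horizontal edges do head to the correct pair of distinct ideal vertices. Once the endpoints are controlled, the convergence of the whole polygonal image to a genuine truncation (including convexity, which holds in the limit because geodesic curvature $\to 0$) follows, and the residue computation is then a bookkeeping exercise.
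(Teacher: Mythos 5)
Your proposal is correct and follows essentially the same route as the paper: Proposition \ref{prop-est} shows the images of the horizontal sides of $\partial P_j$ are near-geodesic arcs of hyperbolic length twice their $q$-length while the vertical sides contract, so $h(\partial P_j)$ approximates the boundary of a truncation of $\PP$, and Lemma \ref{qres} identifies the alternating sum of the horizontal $q$-lengths with the real part of the residue, giving the factor of two in the limit. One small correction to your local picture at the corners: the two ends of the horizontal leaves adjacent to a given vertical edge are asymptotic to the \emph{same} separatrix direction, so the image of that vertical edge is a short arc crossing the cusp at the single shared ideal vertex (the paper's ``almost-horocyclic arc in the cusp''), not a point on, or converging to, the geodesic side joining two distinct vertices --- and it is precisely this cusp-crossing behaviour that makes the limiting curve a truncation boundary in the sense of Definition \ref{trunc}.
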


(Throughout the paper, ``$O(1)$" shall denote a quantity bounded by a universal constant.) 

\begin{proof}
As the distance from the zeroes of $q$ increases, the horizontal sides of the polygonal boundaries are mapped closer to the geodesic sides of $\PP$, and the vertical sides contract to almost-horocyclic arcs in the cusps, by  Proposition \ref{prop-est}.

By the distance estimates (Prop. \ref{prop-est}) hold, and the hyperbolic length of the image of any horizontal side differs from twice its length in the $q$-metric by $\ep>0$ , where the error term $\ep$  depends on the distance of $\partial P_j$ from the zeroes of $q$, and hence can be made arbitrarily small by choosing $j$ sufficiently large.

Hence the metric residue $a$ of the ideal polygon (\textit{cf.} Defn.\ref{metres} and the discussion immediately preceding this proposition) differs from twice the alternating sum of the $q$-lengths  of the horizontal sides by $O(\ep)$.
   
However by Lemma \ref{qres}, this alternating sum of the lengths of the horizontal sides of $\partial P_j$ equals the real part of the quadratic residue at the pole.  Letting $\ep \to 0$, we have that this equals $a/2$, as required. 
 \end{proof}

%\begin{cor} \label{pole-image} Let $h:X \setminus p \to \mathbb{H}^2$ be a harmonic map that has $n-2\geq 1$ directions  of unbounded growth in a neighborhood $U \cong \mathbb{D}$ around $p$. Then the Hopf differential of $h$ has a pole of order $n$ at $p$.
%\end{cor}

%\begin{proof}
%There is a pole of order at least $3$, otherwise the image of the map is bounded. \cm{proof?}\\

%For a pole of order $n\geq 3$, there is a collection of $(n-2)$ sectors in the induced singular-flat metric, each of which is isometric to a half-plane.There is an exhaustion by polygonal regions with boundary comprising alternate horizontal and vertical segments. By the above estimates, we have that the image will have $(n-2)$ bi-infinite geodesics that have $(n-2)$ cusps inbetween. \cm{write better} 
%\end{proof}

\textit{Remark.} Such a picture also holds for harmonic maps to hyperbolic crowns $h:\mathbb{D}^\ast \to \CC$ having Hopf differentials with poles at the origin, as we shall exploit in \S3. Namely, one can construct a polygonal exhaustion at the pole such that their images are asymptotic to the end of the crown.\\

For future use, we shall use the following definition from the preceding discussion:

\begin{defn}[Compatibility of residue]\label{compat} A principal part $P \in \text{Princ}(n)$ where $n\geq 3$ is said to be \textit{compatible} with a hyperbolic crown $\CC \in \text{Poly}(n-2)$ (and vice versa)  if its analytical residue  equals half of the metric residue of the crown. (See Definitions \ref{ares} and \ref{metres} for these notions of residue.) 
\end{defn}

\section{Asymptotic model maps}

Before we prove the main theorem, we prove the existence of harmonic maps from $\mathbb{D}^\ast$ into a hyperbolic crown $\CC$ that will serve as ``model maps" defined on a neighborhood of the puncture on $X\setminus p$.

%These maps will have an image that is asymptotic to a crown end in the sense described below. 

%\begin{defn}[Asymptotic image]\label{asymp-def} Let $\CC$ be a hyperbolic crown.
%A map $f:\mathbb{D}^\ast \to \CC$ is asymptotic to the crown end if the image of the circle $\partial B(0,r)$ is $\ep$-close (in the $\rho$-metric) to a truncation of $\CC$, where $\ep \to 0$ as $r\to 0$.
%\end{defn}

%(See Definitions \ref{pend} and  \ref{trunc} for the notions of ``hyperbolic crown" and its ``truncation".) 

\subsection*{Doubling the domain} To be able to use Theorem \ref{tamwan} and the estimates of \S2.5 (which assume completeness of the domain), we shall 
%double $\mathbb{D}^\ast$ by the holomorphic involution $z\mapsto w=1/z$ and 
consider quadratic differentials defined on the punctured complex plane $\mathbb{C}^\ast$ having an involutive symmetry, namely an invariance under $z\mapsto 1/z$.

\begin{defn}[Symmetrizing $q$] Let $n\geq 3$. For any quadratic differential $q\in \text{Q}(P,n)$ of the form \eqref{qform}, we define a differential  $q_{sym}$ on $\mathbb{C}^\ast$ as follows:
 \begin{equation}\label{hatq}
 {q}_{sym} =  \left( \frac{a_n}{z^n} + \frac{a_{n-1}}{z^{n-1}} + \cdots + \frac{a_3}{z^3} +  \frac{a_2}{z^2} +  \frac{b_{-1}}{z} + b_0 + b_1z + \cdots b_{n-4} z^{n-4}\right)dz^2
   \end{equation}
where   $b_i = a_{i+4} \text{ for } -1\leq i \leq n-4$.
Note that this is the unique choice of coefficients such that 
\begin{itemize}
\item $q_{sym}$ has the involutive symmetry $q_{sym}(z) = q_{sym}(1/z)$, and

\item $q_{sym}$ and $q$ have the same principal part at the pole at $0$.

\end{itemize}
\vspace{.1in}

Note that conversely, $q_{sym}$ determines $q$ uniquely. 

We shall consider the  space $Q_{sym}(P,n)$ of symmetric differentials arising this way, up to a scaling by positive reals. Since $P$ is determined by the coefficients $(a_n,\ldots, a_{n-r-1})$ where  $r = \lfloor n/2 \rfloor$ (see Lemma \ref{dims}),  and we have that the space $Q_{sym}(P,n)$  is homeomorphic to $Q(P,n)$.

\end{defn}

%   $$
%b_i=
%\begin{cases}
%b_1, \text{ for } i=3\\
%a_0, \text{ for } i=4\\
%a_{i-4}, \text{ for } 5\leq i\leq n+4\\
%\end{cases}
%$$

%Note that the principal part of $\hat{q}$ at $0$ is the desired principal part. 

\medskip

The main result of this section is:

\begin{thm}[Asymptotic Models]\label{asm}
Let $n\geq 3$, and let $P \in \text{Princ}(n)$. For any $q_{sym} \in Q_{sym}(P,n)$ and a hyperbolic crown $\CC$, there exists a harmonic map $${h}: \mathbb{D}^\ast\to \CC$$ that has Hopf differential $q_{sym}\vert_{\mathbb{D}^\ast}$ and is asymptotic to the crown end with $(n-2)$ boundary cusps.
The asymptotic image is independent of rescaling the differential by a positive real, and this assignment 
%Assigning to the quadratic differential this asymptotic image 
defines a homeomorphism
\begin{equation}\label{phimap}
{\Phi}:  \mathrm{Q}_{sym}(P, n)   \to {\mathsf{Poly}_a}(n-2).
\end{equation} 
where $a$ is the real part of the residue of the principal part $P$. 
\end{thm}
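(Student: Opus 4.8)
The plan is to construct the map $h$ by doubling the domain and invoking the existing theory for harmonic maps from $\C$. First I would take the symmetric differential $q_{sym}$ on $\C^\ast$ and view it instead as a meromorphic differential on $\RS = \C \cup \{\infty\}$ with poles of order $n$ at both $0$ and $\infty$; equivalently, pull back by $z \mapsto e^{w}$ to get a differential on $\C$ that is periodic, but the cleaner route is to use the involution $\iota(z) = 1/z$ directly. The point of the symmetrization is that $q_{sym}$, restricted to a neighborhood of $\infty$, is a meromorphic quadratic differential with a pole of order $n$ at $\infty$; in the chart $u = 1/z$ near $\infty$ it has the form \eqref{qform} again (with the roles of the polar and the added ``tail'' coefficients interchanged), and crucially it has a zero structure making the induced $q_{sym}$-metric on $\C^\ast$ complete at both ends. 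Actually the simplest framing: apply Theorem \ref{tamwan} (or Theorem \ref{image} in the polynomial case, suitably reduced) not to $q_{sym}$ directly but to a polynomial quadratic differential on $\C$ obtained after a branched cover / coordinate change that resolves the pole at $0$; in any case one obtains a harmonic diffeomorphism $H : \C \to (\D,\rho)$ onto an ideal polygon $\mathcal{P}$, whose Hopf differential realizes the prescribed polar part.

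The second step is to use the symmetry. Because $q_{sym}$ is $\iota$-invariant, uniqueness in Theorem \ref{tamwan} forces $H \circ \iota = A \circ H$ for an isometry $A$ of $(\D,\rho)$; since $\iota$ is an involution and $H$ is a diffeomorphism, $A$ is an involutive isometry, hence either elliptic of order $2$ or a reflection. The fixed locus of $\iota$ on $\C^\ast$ is the unit circle $|z|=1$, a closed curve which is a concatenation of $q_{sym}$-geodesic arcs; its image under $H$ is an $A$-invariant curve, and by Proposition \ref{prop-est} (geodesic-curvature estimate) the relevant pieces are close to geodesics, so $A$ must be a hyperbolic isometry's... no --- rather, $A$ is the elliptic involution or reflection fixing the geodesic $\alpha$ that is the image of the core curve $\{|z|=1\}$. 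Quotienting $\C^\ast$ by... here is where I would instead not quotient but directly restrict: $\D^\ast = \{0 < |z| < 1\}$ sits inside $\C^\ast$, and I set $h = H|_{\D^\ast}$. By construction $h$ is a harmonic diffeomorphism onto its image, with Hopf differential $q_{sym}|_{\D^\ast}$, and its image is half of the ideal polygon $\mathcal{P}$, cut along the geodesic $\alpha$ --- which is precisely a polygonal end / crown end. The $(n-2)$ boundary cusps arise because the pole of order $n$ at $0$ has exactly $n-2$ horizontal directions (as in the \textit{Example} following Definition \ref{h-v}), and by Proposition \ref{rays} each gives a distinct ideal vertex; together with the geodesic $\alpha$ this is a crown with $n-2$ vertices. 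The metric residue of this crown equals twice the real part of the analytic residue of $q_{sym}$ at $0$ by Proposition \ref{poly-ex} (the ``asymptotic image'' statement) and Lemma \ref{qres}, which identifies $\Re(\alpha)$ with the alternating sum of horizontal lengths --- and the analytic residue at $0$ depends only on the principal part $P$. Hence the image lies in $\mathsf{Poly}_a(n-2)$ with $a = \Re(\mathrm{Res}_0 P)$.

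For the homeomorphism statement, I would check that $\Phi$ is well-defined (independent of the choice of $H$: the normalization of three ideal vertices in Theorem \ref{image}/Theorem \ref{tamwan}, together with the boundary-twist normalization built into Definition \ref{pend}, pins down the crown), continuous, injective, and surjective, and then conclude by invariance of domain since both $Q_{sym}(P,n)$ and $\mathsf{Poly}_a(n-2)$ are manifolds of the same dimension --- indeed $Q_{sym}(P,n) \cong Q(P,n) \cong \R^{n-2}$ when $n$ is even by Lemma \ref{dims}, and by the paragraph on symmetrization (and an analogous count when $n$ is odd), matching $\dim \mathsf{Poly}_a(n-2) = \R^{n-2}$ from Lemma \ref{poly1}. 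Continuity of $\Phi$ follows from continuous dependence of the harmonic map on its Hopf differential (standard, from the uniqueness in Theorem \ref{tamwan} plus elliptic estimates) and continuity of the ``asymptotic end'' data in the images. Injectivity: if two symmetric differentials give the same crown end, then the two harmonic maps $\C \to \D$ have the same image ideal polygon, hence differ by an isometry by the rigidity clause of Theorem \ref{tamwan}, forcing the Hopf differentials to agree, hence $q_{sym}$ agrees. Surjectivity is the substantive point and where I expect the main obstacle: given an arbitrary crown $\CC \in \mathsf{Poly}_a(n-2)$, one must produce a symmetric differential mapping to it. The natural approach is to double $\CC$ along its geodesic boundary to get an ideal polygon $\mathcal{P}$ with $2(n-2)$... no, to get the ideal polygon whose ``halving'' is $\CC$ --- realize $\mathcal{P}$ as the image of a harmonic map from $\C$ by the inverse of the Han--Tam--Treibergs--Wan correspondence (Theorem \ref{image}), check its Hopf differential is $\iota$-symmetric (by uniqueness, since $\mathcal{P}$ is symmetric under the reflection across $\alpha$), hence is a $q_{sym}$, and that the residue bookkeeping matches $a$. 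The delicate part throughout is ensuring that ``restricting to $\D^\ast$'' genuinely yields a harmonic map \emph{asymptotic to a crown end} in the precise sense needed later (Definition \ref{pend}, truncations), i.e. controlling the geometry uniformly near the puncture; this is exactly what the geometric estimates of \S2.5 (Propositions \ref{prop-est}, \ref{rays}, \ref{poly-ex}) are designed to supply, so the proof is mostly a matter of assembling them, with the symmetry reducing everything to the known planar case.
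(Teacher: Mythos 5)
Your overall instinct (symmetrize, reduce to the planar theory of Theorem \ref{tamwan}/\ref{image}, then read off the crown from the geometric estimates) is the right one and is the paper's strategy, but three steps in your plan do not work as written. First, the construction of $H$: there is no ``branched cover / coordinate change that resolves the pole at $0$'' --- under $z=u^k$ a pole of order $n$ becomes one of order $k(n-2)+2\ge n$, so finite covers only make the pole worse, and $q_{sym}$ is not holomorphic on $\mathbb{C}$, so Theorem \ref{tamwan} cannot be applied on $\mathbb{C}^\ast$ or after any algebraic change of variable. The only viable move is exactly the one you set aside: pull back by the exponential $w\mapsto e^{2\pi i w}$ (which does kill the pole) and apply Theorem \ref{tamwan} on the universal cover. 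But then ``$h=H|_{\mathbb{D}^\ast}$'' no longer parses; what you get is a map on a half-plane, and you must prove it is equivariant under a \emph{hyperbolic} translation before it descends to a crown. That is a genuine step: one needs the images of the deck-translates of the horizontal rays to accumulate at two \emph{distinct} ideal points $\theta_\pm$ (the paper's Lemma \ref{thetas}, whose proof of $\theta_-\neq\theta_+$ uses the lower half-plane and the diffeomorphism property, and Lemma \ref{Tinv}). Relatedly, your description of the image as ``half of the ideal polygon $\mathcal{P}$ cut along $\alpha$'' cannot be right: that region is simply connected with finitely many ideal vertices, whereas a crown is an annulus with a closed geodesic boundary whose end lifts to a $\mathbb{Z}$-invariant bi-infinite chain of geodesics; the crown only appears after the equivariance is established and one quotients.

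Second, your injectivity argument is unjustified: the rigidity clause of Theorem \ref{tamwan} applies to two harmonic maps with the \emph{same} Hopf differential, not with the same image. In fact distinct differentials can have the same polygonal image --- that fiber is precisely what Proposition \ref{prop-pdiff} describes, and the paper's remark notes such maps are an unbounded distance apart --- so ``same image $\Rightarrow$ differ by an isometry $\Rightarrow$ same differential'' fails. The correct route uses the fixed principal part $P$: same $P$ forces the two maps to be a bounded distance apart (comparison of horizontal/vertical distances as in Lemma \ref{ageom1} plus the estimates of \S2.5), then subharmonicity of the distance function makes it constant, and a Poincar\'{e}--Hopf vector-field argument on the doubled crown forces the constant to vanish. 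Third, your surjectivity step is flawed and properness is missing: doubling a crown along its closed geodesic boundary produces a hyperbolic cylinder with two crown ends, not an ideal polygon, so Theorem \ref{image} cannot realize it; and invariance of domain by itself only gives openness, so without a properness argument (the paper's Proposition \ref{prop-er}: uniform bounds for a sequence of harmonic maps whose image crowns converge, via Claims 1--3 and the maximum principle) you cannot conclude surjectivity. A minor further point: the dimension count should be for $Q_{sym}(P,n)$ taken up to scaling by positive reals, not for $Q(P,n)$ itself.
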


(Recall that $\mathsf{Poly}_a(n-2)$ is the space of hyperbolic crowns with $(n-2)$ boundary cusps and fixed metric residue $a$.)\\

\textit{Remark.} Note that by Lemmas \ref{dims} and \ref{poly1} the dimensions of the spaces in \eqref{phimap} are identical. This is crucial, as the homeomorphism will be finally obtained as an application of the Invariance of Domain.

%\begin{prop}[Injectivity]\label{inj}
%The ideal polygons that arise as images of the maps determined by $q_1$ and $q_2$ as in Proposition \ref{exist}  are identical if their principal parts are the same. Conversely, 
%\end{prop}

\subsection{Existence} 
We shall work in the universal cover.

In what follows , $\mathbb{H}^2$ is the upper half plane and $$\pi:\mathbb{H}^2 \to \mathbb{D}^\ast$$ is the universal covering map $\pi(w) = e^{2\pi iw}$, with the action of $\mathbb{Z}$ by deck-translations being $w\mapsto w +1$.\\

The main step towards proving Theorem \ref{asm} is:

\begin{prop}[Equivariant maps on $\H^2$]\label{exist}
For $n\geq 3$ fix a $P \in \text{Princ}(n)$. 
There exists a meromorphic quadratic differential $q$ on $\mathbb{D}$ with a pole of order $n$ at the origin and principal part $P$, and a harmonic map $$\tilde{h}:\mathbb{H}^2 \to (\mathbb{D},\rho)$$ such that
\begin{enumerate}
\item its Hopf differential is $\pi^\ast(q_{sym})$,
\item its image is a polygonal end in $\mathrm{Poly}_a(n-2)$, and
\item it is equivariant with respect to a $\mathbb{Z}$-action on domain and range, that is, 
\begin{equation}\label{T-act}
 \tilde{h}(w+1) = T\circ \tilde{h}(w)
 \end{equation}
 for a hyperbolic isometry $T$ of the Poincar\'{e} disk. 
\end{enumerate}

\end{prop}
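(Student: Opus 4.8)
\textbf{Proof proposal for Proposition \ref{exist}.}

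The plan is to build $\tilde h$ by first producing a harmonic map on the doubled domain and then descending to the equivariant picture, exploiting the involutive symmetry of $q_{sym}$. First I would apply Theorem \ref{tamwan} to the symmetric quadratic differential $q_{sym}$ viewed as a holomorphic quadratic differential on $\C$ (using that $q_{sym}$, while defined on $\C^\ast$ by the Laurent expression \eqref{hatq}, extends across $0$ and $\infty$ only as a differential with higher-order poles, so the honest domain for Theorem \ref{tamwan} is really the universal cover $\H^2$ of $\C^\ast$ with the pulled-back singular-flat $q_{sym}$-metric, which is complete). Concretely: pull back $q_{sym}$ by $\pi\colon \H^2 \to \D^\ast$ (extended to all of $\C^\ast$), obtaining a holomorphic quadratic differential $\pi^\ast q_{sym}$ on $\H^2$ whose induced metric is complete since the poles of $q_{sym}$ at $0$ and $\infty$ both have order $\geq 3$; then invoke the existence half of Theorem \ref{tamwan} (in the form that works for any Riemann surface $X$ with complete $q$-metric and Cartan--Hadamard target, as flagged in \S2.5) to get a harmonic diffeomorphism $\tilde h\colon \H^2 \to (\D,\rho)$ onto its image with Hopf differential $\pi^\ast q_{sym}$. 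This establishes (1) immediately.

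Next I would establish the equivariance (3). The deck group $\Z = \langle w \mapsto w+1\rangle$ acts on $\H^2$ by biholomorphisms preserving $\pi^\ast q_{sym}$ (because $q_{sym}$ lives on the quotient $\C^\ast$). Hence for the generator $\tau(w) = w+1$, the map $\tilde h \circ \tau$ is again a harmonic diffeomorphism to $(\D,\rho)$ with the same Hopf differential $\pi^\ast q_{sym}$. By the uniqueness half of Theorem \ref{tamwan}, there is an isometry $T$ of $(\D,\rho)$ with $\tilde h \circ \tau = T \circ \tilde h$, which is precisely \eqref{T-act}. A cocycle/associativity argument (comparing $\tilde h \circ \tau^2$ computed two ways) shows the assignment $\tau \mapsto T$ is a homomorphism, so $\tilde h$ is genuinely $\Z$-equivariant; that $T$ is hyperbolic rather than parabolic or elliptic will follow from the geometry in the next step (its axis will be the line $\alpha$ separating the two limiting ideal points of the polygonal chain). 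The descent of $\tilde h$ then gives a harmonic map $\D^\ast \to \D/\langle T\rangle$, and taking $q = q_{sym}|_{\D^\ast}$ (which has a pole of order $n$ at $0$ with principal part $P$ by construction) handles the domain side of the statement.

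The main work, and the step I expect to be the principal obstacle, is (2): identifying the image $\tilde h(\H^2)$ as a polygonal end in $\mathsf{Poly}_a(n-2)$. For this I would use the geometric estimates of \S2.5. The $q_{sym}$-metric near the pole at $0$ has $n-2$ horizontal ray directions (cf. the Example following Definition \ref{h-v}), and likewise near $\infty$; by Proposition \ref{rays}, the horizontal rays emanating toward the pole at $0$ have images limiting to $n-2$ distinct ideal points of $\partial\D$, and rays in a common asymptotic class limit to a common point. Proposition \ref{prop-est} then shows that between consecutive such directions the image of a horizontal leaf converges to a bi-infinite geodesic (geodesic curvature $O(e^{-\alpha R}) \to 0$), so $\tilde h$ maps a neighborhood of the pole at $0$ onto a region bounded by a bi-infinite chain of $n-2$ geodesics with consecutive ones sharing an ideal endpoint --- i.e. a polygonal end with $n-2$ boundary cusps. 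Constructing a polygonal exhaustion $\{P_j\}$ at the pole (Definition \ref{polyex}) and pushing it forward, Proposition \ref{poly-ex} (and its Remark on crowns) identifies $\tilde h(P_j)$ with truncations of this polygonal end, and the residue computation there --- the metric residue equals twice $\Re$ of the analytic residue of $q$ at $0$ --- pins the metric residue to $a = \Re(\mathrm{Res}_0 \sqrt q)$, placing the image in $\mathsf{Poly}_a(n-2)$. The delicate points I anticipate: (i) verifying that the $\Z$-action on the image is by $T$ with axis correctly positioned so that the quotient really is a crown (one must check $T$ translates along the geodesic $\alpha$ between the two families of limit points coming from $0$ and from $\infty$, which uses that the ideal endpoints accumulate only at $p_\pm$); and (ii) controlling the behavior uniformly in the fundamental domain of $\tau$ rather than just along a single ray, which requires the estimates of Proposition \ref{prop-est} with $R$ = distance from the zero set, noting the zeros of $q_{sym}$ form a $\tau$-invariant (hence cocompact-complemented) set so $R \to \infty$ as one approaches either pole.
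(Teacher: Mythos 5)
Your overall strategy is the paper's (symmetrize, invoke Tam--Wan, restrict to a half-plane, get equivariance from uniqueness, identify the image and residue via the \S2.5 estimates), but two of its load-bearing steps are not justified as written. First, the universal cover of $\C^\ast$ is $\C$ (via $w\mapsto e^{2\pi i w}$), not $\H^2$; the upper half-plane only covers $\D^\ast$, and the pullback of $q_{sym}$ to $\H^2$ is \emph{not} complete, since the circle $\lvert z\rvert =1$ lies at finite distance in the $q_{sym}$-metric (the pole at $\infty$ is simply not seen from $\D^\ast$). Consequently there is no form of Theorem \ref{tamwan} available on $\H^2$: both its existence statement and, crucially, its uniqueness-up-to-isometry statement are theorems about the domain $\C$ (the paper only claims that the \emph{estimates} of \S2.5 generalize to complete $q$-metrics). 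The uniqueness half is exactly what you invoke to obtain $\tilde h\circ\tau = T\circ\tilde h$, and it fails on the disk: for instance, distinct conformal embeddings of the disk into $(\D,\rho)$ all have vanishing Hopf differential but do not differ by an isometry of the target; the proof on $\C$ uses parabolicity. The correct (and intended) route, which the paper takes, is to solve on all of $\C$ with Hopf differential $\hat\pi^\ast(q_{sym})$ for the covering $\hat\pi:\C\to\C^\ast$ --- the whole point of the symmetrization being that the $q_{sym}$-metric on $\C^\ast$, hence its pullback to $\C$, is complete --- and only afterwards restrict $\hat h$ to $\H^2\subset\C$.

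Second, even granting equivariance with respect to \emph{some} isometry $T$, the proposition requires $T$ to be hyperbolic, equivalently that the two accumulation points $\theta_\pm$ of the ideal vertices $\theta^j_i$ (which are also the limit points of $\hat h(\mathbb{R})$) are \emph{distinct}; if they coincided, $T$ would be parabolic and the image would not be a polygonal end in $\mathsf{Poly}_a(n-2)$. You flag this as a delicate point and assert it "will follow from the geometry," but give no argument. In the paper this is the substantive content of Lemma \ref{thetas}: it uses that $\hat h$ is a diffeomorphism defined on all of $\C$, so the lower half-plane (governed by the pole of $q_{sym}$ at $\infty$) contributes its own family of distinct ideal limit points by Proposition \ref{rays}, and a Jordan-curve argument applied to $\hat h(\mathbb{R})$ and to images of arcs crossing the real axis produces a contradiction if $\theta_+=\theta_-$. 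Without this step (or a substitute), conclusions (2) and the hyperbolicity of $T$ in (3) are not established; the remaining parts of your outline (image identification via Propositions \ref{prop-est} and \ref{rays}, residue identification via Lemma \ref{qres} and Proposition \ref{poly-ex}) do match the paper.
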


Note that the equivariant harmonic map $\tilde{h}$ above on the universal cover of $\mathbb{D}^\ast$ descends to define a harmonic map to a hyperbolic crown.\\
%in particular,  Proposition \ref{exist}  shows that the map \eqref{phimap} is well-defined. \cm{Not yet -- need to check statement about metric residue!}\\

Such an existence result for harmonic maps with prescribed Hopf differential is known when the domain is  $\mathbb{C}$ (see Theorem \ref{tamwan}), and the geometric estimates of  \S2.5 hold when the Hopf differential metric is  \textit{complete}.

We have already taken care of the second requirement by extending the quadratic differential $q$ on $\D^\ast$ to $q_{sym}$ defined on the punctured plane $\C^\ast$. 
To satisfy the first requirement, we consider the lift of the quadratic differential $q_{sym}$ to the universal cover, namely the differential  $\hat \pi^\ast(q_{sym})$ on $\C$ , where $\hat \pi:\mathbb{C} \to \mathbb{C}^\ast$ is the universal covering defined by $\hat \pi(w) = e^{2\pi iw}$.

%for an harmonic map $$\hat h :\C \to (\D,\rho)$$ 
%which

%\begin{enumerate}

%\item has Hopf differential is $\hat \pi^\ast(q_{sym})$, where $\hat \pi:\mathbb{C} \to \mathbb{C}^\ast$ is the universal covering defined by $\hat \pi(w) = e^{2\pi iw}$. 
%\item  is equivariant with respect to a $\mathbb{Z}$-action that acts by the translation $w\mapsto w+1$ on the domain, and by a hyperbolic translation on the target.
%\end{enumerate}

%Restricting $\hat h$ to the upper half plane $\mathbb{H}^2 \subset \C$, we shall obtain the map $\tilde{h}$ of Proposition \ref{exist}.

%first define an differential  with an involutive symmetry on $\mathbb{C}$ that extends $q$ and defines a complete metric, and apply the existence result there.  Then, using the analytical estimates of harmonic maps, we ascertain that the image is a polygonal end.

 %The above involutive symmetry implies that the pullback differential $\hat{\pi}^\ast(\hat{q})$  is invariant under $w \mapsto -w$.\\

By Theorem \ref{tamwan} we have the existence of such a harmonic map
\begin{equation}\label{mapc}
\hat{h}:\mathbb{C}\to (\mathbb{D},\rho)
\end{equation}
with Hopf differential $\hat{\pi}^\ast({q}_{sym})$, which is unique once one specifies a normalization of the image. 
 
  To complete the proof of Proposition \ref{exist}, we shall show that the restriction of $\hat{h}$ to the upper half plane
  \begin{equation}\label{maph}
 \tilde{h} = \hat{h}\vert_{\mathbb{H}^2} :\mathbb{H}^2\to (\mathbb{D},\rho)
 \end{equation}
 
  is the desired equivariant map to a polygonal end.
  
  For this, we need to show (a) the image is asymptotic to a bi-infinite chain of geodesics, and (b) the map is equivariant with respect to the $\mathbb{Z}$-action generated by translation $w\mapsto w+1$ in the domain and a hyperbolic translation in the target as in \eqref{T-act}.

%  Certainly the Hopf differential of $\tilde{h}$  is the restriction of $\hat{\pi}^\ast(\hat{q})$ to $\mathbb{H}^2$ which is  ${\pi}^\ast(\hat{q}\vert_{\mathbb{D}^\ast})$ where $\hat{q}\vert_{\mathbb{D}^\ast}$ has expression \eqref{hatq} which is $q_{sym}$.
   
 \subsection*{Determining the image}
We can use the analytical estimates (see \S2), since the pullback quadratic differential metric $\hat \pi^\ast(q_{sym})$ on $\mathbb{C}$ defines a metric that is complete.

% Recall that  the Hopf differential of $\tilde{h}$ is the pullback of a meromorphic quadratic differential $q_{sym}$ of order $n\geq 3$ on $\mathbb{D}^\ast$, to the universal cover $\pi:\mathbb{H}^2 \to \mathbb{D}^\ast$. \\

% Construct a ``polygonal exhaustion" of $\mathbb{D}^\ast$  analogous to that in \S?? by a polygonal regions
% $$P_1 \subset P_2 \subset \cdots$$
 %satisfying the following properties:
 %\begin{itemize}
 %\item They shrink down to the pole, that is , their union is $\mathbb{D}^\ast$.
 %\item For each $j\geq 1$, the boundary $\partial P_j$ comprises alternate horizontal and vertical segments in the $q_{sym}$ metric of length $L_j +O(1)$, where $L_j \to \infty$.
 %\end{itemize}
 
% Pulling back this exhaustion to the universal cover $\mathbb{H}^2$ gives a collection polygonal bi-infinite lines $\{\widetilde{\partial P_j}\}_{j\geq 1}$ that are each invariant under the translation $z\mapsto z+1$.
 
 Let $\hat F \subset \mathbb{C}$ be a fundamental domain for the action by deck-translations $w\mapsto w+1$. 
 Then there are exactly $(n-2)$ horizontal rays of the $q_{sym}$-metric on $\mathbb{C}^\ast$  that are asymptotic to the  pole at the origin; these pull back to horizontal rays $H_1,H_2,\ldots H_{n-2}$ contained in $\hat F$.
 By Proposition \ref{rays}, the images of these under the harmonic map $\hat h$ determine  $(n-2)$ distinct ideal points  that we denote $\theta^0_1,\theta^0_2,\ldots \theta^0_{n-2}$.
Their images under the deck-translations shall be denoted by $\theta^j_1,\theta^j_2,\ldots \theta^j_{n-2}$ for each $j\in \mathbb{Z}$. 

%\cm{Moreover, by Proposition \ref{prop-est}, the horizontal lines between the different asymtotic directions in $\C$ map closer and closer to geodesic lines between $\theta^j_i$ and $\theta^{j}_{i+1}$ etc.}\\

 We start with the observation:
 
 \begin{lem}\label{thetas} The sequence of points $\{\theta^j_1,\theta^j_2,\ldots \theta^j_{n-2}\}_{j\in \mathbb{Z}}$ defined above are monotonic on the ideal boundary $\partial \mathbb{D}$, that is, lie in a cyclic order. In particular, we can define  $$\theta_+ = \lim\limits_{j\to \infty} \theta^j_{i} \text{ and } \theta_- = \lim\limits_{j\to - \infty} \theta^j_{i}$$ which are independent of the choice of $1\leq i\leq n-2$. These also the limit points of the image of the real axis $\mathbb{R}\subset \C$ under $\hat h$, respectively.
Moreover,  they are distinct ideal boundary points, that is, $\theta_-\neq \theta_+$. 
 \end{lem}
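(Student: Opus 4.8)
The plan is to exploit the structure of the $q_{sym}$-metric on $\mathbb{C}^\ast$ and the established behavior of images of horizontal rays (Proposition \ref{rays}) together with a linear-ordering argument on the circle. First I would set up the picture on the fundamental domain: the differential $q_{sym}$ has a pole of order $n$ at $0$ and, by the symmetry $z \mapsto 1/z$, a pole of order $n$ at $\infty$, so on $\mathbb{C}^\ast$ its horizontal foliation has exactly $n-2$ horizontal rays emanating toward each pole. Lifting via $\hat\pi(w) = e^{2\pi i w}$, I work in the strip $\hat F = \{0 \le \Re w < 1\}$. The punctured disk $\mathbb{D}^\ast$ lifts to the upper half-plane $\mathbb{H}^2$, and the $n-2$ horizontal rays toward $0$ lift to rays $H_1,\dots,H_{n-2}$ going to $+i\infty$ within $\hat F$; I fix them so their $\Re$-coordinates are increasing. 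By Proposition \ref{rays}, $\hat h(H_i)$ converges to a well-defined ideal point $\theta^0_i \in \partial\mathbb{D}$, and distinct asymptotic directions give distinct ideal points, so the $\theta^0_i$ are $n-2$ distinct points.

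The key step is monotonicity. Since $\hat h$ is a diffeomorphism onto its image (Theorem \ref{tamwan}), it is in particular injective and orientation-preserving; the rays $H_i$, together with their $\mathbb{Z}$-translates $H_i + j$, form a bi-infinite family of disjoint properly embedded arcs in $\mathbb{C}$ all exiting toward $+i\infty$, and they are linearly ordered by their $x$-coordinates. Their images are disjoint properly embedded arcs in $\mathbb{D}$ each converging to a boundary point; because $\hat h$ preserves the cyclic order of ends (an orientation-preserving embedding of the plane extends continuously to a degree-one map of the circle of ends, or more concretely: two disjoint arcs to the boundary cut the disk, and a third disjoint arc lands on a definite side), the points $\{\theta^j_i\}$ must appear on $\partial\mathbb{D}$ in the same cyclic order as the arcs appear in $\mathbb{C}$. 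Thus $\{\theta^j_i\}_{j\in\mathbb{Z},\,1\le i\le n-2}$ is a monotone sequence on the circle, and since the indexing set is order-isomorphic to $\mathbb{Z}$ (ordered lexicographically by $(j,i)$) and has no largest or smallest element, the one-sided limits $\theta_+ = \lim_{j\to\infty}\theta^j_i$ and $\theta_- = \lim_{j\to-\infty}\theta^j_i$ exist and are independent of $i$. That these are also the limit points of $\hat h(\mathbb{R})$ follows because the real axis separates the rays $H_i + j$ with $j \ge 0$ from those with $j < 0$, so $\hat h(\mathbb{R})$ is an arc whose two ends are pinched between the two families of ideal points, forcing its endpoints to be exactly $\theta_\pm$.

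Finally, to see $\theta_- \ne \theta_+$ I would argue that if the two coincided, the image $\hat h(\mathbb{C}^\ast\text{'s lift})$ would accumulate on a single ideal point, contradicting the distance estimates of \S2.5: concretely, the horizontal segments of a polygonal exhaustion at $0$ map, by Proposition \ref{prop-est}, to arcs of hyperbolic length comparable to twice their $q$-length, which grows, so the image contains geodesic-like arcs of unbounded length a bounded distance from a basepoint (the image of $H_i$ and $H_i + 1$ limit to $\theta^0_i$ and $\theta^1_i$, and the connecting horizontal arc across one period has definite, bounded $q$-length, hence bounded image length), forcing $\theta^0_i \ne \theta^1_i$ and, iterating, forcing the $\theta^j_i$ to march off monotonically toward two genuinely distinct limits rather than clustering. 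Equivalently, one can invoke that the translation $T$ realizing equivariance (established in the next lemma, or directly: $\hat h(w+1)$ and $T\hat h(w)$ have the same Hopf differential and normalization so agree) must be a hyperbolic isometry with axis joining $\theta_-$ to $\theta_+$; a parabolic or elliptic $T$ is incompatible with the image being a polygonal end over a closed geodesic. I expect the main obstacle to be making the cyclic-order/monotonicity argument fully rigorous — i.e., carefully justifying that an orientation-preserving diffeomorphism of the plane induces an order-preserving correspondence between the disjoint properly embedded rays and their limiting ideal points — and cross-checking that this does not secretly presuppose the equivariance that Lemma \ref{thetas} is meant to precede.
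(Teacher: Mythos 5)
Your first step (monotonicity of the $\theta^j_i$ and the existence of the limits $\theta_\pm$) is essentially the paper's: it follows from $\hat h$ being an orientation-preserving diffeomorphism onto its image (the paper cites Lemma 3.1 of \cite{Au-Tam-Wan} for the details). The rest of your argument, however, has genuine gaps. First, the separation claim used for the second statement is false: the translates $H_i+j$ differ by \emph{horizontal} translation and all exit through the upper end of the strip, so the real axis $\{\Im w=0\}$ does not separate the rays with $j\geq 0$ from those with $j<0$; your ``pinching'' argument therefore does not control the limit set of the ends of $\hat h(\mathbb{R})$. The paper instead compares $\mathbb{R}$ with the translation-invariant lifted polygonal lines $\beta_j=\widetilde{\partial P_j}$, which stay at bounded vertical $q$-distance from $\mathbb{R}$ (so have the same end limit sets) while their horizontal sides map $\ep$-close to the geodesics joining the $\theta^j_i$, pinning the limit set down to $\{\theta_-,\theta_+\}$.

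Second, and more seriously, your treatment of $\theta_-\neq\theta_+$ — the real content of the lemma — does not work. Monotonicity together with $\theta^j_i\neq\theta^{j+1}_i$ does \emph{not} prevent the sequence from wrapping once around $\partial\mathbb{D}$ and accumulating at a single point from both sides, which is exactly the scenario that must be excluded. Your quantitative claim is also wrong: near a pole of order $n\geq 3$ the horizontal $q$-distance between consecutive rays $H_i$ and $H_i+1$ grows without bound as one approaches the pole (this is why the sides $L_j$ of the polygonal exhaustion tend to infinity), and in any case a bounded-length connecting arc between far-out points would point toward the limit points \emph{coinciding}, not being distinct. Your fallback (``$T$ must be hyperbolic, so its axis joins two distinct points'') is circular: in the paper the hyperbolicity of $T$ is proved in the next lemma \emph{using} the distinctness $\theta_-\neq\theta_+$; with only what you have established, a parabolic $T$ (all $\theta^j_i=T^j\theta^0_i$ converging to the single parabolic fixed point in both directions) is not excluded. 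The paper's actual argument is different in kind: it uses that $\hat h$ is defined on all of $\mathbb{C}$ (both poles of $q_{sym}$, by the symmetry), assumes $\theta_-=\theta_+$, applies the Jordan curve theorem to the closed loop formed by the closure of $\hat h(\mathbb{R})$, and then derives a contradiction with the distinctness of the ideal points attached to the horizontal rays in the \emph{lower} half-plane. Some argument of this global, two-sided nature is needed, and it is missing from your proposal.
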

 
 \begin{proof}
 
The fact that the sequence of limit points of the boundary cusps are distinct points that are monotonic on the circle, and consequently define limiting points $\theta_\pm$, is a consequence of the fact that $\hat{h}$ is a diffeomorphism to its image (see Theorem \ref{tamwan}): for details, see the proof of Lemma 3.1 of \cite{Au-Tam-Wan}.

 For the second statement, consider the polygonal exhaustion at the pole of $q_{sym}$ at the origin (see Definition \ref{polyex}) such that for each $j\geq 1$, the boundary $\partial P_j$ comprises alternate horizontal and vertical segments in the $q_{sym}$- metric of length $L_j +O(1)$, where $L_j \to \infty$.
 
 Pulling back this exhaustion to the universal cover $\mathbb{H}^2$ gives a collection polygonal bi-infinite lines $\{\beta_j := \widetilde{\partial P_j}\}_{j\geq 1}$ that are each invariant under the translation $w\mapsto w+1$. See Figure 5. 
 
 Fix any of the lines $\beta_j$. Note that the vertical distance in the $\tilde{q}_{sym}$-metric of  $\beta_j$ from the real axis is uniformly bounded. Then, by the same proof as Proposition \ref{rays}, the limit points of the image of either end of the line $\beta_j$ are the same as the limit points of the positive and negative real axes. 
 
 As $j\to \infty$  the horizontal segments of $\beta_j$ limit to the horizontal lines of the $\tilde{q}_{sym}$-metric asymptotic to distinct directions in $\C$. Hence by Proposition \ref{prop-est} the images of these segments limit to the geodesic lines between the $\theta^j_i$s. In particular, the images of the endpoints of the horizontal segments of $\beta_j$ are close to $\theta^j _i$ in the Euclidean metric, say bounded above by $\ep>0$ (where $\ep\to 0$ as $j\to \infty$). Since the $\theta^j _i$   limit to $\theta_\pm$, the images of the endpoints of these horizontal segments limit to points $\ep$-close to $\theta_\pm$. Hence the  limit points of the images of $\beta_j$ are $\ep$-close to $\theta_-$ and $\theta_+$.  Since $\ep>0$ was arbitrary, and we have already observed that the limit points of the images of $\beta_j$ coincide with that of the real axis $\mathbb{R} \subset \mathbb{C}$, we conclude that those limit points are precisely $\theta_\pm$. 

\begin{figure}
  % Requires \usepackage{graphicx}
  \centering
  \includegraphics[scale=0.45]{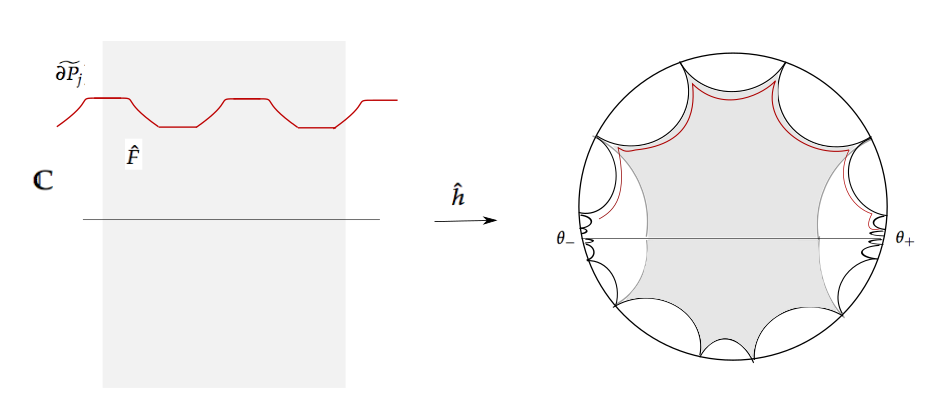}\\
  \caption{ The map $\hat h$ when restricted to the upper half plane $\mathbb{H}^2 \subset \mathbb{C}$ yields the desired equivariant harmonic map $\tilde{h}$ to a polygonal end. }
\end{figure}

Finally, to show that the limit points on either end are distinct,  recall that $\hat{h}$ is defined on $\mathbb{C}$, and hence there is a corresponding sequence of ideal points determined by the image of the restriction of $\hat{h}$ to the \textit{lower} half-plane. (By the symmetry of $q_{sym}$ the Hopf differentials on the upper and lower half planes are invariant under the conformal involution $w\mapsto -w$.) 

Assume that $\theta_-= \theta_+$. (In what follows we shall denote this point by $\theta_\pm$.)

Since $\hat{h}$ is a diffeomorphism, the (closure of the) image of the real line $\mathbb{R} \subset \mathbb{C}$ under $\hat{h}$ is a closed loop starting and ending at the point $\theta_\pm$. By the Jordan Curve theorem, this separates the Poincar\'{e} disk $(\mathbb{D},\rho)$ into two components, such that one of the components has $\theta_\pm$ as the only ideal boundary point in its closure.

Consider a bi-infinite path ${l}$  in $\hat{F}$ intersecting the real axis exactly once and asymptotic to a pair of horizontal rays in the $\hat{q}$-metric, one in the upper half-plane and one in the lower half-plane. Since $\hat{h}$ is a diffeomorphism, the image of the line $\hat{h}({l})$ is an embedded arc between the two ideal limit points determined by the image of the chosen horizontal rays on either side. Moreover, the arc intersects the image of the real line once. Then, by the above assumption, we have that one of the ideal limit points must be $\theta_\pm$. This is true for any choice of a pair of horizontal rays  that ${l}$ is asymptotic to, which contradicts the fact that the limit points for the images of the horizontal rays in the lower half-planes, are distinct (\textit{cf.} Proposition \ref{rays}).
 \end{proof}

 As a consequence, we obtain:
 
 \begin{lem}\label{Tinv}
 Suppose we have normalized $\tilde{h}:\C \to (\D, \rho)$ in \eqref{mapc} such that $\theta_\pm = \pm 1\in \partial \D$ and the  image of the positive vertical (imaginary) axis limits to $i$, the ideal point of cusp ``$1$". 
  Then there are hyperbolic isometries $T, S$ fixing $\theta_\pm$ such that 
 \begin{equation}\label{eq-asymp1}
 \tilde{h}(w+1) = T\circ \tilde{h}(w)
 \end{equation}
 and
 \begin{equation}\label{eq-asymp2}
 \tilde{h}(- w) = - S\circ \tilde{h}(w)
 \end{equation}
  for all $w\in \mathbb{C}$.

 \end{lem}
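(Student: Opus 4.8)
\noindent\emph{Proof proposal.} The plan is to deduce both equivariance relations from the uniqueness clause of Theorem~\ref{tamwan}, applied to two precompositions of the harmonic map $\hat h$ of \eqref{mapc} (here $\hat h$ denotes this map on all of $\mathbb{C}$, written $\tilde h$ in the statement, whose restriction to $\mathbb{H}^2$ is the map \eqref{maph}) by the biholomorphisms $\tau(w)=w+1$ and $\iota(w)=-w$ of $\mathbb{C}$. The key observation is that both $\tau$ and $\iota$ preserve the Hopf differential $\hat\pi^\ast(q_{sym})$ of $\hat h$: indeed $\hat\pi\circ\tau=\hat\pi$, while $\hat\pi\circ\iota=\mathrm{inv}\circ\hat\pi$ with $\mathrm{inv}(z)=1/z$, and $q_{sym}$ is by construction invariant under $\mathrm{inv}$ as a quadratic differential, so that $\tau^\ast\bigl(\hat\pi^\ast q_{sym}\bigr)=\iota^\ast\bigl(\hat\pi^\ast q_{sym}\bigr)=\hat\pi^\ast q_{sym}$; here it matters that $\iota$ is holomorphic (multiplication by $-1$), not anti-holomorphic. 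Hence $\hat h\circ\tau$ and $\hat h\circ\iota$ are again orientation-preserving harmonic diffeomorphisms onto their images (each of which equals $\hat h(\mathbb{C})$), with the same Hopf differential as $\hat h$, so by Theorem~\ref{tamwan} there are isometries $T,A$ of $(\mathbb{D},\rho)$, necessarily orientation-preserving, with $\hat h(w+1)=T\circ\hat h(w)$ and $\hat h(-w)=A\circ\hat h(w)$ for all $w\in\mathbb{C}$.

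It then remains to pin down $T$ and $A$ using Lemma~\ref{thetas}, which tells us that the closure of $\hat h(\mathbb{R})$ meets $\partial\mathbb{D}$ exactly in $\{\theta_-,\theta_+\}$, with $\hat h(w)\to\theta_+$ as $w\to+\infty$ and $\hat h(w)\to\theta_-$ as $w\to-\infty$ along $\mathbb{R}$, and that $\theta_-\neq\theta_+$. Since $\tau$ preserves $\mathbb{R}$ and its orientation, $T$ must fix each of $\theta_\pm=\pm1$; and $T$ is not the identity (otherwise all the $\theta^j_i$ would coincide, forcing $\theta_+=\theta_-$), so an orientation-preserving isometry fixing the two distinct boundary points $\pm1$ is a hyperbolic translation along the geodesic joining them, which is \eqref{eq-asymp1}. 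Since $\iota$ reverses the orientation of $\mathbb{R}$, the isometry $A$ instead interchanges $\theta_+$ and $\theta_-$, i.e.\ interchanges $1$ and $-1$; a direct computation identifies such an orientation-preserving isometry as $A(z)=-S(z)$, where $S$ is a hyperbolic translation along the geodesic $(-1,1)$ (possibly trivial) and $-$ denotes the half-turn $z\mapsto -z$ about the origin. This gives \eqref{eq-asymp2}, and since both $A$ and $z\mapsto -z$ interchange $\pm1$, the translation $S$ fixes $\theta_\pm$; finally the remaining normalization (the positive imaginary axis limits to $i$) is consistent, as one then reads off that the negative imaginary axis limits to $-S(i)$.

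Once Theorem~\ref{tamwan} and Lemma~\ref{thetas} are in hand the argument is essentially formal; the main point demanding care is the orientation bookkeeping that distinguishes \emph{fixing} $\{\theta_-,\theta_+\}$ (for $T$, since $\tau$ preserves the orientation of $\mathbb{R}$) from \emph{interchanging} it (for $A$, since $\iota$ reverses it), together with the verification that $\iota^\ast$ really fixes $\hat\pi^\ast q_{sym}$ on the nose — for which one should track the factor $(2\pi i\,e^{2\pi i w})^2$ in $\hat\pi^\ast q_{sym}$ alongside the identity $q_{sym}(1/z)=q_{sym}(z)$.
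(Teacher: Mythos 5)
Your proposal is correct and follows essentially the same route as the paper: both derive the relations from the invariance of the Hopf differential $\hat\pi^\ast q_{sym}$ under $w\mapsto w+1$ and $w\mapsto -w$ together with the uniqueness clause of Theorem \ref{tamwan}, and then pin down the isometries via the limit points $\theta_\pm$ and the $\theta^j_i$ from Lemma \ref{thetas}. The only cosmetic difference is that you first produce a single isometry $A$ for the involution and then factor it as the half-turn composed with an axis-preserving translation $S$, whereas the paper writes the relation in the form $-S$ from the outset; your orientation bookkeeping and the observation that $S$ may be trivial are fine.
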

 \begin{proof}

%The map $\hat h$ is uniquely determined if we normalize it by required that the three points $\theta_-$ and $\theta_+$ and $\theta^0_1$ are fixed. Here, recall that $ \theta_\pm$ are the limit points of the image of the real axis in $\mathbb{C}$, whereas $\theta^0_1$ is the limit point of the horizontal ray $H_1$ in the fundamental domain $F\subset \mathbb{C}$ of the action by deck-translations $w \mapsto w+1$. 

By Theorem \ref{tamwan}  any two harmonic maps with identical Hopf differential must differ by a postcomposition with an isometry.  The Hopf differential $\tilde{q}_{sym}$ is invariant under the translation $w \mapsto w+1$ and involution $w\mapsto -w$. Hence  \eqref{eq-asymp1}  and \eqref{eq-asymp2} hold for some isometries $T$ and $S$. We need to verify that they are hyperbolic isometries, by showing they have exactly two fixed points $\pm 1$.

%Thus, the map $\hat{h}$ is unique if one normalizes by fixing 3 ideal boundary points. 

 Since the pullback differential $\tilde{q}_{sym}$ to the universal cover of $\mathbb{D}^\ast$ is invariant under deck-translations, it is preserved under the symmetry $w\mapsto w+1$. 
 Since the real axis is preserved under this translation, and by the previous Proposition its image has distinct limit points $\theta_\pm$ on the ideal boundary, it follows by considering sequences $w^+_n, w^-_n \in \mathbb{R}$ such that $w^+_n \to 1$ and $w^-_n \to -1$, that (\ref{eq-asymp1}) holds only if $T$ is an isometry which fixes the two points $\pm 1$.

By construction, the images  under $\hat{h}$ of the translates of the ray $H_1$ by these translations,  limit to the points $\theta^j_1$ for $j\in \mathbb{Z}$. 
Thus, $T$ necessarily has to take $\theta^j_i$ to its successive limit point $\theta^{j+1}_i$ for each $j\in \mathbb{Z}$, and is therefore a hyperbolic isometry translating along an axis with endpoints at $\pm 1$. 

Similarly,  $\tilde{h}(-w^+_n) \to -1$  and hence $S(1) = 1$,  and $\tilde{h}(-w^-_n) \to 1$ which implies $S(-1)=-1$. Hence $S$ is also a hyperbolic isometry fixing the boundary points $\pm1$. 
\end{proof}

\begin{figure}
  % Requires \usepackage{graphicx}
  \centering
  \includegraphics[scale=0.38]{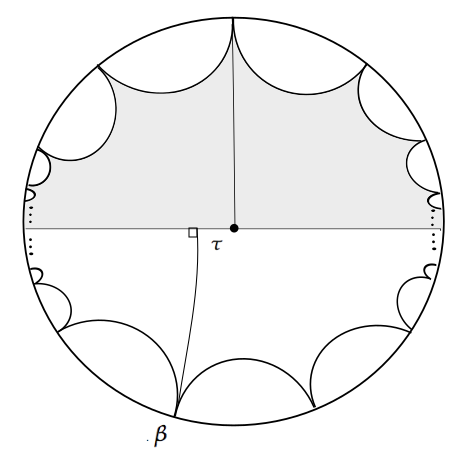}\\
  \caption{The images of the upper and lower half-planes differ by a hyperbolic translation that determines the boundary twist. }
\end{figure}

\textit{Remark.} The equivariant map $\hat{h}$ determines a shear parameter between the images of the upper and lower half-planes  as follows:  If the image under $\tilde{h}$ of the negative imaginary axis limits a point $\beta \in \partial \mathbb{D}$ in the lower semi-circle, then the shear parameter $\tau \in \mathbb{R}$ is the (signed) distance between $0$ and the foot of the perpendicular from $\beta$  to the geodesic between $\pm 1$.  (See Figure 6.) This parameter is also equal to the (signed) translation distance of the hyperbolic isometry $S$. This measures the ``boundary twist" parameter of the hyperbolic crown $\CC$ obtained in the quotient of the image of the upper half-plane by $T$. \\

Finally, we have:
  
  \begin{prop}[Defining $\Phi$]
  Given $q_{sym}\in  \mathrm{Q}_{sym}(P, n)$, there exists a harmonic map $$h:\mathbb{D}^\ast \to \CC$$ where $\CC$ is a hyperbolic crown, having Hopf differential $q$. 
  Moreover, the metric residue $a$ of $\CC$ is equal to  twice the real part of the residue of $q$.

   % Moreover, the following hold:
    %\begin{itemize}
    % \item[(a)] The metric residue $a$ of $\CC$ is equal to the real part of the residue of $q$.
 %   \item [(b)] By the normalization chosen, the image of the real radial ray limits to the cusp labelled ``1".
 %    \item [(b)]   The map $\hat{h}$ determines a twist parameter on the boundary of $\CC$ as follows:  If the image under $\tilde{h}$ of the negative imaginary axis limits a point $\beta \in \partial \mathbb{D}$ in the lower semi-circle, then the twist parameter $\tau \in \mathbb{R}$ is the (signed) distance between $0$ and the foot of the perpendicular from $\beta$  to the geodesic between $\pm 1$.
     %     \end{itemize}
  Together with the preceding remark assigning the boundary twist parameter, 
  we have a well-defined map
  \begin{equation*}
{\Phi}:  \mathrm{Q}_{sym}(P, n)   \to {\mathsf{Poly}_a}(n-2).
\end{equation*} 
where $P$ is the principal part of $q$.
  
 % The map $h$ is asymptotic to a the crown end as in Definition \ref{asymp-def}, that is, the image of the circle $\partial B(0,r)$ maps $\ep$-close to a truncation of $\CC$, where $\ep\to 0$ as $r\to 0$.
  \end{prop}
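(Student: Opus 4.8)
The plan is to obtain $h$ by descending the equivariant picture built in the previous subsection. I would begin from the normalized equivariant harmonic map $\til h:\H^2\to(\D,\rho)$ furnished by Proposition \ref{exist} and Lemma \ref{Tinv}: it has Hopf differential $\pi^\ast(q_{sym})$, its image is a polygonal end $\PP$ with $(n-2)$ boundary cusps, and $\til h(w+1)=T\circ\til h(w)$ for a hyperbolic isometry $T$ whose axis joins $\theta_\pm=\pm1$. Since $\pi:\H^2\to\D^\ast$ is precisely the quotient by $w\mapsto w+1$, and since by the Remark following Definition \ref{pend} the quotient $\PP/\langle T\rangle$ is a hyperbolic crown $\CC$ with $(n-2)$ boundary cusps, the equivariance forces $\til h$ to descend to a map $h:\D^\ast\to\CC$. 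I would then observe that $h$ is harmonic (harmonicity being a local condition), is a diffeomorphism onto $\CC$ (because $\til h$ is a diffeomorphism onto $\PP$ by Theorem \ref{tamwan} and both quotient maps are coverings), and has Hopf differential the descent of $\pi^\ast(q_{sym})$, namely $q:=q_{sym}\vert_{\D^\ast}$, which has principal part $P$ at the origin. That $h$ is asymptotic to the crown end is read off from Lemma \ref{thetas}: pulling back a polygonal exhaustion $\{P_j\}$ at the pole, the images $h(P_j)$ approach the crown end of $\CC$.

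Next I would establish the residue identity $a=2\,\Re(\mathrm{Res}_0\sqrt q)$. If $n$ is odd this is immediate: the number of cusps $m=n-2$ is odd, so the metric residue of $\CC$ is $0$ by Definition \ref{metres}, while the analytic residue vanishes as well since the only polar terms of $\sqrt q$ are half-integral powers of $z$ and so carry no $z^{-1}dz$ term. If $n$ is even I would run the argument of Proposition \ref{poly-ex} in the crown setting flagged by the Remark following it: choose a polygonal exhaustion $\{P_j\}$ at the pole whose boundary $\partial P_j$ has alternating horizontal and vertical sides of $q$-length $L_i^{(j)}\pm O(1)$ with $L_i^{(j)}\to\infty$, lying at $q$-distance $R_j\to\infty$ from the zeroes of $q$; by Proposition \ref{prop-est} the image of each horizontal side is an arc of hyperbolic length $2L_i^{(j)}+O(e^{-\alpha R_j})$ that is $\ep_j$-close to a geodesic, and $h(\partial P_j)$ is $\ep_j$-close to the boundary of a truncation of $\CC$ with $\ep_j\to0$. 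Consequently the metric residue of $\CC$ equals $2\sum_i(-1)^iL_i^{(j)}+O(\ep_j)$, and Lemma \ref{qres} identifies $\sum_i(-1)^iL_i^{(j)}$ with $\Re(\mathrm{Res}_0\sqrt q)$; letting $j\to\infty$ yields the identity. Since this residue depends only on the principal part $P$, the crown $\CC$ lies in $\mathsf{Poly}_a(n-2)$.

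Finally, I would check that $\Phi(q_{sym}):=\CC$, equipped with the boundary twist $\tau$ assigned by the Remark preceding the proposition (the signed translation length of the isometry $S$ of Lemma \ref{Tinv}), is well-defined. The harmonic map $\hat h:\C\to(\D,\rho)$ with Hopf differential $\hat\pi^\ast(q_{sym})$ is unique up to post-composition by an isometry of $(\D,\rho)$ by Theorem \ref{tamwan}, and the normalization of Lemma \ref{Tinv}, namely $\theta_\pm=\pm1$ together with the image of the positive imaginary axis limiting to $i$, rigidifies this, since the only isometry of $\D$ fixing $\pm1$ and $i$ is the identity. Hence $\hat h$, and with it $\til h$, $\PP$, $T$ and $S$, depend only on $q_{sym}$; moreover this is exactly the normalization used in Definition \ref{pend} to single out a polygonal end, so the labelled crown $\CC=\PP/\langle T\rangle$ together with $\tau$ is a well-defined point of $\mathsf{Poly}_a(n-2)$, and $\Phi$ is defined. (The value $a$, and hence the target, is unchanged by the positive rescalings quotiented out in the definition of $Q_{sym}(P,n)$, since these do not alter the residue.)

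The only step here that is more than bookkeeping is the even-$n$ residue computation: transporting the planar asymptotic-image analysis of Proposition \ref{poly-ex} to the hyperbolic crown and controlling the error terms uniformly along the exhaustion (in particular getting the factor of $2$ right). The descent of the equivariant map and the canonicity of the construction are routine once the normalization of Lemma \ref{Tinv} is available.
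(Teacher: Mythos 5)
Your proposal follows essentially the same route as the paper: descend the normalized equivariant map $\tilde h$ from Proposition \ref{exist} and Lemma \ref{Tinv} to $h:\mathbb{D}^\ast\to\CC$, then identify the metric residue with twice $\Re(\mathrm{Res}_0\sqrt q)$ via a polygonal exhaustion, the length estimates of Proposition \ref{prop-est}, and Lemma \ref{qres}, exactly as in the paper's argument (which adapts the end of Proposition \ref{poly-ex}). Your added remarks on the odd-$n$ case, the rigidifying normalization, and invariance under positive rescaling are correct supplementary bookkeeping rather than a different method.
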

  \begin{proof}
  In the preceding discussion, the equivariant map $\tilde{h}$ that is the restriction of $\hat{h}$ to $\mathbb{H}^2$  (see \ref{maph}) descends to the harmonic map $h$.
  
  The statement about metric residue  then follows from the final argument in the proof of Proposition \ref{poly-ex}: 
  
 Namely, consider a  polygonal exhaustion
  $$\cdots P_{j-1} \subset P_j \subset P_{j+1} \subset \cdots$$
   at the pole in $\D^\ast$, and its lift to the universal cover $\mathbb{H}^2$.
   
   By the distance estimates, we know that the lengths of the images by $\tilde{h}$  of a horizontal segment of length $l_h$  in the $\tilde{q}$-metric, is $2l_h + O(e^{-\alpha D})$, where $D$ is the distance of the segment from the zeroes of $q$. 
   
   Applying this to the horizontal sides of $\partial P_j$, note that  $D\to \infty $ as $j \to \infty$. Hence for any $\ep>0$, we can choose $j$ sufficiently large, such that the image of the horizontal sides of $\partial P_j$ have hyperbolic lengths  $2l_i + \ep$ for $1\leq i\leq (n-2)/2$.  Moreover, this image is $\ep$-close to a truncation of the crown $\CC$, and hence the metric residue $a$ of the crown (see Defn. \ref{metres}) which is independent of $\ep$ differs from the alternating sum of these lengths by $O(\ep)$.
   
 By Lemma \ref{qres}, this alternating sum of the lengths of the sides of $\partial P_j$ equals the real part of the quadratic residue at the pole, which is also independent of $\ep$.  Letting $\ep \to 0$ in the above equalities, we have that this real part equals $a/2$, as required. 
 
  Thus in the universal cover $\CC$ determines a polygonal end in ${\mathsf{Poly}_a}(n-2)$. 
 
%\cm{By the preceding construction, if one scales all the coefficients  of $q_{sym}$ by a positive real, one will have the same asymptotic image: then only the horizontal and vertical lengths of a polygon in a polygonal exhaustion, and their corresponding images will scale; the map will exit the cusps at a faster rate, but the geometry of the image crown remains the same.} 

\end{proof}

  %\begin{proof}
  
 %Using the polygonal exhaustion $\{P_j\}_{j\geq 1}$ for $q_{sym}$ at the pole at the origin, introduced in the proof of Lemma \ref{thetas}, we have that the images of the bi-infinite lines $\{\partial \widetilde{P_j}\}_{j\geq 1}$ obtained in the universal cover $\mathbb{H}^2$, are asymptotic to a bi-infinite chain of geodesic lines, each between successive limit points $\{\ldots, \theta_{n-2}^{-1}, \theta^0_1, \theta^0_2, \ldots, \theta^0_{n-2}, \theta^1_1, \theta^1_2,\ldots\}$. 
 
 %Moreover, by Lemma \ref{Tinv}, we have that this geodesic chain is in fact invariant by some hyperbolic translation $T$.
 
% Such a bi-infinite collection of geodesics invariant under a hyperbolic translation is precisely the definition of a polygonal end in  $\mathrm{Poly}(n-2)$ (see Definition \ref{pend}).  
 
% The equivariance \eqref{eq-asymp} implies that the map descends to the quotient. The statement about asymptoticity follows because any such circle $\partial B(0,r)$  is nested between two polygonal boundaries $\partial P_j$ and $\partial P_{j+1}$ for sufficiently large $j\gg 1$.
  %\end{proof}

\subsection{Proof of Theorem \ref{asm} (Asymptotic Models)}

For $n$ even, the both spaces  $\mathrm{Q}_{sym}(P, n)$ and ${\mathsf{Poly}}_a(n-2)$ are homeomorphic to $\mathbb{R}^{n-3}$ and when $n$ is odd, both spaces are homeomorphic to $\mathbb{R}^{n-2}$. The map $\Phi$ is easily seen to be continuous (this follows, for example, by the argument for Lemma 2.2 in \cite{TamWan}). Hence to show that the map $\Phi$ between them is  homeomorphism, it is enough to show that the map is proper and injective.

%\cm{Throughout, we shall choose a rescaled representative $q_{sym}$ that has all coefficients with modulus bounded by $1$.}

\subsection*{Asymptotic geometry of the $q$-metric}

We begin with the following relation between the principal part and the singular flat geometry near the pole, namely, that relative distances in the induced metric are determined, up to a uniformly bounded error, by the principal part:

\begin{lem}\label{ageom1} Let  $n\geq 3$ and let $q_1,q_2 \in Q(P, n)$ be meromorphic quadratic differentials defined on $\D^\ast$ with the same principal part $P$. Then there exists $C_1>0$ such that  for any pair of points $z,w \in \mathbb{D}^\ast$ sufficiently close to the origin, we have
\begin{equation*}
\lvert d^h_1(z,w) - d^h_2(z,w) \rvert  \leq C_1
\end{equation*}
where $d^h_i$ denotes the horizontal distance with respect to the $q_i$-metric, for $i=1,2$. There is a similar  bound for differences of vertical distances $d^v_i$.

%Moreover, if $q_1,q_2$ are symmetric differentials with  all its coefficients of modulus bounded by $1$, then the constant $C_1$ is universal. 
\end{lem}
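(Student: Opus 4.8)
The plan is to compare both $q_i$-metrics to the metric induced by the principal differential $P$ itself, reducing the claim to a statement about how adding a holomorphic quadratic differential (on all of $\mathbb{D}$) perturbs horizontal and vertical distances near the pole. First I would pass to the canonical coordinate for $P$: choosing a branch of $\sqrt{P}$ on a sector or on a suitable cover of a punctured neighborhood, set $\zeta = \int \sqrt{P}$, so that in the $\zeta$-coordinate $P = d\zeta^2$ and the pole at $0$ is sent to infinity. Since $\sqrt{q_i} = \sqrt{P} + \omega_i$ where $\omega_i$ is a holomorphic $1$-form on $\mathbb{D}$ (by Lemma \ref{princ-lem}, or more precisely by the remark following it, interpreted as an identity of square roots on the relevant cover), we have $\sqrt{q_i}\,/\sqrt{P} = 1 + \omega_i/\sqrt{P} \to 1$ as $z\to 0$, because $\omega_i$ is bounded while $\sqrt{P}$ blows up like $z^{-r}$ with $r = \lfloor n/2\rfloor \geq 1$. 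Hence in the $\zeta$-chart the three metrics $|q_i|\,|dz|^2$ and $|P|\,|dz|^2 = |d\zeta|^2$ all agree up to a conformal factor tending to $1$ near infinity.

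The core estimate is then the following: if $\gamma$ is a horizontal segment for $q_i$ joining $z$ to $w$ (both near $0$), its image in the $\zeta$-plane is a curve whose $\zeta$-length differs from the Euclidean straight-line distance $|\zeta(z)-\zeta(w)|$ by a bounded amount, and moreover the ``horizontal part'' (real part of $\zeta$-displacement) is what $d^h_i$ records. Concretely, $d^h_i(z,w) = |\Re(\zeta_i(z) - \zeta_i(w))|$ where $\zeta_i = \int\sqrt{q_i}$, up to the same kind of bounded error coming from the finitely many zeroes of $q_i$ near $0$ that one must route horizontal/vertical saddle connections around. So it suffices to bound $|\Re(\zeta_1(z)-\zeta_1(w)) - \Re(\zeta_2(z)-\zeta_2(w))|$. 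Writing $\zeta_i(z) = \zeta_P(z) + \eta_i(z)$ where $\eta_i(z) = \int^z \omega_i$, this difference is $|\Re(\eta_1(z) - \eta_1(w) - \eta_2(z) + \eta_2(w))| \leq |\eta_1(z)| + |\eta_1(w)| + |\eta_2(z)| + |\eta_2(w)|$, and each $\eta_i$ is a primitive of a holomorphic $1$-form on the disk, hence \emph{bounded} on a slightly smaller disk (note $\omega_i$ has no residue issue affecting the real part once we fix a branch — and in any case $\Re\eta_i$ is single-valued, or its multivaluedness is by a fixed period that can be absorbed). This gives the constant $C_1$, uniform over $z,w$ in a fixed punctured neighborhood. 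The vertical case is identical with $\Re$ replaced by $\Im$.

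I would organize the write-up as: (1) fix a coordinate disk and a branch of $\sqrt{P}$ on the complement of a horizontal/vertical slit, introducing $\zeta_P$; (2) express $d^h_i(z,w)$ as $|\Re(\zeta_i(z)-\zeta_i(w))| + O(1)$, citing the standard fact that in a singular flat metric with finitely many cone points the horizontal distance is computed this way up to error bounded in terms of the cone angles and number of singularities (this is implicit in the discussion around Definition \ref{h-v} and Lemma \ref{qres}); (3) write $\zeta_i - \zeta_P = \int\omega_i$ with $\omega_i$ holomorphic on $\mathbb{D}$, hence with $\Re\int\omega_i$ and $\Im\int\omega_i$ bounded on $\overline{\mathbb{D}_{1/2}}$; (4) combine by the triangle inequality. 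The main obstacle I anticipate is step (2): making precise, with a genuinely uniform constant, the claim that $d^h_i(z,w)$ differs from $|\Re(\zeta_i(z)-\zeta_i(w))|$ by a bounded amount — one has to argue that an optimal horizontal path need not wander far (in $\zeta$-diameter) from the straight segment, controlling the detours forced by the zeroes of $q_i$ accumulating near the pole only in bounded number (finitely many, since $q_i$ is meromorphic with a pole of finite order). Once that bookkeeping is done the rest is the elementary perturbation argument above.
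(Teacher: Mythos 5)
Your proposal rests on the same underlying fact as the paper's proof — fixing the principal part pins down $\sqrt{q_i}$ near the pole up to a term whose integral along paths into the puncture is bounded — but the route is genuinely different, and the paper's is shorter. The paper never introduces the principal differential $P$, the coordinates $\zeta_i$, or their primitives: it takes a single (geodesic) path $\gamma$ joining $z$ to $w$ inside $B(0,r)$, bounds $\lvert d^h_1(z,w)-d^h_2(z,w)\rvert$ by $\int_\gamma \lvert \Re(\sqrt{q_1}-\sqrt{q_2})\rvert$, and observes that equality of principal parts forces $\sqrt{q_1}-\sqrt{q_2}=O(\lvert z\rvert^{-1/2})$, which is integrable, so the bound is $O(r^{1/2})$ (in fact decaying as the points approach the pole, though only boundedness is needed). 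This pointwise estimate along one path completely sidesteps your step (2), the reduction $d^h_i(z,w)=\lvert\Re(\zeta_i(z)-\zeta_i(w))\rvert+O(1)$, which you rightly identify as the main burden of your approach: that reduction is true and provable, but it requires handling the multivaluedness of $\zeta_i$ (the period is the residue), the bounded winding of geodesics about the puncture, and excursions of the geodesic toward the scale-$r$ ``core''; note that the zeros of $q_i$ do not accumulate at the pole, so once $z,w$ are close enough to $0$ there are no detours around zeros at all, and the constant is allowed to depend on $q_1,q_2$ and $r$, which is all the lemma asks. So your argument buys a cleaner conceptual picture (compare everything to $P$ in its natural coordinate) at the price of an extra distance-to-coordinate lemma the paper never needs.

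One inaccuracy you should repair before writing this up: for $n$ odd, $\omega_i=\sqrt{q_i}-\sqrt{P}$ is \emph{not} a holomorphic, or even bounded, $1$-form near the origin — it grows like $\lvert z\rvert^{-1/2}$ (this is precisely the $\alpha_0/\sqrt z$ term displayed in \eqref{hdiff}; the remark after Lemma \ref{princ-lem} is explicitly only formal, and for odd order the remainder involves half-integer powers of $z$). For $n$ even, $\omega_i$ is indeed holomorphic near $0$ by the removable singularity theorem, but in general the correct statement is only $\omega_i=O(\lvert z\rvert^{-1/2})$. Fortunately everything you actually use survives: $\omega_i/\sqrt P\to 0$ still holds since $\sqrt P$ blows up like $\lvert z\rvert^{-n/2}$, and the primitive $\eta_i=\int\omega_i$ is still bounded near $0$ because $\lvert z\rvert^{-1/2}$ is integrable (with periods around the puncture of size $O(s^{1/2})$ on the circle of radius $s$). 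With ``bounded $1$-form'' replaced by ``integrable $1$-form with bounded primitive'', your steps (3)–(4) go through and the proof is correct.
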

\begin{proof}
Let $z,w\in B(0,r)$ where $0<r\ll 1$.
Recall that the vertical and horizontal distances between $z$ and  $w$ are the corresponding vertical and horizontal lengths of  a geodesic path $\gamma$ with respect to the $q_i$-metrics (see Definition \ref{h-v}):
\begin{equation*}
d^h_i(z,w) = \displaystyle\int_\gamma  \lvert \Re (\sqrt q_i) \rvert  \text{ and } d^v_i(z,w) = \displaystyle\int_\gamma  \lvert \Im (\sqrt q_i) \rvert 
\end{equation*}
%where $\gamma$ is a path from $z_0$ to $z$  that has zero winding number around the pole at infinity. Note that these integrals are well-defined, that is, they do not depend on the choice of $\sqrt q_i$ that are defined locally on an open covering of $\gamma$.

The key is to observe that since the principal parts are the same, we have (\textit{cf.} \eqref{pp-eq}) :
%\begin{equation}\label{hdiff}
\begin{multline}\label{hdiff}
\lvert d^h_1(z,w) - d^h_2(z,w)  \rvert   \leq  \displaystyle\int_\gamma  \lvert \Re (\sqrt q_1) - \Re (\sqrt q_2)  \rvert \\ \leq \displaystyle\int_\gamma \left\vert  \Re \left(\frac{\alpha_0}{\sqrt z}  +  {\alpha_{-1}}\sqrt z + O(z^{3/2}) \right) dz \right\vert  \leq C\displaystyle\int_\gamma \frac{\lvert dz\rvert}{\lvert z\rvert ^{1/2}} 
\end{multline}
for some constant $C>0$ which depends on the maximum modulus of the coefficients occurring in an expression of $q_1$ and $q_2$.
%\end{equation}
Since $\gamma \subset B(0,r)$, we have that the right hand side is $O(r^{1/2})$.

The same bound holds for the difference of the vertical distances.
\end{proof}

Conversely, we have that:

\begin{lem}\label{ageom2} Let  $n\geq 3$ and let $q_1,q_2 \in \mathcal{QD}(P, n)$ be meromorphic quadratic differentials defined on $\D^\ast$ having distinct principal parts $P_1, P_2$. Then there exists a sequence of points $z_i \to 0$ in $\D^\ast$ as $i\to \infty$ such that
\begin{equation*}
\lvert d^h_1(z_i,z_0) - d^h_2(z_i,z_0) \rvert  \to \infty
\end{equation*}
where $z_0$ is a fixed basepoint in $\D^\ast$. 
\end{lem}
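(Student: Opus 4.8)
\textbf{Proof plan for Lemma \ref{ageom2}.}

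The plan is to exploit the contrapositive of the structure of Lemma \ref{princ-lem}: the principal part is exactly the collection of negative-power (plus, for odd $n$, the half-integer-power) terms in the formal Laurent expansion of $\sqrt{q}$, and these are precisely the terms whose contribution to path integrals \emph{diverges} as one approaches the pole. Write $\sqrt{q_i} = \eta_i + \text{(holomorphic)}$ where $\eta_i$ is the meromorphic $1$-form associated to $P_i$ as in the Remark following Lemma \ref{princ-lem}; since $P_1 \ne P_2$, the difference $\eta := \eta_1 - \eta_2$ is a nonzero meromorphic $1$-form with a pole of order $\ge 2$ at the origin (its principal part is nonzero because the map from coefficients of $\sqrt q$ to coefficients of $q$ is injective, by Lemma \ref{princ-lem}). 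The idea is that the horizontal distance $d^h_i(z,z_0)$ is, up to the bounded error controlled as in Lemma \ref{ageom1}, governed by $\bigl|\Re \int \eta_i\bigr|$ along an appropriate path, so the difference $d^h_1 - d^h_2$ is governed by $\bigl|\Re \int \eta\bigr|$, which blows up.

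The steps I would carry out, in order: \textbf{(i)} Fix the basepoint $z_0$ and, for each small $r>0$, let $z_r$ be a point at $q_1$-distance roughly $r^{-1}$ (say) from $z_0$ near the pole; more precisely, I would choose the sequence $z_i \to 0$ along a fixed horizontal ray of the $q_1$-metric (one of the $n-2$ rays asymptotic to the pole, cf. the Example after Definition \ref{h-v}), so that $d^h_1(z_i, z_0) \to \infty$ at a controlled rate. \textbf{(ii)} Along the $q_1$-geodesic $\gamma_i$ from $z_0$ to $z_i$ (which is essentially this horizontal ray for large $i$, up to a bounded piece near $z_0$), estimate $d^h_2(z_i,z_0) \le \int_{\gamma_i} |\Re \sqrt{q_2}|$ and compare: $\bigl| d^h_1(z_i,z_0) - d^h_2(z_i,z_0)\bigr| \ge \bigl|\int_{\gamma_i}|\Re\sqrt{q_1}| - \int_{\gamma_i}|\Re\sqrt{q_2}|\bigr|$ needs care because of the absolute values, so instead I would work with the holomorphic primitives. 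Pass to the natural coordinate $\zeta_1 = \int \sqrt{q_1}$ for the $q_1$-metric: in this coordinate $\gamma_i$ is a horizontal segment, $\Re \int_{\gamma_i}\sqrt{q_1} = d^h_1(z_i,z_0) + O(1)$, while $\int_{\gamma_i}\sqrt{q_2} = \int_{\gamma_i}\sqrt{q_1} + \int_{\gamma_i}\eta + O(1)$, and the leading term of $\int_{\gamma_i}\eta$ — coming from the order-$\ge 2$ pole of $\eta$ — is unbounded in modulus as $z_i \to 0$, with a phase that does not conspire to stay purely imaginary if the ray is chosen generically among the $n-2$ available horizontal directions. \textbf{(iii)} Conclude that $\bigl|\Re \int_{\gamma_i}\sqrt{q_2}\bigr|$ and $\bigl|\Re \int_{\gamma_i}\sqrt{q_1}\bigr|$ differ by an unbounded amount, and then transfer this back to the distances $d^h_2(z_i,z_0) = \bigl|\Re \int_{\gamma_i}\sqrt{q_2}\bigr| + O(1)$ using that $\gamma_i$ is a near-geodesic for $q_2$ as well (or, more robustly, that for \emph{any} path the $q_2$-horizontal length is at least $\bigl|\Re\int\sqrt{q_2}\bigr|$, while along $\gamma_i$ it is not much more).

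The main obstacle I anticipate is \textbf{(ii)}–\textbf{(iii)}: controlling the effect of the absolute values $|\Re(\cdot)|$ inside the length integrals, and ensuring the leading-order term of $\int \eta$ along the chosen ray genuinely contributes to the \emph{real} part (and hence to horizontal distance) rather than getting hidden in the imaginary part — for odd $n$ the dominant term of $\eta$ is $\alpha_0 z^{-1/2}$-type with a square-root, whose argument along a radial ray $z = te^{i\theta_0}$ rotates at a fixed rate, so one must pick $\theta_0$ avoiding the finitely many bad directions where $\Re$ of the primitive stays bounded; a cleaner route is to choose the sequence $z_i$ not on a single ray but to average/rotate so that $\limsup_i \bigl|\Re\int_{z_0}^{z_i}\eta\bigr| = \infty$ is forced regardless. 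If one wants to sidestep delicate geodesic-shape arguments for $q_2$, one can instead invoke that horizontal and vertical distances together control all relative distances up to bounded error, and that $\eta \ne 0$ implies its primitive is unbounded in \emph{some} direction (real or imaginary part), so at least one of the analogues (horizontal, as stated, or the parallel vertical statement) diverges; a symmetry between the two cases then recovers exactly the stated conclusion. Finally, one should note the hypothesis as written refers to $\mathcal{QD}(P,n)$ — I would read this as $Q(n)$ with $q_i$ having principal part $P_i$, consistent with Lemma \ref{ageom1}.
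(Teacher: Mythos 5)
Your proposal is essentially the paper's own proof: the paper likewise obtains the divergence by integrating $\Re(\sqrt{q_1}-\sqrt{q_2})$ along paths $\gamma_i$ tending to the pole — choosing the $z_i$ in a fixed sector where $\Re\sqrt{q_1}$ and $\Re\sqrt{q_2}$ are positive rather than along a $q_1$-horizontal ray in natural coordinates — and bounding the difference of horizontal distances below by $c\int_{\gamma_i}\lvert dz\rvert/\lvert z\rvert^{\nu}$, where $z^{-\nu}$ is the first term at which the principal parts differ. The phase and absolute-value issues you raise in steps (ii)--(iii) are simply glossed over in the paper's short argument, so your extra care is consistent with (not a departure from) its approach; only note that the difference form $\eta$ can have a simple pole (when just the residues differ), so ``order $\geq 2$'' should read ``order $\geq 1$,'' with your rotate/spiral variant covering that case.
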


\begin{proof}
Suppose $P_1$ and $P_2$ differ in the coefficient of $z^{-\nu}$ where $-\lfloor n/2 \rfloor \leq -\nu \leq -1$. 
Pick $z_0$, and the sequence $z_i$ to lie in a sector $\mathsf{S}$ in $\D^\ast$ where $\Re(\sqrt{q_1})$ and $\Re(\sqrt{q_1})$  are positive. 
Then 
\begin{multline}\label{hdiff2}
\lvert d^h_1(z_i,z_0) - d^h_2(z_i,z_0)  \rvert  =  \lvert \displaystyle\int_{\gamma_i}  \Re (\sqrt q_1) - \Re (\sqrt q_2)  \rvert\\ =  \lvert \displaystyle\int_{\gamma_i}  \Re (\sqrt q_1 - \sqrt q_2) \rvert \geq c\displaystyle\int_{\gamma_i} \frac{\lvert dz\rvert}{\lvert z\rvert ^{\nu}} 
\end{multline}
for a path $\gamma_i$ from $z_0$ to $z_i$ lying $\mathsf{S}$. 
The paths $\gamma_i$ tend to the origin as $i\to \infty$, and the integral on the right hand side diverges.
\end{proof}

\subsection*{Injectivity of $\Phi$}

The key proposition of this section is to observe that using the  preceding results concerning the asymptotic geometry near the pole, together with the estimates on the geometry of the harmonic map in \S2.5, we have:

\begin{prop}[Bounded distance]\label{inj} Fix $n\geq 3$ and a principal part $P \in \text{Princ}(n)$. Let  $\CC$ be a hyperbolic crown and $$h_1,h_2:\mathbb{D}^\ast \to \CC$$  be two harmonic maps with Hopf differentials $q_1,q_2 \in Q_{sym}(P,n)$. %Here we shall assume that we pick rescalings of $q_1$ and $q_2$ such that the coefficients have modulus bounded by $1$.

Then the  maps $h_1$ and $h_2$  are asymptotic in the following sense: 
\begin{equation}\label{dist-bd}
d_\rho(h_1(z), h_2(z)) \leq C_d
\end{equation}
for some constant $C_d>0$ where $d_\rho$ is the hyperbolic distance on $\CC$.

%Moreover, if $h_1$ and $h_2$ both map the boundary of the disk $\partial \mathbb{D}$ to the boundary of the crown $\partial \CC$, then $C_d$ can be taken to be independent of the choice of the pair of maps in $Q(P,n)$. 

%Moreover,  for any $\ep>0$ there is an $r>0$ such that the relative distances $$d_\rho(h_i(z_1), h_i(z_2))  < \epsilon  \hspace{.1in}\text{   for } i=1,2$$
%for all $z_1,z_2\in B(0, r) \subset \mathbb{D}^\ast$.

Conversely, if two such harmonic maps $h_1,h_2$ to the same crown are a bounded distance apart, then the principal parts of their Hopf differentials must be identical.
% In fact, the distance function 
%\begin{equation*}
%d(z) := d_\rho(h_1(z), h_2(z)) \to 0
%\end{equation*}
%as $z\to 0$  along certain directions in $\mathbb{D}$.
\end{prop}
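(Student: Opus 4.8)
\textbf{Proof proposal for Proposition \ref{inj}.}

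The plan is to prove the two implications separately, using the same dictionary between hyperbolic distances in $\CC$ and horizontal/vertical distances in the $q$-metrics on $\mathbb{D}^\ast$ supplied by the geometric estimates of \S2.5.

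\emph{The bounded-distance direction.} First I would pass to the universal cover, comparing the equivariant lifts $\tilde h_1,\tilde h_2:\mathbb{H}^2\to(\mathbb{D},\rho)$ obtained in Proposition \ref{exist}. By Lemma \ref{thetas}, both lifts send the two ends of $\mathbb{H}^2$ to the \emph{same} pair of ideal points $\theta_\pm$ (these points only depend on the horizontal directions at the pole, which are determined by $P$), and one may normalize via Lemma \ref{Tinv} so that $\theta_\pm=\pm1$ and the cusp labelled ``$1$'' sits at $i$ for both maps. Now fix a point $w\in\mathbb{H}^2$ lying on (a lift of) a horizontal edge of $\partial P_j$ at distance $R$ from the zeros of $q_i$. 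By Proposition \ref{prop-est}, $\tilde h_i(w)$ lies within $\epsilon(R)$ of the geodesic connecting two consecutive ideal vertices $\theta^0_k,\theta^0_{k+1}$; moreover its position \emph{along} that geodesic is controlled, up to an additive error $O(e^{-\alpha R})$, by the horizontal distance in the $q_i$-metric from $w$ to the ``corner'' of $\partial P_j$ (the length estimate $2L+O(e^{-\alpha R})$ of Proposition \ref{prop-est}). Lemma \ref{ageom1} says precisely that these horizontal distances for $q_1$ and $q_2$ differ by at most a universal constant $C_1$, since $q_1,q_2$ share the principal part $P$. Combining: $\tilde h_1(w)$ and $\tilde h_2(w)$ both lie near the same geodesic side, at positions along it differing by $O(C_1)+O(e^{-\alpha R})$, hence $d_\rho(\tilde h_1(w),\tilde h_2(w))\le C_d$ with $C_d$ independent of $w$ in a neighborhood of the end. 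Away from the end (on a compact part of $\mathbb{D}^\ast$) the bound is automatic since both images lie in a fixed compact region; a horizontal/vertical net argument (using also the vertical-distance bound of Lemma \ref{ageom1} and the estimate $O(Le^{-\alpha R})$ for images of vertical segments) extends the estimate uniformly over all of $\mathbb{D}^\ast$, and descends to $\CC$.

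\emph{The converse.} Suppose $d_\rho(h_1(z),h_2(z))\le C_d$ for all $z$ but the principal parts $P_1\ne P_2$ differ, say in the coefficient of $z^{-\nu}$. By Lemma \ref{ageom2} there is a sequence $z_i\to0$, lying in a sector $\mathsf{S}$ where $\Re(\sqrt{q_1})$ and $\Re(\sqrt{q_2})$ are both positive, along which the horizontal distances $d^h_1(z_i,z_0)$ and $d^h_2(z_i,z_0)$ diverge from one another. Choosing the sector to contain a horizontal ray, the images $h_1(z_i)$ and $h_2(z_i)$ march out along (arcs within $\epsilon$ of) the same geodesic side of the crown end toward the same boundary cusp, and by the length estimates of Proposition \ref{prop-est} their arclength positions along that geodesic differ by $|2d^h_1(z_i,z_0)-2d^h_2(z_i,z_0)|+O(1)\to\infty$. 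Two points marching along a fixed geodesic whose signed arclength separation tends to infinity are at hyperbolic distance tending to infinity, contradicting \eqref{dist-bd}. Hence $P_1=P_2$.

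\emph{Main obstacle.} The delicate point is the first direction: converting the ``shared principal part'' hypothesis into a genuine \emph{uniform} bound on $d_\rho(h_1,h_2)$ over the whole punctured disk, rather than just along the horizontal rays. One must handle the region between the model polygonal pieces $\partial P_j$ — in particular control what happens near the vertical sides, where the harmonic map is contracting and the naive ``position along a geodesic'' coordinate degenerates — and verify that the $\epsilon(R)$ errors from Proposition \ref{prop-est}, the $O(e^{-\alpha R})$ length errors, and the constant $C_1$ from Lemma \ref{ageom1} can all be assembled into a single bound $C_d$. This is essentially a synchronization argument between the two singular-flat structures, and getting the error bookkeeping clean (and independent of how far out toward the puncture one is) is where the real work lies; the compact part of $\mathbb{D}^\ast$ contributes nothing since images there are confined to a fixed compact set.
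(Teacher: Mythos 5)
Your first direction follows the paper's strategy up to a point (same ingredients: the estimates of Proposition \ref{prop-est} plus Lemma \ref{ageom1} to synchronize horizontal distances, and the fact that the normalization forces both maps to send the relevant horizontal directions into the same cusp), and your converse is essentially the paper's argument via Lemma \ref{ageom2}. But there is a genuine gap precisely at the place you flag as ``where the real work lies'': you never actually produce the uniform bound on all of $\mathbb{D}^\ast$. Your sketch controls $d_\rho(\tilde h_1(w),\tilde h_2(w))$ only for $w$ on horizontal edges of the $\partial P_j$, and the proposed ``horizontal/vertical net argument'' for the remaining points is exactly the step that does not work pointwise: between consecutive polygons and near the vertical (contracted) sides the ``position along a geodesic side'' coordinate degenerates, and the $\epsilon(R)$, $O(e^{-\alpha R})$ and $C_1$ errors cannot simply be summed along a net without losing uniformity. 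There is also a smaller anchoring issue in your edge estimate: measuring position along the geodesic side from ``the corner of $\partial P_j$'' presupposes that the two maps send that corner to nearby points, which is part of what is to be proved; the paper instead anchors at a fixed basepoint $z_0$ (whose two images are a fixed finite distance apart) and at the vertical side $\alpha_j$ mapping deep into a \emph{common} cusp, where thinness of the cusp gives the bound, and then propagates the bound around $\partial P_j$ side by side.

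The missing idea that makes the paper's proof close is global, not local: the distance function $z\mapsto d_\rho(h_1(z),h_2(z))$ between two harmonic maps into a negatively curved target is subharmonic. So once one has a bound on the curves $\partial P_j$ that is independent of $j$ (the paper's Claim), the maximum principle applied to the regions bounded by these curves gives the uniform bound on the whole neighborhood of the puncture, with no analysis at all needed near the vertical sides or in the regions between the polygons. Adding this subharmonicity/maximum-principle step (and fixing the anchoring via a basepoint and the common cusp as above) would turn your outline into the paper's proof; as written, the first direction is incomplete.
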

\begin{proof}

%It is convenient to consider the transformed differentials $\hat{q}_1$, $\hat{q}_2$  of the form (\ref{qform2}) defined on $\mathbb{C}$ with the pole at infinity.

%Fix radius $R$ such that any point $z\in \mathbb{C} \setminus B(0,R)$ 

%We shall work in the universal cover.

Recall that by the construction in the preceding section,  the lifts to the universal cover $\tilde{h}_1$ and $\tilde{h}_2$ can be thought of as restrictions of harmonic maps $\hat{h}_1, \hat{h}_2: \mathbb{C} \to (\mathbb{D},\rho)$ to the upper half plane  $\mathbb{H}^2 \subset \C $.  

Moreover, their  Hopf differentials are the pullbacks of the differentials $(q_{sym})_{1}$ and $(q_{sym})_{2}$ respectively via the universal covering $\hat \pi:\mathbb{C} \to \mathbb{C}^\ast$.

The distance estimates of \S2.5 apply to the equivariant maps $\hat{h}_i$ for $i=1,2$, but  restricting to the upper half-plane and passing to the quotient the same estimates for \textit{upper} bounds of distances hold for the maps $h_i:\mathbb{D}^\ast\to \CC$. Henceforth we shall apply the estimates directly to the maps $h_i$ to obtain \eqref{dist-bd}. 

Fix a neighborhood $V \subset \mathbb{D}^\ast$ of the pole at zero such that any each point in $V$ 
is a distance $D\gg 0$ from any zero of ${q}_i$ (where $i=1,2$) in the induced quadratic differential metric.

Let $\{P_j\}_{j\geq 1}$ be the polygonal exhaustion for the ${q}_1$-metric on $\mathbb{D}^\ast$ as defined in  Definition \ref{polyex}.

By Lemma \ref{poly-ex} the image $h_1(P_j)$ is $\ep$-close to a truncation of the crown $\CC$, where $\ep\to 0$ as $j\to \infty$; these images form an exhaustion of the crown $\CC$.

%It will suffice to show that $\tilde h_1$ and $\tilde h_2$ are bounded distance from each other on $\partial \tilde P_j$, where the bound is independent of $j$.\\

It will suffice to show:\\

\textit{Claim. The maps $h_1$ and $h_2$ are bounded distance from each other on $\partial P_j$, where the bound is independent of $j$.}\\
\textit{Proof of claim.} 
Let $\alpha_j$ be the vertical side of $\partial P_j$ that maps into the cusp based at $1$ (for large $j$).

Fix a basepoint $z_0 \in V$. By construction, the horizontal distance of $\alpha_j$ in the $q_1$-metric from the basepoint is $L_j \pm O(1)$ for $j\geq 1$. 
Hence, the distance of the image $h_1(\alpha_j)$ from $h_1(z_0)$ is $2L_j +O(1)$ by Proposition \ref{prop-est}.

(Recall that in all this discussion $O(1)$ is a quantity that is independent of $j$.) 

Now by Lemma \ref{ageom1}, for any point in $\alpha_j$, the horizontal distance from $z_0$  with respect to the $q_2$-metric is also $L_j + O(1)$.
This implies that the distance of $h_2(\alpha_j)$ from $h_2(z_0)$ is approximately $2L_j + O(1)$  (again applying Prop \ref{prop-est}).

By our normalization both $q_1$ and $q_2$ have a leading coefficient $1$ (see (\ref{qform})), and hence the asymptotic directions of the horizontal rays at the pole are identical (\textit{cf.} the discussion in the example following Definition \ref{h-v}). 

% A calculation shows that the radial directions at angles $\frac{2\pi j}{n - 2}$ for $0\leq j\leq n-3$  in $\mathbb{C}$ are asymptotically horizontal.

Recall that  by Proposition \ref{rays},  the images of these horizontal directions tend towards the cusps of the polygonal end. 

Hence by the normalization we can assume without loss of generality that the horizontal ray corresponding that the midpoints of the sides $\alpha_j$ lie along, is mapped by $h_1$ and $h_2$  to the same cusp of $\CC$.

\begin{figure}
  % Requires \usepackage{graphicx}
  \centering
  \includegraphics[scale=0.45]{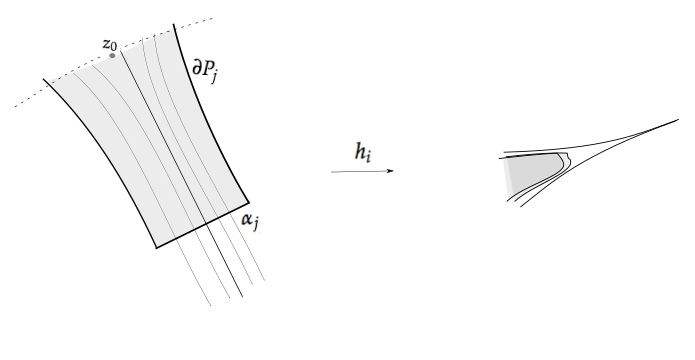}\\
  \caption{The maps $h_i$ for $i=1,2$ map the vertical side $\alpha_j$ of the polygon $\partial P_j$ into the same cusp; since the horizontal distances from $z_0$  for both images is  $L_j + O(1)$ with respect to the metric $q_i$, the distance between the images is uniformly bounded.}
\end{figure}

This observation implies that the two maps $h_1$ and $h_2$ are bounded distance apart when restricted to the side $\alpha_j$ of $P_j$. See Figure 7.

%e obtain using \eqref{hdist} and \eqref{hdiff} that the $\rho$-distances of the points $h_1(z)$ and $h_2(z)$ from $h_1(z_0)$ and $h_2(z_0)$ respectively, are within bounded distance.  

One can then extend the bounded-distance property to the remaining sides of $\partial P_j$:

Since the horizontal and vertical lengths of any side with respect to the two metrics are within bounded distance of each other,
by Prop \ref{prop-est} the distances between the images of the endpoints of the sides (and hence the images of the entire sides, since the horizontal sides map close to geodesic segments between these images, and the images of the vertical sides shrink) are bounded distance from each other. 
Consequently if one endpoint is already known to be a bounded distance apart, then so is the image of the entire side.
We can thus start by bounding the distance between the two maps on the sides adjacent to $\alpha_j$, and continue until one covers the entire polygon $\partial P_j$. $\qed$\\

By the claim, and the subharmonicity of the distance function between the maps $h_1$ and $h_2$, we can conclude that this distance function is uniformly bounded on $P_j$, where the bound is independent of $j$. Since the $P_j$s exhaust a neighborhood of the puncture in $\mathbb{D}^\ast$, we obtain a uniform bound over the entire domain, establishing \eqref{dist-bd}. 

The converse follows from Lemma \ref{ageom2} since if the principal parts are different, the diverging horizontal distances in the domain would imply, by  the geometric estimates (Proposition \ref{prop-est}), that the hyperbolic distances  between the corresponding image points also diverge, so the pair of maps are an unbounded distance apart in a neighborhood of the pole.\end{proof}

%Consequently, if $z$ is along one of the other vertical directions, it maps to the same cusp by both $h_1$ and $h_2$. By \eqref{hdist} and \eqref{hdiff} we can conclude that the $\rho$-distances of the image points $h_1(z)$ and $h_2(z)$ from $h_1(z_0)$ and $h_2(z_0)$ respectively are uniformly bounded, and by the triangle inequality, so is the distance $d_\rho(h_1(z),h_2(z))$.

As a consequence, we obtain:

\begin{cor}[Injectivity of $\Phi$]\label{inj2} The Hopf differentials of the maps $h_1$ and $h_2$ in Proposition \ref{inj} are identical.
%The map $\Phi$ is injective.
\end{cor}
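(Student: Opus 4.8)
The goal is to upgrade Proposition \ref{inj} from ``bounded distance'' to ``identical Hopf differentials.'' The plan is to exploit the fact that two harmonic maps $h_1, h_2 : \mathbb{D}^\ast \to \CC$ with the same target and bounded distance apart must in fact coincide, because a bounded harmonic map into a negatively curved space that is pinned at infinity is essentially unique. The cleanest route is through the lifts $\hat h_1, \hat h_2 : \C \to (\D, \rho)$ furnished by the construction of \S3.1, together with Theorem \ref{tamwan}.

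First I would recall that, by Proposition \ref{inj}, $d_\rho(h_1(z), h_2(z)) \leq C_d$ for all $z \in \mathbb{D}^\ast$; lifting to the universal cover $\mathbb{H}^2 \subset \C$ and using the equivariance \eqref{eq-asymp1} of the two maps (with hyperbolic translations $T_1, T_2$ along axes with endpoints at the limit points $\theta_\pm$ of each map's image), the same uniform bound $d_\rho(\hat h_1(w), \hat h_2(w)) \leq C_d$ holds on all of $\C$. Next I would argue that the convex (geodesic) function $w \mapsto d_\rho(\hat h_1(w), \hat h_2(w))$ is \emph{subharmonic} on $\C$ — this is standard for maps into a $\mathrm{CAT}(0)$ space such as the Poincaré disk (see \cite{SchoenYau}) — and a bounded subharmonic function on the whole plane $\C$ is constant. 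So $d_\rho(\hat h_1(w), \hat h_2(w)) \equiv c$ for some constant $c \geq 0$.

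Then I would rule out $c > 0$. If $c > 0$, the two maps trace out a ``flat strip'': for each $w$, the geodesic segment from $\hat h_1(w)$ to $\hat h_2(w)$ has constant length $c$, and convexity of the distance function forces the union of these segments to be a totally geodesic flat region in $(\D,\rho)$ — impossible since $(\D,\rho)$ has strictly negative curvature. (Alternatively, and perhaps more robustly: the limit points $\theta_\pm^{(1)}$ and $\theta_\pm^{(2)}$ of the two images agree after the normalization of Lemma \ref{Tinv} — three ideal vertices are pinned at $\pm 1, i$ — and two distinct points of $\partial\D$ cannot be joined to their counterparts by bounded-length geodesics unless those counterparts coincide and the segments are the zero segment.) Hence $c = 0$, i.e. $\hat h_1 = \hat h_2$ on $\C$, so $h_1 = h_2$ on $\mathbb{D}^\ast$, and in particular their Hopf differentials $q_1 = q_2$ agree.

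I would also note the shortcut that makes this almost immediate: Theorem \ref{tamwan} already tells us that any two orientation-preserving harmonic diffeomorphisms $\C \to (\D,\rho)$ with the \emph{same} Hopf differential differ by a postcomposed isometry — but here we do not yet know the Hopf differentials are equal, so one genuinely needs the bounded-distance input from Proposition \ref{inj} to conclude that the isometry relating suitably normalized lifts is trivial. The main obstacle is the step ruling out $c>0$: one must be careful that the ``flat strip'' argument is applied to maps that need not be harmonic \emph{as a pair} in any obvious sense, so I would instead lean on the cleaner fact that after the normalization fixing three ideal vertices, the images $\hat h_1(\C)$ and $\hat h_2(\C)$ have the same asymptotic ideal boundary (an $(n-2)$-gon with the same vertex set, by Lemma \ref{thetas} and Proposition \ref{rays}), which forces $c=0$ directly. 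Everything else is routine: subharmonicity of the distance function and the Liouville-type statement on $\C$ are classical.
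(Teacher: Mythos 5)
The first half of your argument (bounded distance from Proposition \ref{inj}, subharmonicity of the distance function, Liouville on $\C$, hence $d_\rho(\hat h_1,\hat h_2)\equiv c$) is exactly the paper's first step. The gap is in ruling out $c>0$, which is precisely where the paper does its real work. Your primary mechanism asserts that constancy of the distance ``forces the union of the connecting segments to be a totally geodesic flat region,'' contradicting negative curvature. That is not the correct equality case: constancy of $d(\hat h_1,\hat h_2)$ makes each Hessian term in the chain-rule formula for $\Delta\, d(\hat h_1,\hat h_2)$ vanish, and in strictly negative curvature the kernel of $\mathrm{Hess}\,d$ at a pair of distinct points consists exactly of vectors tangent to the connecting geodesic; so the equality case is not a flat strip (those are already excluded by curvature) but \emph{rank degeneracy}: both $d\hat h_1$ and $d\hat h_2$ would have to map everything into the line field of connecting geodesics. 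Excluding that degenerate alternative requires an extra input --- e.g.\ that the maps are diffeomorphisms onto two-dimensional images, so their differentials have rank $2$ --- which you never invoke. Your fallback argument is weaker still: knowing that the two images have the same asymptotic ideal boundary does not ``force $c=0$ directly.'' Coincidence of ideal vertices is compatible a priori with the maps differing by a slide along the image; indeed the paper's remark after Proposition \ref{prop-pdiff} points out that distinct harmonic maps with literally the same polygonal image do exist (they are unbounded distance apart, but that is established by the principal-part analysis, not by comparing ideal boundaries). So the step you lean on in the end assumes what has to be proved.

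For comparison, the paper closes this gap with a global topological argument: if $c>0$, the unit vector at $h_1(z)$ pointing toward $h_2(z)$ defines a nowhere-vanishing vector field on the (doubled) crown; the asymptotic estimates force it to be tangent to the geodesic sides and to point alternately into and out of consecutive cusps, which is impossible for an odd number of cusps and contradicts Poincar\'e--Hopf after doubling when the number is even. If you prefer your analytic route, it can be repaired: carry out the equality-case analysis of the Hessian as above and then use that $\hat h_1$ is a harmonic diffeomorphism (rank $2$) to exclude the degenerate case; but as written, neither of your two arguments for $c=0$ is valid, and this is the essential content of the corollary.
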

\begin{proof}
%Assuming $\Phi(q_1) = \PP = \Phi(q_2)$, by the above proposition we conclude that the distance function $d(z) := d_\rho(h_1(z), h_2(z))$ is a bounded on $\mathbb{D}^\ast$ such that $d(z) \to 0$ as $z\to 0$. 

By the previous proposition, the distance function between $\hat h_1$ and $\hat h_2$ is bounded; however such a distance function is subharmonic, and bounded subharmonic functions on $\mathbb{C}$ are constant, say equal to $c$.

If the constant $c\neq 0$,  then at every image point $h_1(z)$ one can place a unit vector towards $h_2(z)$. This defines a non-vanishing vector field $\mathcal{V}$ on the diffeomorphic image that is the doubled hyperbolic crown $\hat \CC$. Consider a horizontal horizontal side $\beta_j$  of $P_j$.   By Proposition \ref{prop-est} the images  of $\eta_j$ under both $h_1$ and $h_2$  tend towards a geodesic side $\gamma$  of the crown-end of $\CC$ as $j\to \infty$. In particular, if $z_j \in \beta_j$ is a sequence of points such that  $h_1(z_n) \to p \in \gamma$, then the images $h_2(z_n) \to p^\prime \in \gamma$ as well, where the distance between $p$ and $p^\prime$ is $c$.  Hence the vector field $\mathcal{V}$  is tangential to the geodesic sides of $\hat \CC$. 

Moreover, the vector field along the two geodesic sides bordering the same cusp point in the same direction, since by continuity this extends to a non-vanishing vector field in the intervening cusp,  with either all vectors towards the cusp or away from the cusp. Since the same geodesic line borders an adjacent cusp, this direction of the vector field  $\mathcal{V}$ (towards or away the ideal point of the cusp) alternates for alternate cusps. 

If  the number of cusps $n$ is odd, one cannot place signs that alternate for adjacent cusps, and hence this case is impossible.

If the number of cusps $n$ is even, by doubling the doubled hyperbolic crown by identifying two copies along the geodesic boundaries by an isometry, one obtains a  vector field on a sphere with total index $0$  (it is index $+1$ in the cusps where it points towards the puncture, and and $-1$ in the others, and these alternate) which contradicts the Poincar\'{e}-Hopf theorem.

 We thus obtain that  $c=0$, and the maps are in fact identical, and so are their Hopf differentials, and their restrictions to $\mathbb{H}^2 \subset \mathbb{C}$. Passing to the quotient using the equivariance, we obtain $q_1= q_2$ as desired.
\end{proof}

\medskip

\subsection*{Properness}
%A basic principle that we shall use in this section, and subsequently in the paper, is the following version of Harnack's principle:

%\begin{lem}[Convergence criterion]
%Let $X,Y$ be Riemann surfaces that are possibly non-compact, where $Y$ is equipped with a hyperbolic metric $\rho$. For each $k\geq 1$ let $h_k:X \to (Y,\rho)$ be a harmonic map such that 

%\begin{itemize}

%\item[(1)]  the images $\{h_k(z_0)\}_{k\geq 1}$ remain in a bounded subset of $Y$ for some fixed basepoint $z_0\in X$, and

%\item[(2)] the distance function between any pair of maps $h_i$ and $h_j$ is bounded on $X$.

%\end{itemize}

%Then there is a subsequence that converges on compact sets to a limiting harmonic map $h:X\to (Y,\rho)$. 

%\end{lem}

%\begin{proof}

%The hypotheses (1) and (2) imply that for any point $z\in X$, the sequence $\{h_k(z)\}$ forms a bounded set in $(Y,\rho)$, since 

%\end{proof}

To prove properness of the map $\Phi$ in \eqref{phimap}, we need to show:

\begin{prop}\label{prop-er} Let $P \in \text{Princ}(n)$. Suppose there is a sequence of symmetric quadratic  differentials  $q_k$ (for $k\geq 1$)  with the same principal part $P$ at the pole at zero in $\D^\ast$, such that the images of the corresponding sequence of harmonic diffeomorphisms $h_k:\mathbb{D}^\ast \to \CC_k$  determine a sequence of hyperbolic crowns  $\CC_k$ that converge to a hyperbolic crown $\CC$ as $k\to \infty$.

Then after passing to a subsequence, the Hopf differentials $q_k$ converge in $Q_{sym}(P,n)$.
\end{prop}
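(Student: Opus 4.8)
The plan is to establish properness of $\Phi$ by showing that a convergent sequence of image crowns forces (a subsequence of) the differentials $q_k$ to stay in a compact subset of $Q_{sym}(P,n)$, and then that any subsequential limit is actually realized. By Lemma \ref{dims}, the space $Q_{sym}(P,n)$ is homeomorphic to a Euclidean space parametrized by the ``tail'' coefficients $a_2,a_3,\ldots,a_{n-r}$ (the leading coefficients being pinned down by $P$), so it suffices to show these coefficients remain bounded and bounded away from any degenerate configuration. The key geometric input is Proposition \ref{prop-est}: the hyperbolic lengths of the images of horizontal segments of a polygonal exhaustion $\{P_j\}$ in the $q_k$-metric are, up to exponentially small error, twice their $q_k$-lengths. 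Since $\CC_k \to \CC$, the truncated geometry (side lengths, cross-ratio-type data, boundary twist) of the crowns $\CC_k$ is uniformly controlled; running the argument of Proposition \ref{poly-ex} backwards, the alternating sums of horizontal $q_k$-lengths along $\partial P_j$ — hence the real parts of certain period integrals of $\sqrt{q_k}$ — are uniformly bounded, and similarly the vertical periods are controlled by the cusp-to-cusp distances in the truncated crown.

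First I would make precise which ``periods'' or relative-distance quantities of the $q_k$-metric are pinned down by the isometry type of $\CC_k$ together with the fixed principal part $P$. The principal part fixes the metric near the pole up to bounded error (Lemma \ref{ageom1}), and fixes the $(n-2)$ asymptotic horizontal directions; the crown $\CC_k$ then determines, up to uniformly bounded error, the relative horizontal and vertical distances between the zeroes of $q_k$ and a reference horizontal ray, via Proposition \ref{prop-est} and Proposition \ref{rays}. Concretely: the images under $h_k$ of a fixed polygonal boundary $\partial P_j$ (for $j$ large, depending only on a uniform distance-to-zeroes bound that I must also establish) lie within $\epsilon_j$ of a truncation of $\CC_k$, and these truncations converge; hence the $q_k$-side-lengths of $\partial P_j$ converge, which translates into convergence of the relevant integrals $\int_{\partial P_j}\sqrt{q_k}$ and of enough additional segment-length data to recover the positions of the finitely many zeroes of $q_k$ in the $q_k$-metric. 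Bounded, non-colliding zero positions (in the flat metric, with the fixed principal part) yield a bound on the coefficients $a_i$, giving a convergent subsequence $q_k \to q_\infty$ in $Q_{sym}(P,n)$.

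Next I would verify that $\Phi(q_\infty)=\CC$, so that the limit is genuinely attained and no mass escapes. Here I would invoke continuity of $\Phi$ (already noted, following the argument for Lemma 2.2 of \cite{TamWan}): since $q_k \to q_\infty$ in $Q_{sym}(P,n)$, the harmonic maps $h_k$ (equivalently their lifts $\hat h_k$ on $\C$, normalized as in Lemma \ref{Tinv}) converge locally uniformly to the harmonic map $h_\infty$ with Hopf differential $q_\infty$, by the uniqueness in Theorem \ref{tamwan} and standard compactness for harmonic maps into $(\D,\rho)$; passing to the limit in Proposition \ref{rays} and Proposition \ref{poly-ex}, the image crown of $h_\infty$ is the limit of the $\CC_k$, namely $\CC$. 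Therefore $\Phi^{-1}(\CC)$ contains $q_\infty$, and any sequence in a preimage of a convergent (hence precompact) set subconverges within $Q_{sym}(P,n)$ — which is exactly properness.

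The main obstacle I anticipate is the uniform ``distance to the zeroes'' control: all the estimates of Proposition \ref{prop-est} degrade near the zeroes of $q_k$, so I must rule out the scenario in which a zero of $q_k$ drifts off to the pole (i.e. the flat distance from the chosen polygonal exhaustion to the zero set fails to be uniformly large) while the image crowns nonetheless converge. Ruling this out amounts to showing that such a degeneration of the flat metric would produce a visible degeneration of the truncated hyperbolic crown — for instance a pair of crown side-lengths blowing up or a cusp pinching — contradicting $\CC_k \to \CC$. I expect to handle this by a compactness/contradiction argument: assume a zero escapes, rescale the flat metric near it, extract a limiting (possibly lower-order) polynomial differential on $\C$ with its own harmonic map by Theorem \ref{image}, and read off from that limit a forced degeneration of the crown geometry, contradicting convergence. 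Making this rescaling argument precise — in particular keeping track of the fixed principal part $P$ through the limit and ensuring the extracted limiting configuration is non-trivial — is the technical heart of the proof.
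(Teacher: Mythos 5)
There is a genuine gap at the heart of your first step. You propose to bound the tail coefficients $a_2,\ldots,a_{n-r}$ of $q_k$ by ``running Proposition \ref{poly-ex} backwards'': from the convergence of the crowns $\CC_k$ you want to extract convergence of flat side-lengths of the polygonal exhaustions, and from those ``enough additional segment-length data to recover the positions of the finitely many zeroes of $q_k$ in the $q_k$-metric.'' But the estimates you are invoking (Propositions \ref{prop-est}, \ref{rays}, \ref{poly-ex}) only control the harmonic map at points whose flat distance from the zero set is large, i.e.\ in the region near the pole where the differential is dominated by its principal part; the quantities they let you read off from the crown (the alternating sums/residue, relative horizontal and vertical distances near the pole) are exactly the data already fixed by $P$, and they say nothing about the lower-order coefficients, which are encoded in the position of the zero set where the estimates are vacuous. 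Asserting that the crown geometry pins down the zero positions up to uniform error is precisely a quantitative inverse of $\Phi$ -- which is the properness statement you are trying to prove -- and neither this step nor your proposed fix for the ``escaping zero'' scenario (the blow-up/rescaling argument, which you yourself defer as the technical heart) is actually carried out. So the proposal does not close.

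The paper avoids inverting the geometric correspondence altogether: it never bounds coefficients directly, but instead proves the maps themselves subconverge. Concretely, since all $q_k$ have the same principal part, Lemma \ref{ageom1} together with Proposition \ref{prop-est} shows that on a small circle $C_r$ near the pole the images under any two maps in the sequence have uniformly bounded diameter; the convergence $\CC_k\to\CC$ is used only to show that the image of $C_r$, being homotopically essential and hence forced to exit the cusps, must meet a fixed compact set of $(\D,\rho)$, so any two maps of the sequence are a uniformly bounded distance apart on $C_r$ (and on $\partial\D$ by convergence of the boundary data). Subharmonicity of the distance function between two harmonic maps and the maximum principle on the intermediate annulus then give a uniform pointwise bound for the whole sequence, and Cheng-type interior gradient estimates plus Arzel\`a--Ascoli produce a locally uniformly convergent subsequence of harmonic maps, from which convergence of the Hopf differentials in the finite-dimensional space $Q_{sym}(P,n)$ follows automatically. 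If you want to salvage your outline, you would need to replace the coefficient-recovery step by an argument of this kind (or genuinely carry out the rescaling analysis you sketch), since as written the crucial implication ``crowns converge $\Rightarrow$ coefficients bounded'' is assumed rather than proved.
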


%\textit{Note.} As before, we can instead consider the associated sequence of harmonic maps
%$$\hat{h}_k:\mathbb{C} \to (\mathbb{D},\rho)$$
%which are diffeomorphism to the $\PP_k$.

\begin{proof}

Consider the associated sequence of $\mathbb{Z}$-equivariant harmonic maps
$$\tilde{h}_k:\mathbb{C} \to (\mathbb{D},\rho)$$
such that the restriction of each to $\mathbb{H}^2$  is an equivariant harmonic map to an image polygonal end.

Recall that since we are taking symmetric differentials, by our preceding construction,  the real axis in  $\mathbb{C}$ maps to the lift of the geodesic boundary of the corresponding crown. 

In what follows, for $0<r \leq 1$, let $\tilde{C}_r$ be the bi-infinite line in $\mathbb{H}^2$ that is the lift of the circle $C_r$ of radius $r$ in $\D^\ast$.  Let $F_r \subset \tilde{C}_r$ be a segment that is a fundamental domain of the action by translation $w\mapsto w+1$.

%Since the image polygonal ends converge to a polygonal end $\P$,  the images of  $F_1$  in fact converge to a geodesic segment on the boundary of $\P$, that lies on the boundary of a fundamental domain of the $\mathbb{Z}$-action. In particular, the images of $F_1$ remain uniformly bounded in  $(\D,\rho)$.

Our goal is to show that the sequence $\{h_k\}_{k\geq 1}$ is pointwise uniformly bounded. 

We shall show that any pair of maps $h_k,h_l$ in the sequence are a bounded distance apart, where the bound is independent of which pair we chose.  

(Note that the fact that such a pair is a bounded distance apart is already  consequence of Lemma \ref{inj} since the corresponding Hopf differentials $q_k$, $q_k$ have the same principal part $P$. The uniform bound shall crucially depend on the additional assumption that the image polygonal ends converge.) \\

%The following claim is based on the crucial observation that  the image of the circle $C_r$ under ${h}_k$ is a homotopically non-trivial curve in $\CC$, so the image of the lifted segment $F_r$ must exit the cusps of the image polygonal end.

\textit{Claim 1. There is a constant $C>0$ (independent of the pair) and a sufficiently small $0<r\ll 1$, such that the diameter of the images of $F_r$  the maps  $\tilde{h}_k, \tilde{h}_l$ is bounded above by $C$.}\\
% that is, $\tilde{h}_k(F_r)$ have a diameter bound that is independent of $k$ (but may depend on $r$).}\\
\textit{Proof.} 
As a consequence of the distance estimates in  Lemma \ref{ageom1}, we have that for sufficiently small $r>0$, the 
 maximum vertical and horizontal distance between points on $F_r$ with respect to both the $q_k$- and $q_l$-metrics, is bounded above by some $C_0>0$.  
 
The estimates of the geometry of harmonic maps in Proposition \ref{prop-est} hold if $r$ is sufficiently small, since any point in $F_r$ is  then far (say a distance  $D\gg 0$) from the zeroes of the differential (which are determined by $P$). Thus, we obtain the distance bound
\begin{equation}\label{D2-2}
d_\rho(\tilde{h}_k(x), \tilde{h}_k(y)) < C_0 e^{-\alpha D} + 2C_0 + O(e^{-\alpha D})  =: C
\end{equation}
for any pair of points $x,y\in F_r$.  (Note that the bound is  independent of the pair.)
$\qed$\\

Second, a crucial observation is:\\

\textit{Claim 2. For any $r>0$, the images of $F_r$ under the maps $\tilde{h}_k, \tilde{h}_l$ intersect a fixed compact set in $(\D,\rho)$ .}\\
\textit{Proof.} 
First, since the image polygonal ends converge to a polygonal end $\P$,  the images of  $F_1$  converge to a geodesic segment on the boundary of $\P$ corresponding to a fundamental domain of the $\mathbb{Z}$-action on $\P$. In particular, the images of $F_1$ remain uniformly bounded in  $(\D,\rho)$.

Pick a basepoint $z_0$ in $(\D,\rho)$ (say, lying on the limiting image of $F_1$).
There is a constant $D_2>0$ such that there is a geodesic side of $\P$ at a distance at most $D_2$ from $z_0$, and by the convergence of the images to $\P$,  the same is true for the corresponding geodesic side  $\gamma_k$ of $\P_k$ for all sufficiently large $k$.

The image of the circle $C_r$ for any $r>0$  under ${h}_k$ is homotopically non-trivial curve in $\CC$, and in particular, must exit the cusps bordering $\gamma_k$.  (See Figure 8.) 
Hence in the universal cover the image of the fundamental segment $F_r$ that intersects the ball of radius $D_2$ around $z_0$, for each $k$.  $\qed$ \\

\begin{figure}
  % Requires \usepackage{graphicx}
  \centering
  \includegraphics[scale=0.45]{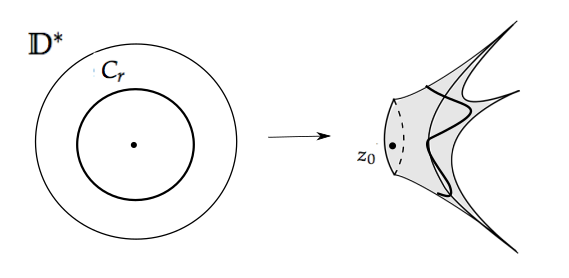}\\
  \caption{The image of the circle $C_r$ under the harmonic diffeomorphism $h_k$  must exit  the cusps and has a point $x_k$ at a distance  $D_2$ from the basepoint $z_0$. Moreover the diameter of the image is uniformly bounded since the principal part of $\text{Hopf}(h_k)$ is independent of $k$. }
\end{figure}

As a consequence of Claims 1 and 2, we obtain:\\

\textit{Claim 3. There is a constant $C_1>0$ (independent of the pair) and a sufficiently small $0<r\ll 1$, such that on $F_r$, the restriction of the maps  $\tilde{h}_k, \tilde{h}_l$ have a distance bounded above by $C_1$.}\\
%Since the image of the circle $C_r$ under $\hat{h}_k$ is homotopically non-trivial curve in $\C$, the image of the fundamental segment $F_r$ must intersect the ball of radius $D_2$ around $z_0$.  
\textit{Proof.} 
As a consequence of Claim 2, for any $r>0$, there exists a point $x_0 \in F_r$ such that 
\begin{equation}\label{D2}
d_\rho(\tilde{h}_k(x_0), z_0) < D_2 \text{ and } d_\rho(\tilde{h}_l(x_0), z_0) < D_2
\end{equation}
where $z_0$ is a fixed basepoint in $(\D,\rho)$.
This implies that 
\begin{equation}\label{D2-1}
d_\rho(\tilde{h}_k(x_0), \tilde{h}_l(x_0) ) < 2D_2. 
\end{equation}
By Claim 1 (see \eqref{D2-2}) we then that for $r$ sufficiently small, we have 
\begin{align}
d_\rho(\tilde{h}_k(x), \tilde{h}_l(x)) \leq  d_\rho(\tilde{h}_k(x), \tilde{h}_k(x_0)) + d_\rho(\tilde{h}_k(x_0), \tilde{h}_l(x_0) ) + d_\rho(\tilde{h}_l(x_0), \tilde{h}_l(x)) \\ < 2C + 2D_2 =: C_1
\end{align}
for any $x\in F_r$,  where the right hand side is uniform over the sequence (that is, independent of the pair $h_k$, $h_l$). $\qed$\\

We have already noted that the distance between the maps $\tilde{h}_k, \tilde{h}_l$  has a uniform bound on $F_1$ (where ``uniform" means that the bound is independent of the pair chosen from the original sequence). Claim 3 implies that there is a similar uniform bound on this distance function when the maps are restricted to $F_r$ (for a sufficiently small $r$).

Passing to the quotient, we obtain a function on the annular region bounded by $\partial \mathbb{D}$ and $C_r$ that has the above uniform bounds on the two boundary components. However the  distance function between two harmonic maps is subharmonic, and the Maximum Principle then implies that there is a uniform bound on the entire annular region. 

We can thus conclude that  the original sequence ${h}_k$ is pointwise bounded. By standard interior gradient estimates (see \cite{Cheng}, or Theorem 2.5 of \cite{AHuang}) all derivatives of the harmonic maps are also uniformly bounded on compacta. Thus by Arzela-Ascoli there is a convergent subsequence, as claimed, and we obtain a limiting harmonic map $h$. 
\end{proof}

\subsection{Revisiting polynomial quadratic differentials}
Having established Theorem \ref{asm}, we note that an application of some of the methods introduced to clarify the fibers of the map 
\begin{center}
$\{$ polynomial quadratic differentials $\} \longrightarrow \{$  ideal polygons $\}$
\end{center}

that is implied by Theorem \ref{image}. \\

To be more specific, let $n\geq 6$ be the order of the pole at infinity.  If $\text{PDiff}(P,n)$ is the space of polynomial quadratic differentials of degree $(n-4)$  with fixed principal part $P$ at the pole at infinity. From the argument in Lemma \ref{dims}, such a polynomial is determined by $(n-r-3)$ complex coefficients, where $r = \lfloor n/2\rfloor$, namely $(a_0, a_1,\ldots a_{n-r-4})$. That is, it is determined by $(n-6)$ real parameters when $n$ is even, and $(n-5)$ real parameters when $n$ is odd. 

(Note that the total space of monic, centered polynomials as in \eqref{polydiff} is homeomorphic to $\mathbb{C}^{d-1} \cong \mathbb{R}^{2d-2} \cong \mathbb{R}^{2n-10}$ since $d=n-4$.) 

On the other hand the space $\text{PGon}(n-2)$ of ideal $(n-2)$-gons is determined by $(n-5)$ parameters, since $3$ of the points can be fixed after a normalization, and is hence homeomorphic to $\mathbb{R}^{n-5}$.  When $n$ is even, those with prescribed metric residue is  is hence homeomorphic to $\mathbb{R}^{n-6}$ (\textit{cf.} the corresponding discussion for hyperbolic crowns in Lemma \ref{poly1}).

We already know from Theorem \ref{image} (proved in \cite{HTTW}) that there is  then a well-defined map 
\begin{equation}\label{Pieq}
\Pi: \text{PDiff}(P,n) \to \text{PGon}(n-2)
\end{equation}

that assigns to a polynomial quadratic differential $q\in \text{PDiff}(P,n)$ the image of the harmonic map $h:\C\to (\D,\rho)$ with Hopf differential $q$.  \\

We can now assert the following:

\begin{prop}\label{prop-pdiff} Let $n\geq 6$ and fix an ideal polygon $\PP\in  \text{PGon}(n-2)$. Then 
for any choice of a principal part $P \in \text{Princ}(n)$  that is compatible with the ideal polygon  (see Definitions \ref{qd-def} and \ref{compat}) there exists a unique harmonic diffeomorphism  $$h:\C \to \PP$$
whose Hopf differential has principal part $P$. 

Equivalently, the map $\Pi$ in \eqref{Pieq} is a homeomorphism. 

\end{prop}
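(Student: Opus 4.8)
The plan is to deduce Proposition \ref{prop-pdiff} from the Asymptotic Models Theorem \ref{asm} by reducing the "entire-plane" situation to the "punctured-disk" situation already analyzed, and then invoking invariance of domain. First I would observe that a polynomial quadratic differential $q$ of degree $n-4$ on $\C$, viewed near the pole at $\infty$ via the coordinate $z\mapsto 1/z$, is automatically of the symmetric form \eqref{hatq}: indeed, a monic centered polynomial in $z$ transforms under $z\mapsto 1/z$ precisely to a meromorphic quadratic differential of the type $q_{sym}$ on $\mathbb{D}^\ast$ (with matching principal part at $0$), so $\mathrm{PDiff}(P,n)$ is naturally identified with $Q_{sym}(P',n)$ for the corresponding principal part $P'$. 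Under this identification the map $\Pi$ of \eqref{Pieq} is exactly the map $\Phi$ of \eqref{phimap} — the harmonic map $h\col \C\to(\D,\rho)$ of Theorem \ref{image} restricted near $\infty$ is a model map as in Theorem \ref{asm}, and its image ideal polygon $\PP$ is the data recorded by $\Phi$ (here there is no boundary-twist or closed-geodesic-boundary datum, since the crown here is "degenerate": the whole plane, not a punctured disk around a closed geodesic — this is the genus-zero one-puncture case flagged in the Remark after Theorem \ref{thm2}, and I would remark that $\mathsf{Poly}_a(n-2)$ in that degenerate case is literally the space $\mathrm{PGon}(n-2)$ of ideal polygons with metric residue $a$).

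Next I would match dimensions. By the computation in the proof of Lemma \ref{dims}, $\mathrm{PDiff}(P,n)$ is homeomorphic to $\R^{n-6}$ when $n$ is even and $\R^{n-5}$ when $n$ is odd; by the discussion preceding the proposition, the space of ideal $(n-2)$-gons with prescribed metric residue (resp.\ with no residue constraint, for $n$ odd) is homeomorphic to $\R^{n-6}$ (resp.\ $\R^{n-5}$). So the two spaces in \eqref{Pieq}, after restricting the target to polygons compatible with $P$, are manifolds of the same dimension. Theorem \ref{image} already gives that $\Pi$ is well-defined and surjective onto polygons of the appropriate type (every ideal polygon arises, and the metric residue of the image equals twice the real part of the residue of $q$ by Proposition \ref{poly-ex}, hence the compatibility condition is exactly what cuts out the image). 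Continuity of $\Pi$ follows as for $\Phi$, by the argument of Lemma 2.2 of \cite{TamWan}.

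For injectivity I would run the argument of Corollary \ref{inj2}: if $q_1,q_2\in\mathrm{PDiff}(P,n)$ give harmonic maps $h_1,h_2\col\C\to\PP$ with the same image polygon, then by Proposition \ref{inj} (bounded distance — here applied directly to maps on $\C$, which is the setting in which the estimates of \S2.5 were originally proved) $d_\rho(h_1,h_2)$ is a bounded subharmonic function on $\C$, hence constant, say $\equiv c$. If $c\neq 0$ the unit vector field pointing from $h_1(z)$ to $h_2(z)$ descends to a nowhere-vanishing tangent vector field along the geodesic sides of $\PP$, pointing consistently into or out of each cusp, alternating between adjacent cusps; capping off one copy of the (doubled) ideal polygon against another along the boundary produces a vector field on a sphere whose total index is forced to be nonzero (the indices at the cusps alternate $\pm 1$ and there are $n-2$ of them), contradicting Poincar\'e--Hopf when $n-2$ is even, while when $n-2$ is odd the alternation is already impossible around the cycle of cusps. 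Hence $c=0$, so $h_1=h_2$ and $q_1=q_2$. Together, $\Pi$ is a continuous injection between manifolds of equal dimension, so by invariance of domain it is an open map onto its image; combined with surjectivity onto $\{$polygons compatible with $P\}$ and the fact that this target is connected, $\Pi$ is a homeomorphism, which gives both the existence--uniqueness statement and the final assertion of the proposition.

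\textbf{Main obstacle.} I expect the one genuinely delicate point to be verifying that the machinery of \S3 — in particular Proposition \ref{inj} and the estimates it rests on — transfers cleanly to the degenerate "crown" that is all of $\C$, i.e.\ to the case with no closed geodesic boundary and no boundary-twist parameter. Strictly speaking Theorem \ref{asm} and Proposition \ref{inj} were phrased for genuine hyperbolic crowns $\CC$ with a closed geodesic boundary; here one must check that the bounded-distance and Poincar\'e--Hopf arguments go through verbatim when $\CC$ is replaced by an ideal polygon $\PP$ (they do, since the relevant estimates of \S2.5 are for maps on $\C$ in the first place and the topological index argument only uses the cusps), but this degenerate case should be stated and checked carefully — it is exactly the genus-zero, one-puncture situation the paper defers to \S3.3.
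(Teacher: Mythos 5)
There is a genuine gap at the surjectivity step. You assert that ``Theorem \ref{image} already gives that $\Pi$ is well-defined and surjective onto polygons of the appropriate type (every ideal polygon arises \dots)''. Theorem \ref{image} as stated only says that \emph{each} polynomial differential $q$ is the Hopf differential of a harmonic diffeomorphism onto \emph{some} ideal $(n-2)$-gon; it does not say that every ideal polygon is realized, and it certainly does not say that every compatible polygon is realized by a differential with the \emph{prescribed} principal part $P$. That realization statement is exactly the existence half of Proposition \ref{prop-pdiff}, so your argument is circular at this point. Without it, what you have established (continuity, injectivity via the bounded-distance/Poincar\'e--Hopf argument, equal dimensions, invariance of domain) only shows that $\Pi$ is an open embedding onto some open subset of the connected target; nothing rules out that this subset is proper. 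The paper closes precisely this hole by proving \emph{properness} of $\Pi$: one adapts the argument of Proposition \ref{prop-er} to a sequence $h_k\colon\C\to(\D,\rho)$ whose image polygons converge (the circle $C_r$ is replaced by a curve of winding number $1$ about the origin, which must still exit the cusps, giving the uniform pointwise bounds of Claims 1--3), so that the Hopf differentials subconverge in $\text{PDiff}(P,n)$. A proper, continuous, injective map between Euclidean spaces of equal dimension is then a homeomorphism by invariance of domain, and surjectivity is a \emph{conclusion}, not an input. Your proposal omits the properness argument entirely, so the existence statement is not proved.

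A secondary inaccuracy: the opening identification of $\text{PDiff}(P,n)$ with $Q_{sym}(P',n)$, and of $\Pi$ with $\Phi$ from \eqref{phimap}, is not correct as stated. Under $z\mapsto 1/z$ a monic centered polynomial differential becomes a Laurent tail with terms from $w^{-n}$ down to $w^{-4}$ only, which is not of the symmetric form \eqref{hatq}; and $\mathsf{Poly}_a(n-2)$ carries the extra data of a closed geodesic boundary (translation length) and a boundary twist, which have no analogue for an ideal polygon, so the two spaces even have different dimensions ($\mathbb{R}^{n-3}$ versus $\mathbb{R}^{n-6}$ for $n$ even, comparing with your own correct count for $\text{PDiff}(P,n)$ and $\text{PGon}$ with fixed residue). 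Since you redo the dimension count directly and only use the identification as framing, this does not by itself break the proof, but taken literally it contradicts the dimension match you need, and it should be replaced by the paper's weaker (and correct) statement that the \emph{arguments} of \S3 transfer to maps $\C\to(\D,\rho)$, not that the spaces or maps coincide.
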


\begin{proof}[Sketch of the proof]

Recall  that Proposition \ref{inj}  shows that two harmonic maps on a punctured surface with a Hopf differential having a pole at the puncture with the same principal part $P$ are a bounded distance apart: its proof  relies on the fact that distances between image points are determined by the real and imaginary parts of integrals of $P$, and works \textit{mutatis mutandis} for maps from $\C$ to $(\D,\rho)$.

The uniqueness is then a consequence of the fact that the distance function between two harmonic maps is subharmonic, and a bounded subharmonic function on $\C$ is constant.  Moreover, the fact that the constant is zero can be derived from the same argument as in Corollary \ref{inj2}: namely, if the constant is non-zero, the vector field on the image polygon constructed by considering the unit vector in the direction of the difference of the two maps at each point is non-vanishing, and points into and out of each alternate cusp. This contradicts the Poincar\'{e}-Hopf theorem  applied to the punctured sphere obtained by  doubling the polygon.

Moreover, the arguments of \S3.2 can be applied to show the existence part:

%Let $n\geq 6$ be the order of the pole at infinity.  If $\text{PDiff}(P,n)$ is the space of polynomial quadratic differentials of degree $(n-4)$  (up to conformal automorphisms) with fixed principal part $P$, then the same argument as in Lemma \ref{dims} shows that it is homeomorphic to $\mathbb{R}^{n-5}$ if $ n$ is odd, and  to $\mathbb{R}^{n-6}$ if $n$ is even. \cm{check}

Namely, from the discussion preceding this Proposition, the dimensions of the spaces on the either side of \eqref{Pieq} match. Moreover $\Pi$  is continuous and injective by the preceding discussion, and properness  is a consequence of the argument in Proposition \ref{inj2} (see pg. 27 ) applied to a sequence of harmonic maps from $h_k:\C\to (\D, \rho)$ with the image polygons converging to an ideal polygon, after making the appropriate changes (for example, the image of the curve $C_r$  is not non-trivial in homotopy, but has winding number $1$ around the origin, and in particular must exit the cusps).
Then by the Invariance of Domain, $\Pi$ is homeomorphism and in particular, surjective. 
\end{proof}

\textit{Remark.} It is worth observing that the above argument implies that the $\rho$-distance between two \textit{distinct} harmonic maps from $\C$ to $(\D,\rho)$ with the same polygonal image  is unbounded; in particular, the maps escape out the cusps at different rates.

%We recall the following basic fact, for which we had normalized the quadratic differentials that we were considering:

%\begin{lem} Suppose $h:\mathbb{D}^\ast \to (\mathbb{D},\rho)$ is the unique harmonic map with Hopf differential $q\in \text{Princ}_1(n)$ with the image ideal polygon $\PP$ having three vertices at $\pm1$ and $i$. Then for any $t>0$,
%the image of the unique harmonic map corresponding to the quadratic differential $tq$ is also $\PP$.
%\end{lem}

%\begin{proof}
%Restrict to a concentric subdisk. Then conformal rescaling of the domain to $\mathbb{D}^\ast$ scales the coefficients but the asymptotic behavior, and in particular the ideal polygon in the asymptotic image, is the same. 
%\end{proof}

%\begin{cor}  The image of the unique harmonic map corresponding to the quadratic differential $tq$ is also $\PP$.
%\end{cor}

%Now, suppose one of the coefficients $a_j\in \{a_3,a_4,\ldots a_{n-k}\}$ diverges. We assume without loss of generality that it diverges the fastest. Then, by considering the quadratic differentials normalized by $\lvert a_j\rvert^{-1}$, we have that the images converge to an ideal $(j-2)$-gon,  since the differentials are converging to a limiting differential of the form
%$$\left(\frac{1}{z^j} + \cdots + \frac{\alpha_4}{z^4} \right)dz^2$$

%Hence the images are diverging in the space $\text{Poly}(n-2)$.\cm{No, cannot really conclude this!} This proves properness.

\section{Proof of Theorem \ref{thm1}}

%Fix an hyperbolic crown $\CC$ with $(n-2)$ boundary cusps. 

%In this section we consider the Hopf differential of a harmonic map from a punctured Riemann surface to a crowned hyperbolic surface, with prescribed asymptotic behavior at the puncture that is modelled after the maps described in the previous section. 

%The same notion of compatibility then holds for the Hopf differential of a harmonic map from a punctured Riemann surface to a crowned hyperbolic surface, since the restriction of the harmonic map to a  neighborhood of the puncture is a map from $\mathbb{D}^\ast$ whose image is asymptotic to an ideal polygon.\\

%\begin{defn} A principal part $P$  at pole of order $n$ is \textit{compatible} with a crown  $\mathcal{C}$ if the real part of its residue equals the metric residue of $\CC$.\end{defn}

The goal of this section is to prove Theorem \ref{thm1}.

%\begin{prop}\label{main-prop} Let $X$ be a marked Riemann surface with $k\geq 1$ punctures at a set of points $p_1,p_2,\ldots p_k$, and a choice of coordinate disks around each pole. Then for any marked crowned hyperbolic surface $Y$ with $k$ crown ends, there exists a harmonic diffeomorphism $$h:X\to Y$$ that preserves the marking and the ends. 

%Moreover, such a map is unique if one further prescribes principal parts $P_1,P_2,\ldots P_k$ at the poles of the Hopf differential, that are compatible with the corresponding crowns.
%\end{prop}

%\medskip

For convenience of notation, we shall assume that  the number of marked points $k$ equals $1$, that is we shall prove:

\begin{prop}\label{main-prop}  Let $X$ be a marked Riemann surface of genus $g\geq 1$ and one marked point $p$, with a fixed coordinate disk   $(U,p) \cong (\mathbb{D},0)$. Let $Y$ be a marked crowned hyperbolic surface with one crown end $\CC$ having $(n-2)$ boundary cusps where $n\geq 3$. Let $P \in \text{Princ}(n)$ be a principal part. 
Then there exists
a unique harmonic diffeomorphism $$h:X\setminus p \to Y$$  taking a neighborhood of  $p$ to the crown end $\CC$ and preserving the marking, such that its Hopf differential has a pole of order $n$ at $p$ with principal part $P$. 
\end{prop}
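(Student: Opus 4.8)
The strategy is to construct the harmonic map by an energy-minimizing procedure on an exhaustion of $X\setminus p$, using the asymptotic model map from Theorem \ref{asm} to control behavior near the puncture. First I would fix a meromorphic quadratic differential $q$ on $X$ with a pole of order $n$ at $p$ and principal part $P$; by Lemma \ref{dims} such $q$ exist, and the choice of the remaining holomorphic part will be pinned down at the end by the target geometry. On the coordinate disk $U$, write $q_{sym}$ for the symmetrized differential and let $h_{mod}:\mathbb{D}^\ast \to \CC$ be the model harmonic map from Theorem \ref{asm}, whose existence and asymptotic image (a crown in $\mathsf{Poly}_a(n-2)$) we may invoke. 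Since $Y$ has a crown end $\CC$ that we are free to match, we fix $q$ (equivalently the point in $Q_{sym}(P,n)$) so that $\Phi(q_{sym})$ is isometric to the end of $Y$; this uses that $\Phi$ is a homeomorphism. Now take a compact exhaustion $\Omega_1 \subset \Omega_2 \subset \cdots$ of $X\setminus p$ where $\Omega_j^c$ is a small disk $B(0,r_j) \subset U$ with $r_j \to 0$, and on each $\partial \Omega_j$ prescribe the boundary values coming from the model map $h_{mod}$ (after the identification of collar neighborhoods). On each $\Omega_j$, by the Hamilton–Schoen–Yau existence theorem for harmonic maps with prescribed boundary values into a negatively curved target with convex boundary (the Remark after Theorem \ref{fund-thm}; here one truncates $\CC$ along geodesic arcs to keep the target convex, per the Remark after Definition \ref{trunc}), there is a harmonic map $h_j:\Omega_j \to Y$ homotopic rel boundary to (a truncation of) $f$ with these boundary values.

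The heart of the argument is a \emph{uniform energy estimate}: the energy of $h_j$ on a fixed compact set $K$ must be bounded independent of $j$. Here I would follow the Doubling Lemma idea attributed to A. Huang (mentioned in the introduction, \S4.2): compare $h_j$ on the collar region near the puncture with the model map $h_{mod}$, using that their boundary values agree on $\partial \Omega_j$ and that both have Hopf differential with the same principal part, hence (by the bounded-distance estimates underlying Proposition \ref{inj}) stay a bounded distance apart; this prevents energy from escaping out the end. With the escape of energy ruled out, a minimizing/subconvergence argument on compacta, together with the interior gradient estimates for harmonic maps (\cite{Cheng}, Theorem 2.5 of \cite{AHuang}) and Arzela–Ascoli, yields a subsequential limit $h:X\setminus p \to Y$ that is harmonic. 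One then checks $h$ is a diffeomorphism: harmonicity plus the fact that the Hopf differential is holomorphic with the prescribed pole forces the Jacobian to be nonvanishing (the standard Jost–Schoen/Sampson argument, using that the target is hyperbolic and the degree of the map is $1$), and the asymptotic analysis near $p$ — matching the model map which is a diffeomorphism onto the crown end — shows $h$ takes a neighborhood of $p$ properly onto $\CC$.

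For \textbf{uniqueness} with prescribed principal part: if $h_1, h_2: X\setminus p \to Y$ are two such harmonic maps, the function $z \mapsto d_\rho(h_1(z), h_2(z))$ is subharmonic on $X\setminus p$; near the puncture, Proposition \ref{inj} (bounded distance) shows it is bounded, so it extends to a bounded subharmonic function on the closed surface $X$, hence is constant. The argument of Corollary \ref{inj2} — constructing the unit vector field pointing from $h_1$ to $h_2$, showing it is tangent to the crown boundary geodesics, and deriving a contradiction with the Poincar\'e–Hopf theorem on the doubled surface — forces this constant to be $0$, so $h_1 = h_2$. (When there is no principal part prescribed, uniqueness holds only up to the finite-dimensional choice of $q$, matching the statement of Theorem \ref{thm1}.) For the general case of $k \geq 2$ marked points, the same argument applies verbatim, using one model map $h_{mod}^{(i)}$ near each $p_i$ and an exhaustion removing a small disk around each puncture.

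\textbf{Main obstacle.} The crux is the uniform energy bound near the puncture: because the desired map has \emph{infinite} total energy (the Hopf differential has a higher-order pole), one cannot use the classical finite-energy machinery, and one must extract from the comparison with the model map a bound on energy \emph{on compacta} that does not degenerate as the exhaustion exhausts. This is precisely where the Doubling Lemma and the estimates of Proposition \ref{inj} do the essential work; verifying that the boundary matching with $h_{mod}$ can be arranged compatibly with the marking, and that the convexity of the truncated target is preserved throughout, are the delicate technical points.
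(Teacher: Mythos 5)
Your overall skeleton matches the paper's: exhaust $X\setminus p$, solve Dirichlet problems with boundary data taken from the asymptotic model map of Theorem \ref{asm}, extract a subsequential limit, identify its behavior at the puncture, and prove uniqueness via the subharmonic distance function together with the vector-field/Poincar\'e--Hopf argument on the doubled surface (your uniqueness paragraph is essentially the paper's). But the step you yourself call the crux --- the uniform energy bound on compacta --- is not actually proved, and the mechanism you offer for it would fail. You assert that $h_j$ and the model map $h_{\mathrm{mod}}$ ``have Hopf differential with the same principal part, hence (by the bounded-distance estimates underlying Proposition \ref{inj}) stay a bounded distance apart; this prevents energy from escaping out the end.'' Two problems. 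First, Proposition \ref{inj} compares two harmonic maps of $\mathbb{D}^\ast$ whose Hopf differentials are meromorphic with the \emph{same prescribed principal part}; the maps $h_j$ are solutions of Dirichlet problems on the compact pieces $\Omega_j$, their Hopf differentials are holomorphic there, and no principal part is known for them a priori --- in the paper, the principal part is identified only \emph{after} convergence, by showing the limit stays a bounded distance from $h_{\mathrm{mod}}$ (via the Maximum Principle on the annuli) and then invoking the \emph{converse} direction of Proposition \ref{inj}. Second, even granting a bounded-distance estimate, boundedness of $d_\rho(h_j,h_{\mathrm{mod}})$ does not bound the energy of $h_j$ on a fixed compact set; distance control and energy control are different things.

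What actually closes this gap in the paper is an energy comparison. Since $h_j$ is the least-energy map with boundary values $h_{\mathrm{mod}}\vert_{\partial\Omega_j}$, one tests against the candidate map equal to $h_1$ on $\Omega_1$ and to $h_{\mathrm{mod}}$ on the annulus $A_j=\Omega_j\setminus\Omega_1$; the whole point is then to show that the excess $\mathcal{E}(h_{\mathrm{mod}}\vert_{A_j})-\mathcal{E}(\phi_j)$ is bounded independently of $j$, where $\phi_j$ is the solution of the partially free boundary problem on $A_j$ with the same outer boundary data (Proposition \ref{comparison}). That bound uses the Doubling Lemma to convert the free-boundary problem into a Dirichlet problem on the doubled annulus, the fact that the doubled model map is itself energy-minimizing for its own boundary data (this is exactly where the symmetric extension $q_{sym}$ and the equivariant construction of the model map are used), and the twist estimate (Lemma \ref{htw}, Corollary \ref{htw2}) to correct the mismatch of boundary identifications at a cost independent of the modulus of $A_j$. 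You cite the Doubling Lemma by name but never run this comparison, so as written the existence half of the proposition is not established. Smaller points: for Arzel\`a--Ascoli with the noncompact target $Y$ you also need the images of a fixed compact subsurface to meet a fixed compact subset of $Y$, which the paper obtains from a topological (homotopy) argument; and fixing a global differential $q$ on $X$ at the outset is unnecessary and slightly misleading --- only the model map on $\mathbb{D}^\ast$, chosen via the homeomorphism $\Phi$, enters the construction, and the Hopf differential of the limiting map is not that $q$.
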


\medskip

By Theorem \ref{asm} (Asymptotic Models), for any  principal part $P \in \text{Princ}(n)$, there exists a harmonic ``model map" 
$$m:\mathbb{D}^\ast \to \CC$$ 
that is asymptotic to its crown end, and with Hopf differential $q$ having principal part $P$.

 Our method of proving Proposition \ref{main-prop} involves taking an exhaustion of the punctured surface, solving a Dirichlet problem for each compact surface with boundary, where the boundary condition is determined by the model map $m$. The main work lies in showing that the sequence of harmonic maps has a convergent subsequence, which in turn relies on proving a uniform energy bound on any compact subsurface of $X\setminus p$.

 \subsection{Defining the sequence $h_i$}

We begin by choosing an exhaustion of the  disk $U$ with nested  sub-disks $\{U_i\}_{i\geq 0}$ of decreasing radii, where $U_0 :=U$. Defining $X_i = X\setminus U_i$, we obtain a compact exhaustion of the surface $X\setminus p$, namely, a nested collection 
\begin{equation}\label{xi-exh}
X_0 \subset X_1 \subset \cdots X_i \subset \cdots 
\end{equation}
of compact subsurfaces with boundary, such that $\bigcup\limits_{i\geq 0} X_i = X \setminus p$.\\

We shall denote the annulus $A_i := X_i \setminus X_0$.\\

%$\mathbb{Z}$-equivariant harmonic map ${m}: {\mathbb{H}^2} \to \mathbb{H}^2$ with Hopf differential $P$ that is asymptotic to the polygonal end  $\PP$. 

%Here the domain is universal cover of $U \setminus p \cong \mathbb{D}^\ast$. 

%Choose a symmetric exhaustion  $T_1 \subset T_2 \subset \cdots $ of the crown $\mathcal{C} \subset Y$, and let $Y_i := Y \setminus T_i$ for $i\geq 1$ be the corresponding exhaustion of $Y$.

Define 
 ${h_i}:{X_i} \to Y_i$  be the harmonic map  that preserves the marking, with the boundary condition that it restricts to ${m}$ on $\partial X_i$. \\

\textit{Note.} Such a harmonic map exists by the work of Lemaire in \cite{Lem82}. Moreover, since the target is negatively curved, the usual convexity of energy along a geodesic homotopy implies that in fact it is the \textit{least energy} map with the given boundary conditions. We shall use this property of $h_i$ in \S4.3. \\

\medskip

The crucial step  for the uniform energy bound is proving an comparison of the model map $m$ with the solution of a ``partially free boundary problem"  on a cylinder, that we describe in the next section. In previous work  (\cite{GW1}, \cite{GW2}) we had also considered partially free boundary problem to certain metric trees. However, the argument there used the fact that the maps had an additional symmetry; which ensured a decay of the maps along the cylinder. Here, in contrast, we exploit our knowledge of the model map $m$ obtained from our analysis in \S3.

\subsection{Partially free boundary problem}

%Define $A_i$ to be the annular region $X_i \setminus X_0$. \\

%Recall that there is an asymptotic model map $m$ defined on $X\setminus X_0$, and in particular, on $A_i$.\\

Recall that  $\mathcal{C}$ be a hyperbolic crown with $(n-2)$ cusps and a chosen basepoint $q$.\\

We introduce the following (\textit{cf.} \S3.3 of \cite{AHuang}):

\begin{defn}[Partially free boundary problem]\label{pfd}  
Let $A$ be an annulus of modulus $M>0$, and let $f:\partial_+ A \to \mathcal{C}$ be a $C^1$-map. 

Let $$\phi:A \to \CC$$ be the least energy map with the boundary condition  $\phi\vert_{\partial_+ A} = f$ and  no restriction on $\partial_-A$ ( the ``free boundary").
Then $\phi$ will be referred to as the solution to the ``partially free Dirichlet boundary problem" (PFD for short).
\end{defn}

Such a map is harmonic by virtue of having least energy. Moreover,  A. Huang in \cite{AHuang} makes the crucial observation that by a variational argument one can show that $\nabla \phi \vert_{\partial A_-} ( \nu) = 0$ where $\nu$ is the outward-pointing normal vector along the free boundary. This allows one to frame the following equivalent Dirichlet problem:

\begin{lem}[Doubling lemma, Lemma 3.5 of \cite{AHuang}]\label{doubl}
Consider the doubled annulus $\hat{A}$  obtained by two copies of $A$ identified along the free boundary $\partial_-A$ in each by a reflection. 
Let $$\Phi:\hat{A} \to \mathcal{C}$$ be the least energy map with the  boundary condition $f$ on both boundary conditions.

Then the solution $\phi$ of the partially free boundary problem is the restriction of $\Phi$ to the annulus $A \subset \hat{A}$. 
\end{lem}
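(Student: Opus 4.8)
The plan is to deduce the Doubling Lemma from two facts: the characterization of $\phi$ as a \emph{least energy} map subject only to a constraint on $\partial_+A$, and the fact that a least-energy (hence harmonic, hence real-analytic away from the singular locus) map with free boundary must meet that boundary with vanishing normal derivative. First I would record the variational identity: if $\phi$ solves the PFD problem of Definition \ref{pfd}, then for any compactly supported variation of $\phi$ that is unconstrained near $\partial_-A$, the first variation of energy vanishes; integrating by parts produces the interior Euler--Lagrange equation \eqref{har} \emph{together with} the Neumann-type boundary condition $\nabla\phi\vert_{\partial_-A}(\nu)=0$, where $\nu$ is the outward unit normal along the free boundary. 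This is exactly the observation attributed to A.\ Huang, and it is the analytic heart of the statement.

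Next I would set up the doubled annulus $\hat A$ carefully: $\hat A$ is obtained from two isometric copies $A$, $A'$ of $A$ (with the flat conformal structure of a cylinder, say $A\cong[0,M]\times S^1$) glued along $\partial_-A$ by the reflection $\iota$ that swaps the two copies and fixes the gluing circle pointwise; this makes $\hat A\cong[-M,M]\times S^1$ a smooth annulus carrying the reflection isometry $\iota$, and $\partial_-A$ becomes the fixed circle $\{0\}\times S^1$. Let $\Phi:\hat A\to\CC$ be the least-energy map agreeing with $f$ on both boundary circles (existence again by Lemaire \cite{Lem82} as invoked for $h_i$; the target crown has convex geodesic boundary after the truncation of Definition \ref{trunc}, and one works with a truncated $\CC$ as in the remark following Theorem \ref{fund-thm}). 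The boundary data $f\circ\iota=f$ is $\iota$-invariant, so $\Phi\circ\iota$ is another least-energy map with the same boundary data; by the uniqueness of energy minimizers into a negatively curved target (Theorem \ref{fund-thm} and the convexity of energy along geodesic homotopies, exactly the property used to identify $h_i$ as the least-energy map), $\Phi\circ\iota=\Phi$, i.e.\ $\Phi$ is symmetric under the reflection.

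I would then argue both inclusions between the two problems. On one hand, $\Phi\vert_A$ is a candidate for the PFD problem on $A$: it has boundary value $f$ on $\partial_+A$ and its energy is $\tfrac12\mathcal{E}_{\hat A}(\Phi)$, so $\mathcal{E}_A(\phi)\le\mathcal{E}_A(\Phi\vert_A)$. On the other hand, the reflected double $\tilde\phi$ of the PFD solution $\phi$ — defined on $\hat A$ by $\tilde\phi=\phi$ on $A$ and $\tilde\phi=\phi\circ(\text{reflection})$ on $A'$ — is a legitimate competitor for $\Phi$'s problem: it matches $f$ on both boundary circles, and by the Neumann condition $\nabla\phi\vert_{\partial_-A}(\nu)=0$ the two halves glue together to a map that is $C^1$ across $\{0\}\times S^1$, hence in $W^{1,2}(\hat A)$ and weakly harmonic across the seam (the distributional Laplacian picks up no singular term along the seam precisely because the normal derivatives cancel); thus $\mathcal{E}_{\hat A}(\Phi)\le\mathcal{E}_{\hat A}(\tilde\phi)=2\,\mathcal{E}_A(\phi)$. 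Combining, $\mathcal{E}_A(\Phi\vert_A)=\tfrac12\mathcal{E}_{\hat A}(\Phi)\le\mathcal{E}_A(\phi)\le\mathcal{E}_A(\Phi\vert_A)$, so equality holds throughout; since energy minimizers with the PFD constraint are unique (negative curvature again), $\phi=\Phi\vert_A$, which is the assertion. Finally, elliptic regularity upgrades the $W^{1,2}$ solution $\Phi$ to a smooth (real-analytic) harmonic map, so the identification is genuinely of smooth maps. The main obstacle I anticipate is the clean justification that the glued map $\tilde\phi$ is weakly harmonic across the seam — i.e.\ that the free-boundary Neumann condition really does kill the interface distribution — and dually that the minimizer $\Phi$, a priori only known to be smooth in the interior, has enough boundary regularity at the seam $\{0\}\times S^1$ for the symmetrization argument $\Phi=\Phi\circ\iota$ to force $\nabla\Phi(\nu)=0$ there; both points are standard for harmonic maps into convex-boundaried negatively curved targets but need the truncation of $\CC$ and the boundary regularity theory of Hamilton/Schoen--Yau cited after Theorem \ref{fund-thm}.
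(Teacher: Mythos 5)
The paper does not actually prove this lemma: it is quoted verbatim as Lemma 3.5 of \cite{AHuang}, with only the preceding remark that the first variation of the partially free problem forces $\nabla\phi(\nu)=0$ along $\partial_-A$. So there is no in-paper proof to match against; judged on its own, your reflection-plus-energy-comparison argument is the standard proof of such doubling statements and is essentially the argument in \cite{AHuang}. Two refinements are worth recording. First, the point you flag as the main obstacle --- weak harmonicity of the glued map $\tilde\phi$ across the seam --- is not actually needed: for the inequality $\mathcal{E}_{\hat A}(\Phi)\le\mathcal{E}_{\hat A}(\tilde\phi)=2\,\mathcal{E}_A(\phi)$ you only need $\tilde\phi$ to be an admissible competitor, and the even reflection of a $W^{1,2}$ map across $\partial_-A$ is automatically $W^{1,2}(\hat A)$ with the correct trace, no Neumann condition required; the Neumann condition (and boundary regularity at the seam) only enters if you insist on knowing $\tilde\phi$ is harmonic, which your energy chain makes unnecessary. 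Once all inequalities collapse to equalities you can also finish without invoking uniqueness for the free-boundary problem: equality shows $\tilde\phi$ minimizes the doubled Dirichlet problem, so $\tilde\phi=\Phi$ by the same uniqueness already used for the symmetrization, hence $\phi=\Phi\vert_A$. Second, the symmetrization step $\Phi\circ\iota=\Phi$ needs one more word: $\iota$ swaps the two boundary circles of $\hat A$, and precomposition by $\iota$ can a priori change the homotopy class rel boundary (it reverses any relative twisting), so uniqueness of the minimizer only applies after noting that the class in which one minimizes --- the class of the doubled model map, which is itself reflection-symmetric --- is preserved by $\iota$; in the paper's application this is automatic, but it should be said, since uniqueness via subharmonicity of the distance function or convexity of energy is a statement within a fixed relative homotopy class for a non-simply-connected target such as $\CC$. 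With those two adjustments your argument is complete and, if anything, slightly more economical than the route you sketched.
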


For the key estimate to follow, we shall need the following easy fact (we shall state  it in slightly greater generality than we need):

\begin{lem}\label{htw}
Suppose $A =\{(x,\theta) \vert 0\leq x\leq L, \theta \in S^1\}$ is a Euclidean cylinder of circumference $2\pi$ and length $L\geq 1$.  Let $$f:A\to Y$$ be a smooth map to a target space that is equipped with a Riemannian metric, and for $\tau>0$ let $$\Theta_\tau: A\to A$$ be the ``twist" map  defined by $\Theta_\tau(x, \theta) =  (x, \theta + \tau x/L)$. 

Then we have the energy estimate
\begin{equation}\label{en-comp-1}
\mathcal{E}(f) - K_1 \leq \mathcal{E}(f \circ \Theta_\tau) \leq  \mathcal{E}(f) + K_1
\end{equation}
where $K_1>0$ is a constant that is independent of $L$ (but depends on $\tau$). 
\end{lem}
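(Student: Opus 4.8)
The strategy is to compute both energies in coordinates and compare the integrands pointwise. Write $f$ in terms of its energy density $e(f) = \lVert f_x \rVert^2 + \lVert f_\theta\rVert^2$ (computed with respect to the flat metric $dx^2 + d\theta^2$ on $A$), so that $\mathcal{E}(f) = \int_A e(f)\, dx\, d\theta$. The twist map $\Theta_\tau$ is a diffeomorphism of $A$ with Jacobian identically $1$, so a change of variables gives $\mathcal{E}(f\circ\Theta_\tau) = \int_A e(f\circ\Theta_\tau)\circ\Theta_\tau^{-1}\, dx\, d\theta$; equivalently, it suffices to integrate $e(f\circ\Theta_\tau)$ over $A$ directly and bound it against $e(f)$ pulled back. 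Concretely, by the chain rule, if $g = f\circ\Theta_\tau$ then $g_x = f_x + (\tau/L) f_\theta$ and $g_\theta = f_\theta$ (evaluated at $\Theta_\tau(x,\theta)$), so that
\begin{equation}\label{en-twist-expand}
e(g) = \lVert f_x + (\tau/L) f_\theta\rVert^2 + \lVert f_\theta\rVert^2 = e(f) + \frac{2\tau}{L}\langle f_x, f_\theta\rangle + \frac{\tau^2}{L^2}\lVert f_\theta\rVert^2,
\end{equation}
all terms on the right evaluated at $\Theta_\tau(x,\theta)$.

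First I would integrate \eqref{en-twist-expand} over $A$; since $\Theta_\tau$ is a measure-preserving diffeomorphism, $\int_A e(f)\circ\Theta_\tau = \mathcal{E}(f)$ and $\int_A \lVert f_\theta\rVert^2\circ\Theta_\tau = \int_A \lVert f_\theta\rVert^2 \le \mathcal{E}(f)$, so the third term contributes at most $(\tau^2/L^2)\mathcal{E}(f) \le \tau^2 \mathcal{E}(f)$ (using $L\ge 1$). The cross term is controlled by Cauchy--Schwarz pointwise: $2|\langle f_x, f_\theta\rangle| \le \lVert f_x\rVert^2 + \lVert f_\theta\rVert^2 = e(f)$, hence $\frac{2\tau}{L}\int_A |\langle f_x,f_\theta\rangle|\circ\Theta_\tau \le \frac{\tau}{L}\mathcal{E}(f) \le \tau\,\mathcal{E}(f)$. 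Putting these together yields $|\mathcal{E}(f\circ\Theta_\tau) - \mathcal{E}(f)| \le (\tau + \tau^2)\mathcal{E}(f)$, which unfortunately scales with $\mathcal{E}(f)$ and is not of the claimed form with an $L$-independent additive constant $K_1$. So the naive Cauchy--Schwarz bound is too lossy; the real point must be that the extra terms are controlled not by the full energy but by something $L$-independent.

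The fix, and the genuine content of the lemma, is to bound the cross term more carefully using the inequality $2ab \le \epsilon a^2 + \epsilon^{-1} b^2$ with $\epsilon = L$: we get $\frac{2\tau}{L}|\langle f_x,f_\theta\rangle| \le \frac{2\tau}{L}\lVert f_x\rVert\,\lVert f_\theta\rVert \le \tau\bigl(\frac{1}{L}\lVert f_\theta\rVert^2 \cdot L + \frac{1}{L}\lVert f_x\rVert^2 \cdot \frac{1}{L}\bigr)$ — this still is not obviously bounded. The correct observation is that, in the application, $f$ is a map to the crown whose relevant energy comes from a harmonic (or model) map with a \emph{uniform pointwise gradient bound}: by the interior and boundary gradient estimates for harmonic maps into negatively curved targets (and for the model map $m$, by the estimates of \S2.5 and \S3), there is a constant $G$ with $e(f)(x,\theta) \le G$ for all $(x,\theta)\in A$. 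Then the cross term integrand is $\le \frac{2\tau}{L}\cdot\frac{G}{2} = \frac{\tau G}{L}$, whose integral over $A$ (area $2\pi L$) is $2\pi\tau G$, independent of $L$; similarly the $\tau^2/L^2$ term integrates to $\le 2\pi \tau^2 G/L \le 2\pi\tau^2 G$. This gives \eqref{en-comp-1} with $K_1 = 2\pi(\tau + \tau^2)G$, independent of $L$.

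\textbf{Main obstacle.} The one subtlety to get right is the source of the $L$-independent bound: the lemma as literally stated (for an \emph{arbitrary} smooth $f$) is false with $K_1$ independent of $f$, so the proof must either (i) invoke the standing hypothesis that $f$ has bounded energy density — which is exactly what the gradient estimates in \S2.5 and the interior estimates cited in \S3.2 provide for the maps to which the lemma is applied — or (ii) interpret $K_1$ as allowed to depend on $\sup_A e(f)$. I would write the proof in form (i), stating explicitly that $K_1$ depends on the uniform gradient bound of $f$ (which is harmless since every map to which this lemma is applied in \S4.3 satisfies such a bound), and the remaining steps are the routine chain-rule expansion \eqref{en-twist-expand}, the measure-preservation of $\Theta_\tau$, and the pointwise Cauchy--Schwarz estimate above, carried out in both directions to get the two-sided bound.
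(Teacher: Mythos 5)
Your argument is essentially the paper's own proof: the paper likewise differentiates the composition, using $d\Theta_\tau = \begin{pmatrix} 1 & \tau/L \\ 0 & 1 \end{pmatrix}$, deduces a pointwise change in the energy density of order $1/L$, and integrates over the cylinder of area $2\pi L$ so that the $1/L$ decay cancels the growth of the area, giving an $L$-independent additive constant; your expansion $e(f\circ\Theta_\tau) = e(f) + \frac{2\tau}{L}\langle f_x,f_\theta\rangle + \frac{\tau^2}{L^2}\lVert f_\theta\rVert^2$ (composed with $\Theta_\tau$, which is measure preserving) is just a more explicit version of that computation. Your caveat is also accurate: the paper's pointwise claim $\lvert\,\lVert d(f\circ\Theta_\tau)\rVert - \lVert df\rVert\,\rvert = O(1/L)$ is really $O(\lVert df\rVert/L)$, so the constant $K_1$ implicitly depends on a bound for the energy density of $f$ and cannot be universal over all smooth maps (your scaling example shows this); the paper absorbs this dependence into its $O(\cdot)$ notation. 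Your reading (i), with $K_1$ depending on $\sup_A e(f)$ together with the remark that the maps to which the lemma is applied carry such gradient bounds, is the correct way to state and use the estimate — the only point left implicit in both your write-up and the paper is that the bound must be uniform over the family $\Phi_i$ appearing in Proposition \ref{comparison}, but that concerns the application rather than the lemma's proof itself.
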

\begin{proof}
Since $$d\Theta_\tau = \begin{pmatrix} 1 & \tau/L \\ 0 & 1 \end{pmatrix}$$
we can easily compute that the norm of the derivatives satisfy 
\begin{equation*}
\lvert \lVert d(f\circ \Theta_\tau) \rVert - \lVert df \rVert \rvert =  O(1/L)
\end{equation*}
at each point of $A$. 
In particular, 
\begin{multline}
\mathcal{E}(f \circ \Theta_\tau)  = \displaystyle\int\limits_{A} \lVert d(f\circ \Theta_\tau) \rVert^2 \leq  \left( \displaystyle\int\limits_{A} \lVert df \rVert^2 \right)    + O(1/L)\cdot 2\pi L  = \mathcal{E}(f) + O(1) 
\end{multline}
which is half of \eqref{en-comp-1}, and the proof of the other inequality is similar. 
\end{proof}

In words, the Lemma above shows that for a map defined on a cylinder $\mathsf{C}$ , precomposing the map with a fixed twist of $\mathsf{C}$ increases the energy by a bounded amount that is independent of the modulus of $\mathsf{C}$.\\

In what follows, we shall need the following equivariant version of the previous lemma obtained by passing to the universal cover:

\begin{cor}\label{htw2}
Let $S_L = \{ \lvert \Im(z) \rvert \leq L \}$ be an infinite strip, and let $$\tilde{f}: S_L \to (\mathbb{D},\rho)$$ be a smooth map  that is equivariant with respect to the $\mathbb{Z}$-action by the translation $z\mapsto z+1$ on the domain and by a hyperbolic translation $T$ in the target, where $T$ is a hyperbolic isometry with axis the geodesic line from $-1$ to $1$. 

For $\tau>1$, let $\tilde{\Theta}_\tau:S_L\to S_L$ be the affine map that restricts to the identity map on the boundary component $\{\Im(z) =L\}$ and is a translation by $\tau$ on the boundary component $\{\Im(z) = -L\}$.

Then the equivariant energy satisfies:
\begin{equation}\label{en-comp}
\hat{\mathcal{E}}(\tilde{f}) - K_1 \leq \hat{\mathcal{E}}(\tilde{f} \circ \tilde{\Theta}_\tau) \leq  \hat{\mathcal{E}}(\tilde{f}) + K_1
\end{equation}
where $K_1>0$ is a constant that is independent of $L$ (but depends on $\tau$). 
\end{cor}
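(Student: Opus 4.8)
The plan is to deduce the equivariant statement directly from the non-equivariant Lemma \ref{htw} by passing to the quotient cylinder. First I would observe that the $\mathbb{Z}$-action $z\mapsto z+1$ on the strip $S_L$ is free and properly discontinuous, with quotient a compact Euclidean cylinder $A_L$ of circumference $1$ (or, after rescaling, $2\pi$) and length $2L$; concretely $A_L = S_L/\mathbb{Z} \cong \{(x,\theta)\ \vert\ 0\leq x\leq 2L,\ \theta\in S^1\}$, where $x$ corresponds to $\Im(z)$ (up to the affine reparametrization sending $[-L,L]$ to $[0,2L]$) and $\theta$ to $\Re(z)\bmod 1$. Since $\tilde f$ is equivariant with respect to the $\mathbb{Z}$-action in the domain and the hyperbolic isometry $T$ in the target, the equivariant energy $\hat{\mathcal{E}}(\tilde f)$ is by definition the energy of $\tilde f$ computed over a single fundamental domain of the $\mathbb{Z}$-action, i.e. over a strip of width $1$ in the $\Re(z)$-direction; this is finite precisely because the integrand is $\mathbb{Z}$-periodic. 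Thus $\hat{\mathcal{E}}(\tilde f)$ equals the (ordinary) energy of a well-defined map on $A_L$ — not of $\tilde f$ itself descending to the quotient, since $T$ acts on the target, but the energy \emph{density} descends, which is all that is needed.

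Next I would identify the affine twist $\tilde\Theta_\tau$ with the twist map $\Theta_\tau$ of Lemma \ref{htw}. By construction $\tilde\Theta_\tau$ is the identity on $\{\Im(z)=L\}$ and the translation $z\mapsto z+\tau$ on $\{\Im(z)=-L\}$, interpolating affinely; in the $(x,\theta)$ coordinates on $A_L$ this is exactly $(x,\theta)\mapsto(x,\theta+\tau x/(2L))$ up to orientation of the $x$-axis, and it commutes with the deck translation $z\mapsto z+1$ (since it only shifts $\Re(z)$ by an amount depending on $\Im(z)$), so it descends to the twist map $\Theta_\tau$ of the cylinder $A_L$ of length $2L$. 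Because $\tilde\Theta_\tau$ commutes with the deck action, $\tilde f\circ\tilde\Theta_\tau$ is again $\mathbb{Z}$-equivariant with the same target isometry $T$, and its equivariant energy is the ordinary energy of the corresponding map on $A_L$ precomposed with $\Theta_\tau$. Applying Lemma \ref{htw} with $L$ there replaced by $2L$ and $f$ replaced by the descended map gives
\begin{equation*}
\mathcal{E}(f\circ\Theta_\tau)\ \text{within}\ K_1\ \text{of}\ \mathcal{E}(f),
\end{equation*}
with $K_1$ independent of $L$, and translating back through the identification $\hat{\mathcal{E}}(\tilde f)=\mathcal{E}(f)$, $\hat{\mathcal{E}}(\tilde f\circ\tilde\Theta_\tau)=\mathcal{E}(f\circ\Theta_\tau)$ yields \eqref{en-comp}.

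The only genuinely delicate point is bookkeeping rather than analysis: one must check that the equivariant energy really is computed as an integral of a $\mathbb{Z}$-invariant density over a fundamental domain, so that the pointwise derivative comparison $\bigl\vert\,\lVert d(\tilde f\circ\tilde\Theta_\tau)\rVert-\lVert d\tilde f\rVert\,\bigr\vert=O(1/L)$ from the proof of Lemma \ref{htw} applies verbatim at each point of $S_L$ (the hyperbolic metric on $(\mathbb{D},\rho)$ and the isometry $T$ play no role in this pointwise estimate, since $\lVert df\rVert$ is measured using the target metric at the image point and is automatically $T$-invariant by equivariance). Once that is in place, integrating the $O(1/L)$ bound over a fundamental domain of area $2L\cdot 1$ produces the $L$-independent error $K_1=O(1)$ exactly as before, and the lower bound follows by the symmetric argument (or by applying the upper bound to $\tilde f\circ\tilde\Theta_\tau$ and $\tilde\Theta_\tau^{-1}=\tilde\Theta_{-\tau}$). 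I do not expect any serious obstacle here; the corollary is essentially a restatement of Lemma \ref{htw} in the universal cover, and the proof is a two-line reduction once the equivariant energy is unwound.
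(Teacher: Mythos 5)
Your reduction is correct and is exactly the route the paper intends: the corollary is stated there as the equivariant restatement of Lemma \ref{htw} obtained by passing between the strip and the quotient cylinder, with the equivariant energy computed as the integral of the ($T$-invariant, hence $\mathbb{Z}$-periodic) energy density over a fundamental domain and the affine twist descending to the cylinder twist $\Theta_\tau$. Your bookkeeping of the circumference/length normalization and the lower bound via $\tilde{\Theta}_{-\tau}$ is sound, so nothing further is needed.
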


Using this, we now prove:

\begin{prop}[Energy comparison]\label{comparison}
Let $A_i$ be the annular region constructed in the exhaustion \eqref{xi-exh}, with the two boundary components  $\partial_- A_i = \partial X_0$ and $\partial_+ A_i  = \partial X_i$. 
Let $\phi_i$ be the solution of the PFD on  $A_i$ with the boundary condition  on $\partial_+ A_i$ that is the restriction of the model map $m:\mathbb{D}^\ast \to \mathcal{C}$. 

Then we have
\begin{equation}\label{lbd}
\mathcal{E}(\phi_i) \leq \mathcal{E}(m\vert_{A_i}) \leq \mathcal{E}(\phi_i) + K_2
\end{equation}
where $K_2>0$ is uniformly bounded (independent of $i$).
\end{prop}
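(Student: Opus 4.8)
The left inequality in \eqref{lbd} is immediate: $\phi_i$ is the least-energy map on $A_i$ with the prescribed boundary value on $\partial_+ A_i$ and free boundary on $\partial_- A_i$, while $m\vert_{A_i}$ is a competitor with that same value on $\partial_+ A_i$, so $\mathcal{E}(\phi_i)\leq \mathcal{E}(m\vert_{A_i})$. The content is the right-hand inequality, namely that $m\vert_{A_i}$ does not overshoot the free-boundary minimizer by more than a constant independent of $i$. The plan is to build an explicit competitor for the PFD on $A_i$ out of $\phi_i$ itself, patched on a collar near $\partial_- A_i$ to agree with $m$ on $\partial X_0$, so that $m\vert_{A_i}$ (being least-energy for the honest Dirichlet problem with \emph{both} boundary values prescribed by $m$) has energy at most that competitor's energy, which in turn exceeds $\mathcal{E}(\phi_i)$ only by a bounded amount.

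Concretely, I would first pass to the doubled picture of Lemma \ref{doubl}: lift everything to the universal cover of the crown, so that $\phi_i$ becomes an equivariant map on an infinite strip $S_{L_i}$ (with $L_i\to\infty$), equivariant under $z\mapsto z+1$ and a hyperbolic translation $T$ along the axis from $-1$ to $1$, exactly as in Corollary \ref{htw2}. The key point is that both $\phi_i$ restricted to $\partial_- A_i$ and $m$ restricted to $\partial X_0$ are loops mapping into the crown that are homotopic (they bound the same annulus into $\CC$ and wrap once around the core geodesic), so in the universal cover their lifts to $\{\Im z = -L_i\}$ differ from the restriction of $\til m$ to that line by a bounded-energy correction; after a twist $\tilde\Theta_\tau$ (with $\tau$ the mismatch in the ``boundary twist'' parameters, which is \emph{fixed} once $X$, $Y$, $P$ are fixed) and a further homotopy supported on a collar of fixed modulus $\geq 1$ adjacent to $\{\Im z=-L_i\}$, one produces an equivariant competitor $\psi_i$ on $S_{L_i}$ agreeing with $\til m$ on $\{\Im z=L_i\}$ and with $\til m$ on $\{\Im z=-L_i\}$, with $\hat{\mathcal{E}}(\psi_i)\leq \hat{\mathcal{E}}(\phi_i)+K_1+K_1'$ where $K_1$ comes from Corollary \ref{htw2} and $K_1'$ bounds the energy added on the fixed-modulus collar. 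Descending to $A_i$ and using that $m\vert_{A_i}$ minimizes energy among maps with \emph{both} boundary values equal to $m$'s, we get $\mathcal{E}(m\vert_{A_i})\leq \mathcal{E}(\psi_i/\mathord\sim)\leq \mathcal{E}(\phi_i)+K_2$, as desired.

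The step I expect to be the main obstacle is controlling the homotopy on the fixed-modulus collar near the free boundary: one must check that the boundary loop $\phi_i\vert_{\partial_- A_i}$, although it varies with $i$, stays in a \emph{compact} family of curves in $\CC$ (up to the ambient $\mathbb{Z}$-action), so that the energy of interpolating from it to the fixed loop $m\vert_{\partial X_0}$ over a collar of modulus one is bounded uniformly in $i$. This is where one needs a priori control on $\phi_i$ near $\partial_- A_i$; I would extract it from the same geometric estimates of \S2.5 applied in the doubled picture — the image of $\phi_i$ on a polygonal exhaustion is close to a truncation of the crown with side-lengths determined (up to $O(1)$) by the common principal part $P$, hence the free-boundary loop cannot wander off to infinity or degenerate. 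Once that compactness is in hand, the constant $K_2$ is assembled from $K_1$ (Corollary \ref{htw2}), the collar-interpolation bound, and the fixed quantities attached to $P$ and the twist mismatch, all manifestly independent of $i$.
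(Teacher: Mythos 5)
Your strategy has the right general shape (compare against the minimizing property of the model map, double, absorb the boundary-twist mismatch by Lemma \ref{htw}/Corollary \ref{htw2}), and the left inequality is handled exactly as in the paper. But the step you yourself flag as the main obstacle is a genuine gap, not a technicality. To make your competitor agree with $m$ on $\partial_- A_i=\partial X_0$ you must interpolate, over a collar of fixed modulus, from the free-boundary trace $\phi_i\vert_{\partial_- A_i}$ to the fixed loop $m\vert_{\partial X_0}$ with energy cost bounded independently of $i$; this requires the traces $\phi_i\vert_{\partial_- A_i}$ to lie in a compact family with derivative control (a $C^0$ bound alone does not bound the energy of an interpolation). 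The source you propose for this compactness, the estimates of \S2.5 ``applied to $\phi_i$ in the doubled picture,'' does not apply: those estimates are for harmonic maps whose Hopf differential has prescribed principal part and induces a complete singular flat metric, with errors measured by the distance from its zeros. The Hopf differential of $\phi_i$ (or of its double $\Phi_i$) is an unknown holomorphic quadratic differential on a compact annulus -- it has no pole, no principal part $P$, and no a priori control of its zeros -- so nothing in \S2.5 prevents $\phi_i(\partial_- A_i)$ from drifting into the boundary cusps as $i\to\infty$. Proving that compactness is essentially of the same order of difficulty as the proposition itself, so as written your argument is circular at its key point. (A smaller slip: in the doubled strip the free boundary corresponds to the middle line $\{\Im z=0\}$, not $\{\Im z=-L_i\}$; on $\{\Im z=\pm L_i\}$ the doubled map $\Phi_i$ carries the \emph{known} value, the lift of $m\vert_{\partial X_i}$.)

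The paper's proof avoids any patching near the free boundary by running the doubled comparison in the other direction. By the Doubling Lemma, $\Phi_i$ is the least-energy map on $\hat A_i$ with \emph{both} boundary values equal to $m\vert_{\partial X_i}$ -- so its boundary data are completely known and no information about $\phi_i$ on $\partial_- A_i$ is ever needed. On the other hand, the restriction of the equivariant model $\hat m$ to the full strip $S_{L_i}$ descends to a harmonic map $m^d_i$ on $\hat A_i$, which is least-energy for \emph{its} boundary values; those differ from the boundary values of $\Phi_i$ only by the fixed shear $S$ (the boundary-twist datum), independent of $i$. Hence $\Phi_i\circ\Theta_\tau$ is a legitimate competitor for $m^d_i$, Lemma \ref{htw} bounds the cost of the twist by $K_1$ independently of the modulus, and $\mathcal{E}(m^d_i)\le\mathcal{E}(\Phi_i)+K_1$ yields \eqref{lbd} with $K_2=K_1/2$ after halving the doubled energies. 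If you want to salvage your route, you would first need a uniform compactness statement for the traces $\phi_i\vert_{\partial_- A_i}$ together with a collar energy bound, and the natural way to obtain it is, in effect, the paper's argument.
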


\begin{proof}
The lower bound is immediate from the property of $\phi_i$ being least energy.

%recall that the model map $m:\mathbb{D}^\ast \to \C$ is obtained as a quotient of an equivariant map $\tilde{m}:\mathbb{H}^2 \to (\mathbb{D},\rho)$ where the equivariance is with respect to an action by translation $z\mapsto z + 1$ on $\mathbb{H}^2$, and by a hyperbolic isometry on $(\mathbb{D},\rho)$.
%Moreover, $\tilde{m}$ is the restriction to the upper half-plane of an equivariant map $\tilde{\hat m}$ defined on the entire complex plane  $\mathbb{C}$,  which has a reflective symmetry  $\tilde{\hat m}(z) =  \tilde{\hat m}(-z)$ stemming from the fact that it solves harmonic map equation for a Hopf differential $\tilde{q}_{sym}$ which has an involutive symmetry. (Recall that $\tilde{q}_{sym}$ in turn is the lift to the universal cover of a quadratic differential $q_{sym}$ on $\mathbb{C}^\ast$ that is symmetric under the holomorphic involution $z\mapsto 1/z$.) 

For the other inequality, 
recall from \S3.1 that there is an equivariant harmonic model map $\hat{m}$ on $\mathbb{C}$ with Hopf differential $\tilde{q}_{sym}$, such that 

\begin{enumerate}

\item $m$ is the quotient of the restriction of  the translation invariant map $\hat{m}$ defined on $\C$, to the upper half-plane $\mathbb{H}^2$, and

\item The values of $\hat{m}(-z)$ differs from $-\hat{m}(z)$ by a hyperbolic translation $S$.
% is symmetric under the holomorphic involution $z\mapsto 1/z$. 

\end{enumerate}

%Passing to the quotient of $\hat{m}$ by the translation $w\mapsto w+1$, we would have a model map $\hat{m}_d$ defined on $\C^\ast$, which is ``almost" symmetric under the holomorphic involution $z\mapsto 1/z$, that is, it differs by a twist. 

%In particular, the restriction $m\vert_{A_i}$ can be thought of as the quotient of $\hat{m}$  to a strip $S_{L_i}$.

Restrict the harmonic map $\hat{m}$ to the strip $S_L = \{ \lvert \Im(z) \rvert \leq L \}$, where the quotient of $S_L \cap \mathbb{H}^2$ by the translations yields the annulus $A_i$
By the translation invariance, this restriction descends to a  quotient map ${m}^d_i$ that is harmonic on the doubled annulus $\hat{A}_i $ (the quotient of $S_L$ by the translations), and is the least energy map having those boundary conditions. 

%doubled annulus $\hat{A}_i = (A_i)_- \cup A_i$, where the doubling is across the circle $\partial \mathbb{D}$ via the holomorphic involution $z\mapsto 1/z$. 
%From the above discussion, the boundary values on the doubled annulus differ by a twist $\Theta_\tau$  for some $\tau$, as in Lemma \ref{htw}. 

On the other hand, by the Doubling Lemma, $\phi_i$ is also the restriction to $A_i$ of a map ${\Phi}_i$ on the doubled annulus  $\hat{A}_i $, which has identical boundary conditions on the two boundary components (both equal to $m\vert_{\partial_+A_i})$.

The idea is to then modify ${\Phi_i}$ by a suitable twist $\Theta_\tau$, such that the lift to $S_L$ has the same boundary conditions as $\hat{m}$ on $S_L$. Note that the amount of this twist is uniformly bounded (independent of $i$) as it only depends on the boundary twist data of the crown $\CC$. 

By the previous lemma, this composition ${\Phi_i} \circ \Theta_\tau$ has  energy only a uniformly bounded amount (say $K_1>0$) more than that of   ${\Phi}_i$. Since  the composition has exactly the same boundary conditions as $m^d_i$ on $\hat{A}_i $, we have from the energy-minimizing property of $m^d_i$ noted above, that
\begin{equation}
 \mathcal{E}(m^d_i) \leq \mathcal{E}(\Phi_i) + K_1
\end{equation}

from which the right-hand side of  \eqref{lbd} follows with $K_2 = \frac{K_1}{2}$, as the doubled maps have energy exactly twice as those defined on the annulus $A_i$. 
\end{proof}

\subsection{Proof of Proposition \ref{main-prop}}

%Recall that we have fixed a choice of coordinate chart $(U,p) \cong (\mathbb{D},0)$ about the pole $p$. Moreover we are given a principal part, call it $P$, that is compatible with the crown in $Y$, having an polygonal end $\PP$.  \\

\subsection*{Convergence}
 
To show that $h_i$ uniformly converge to a harmonic map $h$ after passing to a subsequence,
we need \textit{a priori} energy bounds of the harmonic maps.

To do this, we shall use the energy comparison proved in Proposition \ref{comparison}. The rest of the argument is originally due to Wolf in \cite{Wolf3}, and used by Jost-Zuo in \cite{Jost-Zuo}, and in our previous work  \cite{GW1}, \cite{GW15}, \cite{GW2}.

 %A key step is to control the map on the annular region between successive subsurfaces in the exhaustion (see Prop \ref{ebound}) for which one needs to consider a ``partially free boundary problem" on the annulus, and prove an estimate comparing the energy of its solution, to that of the model map $m$. \\

%We show that  on each lift of $A_i$ to the universal cover, the restriction $\widetilde{h_i}\vert_{\widetilde{A_i}}$ is bounded distance from the model map $\widetilde{m}$. 

%We show that on each $A_i$, the restriction ${h_i}\vert_{{A_i}}$ is bounded distance from the model map ${m}$. 

%For this, we use the next section on the partially free boundary problem .\\

\begin{prop}[Uniform energy bounds]\label{ebound} For the sequence of maps $h_i$ constructed in \S4.1, there is a uniform energy bound on compacta, that is, for any compact subsurface $K \subset X \setminus p$, we have
\begin{center}
$\mathcal{E}(h_i \vert_K) \leq C$ 
\end{center}
for each $i\geq 1$, where $C$ is independent of $i$.

\end{prop}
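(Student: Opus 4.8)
The plan is to reduce the problem to a statement on the annular ends and then invoke the energy comparison of Proposition \ref{comparison} together with the least-energy property of the $h_i$. First I would fix a compact subsurface $K \subset X\setminus p$ and, enlarging $K$ if necessary, assume $K \supset X_0$, so that $X_i = K' \cup A_i'$ decomposes into a fixed compact piece and an annular collar of growing modulus; it suffices to bound $\mathcal{E}(h_i\vert_{X_i})$ uniformly, since $\mathcal{E}(h_i\vert_K) \leq \mathcal{E}(h_i\vert_{X_i})$ once $K \subset X_i$. The key point is that $h_i$ is the \emph{least energy} map on $X_i$ with boundary value $m\vert_{\partial X_i}$ and with the prescribed marking (as noted after the definition of $h_i$). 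Hence one may produce \emph{any} competitor map with the same boundary data and marking and read off an upper bound for $\mathcal{E}(h_i)$ from it.

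The natural competitor is the map that agrees with $h_0$ (the harmonic map on the fixed surface $X_0$, which has some fixed finite energy $E_0$) on $X_0$, agrees with the model map $m$ on a slightly shrunk annulus near $\partial X_i$, and interpolates on a fixed-modulus transition collar between $\partial X_0$ and the model. More precisely: choose the exhaustion so that the annulus $A_i = X_i \setminus X_0$ contains a fixed sub-collar $A'$ of modulus $1$ adjacent to $\partial X_0$; define the competitor $g_i$ to equal $h_0$ on $X_0$, to equal $m$ on $X_i \setminus (X_0 \cup A')$, and on $A'$ to be a geodesic-homotopy interpolation between $h_0\vert_{\partial X_0}$ and $m\vert_{\partial A'}$ (valid because the target crown is nonpositively curved, so geodesic homotopies exist and the interpolation has energy bounded in terms of the fixed boundary data). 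Then
\begin{equation*}
\mathcal{E}(h_i) \leq \mathcal{E}(g_i) = E_0 + \mathcal{E}(g_i\vert_{A'}) + \mathcal{E}(m\vert_{A_i \setminus A'}) \leq E_0 + C_0 + \mathcal{E}(m\vert_{A_i}),
\end{equation*}
where $C_0$ is independent of $i$. This only moves the problem to bounding $\mathcal{E}(m\vert_{A_i})$, which grows with $i$; so a naive comparison is not enough, and this is where Proposition \ref{comparison} enters.

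The resolution is the two-sided inequality $\mathcal{E}(\phi_i) \leq \mathcal{E}(m\vert_{A_i}) \leq \mathcal{E}(\phi_i) + K_2$ of Proposition \ref{comparison}, together with the observation that $\phi_i$, the solution of the partially free boundary problem on $A_i$ with boundary value $m$ on $\partial_+ A_i$, has energy \emph{no larger} than the restriction $h_i\vert_{A_i}$ of the genuine minimizer (since $\phi_i$ minimizes energy over an even larger class of maps --- those with free boundary on $\partial_- A_i$). Chaining these: $\mathcal{E}(m\vert_{A_i}) \leq \mathcal{E}(\phi_i) + K_2 \leq \mathcal{E}(h_i\vert_{A_i}) + K_2 \leq \mathcal{E}(h_i) + K_2$. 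Substituting into the competitor bound gives $\mathcal{E}(h_i) \leq E_0 + C_0 + \mathcal{E}(h_i) + K_2$, which is vacuous --- so I must instead run the comparison the other direction, using that $h_i$ restricted to the collar is itself a competitor for the PFD whose boundary data on $\partial_+A_i$ is $m\vert_{\partial X_i}$: one writes $\mathcal{E}(h_i) = \mathcal{E}(h_i\vert_{X_0'}) + \mathcal{E}(h_i\vert_{A_i})$, replaces $h_i\vert_{A_i}$ by $\phi_i$ in a new competitor $g_i'$ that glues $\phi_i$ on $A_i$ to the appropriate map on $X_0$ (matching along $\partial_- A_i = \partial X_0$, which is allowed since $\phi_i$ has a free boundary there, but the glued map must remain admissible for the marking --- this requires a small argument), obtaining $\mathcal{E}(h_i) \leq \mathcal{E}(\text{fixed map on } X_0) + \mathcal{E}(\phi_i) + (\text{transition}) \leq E_1 + \mathcal{E}(m\vert_{A_i})$ and also $\mathcal{E}(m\vert_{A_i}) \leq \mathcal{E}(\phi_i) + K_2$; the finiteness of $\mathcal{E}(\phi_i)$ independent of $i$ then comes from the fact that the model map $m$ has, on each fixed \emph{finite-height} truncation, a definite energy, while Proposition \ref{comparison} controls the difference $\mathcal{E}(m\vert_{A_i}) - \mathcal{E}(\phi_i)$ uniformly --- hence $\mathcal{E}(h_i\vert_K)$, being at most $\mathcal{E}(h_i\vert_{X_j})$ for $K \subset X_j$ with $j$ fixed, is bounded by the energy of $h_j$ alone, which is finite. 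The cleanest route, and the one I would write up, is: bound $\mathcal{E}(h_i\vert_K)$ by comparing $h_i$ to the competitor built from $\phi_i$ as above; use the least-energy property of $h_i$; use $\mathcal{E}(\phi_i) \leq \mathcal{E}(m\vert_{A_i})$ and the reverse bound of Proposition \ref{comparison} to trade $\mathcal{E}(m\vert_{A_i})$ for $\mathcal{E}(\phi_i) + K_2$; and finally absorb the genuinely $i$-independent pieces. The main obstacle is exactly this bookkeeping of which map minimizes over which class, and ensuring every competitor constructed respects the marking and the ``no Dehn twist around the crown'' constraint --- the analysis (subharmonicity, the Doubling Lemma, the twist estimate of Corollary \ref{htw2}) is all in place from \S4.2, but assembling the inequalities in the order that yields an $i$-independent bound, rather than a vacuous tautology, is the delicate step.
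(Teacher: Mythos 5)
You assembled the correct ingredients (competitor map built from a fixed harmonic map plus the model $m$ on the annulus; least-energy property of $h_i$; the PFD minimality $\mathcal{E}(\phi_i)\leq\mathcal{E}(h_i\vert_{A_i})$; the comparison $\mathcal{E}(m\vert_{A_i})\leq\mathcal{E}(\phi_i)+K_2$ of Proposition \ref{comparison}), but the proof is not actually closed, and the step you flag as ``delicate'' is exactly the one the paper does and you did not. The point is that one should never try to bound the total energy $\mathcal{E}(h_i)$ --- it genuinely diverges, since $m$ has infinite energy at the pole --- but rather keep the decomposition $\mathcal{E}(h_i)=\mathcal{E}(h_i\vert_{X_1})+\mathcal{E}(h_i\vert_{A_i})$ and cancel the unbounded annular term. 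Concretely: with the competitor $g$ equal to $h_1$ on $X_1$ and to $m$ on $A_i=X_i\setminus X_1$ (no interpolation collar is needed, since $h_1\vert_{\partial X_1}=m\vert_{\partial X_1}$ by construction of $h_1$), one gets
$\mathcal{E}(h_i\vert_{X_1})+\mathcal{E}(h_i\vert_{A_i})\leq\mathcal{E}(h_1)+\mathcal{E}(m\vert_{A_i})\leq\mathcal{E}(h_1)+\mathcal{E}(\phi_i)+K_2\leq\mathcal{E}(h_1)+\mathcal{E}(h_i\vert_{A_i})+K_2$,
and subtracting $\mathcal{E}(h_i\vert_{A_i})$ yields $\mathcal{E}(h_i\vert_{X_1})\leq\mathcal{E}(h_1)+K_2$; for general $K$ one replaces $X_1$ by some fixed $X_m\supset K$. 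You in fact wrote the chain $\mathcal{E}(m\vert_{A_i})\leq\mathcal{E}(h_i\vert_{A_i})+K_2$, but then weakened it to $\mathcal{E}(m\vert_{A_i})\leq\mathcal{E}(h_i)+K_2$ before substituting, which destroys the cancellation and is why you landed on a tautology.

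Your fallback routes do not repair this. Gluing $\phi_i$ on $A_i$ to a fixed map on $X_0$ is not admissible as stated: the values of $\phi_i$ on the free boundary $\partial_-A_i$ are not prescribed and not a priori controlled, so the glued map need not be continuous (or close to the fixed map) without further estimates; the paper never uses $\phi_i$ as part of a competitor, only as an intermediary in the two energy inequalities above, precisely to avoid this. And the claim that $\mathcal{E}(\phi_i)$ is finite ``independent of $i$'' is false: by Proposition \ref{comparison} it differs from $\mathcal{E}(m\vert_{A_i})$ by at most $K_2$, and the latter tends to infinity; likewise $\mathcal{E}(h_i\vert_{X_j})\leq\mathcal{E}(h_j)$ is not something you may assert --- it is (a form of) the statement being proved. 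So the gap is real: the single missing idea is to subtract the common unbounded term $\mathcal{E}(h_i\vert_{A_i})$ from both sides of the competitor inequality, bounding only the restricted energy.
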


\begin{proof}

As observed in \S4.1, $h_i$ is the least energy map with the prescribed boundary condition on $\partial X_i$, namely, that the boundary map agrees with $m\vert_{\partial X_i}$.

For any $i\geq 1$ one can define a map $g$ that is a candidate solution to this energy-minimizing problem by defining $g$ to equal $h_1$ on $X_1$, and restrict to the map $m$ on $A_i = X_i \setminus X_1$. (Note that this defines a continuous map since $h_1\vert_{\partial X_1} = m\vert_{\partial X_1}$; since $\partial X_1$ is an analytic curve of measure zero, the candidate map $g$ is locally square-integrable.

We then have
\begin{equation*}
 \mathcal{E}(h_i) \leq  \mathcal{E}(g)  =  \mathcal{E}(h_1)  + \mathcal{E}(m\vert_{A_i}).
\end{equation*}

On the other hand, we have the equality
\begin{equation*}
\mathcal{E}(h_i) = \mathcal{E}(h_i\vert_{X_1})  + \mathcal{E}(h_i\vert_{A_i})  
\end{equation*}

and by the energy-minimizing property of the solution to the partially free boundary problem, 
\begin{equation*}
\mathcal{E}(\phi_i) \leq  \mathcal{E}(h\vert_{A_i})
\end{equation*}

Combining  these with energy comparison (\ref{lbd}) of Proposition \ref{comparison}  then yields 
\begin{equation*}
\mathcal{E}(h_i\vert_{X_1}) \leq K_2 + \mathcal{E}(h_1)
\end{equation*}
where the right-hand-side is independent of $i$.

Thus the energies restricted to the fixed subsurface $X_1$ is uniformly bounded. 
A similar argument then yields uniform energy bounds on \textit{any} compact subset $K$ of $X$, namely,  in the preceding argument, we replace the compact subsurface $X_1$ by a compact subsurface $X_m$ (for some $m>1$) in the exhaustion, that contains $K$. 
\end{proof}

We can now show:

\begin{lem} The sequence of harmonic maps $h_i$  (for $i\geq 1$) converges uniformly on compact sets, after passing to a subsequence, to a harmonic map $h:X\setminus p \to Y$.
\end{lem}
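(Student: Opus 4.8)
The plan is to upgrade the uniform energy bounds of Proposition~\ref{ebound} into actual subconvergence on compacta, then verify that the limit has all the required properties. First I would invoke standard interior regularity for harmonic maps into a negatively curved target: by the interior gradient estimate (see \cite{Cheng}, or Theorem~2.5 of \cite{AHuang}), on any compact $K \subset X \setminus p$ the uniform energy bound $\mathcal{E}(h_i\vert_{K'}) \leq C$ on a slightly larger compact set $K' \supset K$ forces a uniform pointwise bound on $\lVert dh_i \rVert$ over $K$. One subtlety is that these interior estimates require the images $h_i(K)$ to stay in a bounded region of the (incomplete) target $Y$; this follows because $h_i$ agrees with the fixed model map $m$ outside $X_i$ and, by the maximum principle applied to the distance function $d_\rho(h_i, m)$ (which is subharmonic, $m$ being harmonic), $h_i$ cannot wander far from $m$ — in particular it cannot escape into the ends. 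So I would first record that $\{h_i\}$ maps each fixed compactum into a fixed compact subset of $Y$, uniformly in $i$.

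With uniform $C^0$ and $C^1$ bounds on compacta in hand, a bootstrap via Schauder estimates applied to the harmonic map equation \eqref{har} gives uniform $C^{k}$ bounds on compacta for every $k$. Then Arzel\`a--Ascoli and a diagonal argument over an exhaustion $\{X_i\}$ extract a subsequence $h_{i_j}$ converging in $C^\infty_{\mathrm{loc}}$ to a limiting smooth map $h: X\setminus p \to Y$. Since the harmonic map equation is preserved under $C^2_{\mathrm{loc}}$ limits, $h$ is harmonic. It remains homotopic to $f$ because each $h_i$ is (all the $h_i$ preserve the marking, and $C^0_{\mathrm{loc}}$ convergence together with the fixed behavior near $p$ preserves the homotopy class).

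The main obstacle — and the point where the preceding sections of the paper do the real work — is precisely the control near the puncture, i.e. ruling out escape of energy out the end and ruling out degeneration. Here that is handled not at the level of this lemma but through the energy comparison of Proposition~\ref{comparison} feeding into Proposition~\ref{ebound}: because $h_i$ is the \emph{least-energy} map with boundary data $m\vert_{\partial X_i}$, and because the solution of the partially free boundary problem on the annulus $A_i$ has energy within a uniform additive constant $K_2$ of $m\vert_{A_i}$, the energy of $h_i$ restricted to any fixed $X_m$ is bounded independently of $i$. That is the substitute for the finite-energy hypothesis that classical existence arguments rely on. Given that bound, the convergence argument above is essentially the standard compactness machinery for harmonic maps into nonpositively curved spaces, so in this lemma I would present it briskly, citing \cite{Cheng} and \cite{AHuang} for the interior estimates and Proposition~\ref{ebound} for the energy input, and flag that the identification of the limit's Hopf differential and its behavior at $p$ (including that it genuinely maps a neighborhood of $p$ onto the crown $\CC$) is carried out in the remainder of \S4.
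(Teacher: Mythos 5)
Your overall scheme (uniform energy on compacta $\Rightarrow$ equicontinuity/derivative bounds $\Rightarrow$ Arzel\`a--Ascoli and a diagonal argument, with the harmonic map equation passing to the limit) matches the paper's, but the one step that actually needs an idea --- keeping the images $h_i(K)$ in a fixed compact subset of the incomplete target $Y$, i.e.\ ruling out drift of the compact core into the crown end --- is not justified by your argument, and this is a genuine gap. You write that ``$h_i$ agrees with the fixed model map $m$ outside $X_i$'' and then apply the maximum principle to $d_\rho(h_i,m)$. But $h_i$ is only defined on $X_i$ (the agreement with $m$ is only a boundary condition on $\partial X_i$), and $m$ is only defined on the punctured disk $U\setminus p$, so the subharmonic function $d_\rho(h_i,m)$ lives only on the annulus $U\setminus U_i$. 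The maximum principle there bounds it by its values on the two boundary circles: it vanishes on $\partial U_i=\partial X_i$, but on the inner circle $\partial U=\partial X_0$ there is no a priori control --- in the paper that bound is deduced only \emph{after} the present lemma, precisely by using the uniform convergence $h_i\to h$ on the compact set $\partial X_0$. So your confinement argument is circular: a priori nothing in the energy bound or the far-away boundary condition prevents $h_i(\partial X_0)$, and hence the whole compact core, from sliding deeper and deeper into the cusps of the crown while the energy on compacta stays bounded.

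The paper closes this gap with a topological argument rather than a comparison with $m$: there is a compact set $K_Y\subset Y$ whose complement contains no essential loop except ones homotopic into the crown end; since each $h_i$ is a marking-preserving diffeomorphism, the image of a fixed compact subsurface $X_m$ (which carries non-peripheral essential loops) must intersect $K_Y$. Equicontinuity, obtained via the Courant--Lebesgue Lemma from the energy bound of Proposition~\ref{ebound}, then bounds $d_\rho\bigl(h_i(x_0),h_i(x_i)\bigr)$ for points $x_i\in X_m$ with $h_i(x_i)\in K_Y$, so the images of a basepoint stay in a set of uniformly bounded diameter and Arzel\`a--Ascoli applies. If you insert this marking/topology step (or some other genuine $C^0$ anchor) in place of the maximum-principle comparison with $m$, the rest of your write-up --- interior gradient estimates as in \cite{Cheng} and \cite{AHuang}, bootstrap, diagonalization, harmonicity of the limit --- goes through as in the paper; also note that the identification of the limit's principal part and its distance to $m$ is indeed deferred to the subsequent lemmas, as you anticipated.
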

\begin{proof}
Let $K\subset X\setminus p$ be a compact set, and consider the restrictions $h_i\vert_K$. 

By Proposition \ref{ebound}, these restrictions have a uniform energy bound. By an application of the Courant-Lebesgue Lemma (see Lemma 3.7.1 of \cite{Jost0}) we can conclude that the sequence is equicontinuous. 

To show sub-convergence, it suffices to show that the images of a fixed basepoint $x_0 \in X\setminus p$ remain in a subset of uniformly bounded diameter in $Y$:

Consider a fixed compact subsurface $X_m$ (in the compact exhaustion constructed above)  containing $x_0$. We claim that the images of $X_m$ under $h_i$ (where $i\geq 1$) intersect a fixed compact subset $K_Y$ of $Y$:\\
This follows because of the non-trivial topology of the subsurface $X_m$, and the observation that  there exists a compact set $K_Y$  of $Y$ whose complement does not contain any curve that is homotopically non-trivial on the surface $Y$, except a curve that is homotopic to the boundary of the crown.
If the  image of $X_m$  under $h_{i}$ is disjoint from $K_Y$, then a non-trivial loop in $X_m$ that is not homotopic to $\partial X_m$ must map to either a curve that is null-homotopic or a curve that is homotopic into the puncture, which contradicts the fact that $h_{i}$ is a diffeomorphism preserving markings. 

Hence there is a sequence of points $x_i \in X_m$ such that $h_i(x_i) \in K_Y$. By the equicontinuity of the sequence, since $X_m$ is fixed,  there is a uniform bound on the distance between the images of the points $x_0$ and $x_i$ (for $i\geq 1$). In particular, the images of $x_0$ are uniformly bounded, as required.
\end{proof}

\subsection*{Finishing the proof}

%\begin{lem} The Hopf differential of the limiting map $h$ has a principal part $P$ at $p$.
% that is compatible with the ideal polygon $\PP$.
%\end{lem}

 %By Proposition \ref{conv} we have obtained a harmonic map
 %$$h:X\setminus p \to Y$$ 

%It remains to show: 
 \begin{lem}  The restriction of $h_i$ to $U\setminus p \cong \mathbb{D}^\ast$ is a uniformly bounded distance from the model map $m:\mathbb{D}^\ast \to \CC$.   Moreover, the principal part of the Hopf differential of $h$ is $P$.

 \end{lem}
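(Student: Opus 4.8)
The plan is to deduce both assertions from the energy bound of Proposition \ref{ebound} together with the properties of the PFD solutions $\phi_i$ and the machinery of \S3. First I would show that the restrictions $h_i\vert_{A_i}$ are uniformly energy-close to the PFD solutions $\phi_i$: indeed, $\phi_i$ is the least-energy map on $A_i$ with boundary value $m\vert_{\partial_+A_i}$ and free boundary on $\partial X_0$, so $\mathcal{E}(\phi_i)\le \mathcal{E}(h_i\vert_{A_i})$; on the other hand the computation in the proof of Proposition \ref{ebound} gives $\mathcal{E}(h_i\vert_{A_i})\le \mathcal{E}(m\vert_{A_i})\le \mathcal{E}(\phi_i)+K_2$ by Proposition \ref{comparison}. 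Hence $\mathcal{E}(h_i\vert_{A_i})$ and $\mathcal{E}(\phi_i)$ differ by at most $K_2$, uniformly in $i$. Passing to the limit $h=\lim h_i$ (after subsequence) and letting $i\to\infty$, the limit map $h\vert_{U\setminus p}$ has the property that on every sub-cylinder it is within bounded energy of the corresponding least-energy map with boundary value $m$; since the target crown $\CC$ is non-positively curved and the PFD solution is unique, a standard convexity-of-energy argument forces $h\vert_{U\setminus p}$ to coincide, up to bounded distance, with the doubling-limit of the $\phi_i$, which in turn is bounded distance from $m$ by construction (the doubled model map $m^d_i$ and $\Phi_i$ agree after a bounded twist).

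More concretely, I would run the same subharmonicity argument as in Proposition \ref{inj}. Consider $d_\rho(h(z),m(z))$ on the punctured disk $U\setminus p$. Lifting to the universal cover $\mathbb{H}^2$, both $\tilde h$ and $\tilde m$ are (equivariant, resp.\ the restriction of an equivariant) harmonic maps to $(\mathbb{D},\rho)$, so the distance function between the two lifts is subharmonic. On the outer boundary $\partial U = \partial X_0$ it is controlled because $h$ agrees with $h_1$ there (fixed compact data), and $m$ is a fixed map; so the distance is bounded on $\partial X_0$. To bound it near the puncture, use the uniform energy bound $\mathcal{E}(h_i\vert_{A_i})\le \mathcal{E}(m\vert_{A_i})+O(1)$: this forces $h$ to send the core circles $C_r$ of $U\setminus p$ into the crown $\CC$ tracking the cusps in the same way $m$ does (since any homotopically essential loop must exit the same cusps, as in Claim 2 of Proposition \ref{prop-er}), and the energy bound prevents $h$ from escaping out the ends faster or slower than $m$ by more than a bounded amount. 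Combining the boundary bound with the near-puncture bound via the maximum principle for the subharmonic function $d_\rho(\tilde h,\tilde m)$ on the strip yields the desired uniform bound $d_\rho(h(z),m(z))\le C$ on $U\setminus p$.

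For the second assertion — that the Hopf differential of $h$ has principal part $P$ — I would invoke the converse direction of Proposition \ref{inj}. Since $h$ and $m$ are both harmonic maps from $U\setminus p$ to $\CC$ that are a uniformly bounded distance apart, and $\mathrm{Hopf}(m)=q$ has principal part $P$ by the Asymptotic Models Theorem \ref{asm}, the bounded-distance conclusion combined with Lemma \ref{ageom2} (if the principal parts differed, horizontal distances in the domain would diverge and hence, by the geometric estimates of Proposition \ref{prop-est}, so would the $\rho$-distances between image points) shows that $\mathrm{Hopf}(h)$ must have the same principal part $P$. Alternatively, one can note that $\mathrm{Hopf}(h) = \lim \mathrm{Hopf}(h_i)$ locally uniformly, each $h_i$ has boundary value $m$ on $\partial X_i$, and a direct estimate on the difference of Hopf differentials near the puncture (again using the bounded-distance property and subharmonicity) pins down the negative-order Laurent coefficients.

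\textbf{Main obstacle.} The delicate point is establishing the bound near the puncture: one must rule out that the limiting map $h$ differs from $m$ by an \emph{unbounded} amount — e.g.\ that $h$ spirals or escapes out a cusp at a different rate. The uniform energy comparison (Proposition \ref{comparison}) is exactly what prevents this, but translating "uniformly bounded energy on every sub-cylinder" into "uniformly bounded pointwise distance from $m$" requires care: one needs the geometric estimates of \S2.5 to hold for the limit map (which they do, since $\mathrm{Hopf}(h)$ will have a higher-order pole and hence a complete $q$-metric near $p$ once we know it is meromorphic of the right order), and one needs to know a priori that $\mathrm{Hopf}(h)$ has a pole of order exactly $n$ at $p$ — which is itself a consequence of the energy bound (infinite energy of the right growth rate) together with the removable-singularities analysis. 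So the argument is somewhat circular-looking and must be unwound in the correct order: first the energy comparison, then the pole order, then the geometric estimates, then the bounded distance, and finally the identification of the principal part.
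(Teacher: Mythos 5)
There is a genuine gap in the central step. You try to bound $d_\rho(h(z),m(z))$ by applying the maximum principle directly to the limit map $h$ on the noncompact punctured disk $U\setminus p$: you control the distance on the fixed circle $\partial X_0$, but near the puncture you only offer the assertion that ``the energy bound prevents $h$ from escaping out the ends faster or slower than $m$ by more than a bounded amount.'' That is exactly the statement to be proved, and neither the energy comparison of Proposition \ref{comparison} nor the uniform energy bound of Proposition \ref{ebound} yields a pointwise distance bound in any direct way (two maps can have comparable energy on every annulus and still drift apart along a cusp); your own ``main obstacle'' paragraph concedes this circularity but does not resolve it. The convexity-of-energy sketch in your first paragraph has the same problem: closeness in energy to the PFD solutions does not translate into a uniform pointwise bound without further argument.

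The fix — and the paper's actual proof — is elementary and avoids the puncture entirely: work with the approximating maps $h_i$ on the \emph{compact} annuli $A_i$. By the very definition of $h_i$ (Dirichlet data in \S4.1), $h_i$ \emph{equals} $m$ on the outer boundary $\partial_+A_i=\partial X_i$, and on the fixed inner circle $\partial X_0$ the maps $h_i$ stay a uniformly bounded distance from $m$ because $h_i\to h$ uniformly on compacta. Since $d_\rho(h_i,m)$ is subharmonic on $A_i$ (both maps harmonic, negatively curved target), the maximum principle on the compact annulus gives a bound independent of $i$, and letting $i\to\infty$ transfers it to $h$ on all of $U\setminus p$. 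You do record the boundary condition $h_i=m$ on $\partial X_i$ in your alternative remark about Hopf differentials, but you never use it where it is needed. Your treatment of the second assertion — deducing that the principal part of $\mathrm{Hopf}(h)$ is $P$ from the bounded-distance property via the converse direction of Proposition \ref{inj} (Lemma \ref{ageom2}) — does agree with the paper once the distance bound is in place.
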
 
 
 \begin{proof}
 Since $h_i \to h$ uniformly on compact sets, the sequence of maps $h_i$ restricted to  $\partial_- A_i = \partial X_0$ remains a bounded distance from $m$.
 By construction, any harmonic map $h_i$ in the sequence equals the restriction of $m$ on $\partial_+A_i$. Hence, by the Maximum Principle, we have that $h_i$ is uniformly  bounded distance from $m$ on the annulus $A_i$, where the uniformity means that the bound is independent of $j$. 
 We can also conclude that the limiting map $h$ is a bounded distance from $m$ on each $A_i$, and consequently on  the union $U \setminus p = \bigcup\limits_i A_i$.

By the statement of the ``converse"  in Proposition \ref{inj}, the Hopf differential of $h$ also has a principal part $P$. 
  \end{proof}

\medskip

\begin{lem}[Uniqueness] If there are two harmonic maps $h_1,h_2:X\setminus p \to Y$ such that their Hopf differentials $q_1, q_2$ have a principal part $P$, then the two maps are identical. In particular, $q_1=q_2$.
\end{lem}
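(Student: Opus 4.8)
The plan is to run the standard subharmonicity-plus-Poincar\'{e}--Hopf argument that already appears in the proofs of Corollary \ref{inj2} and Proposition \ref{prop-pdiff}, now globalized to the whole punctured surface. First I would observe that since $h_1$ and $h_2$ both take a neighborhood of $p$ to the crown end $\CC$ and have Hopf differentials with the same principal part $P$ at $p$, the previous Lemma (or rather its proof, via Proposition \ref{inj}) shows that $d_\rho(h_1(x),h_2(x))$ is uniformly bounded on a punctured neighborhood $U\setminus p$. On the rest of $X\setminus p$, which is relatively compact away from the puncture together with this neighborhood, the distance function is automatically bounded by continuity. Hence $u(x):=d_\rho(h_1(x),h_2(x))$ is a bounded function on all of $X\setminus p$.

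Next I would invoke the standard fact that the distance function between two harmonic maps into a nonpositively curved target is subharmonic (see e.g.\ Schoen--Yau); so $u$ is a bounded subharmonic function on $X\setminus p$. Since $p$ is a removable singularity for bounded subharmonic functions (the puncture has zero capacity), $u$ extends to a bounded subharmonic function on the closed Riemann surface $X$, hence is constant, say $u\equiv c\geq 0$. The goal is then to show $c=0$.

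To rule out $c>0$ I would argue exactly as in Corollary \ref{inj2}: if $c>0$, then at each point $y$ of the common image surface $Y$ one may place the unit vector at $h_1\circ h_1^{-1}(y)$ pointing along the (unique, by nonpositive curvature) geodesic toward the corresponding point of $h_2$; this yields a nowhere-vanishing vector field $\mathcal{V}$ on $Y$. Using the geometric estimates of \S2.5 applied in a polygonal exhaustion near the puncture (as in Lemma \ref{poly-ex} and its analogue for crowns), the images of the horizontal sides under both $h_1$ and $h_2$ converge to the same geodesic side of the crown end, so $\mathcal{V}$ is tangent to each bi-infinite geodesic boundary line of the crown, and on the two lines bordering a given boundary cusp it points consistently toward (or away from) that cusp, with the toward/away alternating between consecutive cusps. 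Doubling $Y$ across its geodesic boundary (and, if $Y$ has several crowns, doubling each) produces a closed surface carrying a vector field whose only zeros are at the doubled cusps, of index $+1$ at the ``toward'' cusps and $-1$ at the ``away'' ones; these indices alternate and hence the total index vanishes while the Euler characteristic of the doubled surface is negative --- contradicting Poincar\'{e}--Hopf. (When the number of cusps of some crown is odd one cannot even assign alternating signs, so that case is immediately impossible.) Therefore $c=0$, so $h_1=h_2$ and consequently $q_1=q_2$.

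The main obstacle I anticipate is making the ``$\mathcal{V}$ is tangent to the boundary geodesics and its direction alternates'' step fully rigorous for a surface with arbitrary genus and several crowns, rather than for a single ideal polygon: one must check that the bounded-distance control near the puncture combines correctly with the asymptotic-image analysis of \S3 so that the limiting behaviour of $\mathcal{V}$ at each crown end is as claimed, and that the doubling is carried out in a way that keeps $\mathcal{V}$ continuous and nonvanishing across the seams. Once that geometric picture is secured, the Poincar\'{e}--Hopf count and the removable-singularity/constancy argument are routine.
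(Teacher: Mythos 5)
Your proposal follows essentially the same route as the paper's proof: bounded distance via the argument of Proposition \ref{inj}, constancy of the (bounded, subharmonic) distance function using parabolicity of the punctured surface (your removable-singularity phrasing is an equivalent formulation), and then the vector-field/Poincar\'{e}--Hopf argument of Corollary \ref{inj2}, applied to the doubled surface, to force the constant to vanish. The details you flag as needing care (tangency of $\mathcal{V}$ along the crown geodesics and the alternating toward/away behaviour at the cusps) are exactly the points the paper delegates to Corollary \ref{inj2}, so nothing essentially new is required.
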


\begin{proof}
A proof identical to that of Proposition \ref{inj} shows that the two maps are a bounded distance apart. Since a punctured Riemann surface is parabolic in the potential-theoretic sense, the distance function between them is bounded and subharmonic, and therefore constant.  If the constant is non-zero, then the same argument as in Corollary \ref{inj2} produces a non-vanishing vector field $\mathcal{V}$ on $Y$. This vector field is parallel along the geodesic boundary components of the crown end, and points alternately towards, and away from, the ideal cusps. On the cusped hyperbolic surface $\hat{Y}$ obtained by doubling, this produces a non-vanishing vector field  $\widehat{\mathcal{V}}$ that has index $\pm1$ on each puncture (that is a cusp in the hyperbolic metric) such that the sum of the indices lies in $\{-1,1,0,1\}$. Since the Euler characteristic of $\hat Y$ is $2-2\cdot(2g) -k$ where $g\geq 1$ and the number of cusps $k=1$, this contradicts the Poincar\'{e}-Hopf theorem.
\end{proof}

This completes the proof of  Proposition \ref{main-prop}; the argument extends \textit{mutatis mutandi} to the case of more than one marked point, or the case when $X$ has genus zero and at least three marked points,  proving Theorem \ref{thm1}.

\section{Proof of Theorem \ref{thm2}}

Fix a Riemann surface $X$ as in theorem, and as before, we shall assume that there is a single marked point $p$. 

Fix an $n\geq 3$ and a principal part $P \in \text{Princ}(n)$.
in what follows $D$ shall denote the divisor $-n\cdot p$.

We shall denote the relevant space of quadratic differentials by $\Q(X,D, P)$
: each element $q$ in this space has a pole of order $n\geq 3$ at $p$ with principal part $P$.\\

\begin{lem}\label{qxdim} The space $\Q(X,D, P)$ is homeomorphic to $\mathbb{R}^{6g-6+ n+1}$ when $n$ is odd, and $\mathbb{R}^{6g-6+ n}$ when $n$ is even. 
\end{lem}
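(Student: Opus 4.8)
### Proof Proposal

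\textbf{Setup and strategy.} The plan is to compute the dimension of $\Q(X,D,P)$ by a Riemann--Roch argument, splitting into the local contribution at $p$ (which is governed by the principal part $P$) and the global contribution from holomorphic quadratic differentials on $X$. The key observation, already implicit in Lemma \ref{dims}, is that fixing the principal part $P\in\mathrm{Princ}(n)$ amounts to prescribing the top $r=\lfloor n/2\rfloor$ coefficients of the Laurent tail of $q$ at $p$, leaving free the remaining coefficients of the polar part together with an affine space of holomorphic quadratic differentials.

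\textbf{Key steps.} First I would note that $\Q(X,D,P)$ is either empty or an affine space over the vector space $H^0(X,K_X^{\otimes 2}(D'))$ of meromorphic quadratic differentials with a pole of order at most $n$ at $p$ and no other poles, where $D' = n\cdot p$; indeed, the difference of two elements of $\Q(X,D,P)$ is a meromorphic quadratic differential whose polar part at $p$ has its top $r$ coefficients vanishing. Second, I would show non-emptiness: one can always build an element of $\Q(X,D,P)$ by taking any meromorphic quadratic differential on $X$ realizing the prescribed polar coefficients $(a_n,\dots,a_{n-r+1})$ locally (such differentials exist since $\deg K_X^{\otimes 2} = 4g-4 > 0$ allows poles of arbitrary prescribed principal part at a point, by Riemann--Roch / Serre duality once $n$ is large enough, and for small $n$ one checks directly). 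Third, I would compute $\dim_{\mathbb{C}} H^0(X, K_X^{\otimes 2}(n\cdot p))$ via Riemann--Roch: $\deg(K_X^{\otimes 2}(n\cdot p)) = 4g-4+n$, and since this exceeds $2g-2$ for $n\geq 3$, $g\geq 1$, the $H^1$ term vanishes and we get $\dim_{\mathbb{C}} H^0 = 4g-4+n - g + 1 = 3g-3+n$. Fourth, I would subtract the $r-1$ complex conditions coming from prescribing the coefficients $a_n,\dots,a_{n-r+1}$ modulo the one-dimensional freedom already accounted for — more cleanly, directly: the free parameters are the coefficients $a_2,a_3,\dots,a_{n-r}$ (that is, $n-r-1$ complex numbers, exactly as in the proof of Lemma \ref{dims}) together with the global holomorphic part. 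The cleanest bookkeeping is: total complex dimension $= \dim_{\mathbb{C}}\Q(X,D) - (r-1)$ where $\Q(X,D)$ is the full space with principal part unconstrained; combining with $\dim_{\mathbb{C}}\Q(X,D) = 3g-3+n$ (the Riemann--Roch count above, noting the leading coefficient $a_n\neq 0$ imposes no dimension change) and $r=\lfloor n/2\rfloor$ gives real dimension $2(3g-3+n) - 2(r-1) + (\text{parity correction})$. Carrying out the parity split: for $n$ even, $r = n/2$, and the count yields $6g-6+n$; for $n$ odd, $r=(n-1)/2$, and one extra real parameter survives (exactly as in Lemma \ref{dims}, where $Q(P,n)\cong\mathbb{R}^{n-1}$ for $n$ odd versus $\mathbb{R}^{n-2}$ for $n$ even), giving $6g-6+n+1$.

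\textbf{Assembling the final count.} Concretely: the local piece $Q(P,n)$ of Lemma \ref{dims} contributes $\mathbb{R}^{n-1}$ ($n$ odd) or $\mathbb{R}^{n-2}$ ($n$ even), and gluing to $X$ adds an affine space of holomorphic quadratic differentials on $X$, of real dimension $2(3g-3) = 6g-6$, since a global element of $\Q(X,D,P)$ is determined by its restriction to $U$ (an element of $Q(P,n)$) together with a holomorphic quadratic differential on $X\setminus\{p\}$ extending to a prescribed polar behavior — and the obstruction/ambiguity is precisely $H^0(X,K_X^{\otimes 2})\cong \mathbb{C}^{3g-3}$. Adding: $n$ odd gives $(n-1) + (6g-6) = 6g-6+n-1$... here I would need to be careful that the $\mathbb{R}^{n-1}$ already includes one ``global-type'' parameter, so the honest gluing sum is $6g-6 + (n-1)$ versus the claimed $6g-7+n$ — reconciling this off-by-one is exactly the bookkeeping that must be done carefully, matching against the direct Riemann--Roch computation $3g-3+n$ complex $= 6g-6+2n$ real minus the $2(r-1)$ constraints minus/plus the parity term. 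The direct Riemann--Roch route is the one I would present, as it is least error-prone.

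\textbf{Main obstacle.} The genuine mathematical content is minimal — it is all Riemann--Roch plus the coefficient bookkeeping from Lemma \ref{dims}. The main obstacle is purely the careful parity accounting: ensuring that the ``$+1$'' in the odd case is correctly attributed (it comes from the imaginary part of $a_{n-r}$ when $n$ is odd, i.e.\ the coefficient just below the principal-part threshold, being unconstrained in a way it is not when $n$ is even), and that one does not double-count the global holomorphic quadratic differentials against the local free coefficients. I would verify the final formula against the small case $g=1$, $n=3$ (where it predicts $\mathbb{R}^{6-6+3+1} = \mathbb{R}^4$) as a sanity check, and against compatibility with Lemma \ref{dims} in the limit where $X$ degenerates to a disk.
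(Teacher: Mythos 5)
Your overall route --- realizing $\Q(X,D,P)$ as a fiber of the linear map recording Laurent coefficients at $p$ on $H^0(X,K_X^{\otimes 2}(np))$ and counting with Riemann--Roch --- is sound, and it is essentially the same count as the paper's, which splits $q$ into a local polar tail (counted by Lemma \ref{dims}) plus a global differential with at most a simple pole at $p$, whose dimension $3g-2$ is the same Riemann--Roch input. But as written your bookkeeping does not actually produce the stated numbers, and since this lemma is nothing but bookkeeping, that is a genuine gap. Concretely: prescribing $P$ prescribes the top $r=\lfloor n/2\rfloor$ coefficients $a_n,\dots,a_{n-r+1}$ (Lemma \ref{princ-lem}), hence imposes $r$ complex conditions, not $r-1$. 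The coefficient map $H^0(K_X^{\otimes 2}(np))\to\mathbb{C}^r$ is surjective since $h^0(K_X^{\otimes 2}(np))-h^0(K_X^{\otimes 2}((n-r)p))=r$ (both degrees exceed $2g-2$ because $g\geq 1$ and $n-r\geq 2$), so $\Q(X,D,P)$ is a nonempty affine space modeled on $H^0(K_X^{\otimes 2}((n-r)p))$ --- not on $H^0(K_X^{\otimes 2}(np))$ as your first step asserts --- of complex dimension $3g-3+n-r$, hence real dimension $6g-6+2(n-r)$, which is $6g-6+n$ for $n$ even and $6g-6+n+1$ for $n$ odd, with no residual ``parity correction'': the parity lives entirely in $r$. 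Indeed, in your formula $2(3g-3+n)-2(r-1)+(\text{parity correction})$ the unknown correction is forced to equal $-2$ for \emph{both} parities, which is exactly the signal that $-(r-1)$ should have been $-r$.

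Two further slips. The discrepancy in your gluing cross-check is off by two, not one: the correct global piece in such a decomposition is the space of quadratic differentials on $X$ with a pole of order at most one at $p$ (complex dimension $3g-2$, real dimension $6g-4$), not the holomorphic ones ($6g-6$); with that replacement the gluing sums $(6g-4)+(n-1)$ and $(6g-4)+(n-2)$ reproduce the statement, and this is precisely the paper's proof. Finally, the attribution of the odd-case $+1$ to ``the imaginary part of $a_{n-r}$ being unconstrained'' is not accurate: in both parities the free coefficients $a_2,\dots,a_{n-r}$ are fully complex; the $+1$ is simply the value of $2(n-r)-n$ when $n$ is odd.
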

\begin{proof}
Fix a coordinate disk $U$ around $p$.
The meromorphic quadratic differentials with a pole of order $1$ at $p$ is a vector space of complex dimension $3g-3+ 1$, and thus homeomorphic to $\mathbb{R}^{6g-6+2}$. By Lemma \ref{dims} the space of meromorphic quadratic differentials on $U$ of the form $\sum\limits_{i=2}^n a_i z^{-i}$ and prescribed principal part is homeomorphic to $\mathbb{R}^{n-1}$ parameters when $n$ is odd, and $\mathbb{R}^{n-2}$ parameters when $n$ is even. Moreover, any quadratic differential in $\Q(X,D, P)$  can be uniquely as such a sum of meromorphic quadratic differential on $X$ with a pole of order $1$ at $p$, and one on $U$ which lies in $Q(P,n)$; it is easy to see that any such pair can be obtained.
\end{proof}

By Theorem \ref{thm1}, for any crowned surface $Y\in \T(P)$, there exists a harmonic diffeomorphism $h:X\setminus p \to Y$ with Hopf differential $q\in \Q(X, D,P)$.  That is, $q$ has a pole of order $n\geq 3$ at $p$ with principal part $P$.

In this section we shall prove:

\begin{thm}[Theorem \ref{thm2} for one marked point]\label{thm2-case}
The resulting map
\begin{equation}\label{hatpsi}
\hat{\Psi}: \T(P) \to \Q(X,D,P)
\end{equation}
is a homeomorphism.
\end{thm}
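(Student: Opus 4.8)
### Plan for the proof

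The strategy is the standard Wolf-Hitchin-type argument: show that $\hat\Psi$ is a continuous bijection between two spaces that are (by the dimension counts already established) homeomorphic to Euclidean spaces of the same dimension, and then invoke Invariance of Domain to upgrade this to a homeomorphism. So the proof breaks into four pieces: matching dimensions, continuity, injectivity, and properness (which, together with injectivity and Invariance of Domain, yields surjectivity and that the inverse is continuous).

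\textbf{Step 1: dimensions match.} By Lemma \ref{qxdim} the target $\Q(X,D,P)$ is homeomorphic to $\mathbb{R}^{6g-6+n+1}$ when $n$ is odd and $\mathbb{R}^{6g-6+n}$ when $n$ is even. On the source side, $\T(P)$ is the subspace of $\mathcal{T}(S,n-2)$ with the metric residue of the (single) crown fixed when $n-2$ is even; by the lemma following Definition \ref{T-chs} together with Corollary \ref{wtdim}, this space is homeomorphic to $\mathbb{R}^{6g-6+(n-2)+3}=\mathbb{R}^{6g-6+n+1}$ when $n-2$ is odd (i.e.\ $n$ odd), and to $\mathbb{R}^{6g-6+n+1-1}=\mathbb{R}^{6g-6+n}$ when $n-2$ is even (i.e.\ $n$ even, since we have fixed the metric residue, which is compatible with the residue of $P$). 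So the dimensions agree in both parity cases.

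\textbf{Step 2: continuity.} The map $\hat\Psi$ sends $Y$ to $\mathrm{Hopf}(h_Y)$ where $h_Y\colon X\setminus p\to Y$ is the harmonic diffeomorphism of Theorem \ref{thm1}. Continuity follows from the standard stability of solutions of the harmonic map problem: given a convergent sequence $Y_k\to Y$ in $\T(P)$, one fixes a marking and uses the uniform energy bounds on compacta (Proposition \ref{ebound}) together with interior gradient estimates to extract a subsequence of $h_{Y_k}$ converging uniformly on compacta to a harmonic map to $Y$, which by the uniqueness part of Theorem \ref{thm1} (once the principal part $P$ is pinned down, which is preserved in the limit by the converse in Proposition \ref{inj}) must be $h_Y$; hence $\mathrm{Hopf}(h_{Y_k})\to\mathrm{Hopf}(h_Y)$. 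This is the same style of argument as in Lemma 2.2 of \cite{TamWan}.

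\textbf{Step 3: injectivity.} Suppose $Y_1,Y_2\in\T(P)$ have $\hat\Psi(Y_1)=\hat\Psi(Y_2)=q$. Then $h_{Y_1}\circ h_{Y_2}^{-1}\colon Y_2\to Y_1$ is a map between two crowned hyperbolic surfaces whose ``Hopf differential'' data agree; more precisely, two harmonic diffeomorphisms from $X\setminus p$ with the same Hopf differential $q$ have, by the uniqueness clause of Theorem \ref{thm1}, the same image up to isometry, and the marking-preserving hypothesis then forces $Y_1\cong Y_2$ in $\T(P)$. Alternatively, and more self-containedly, once two such harmonic maps have the same Hopf differential the composition $h_{Y_1}\circ h_{Y_2}^{-1}$ pulls back the metric on $Y_1$ to the metric on $Y_2$ (both are $2\cosh(2w_1)\,|q|$ in the notation of \eqref{boch2}, with the same $w_1$ because $w_1$ solves the same equation \eqref{boch2} with the same completeness properties), so it is an isometry homotopic to the identity rel the markings, giving $Y_1=Y_2$ in $\T(P)$.

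\textbf{Step 4: properness, hence surjectivity.} This is the main obstacle. Take a sequence $q_k\in\Q(X,D,P)$, all with principal part $P$, converging to $q_\infty\in\Q(X,D,P)$, and write $Y_k=\hat\Psi^{-1}(q_k)$, $h_k=h_{Y_k}\colon X\setminus p\to Y_k$. We must show $\{Y_k\}$ is precompact in $\T(P)$. The point is to run the compactness argument of \S4.3 with the differentials varying: the uniform energy bounds of Proposition \ref{ebound} go through because the model maps $m_k$ for the crowns $\CC_k$ associated to $q_k$ have uniformly bounded energy on the exhausting annuli (the constant $K_2$ in Proposition \ref{comparison} depends only on $P$ and the boundary-twist data, which is controlled since $\hat\Psi(Y_k)=q_k$ all have the same principal part). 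As in the proof of properness of $\Phi$ (Proposition \ref{prop-er}), the key is to rule out degeneration of the $Y_k$: a diverging sequence in $\T(P)$ would force either a length of a simple closed geodesic on the compact part of $Y_k$ to go to $0$ or $\infty$, or the positions of the boundary cusps to escape. In either case the images $h_k(X_m)$ of a fixed compact subsurface would have to leave every compact set of a limiting hyperbolic surface, contradicting the non-trivial-topology argument (as in the last Lemma of \S4.3: the complement of a suitable compact set contains no essential curve other than the one around the crown). Hence $Y_k$ subconverges to some $Y_\infty\in\T(P)$, and by continuity and Step 2, $\hat\Psi(Y_\infty)=q_\infty$; so $\hat\Psi$ is proper.

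\textbf{Conclusion.} By Steps 1--4, $\hat\Psi\colon\T(P)\to\Q(X,D,P)$ is a continuous, injective, proper map between manifolds (homeomorphic to $\mathbb{R}^N$) of the same dimension $N$. A proper injective continuous map between manifolds of equal dimension is a homeomorphism onto its image, which is open by Invariance of Domain and closed by properness, hence all of $\Q(X,D,P)$ since the target is connected. Therefore $\hat\Psi$ is a homeomorphism. The general case of Theorem \ref{thm2} with several marked points $p_1,\ldots,p_k$ and a collection $\mathscr{P}=\{P_1,\ldots,P_k\}$ of principal parts follows by exactly the same argument applied crown by crown, using the product form of the dimension count in Lemma \ref{qxdim} and Corollary \ref{wtdim}.
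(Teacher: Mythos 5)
Your overall route (equal dimensions via Lemma \ref{qxdim} and Corollary \ref{wtdim}, continuity, injectivity, properness, then Invariance of Domain) is exactly the paper's, and your ``self-contained'' injectivity argument is essentially the paper's: the paper runs the maximum principle on the Bochner equation for $\ln\mathcal{H}$ to show the Hopf differential determines the energy densities, hence the pullback metric, hence an isometry isotopic to the identity. Note, however, that your first injectivity variant misreads Theorem \ref{thm1}: its uniqueness clause concerns two harmonic maps to a \emph{fixed} target $Y$ with prescribed principal part, and says nothing about two maps to \emph{different} targets sharing a Hopf differential, so only your second variant carries the step (and it should be argued, as the paper does, via a maximum-principle comparison rather than a bare assertion that ``the same equation gives the same $w_1$'').

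The genuine gap is in Step 4 (properness). The crux there is a uniform-in-$k$ energy bound for $h_k\colon X\setminus p\to Y_k$ on a fixed compact subsurface, with the targets $Y_k$ varying and a priori possibly degenerating. You try to get it from Proposition \ref{ebound}, but the constants in that proposition (the energy of the first Dirichlet solution on $X_1$ and the $K_2$ of Proposition \ref{comparison}, which involves the model map into the crown $\CC_k$ and its boundary twist) depend on the target $Y_k$ itself --- precisely the object whose degeneration you must exclude --- and fixing the principal part $P$ does not control $\CC_k$ or the twist, so the argument is circular. The paper instead uses the pointwise positivity of the Jacobian $\mathcal{J}=\mathcal{H}-\mathcal{L}$ for the harmonic diffeomorphisms $h_k$, which together with \eqref{hl}--\eqref{phi2} yields the Wolf-type inequality bounding $\mathcal{E}(h_k\vert_S)$ by twice the $L^1$-norm of $q_k$ on $S$ plus a topological constant (the adaptation of Lemma 3.2 of \cite{Wolf0}); this is uniform precisely because the $q_k$ range in a compact subset of $\Q(X,D,P)$. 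Only with that bound in hand do Courant--Lebesgue length estimates, the determination of the uncrowned part by finitely many curve lengths, and the subconvergence of $q_k\vert_U$ together with Theorem \ref{asm} (and the boundary twists) give subconvergence of $Y_k$ in $\T(P)$. Your ``rule out degeneration'' sentence (lengths going to $0$ or $\infty$, cusps escaping, images leaving every compact set of ``a limiting hyperbolic surface'') presupposes a limiting surface and the fixed-target topology argument of \S4.3, and does not substitute for this missing $L^1$ energy estimate; as written, the properness step does not go through.
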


\textit{Strategy of proof.} Observe that by Corollary \ref{wtdim} and Lemma \ref{qxdim} both sides are homeomorphic Euclidean space of the same dimension (which depends on $n$ even or odd). Therefore, by the Invariance of Domain,  it suffices to prove that the map is proper and injective.\\

\subsection*{Injectivity}
To show that $\hat{\Psi}$ is injective we use a standard argument using the Maximum Principle (see pg. 454 of \cite{Wolf0}).

We recall the argument:

Namely, splitting the energy density into the holomorphic and anti-holomorphic parts:
\begin{equation}\label{hl}
\mathcal{H} = \frac{\rho(w(z))}{\sigma(z)} \lvert w_{z}\rvert^2 
\end{equation}
\begin{equation*}
\mathcal{L} = \frac{\rho(w(z))}{\sigma(z)} \lvert w_{\bar{z}}\rvert^2 
\end{equation*}
which implies that the Hopf differnetial $\Phi$ of $w$ satisfies:
\begin{equation}\label{phi2}
\lvert \Phi \rvert^2 = \mathcal{H}\mathcal{L}/\sigma^2
\end{equation}

Also, we find that the Euler-Lagrange equation \eqref{har} yields the Bochner equation:
\begin{equation}
\Delta h = 2e^h - 2 \lVert \Phi\rVert^2 e^{-h} -2 
\end{equation}
where $h = \ln \mathcal{H}$.

Denote $\Phi_i = \Phi(Y_i)$ for $i=1,2$, 
and suppose $\Phi_1 = \Phi_2 = \Phi$.

Then it follows that $h_1 = h_2$ by an application of the Maximum Principle: namely, if $h_1>h_2$ somewhere, then at a maximum of the difference $h_1-h_2$ we would have:
$$0 \geq \Delta (h_1-h_2) = (e^{h_1} - e^{h_2}) - \lvert \Phi^2 \rvert^2 (e^{-h_1} - e^{-h_2})  >0$$
which is a contradiction, and a similar contradiction is reached when $h_1<h_2$ somewhere.

This implies that $\mathcal{H}_1 = \mathcal{H}_2$ and by \eqref{phi2} we have $\mathcal{L}_1=\mathcal{L}_2$ also. Hence the energy densities $e_1= e_2$, which implies that the pullback tensors $w_1^\ast \rho_1 = w_2^\ast \rho_2$.  Thus, the composition  $w_1 \circ w_2^{-1}: Y_2 \to Y_1$ is an isometry isotopic to the identity, so we can conclude that $Y_1=Y_2$. $\qed$ \\

This establishes that  the map $\hat \Psi$ in \eqref{hatpsi}  is proper and injective. Continuity of $\hat \Psi$ follows from the injectivity, and is also noted in the literature (for example in \cite{E-E}). Hence the Invariance of Domain applies, we conclude that  $\hat{\Psi}$  is a homeomorphism.

\subsection*{Properness}

For $i\geq 1$, onsider a sequence $\Psi(Y_i) = q_i\in \Q(X,D,P)$ with $Y_i \in \T(P)$ such that the image quadratic differential lies in a compact set $K_0$. 
Let 
\begin{equation}\label{hi-seq}
h_i:X\setminus p \to Y_i
\end{equation}
 be the associated sequence of harmonic diffeomorphisms to the crowned hyperbolic surfaces $Y_i \in \T(P)$.
 
To prove the properness of $\Psi$, we need to establish that the sequence of crowned hyperbolic surfaces $Y_i$ sub-converge to a crowned hyperbolic surface $Y \in \T(P)$. 

That is,

\begin{prop}\label{last-prop}

%For the sequence of harmonic maps $h_i$ and crowned hyperbolic surfaces $Y_i$ (for $i\geq 1$) in \eqref{hi-seq}, the following hold:
%\begin{itemize}

%\item[(i)] For any simple closed essential curve $\gamma$ on $X\setminus p$, the lengths of the image curves $h_i(\gamma)$ on $Y_i$ are uniformly bounded. 

%\item[(ii)] There is a uniform bound on the lengths of boundaries of the crowns $\CC_i \subset Y_i$.

%\item[(iii)] There is a uniform bound on the boundary twists for the crowns $\CC_i \subset Y_i$.

%\end{itemize}

The sequence of hyperbolic surfaces $\{Y_i\}_{i\geq 1}$  in \eqref{hi-seq} sub-converge to a crowned hyperbolic surface $Y \in \T(P)$.

%The equivariant maps $\tilde{h}_i$ between the universal covers (\textit{cf.} \eqref{hi-seq}) sub-converge uniformly on compact sets to a harmonic map $\tilde{h}:\mathbb{D} \to (\mathbb{D},\rho)$ that is equivariant with respect to a $\pi_1(X\setminus p)$-action yielding a diffeomorphism $$h:X\setminus p \to Y$$ to a crowned hyperbolic surface $Y$, in the quotient.
\end{prop}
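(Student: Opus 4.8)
The plan is to pass to a subsequence along which $q_i\to q$ inside the compact set $K_0\subset\Q(X,D,P)$, and to show that along a further subsequence the marked crowned hyperbolic surfaces $Y_i$ converge to $Y\in\T(P)$, namely to the image of the harmonic diffeomorphism with Hopf differential $q$ (so that in fact $\widehat{\Psi}(Y)=q$). I would organize this in three stages: (i) uniform control of $h_i$ and of the crown ends near the puncture $p$; (ii) uniform energy bounds on compacta together with non-degeneration of the interior, yielding subconvergence $Y_i\to Y$ and $h_i\to h$; (iii) identification of the limit as an element of $\T(P)$.

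For stage (i): by Theorem~\ref{asm} the map $\Phi$ is a homeomorphism, so $q_i\to q$ forces the crown ends $\CC_i$ of $Y_i$ to converge in $\mathsf{Poly}_a(n-2)$ to a crown $\CC$, and the model maps $m_i:\D^\ast\to\CC_i$ to converge locally uniformly to a model map $m$ for $q$. By the uniform version of Proposition~\ref{inj} --- the constant $C_d$ there depends only on bounds for the coefficients occurring in the expressions of the $q_i$, which are uniform since $q_i\to q$ --- we have $d_\rho(h_i,m_i)\le C_d$ on a fixed punctured neighbourhood $V$ of $p$, with $C_d$ independent of $i$. Thus near $p$ the images $h_i(V)$ stay in a region of bounded diameter about $m_i(V)$, and $m_i(V)$ converges; the crown data is pinned down.

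For stage (ii): the uniform energy bound on any compact $K\subset X\setminus p$ is cheap. Writing $\mathcal{H}_i,\mathcal{L}_i$ for the two parts of the energy density of $h_i$ (as in \eqref{hl}) and $J_i=\mathcal{H}_i-\mathcal{L}_i\ge 0$ for its Jacobian density, one has $\mathcal{H}_i\mathcal{L}_i=|q_i|^2$ and hence $\mathcal{L}_i\le|q_i|$, which is uniformly bounded on $K$; moreover $\int_K J_i\le\operatorname{Area}(Y_i)=V_0$, the fixed finite hyperbolic area of a crowned surface of this topological type (the same for all of $\T(P)$ by Gauss--Bonnet). Therefore $\mathcal{E}(h_i|_K)=\int_K J_i+2\int_K\mathcal{L}_i$ is uniformly bounded, and equicontinuity on compacta follows from the Courant--Lebesgue lemma. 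To rule out degeneration of the interior of $Y_i$ I would use a length--area estimate: for a non-peripheral simple closed curve $\gamma\subset X\setminus p$, every embedded annulus homotopic to $\gamma$ has modulus at most the modulus $M_0$ of the maximal one and lies in a fixed compact subsurface $X_m$; so on the $h_i$-preimage of the collar about the $Y_i$-geodesic in the class of $h_i(\gamma)$, the map $h_i$ is degree one onto a hyperbolic collar of half-width $w_i$ (with $\sinh w_i=1/\sinh(\tfrac{1}{2}\ell_i)$, $\ell_i$ the geodesic length, by the collar lemma), and a crossing-arc argument gives $\mathcal{E}(h_i|_{X_m})\ge 4w_i^2/M_0$. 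Hence the energy bound bounds $w_i$, so $\ell_i$ is bounded below; Cauchy--Schwarz against the energy bound gives lengths bounded above; and upper length bounds on curves dual to the twisting curves bound the Fenchel--Nielsen twists (and the boundary-gluing twists are controlled by stage (i)). So all the Fenchel--Nielsen-type parameters of $Y_i$ are bounded, and after a further subsequence $Y_i\to Y$ in $\T(P)$. With this convergence the targets fit together; the topological argument from the convergence step of \S4.3 --- now with a compact subset of $Y$ available uniformly, thanks to non-degeneration --- shows the $h_i$ are pointwise bounded, and Arzel\`a--Ascoli together with interior gradient estimates (\cite{Cheng}, Theorem~2.5 of \cite{AHuang}) gives $h_i\to h$ in $C^\infty_{loc}$ to a harmonic map $h:X\setminus p\to Y$.

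For stage (iii): $\operatorname{Hopf}(h)=\lim q_i=q$, which has a pole of order $n$ at $p$ with principal part $P$; $h$ is a diffeomorphism, being a $C^\infty_{loc}$-limit of harmonic diffeomorphisms homotopic to a homeomorphism into a negatively curved surface (Sampson, Schoen--Yau) whose image has positive area $V_0$; and by the asymptotic analysis of Proposition~\ref{poly-ex} in the crown form used in \S3 (\emph{cf.}\ Proposition~``Defining~$\Phi$'') the crown end of $Y$ has $(n-2)$ cusps with metric residue compatible with $P$ in the sense of Definition~\ref{compat}. Hence $Y\in\T(P)$ and $\widehat{\Psi}(Y)=q$, which is precisely the required subconvergence. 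The main obstacle is stage (ii): the energy bound is immediate, but ruling out that the interior of $Y_i$ degenerates --- a pinching geodesic or a runaway twist --- is the genuinely new point, and it is exactly there that the length--area estimate, fed by the uniform energy bound, does the work.
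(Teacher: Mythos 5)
Your stages (ii)--(iii) are sound and run essentially parallel to the paper's own argument: the paper likewise extracts the uniform energy bound on a fixed compact subsurface from the compactness of $K_0$ (via the holomorphic/anti-holomorphic splitting, essentially your $\mathcal{L}_i\le |q_i|$ plus a Jacobian/area term), and then uses Courant--Lebesgue length bounds on a finite filling family of simple closed curves (including the curve homotopic to the crown boundary) to conclude that the bordered pieces $Y_i\setminus\CC_i$ subconverge; your collar-lemma/modulus estimate just makes explicit the no-pinching and bounded-twist steps that the paper leaves implicit. The genuine gap is in stage (i), on which your control of the crown ends and of the boundary-gluing twist rests. Theorem \ref{asm} does not say that the crown end $\CC_i$ of $Y_i$ is $\Phi$ of (the symmetrization of) $q_i\vert_U$: the homeomorphism $\Phi$ assigns to a symmetric differential the asymptotic crown of the associated \emph{model} map on $\mathbb{D}^\ast$, whereas $\CC_i$ is the asymptotic image of the \emph{global} map $h_i$, and the germ of the Hopf differential at the pole does not determine that image. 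The punctured disk is not parabolic, so the rigidity arguments (bounded subharmonic distance functions are constant) that pin down the image in Theorem \ref{tamwan}, Corollary \ref{inj2} and Proposition \ref{prop-pdiff} are unavailable for maps defined only on $\mathbb{D}^\ast$; only the residue compatibility of Definition \ref{compat} is forced by the local data. Hence ``$q_i\to q$ forces $\CC_i\to\CC$ because $\Phi$ is a homeomorphism'' is unjustified, and your ``uniform Proposition \ref{inj}'' comparison of $h_i$ with model maps $m_i$ presupposes that the two maps have the \emph{same} target crown, which is exactly what is unknown at that point.

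The paper closes this by ordering the steps the other way around: first the core subconvergence (your stage (ii), which does not use stage (i) except for the gluing twist), and only then the crown ends, by subconverging the restricted maps $h_i\vert_U$ --- anchored by the already-controlled core and the uniform interior estimates --- to a harmonic map with Hopf differential $q_\infty=\lim q_i\vert_U$ and principal part $P$, from which the convergence of the crowns $\CC_i$ (number of cusps, ideal vertices, metric residue, and the boundary twist governing the gluing) is read off via the asymptotic analysis of \S3. If you reorganize stage (i) along these lines, deriving crown and twist convergence from the convergence of the maps rather than from $\Phi$ applied to the local differentials, your proof goes through; as written, the crown and twist control is not established.
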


Note that all simple closed geodesics in $X \setminus p$, with respect to the uniformizing (cusped) hyperbolic metric, lie in a compact subsurface $S$. In what follows we fix this subsurface $S$ carrying  the topology of the punctured surface. Moreover, we shall assume that $\partial S$ is homotopic to the puncture.  (For example, $S$ can be taken to be $X_m$ for some large $m$ in the compact exhaustion considered previously.)\\

Adapting an argument of Wolf (Lemma 3.2 of \cite{Wolf0}) one can show:

\begin{lem} The sequence of harmonic maps $\{h_i\}_{i\geq 1}$ above (\textit{cf.} \eqref{hi-seq}) has uniformly bounded energy when restricted to the subsurface $S$. 
\end{lem}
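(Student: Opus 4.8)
The plan is to follow the classical strategy of Wolf (Lemma 3.2 of \cite{Wolf0}), adapting it to the crowned setting. The key point is that the hyperbolic area of the image, which equals the difference of the two pieces of the energy density, is a topological quantity that stays bounded, while a uniform lower bound on the holomorphic energy comes from compactness of the differentials in $K_0$. Concretely, recall from \S2.4 that the total energy density splits as $e_i = \mathcal{H}_i + \mathcal{L}_i$, that $\mathcal{H}_i - \mathcal{L}_i$ is (a constant multiple of) the Jacobian of $h_i$, and that $\mathcal{H}_i \mathcal{L}_i = \lvert q_i\rvert^2/\sigma^2$ where $\sigma$ is a fixed conformal metric on $X \setminus p$. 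Thus on the compact subsurface $S$ we have
\begin{equation*}
\mathcal{E}(h_i\vert_S) = \int_S e_i \, dA = \int_S (\mathcal{H}_i - \mathcal{L}_i)\, dA + 2\int_S \mathcal{L}_i\, dA,
\end{equation*}
and the first term is $\mathrm{Area}_\rho(h_i(S))$, which is at most the total area of the crowned hyperbolic surface $Y_i$ — a fixed number depending only on the topological type (the Gauss-Bonnet area is determined by $g$, $k$ and the $m_i$). So it suffices to bound $\int_S \mathcal{L}_i\, dA$ uniformly.

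The step I would carry out next is to bound $\mathcal{L}_i$ pointwise on $S$. Here the relation $\mathcal{H}_i\mathcal{L}_i = \lvert q_i\rvert^2/\sigma^2$ is the crucial input: since the $q_i$ lie in the compact set $K_0 \subset \Q(X,D,P)$, their restrictions to $S$ have a uniform sup-norm bound with respect to the fixed metric $\sigma$, so $\mathcal{L}_i \leq C \mathcal{H}_i^{-1}$ on $S$ for a constant $C$ independent of $i$. To convert this into an integral bound, I would use a lower bound on $\mathcal{H}_i$ on $S$. One route is the standard fact that $h = \ln \mathcal{H}_i$ satisfies the Bochner equation $\Delta h = 2e^h - 2\lVert q_i\rVert^2 e^{-h} - 2$ (as recorded just above in the injectivity argument), so $\mathcal{H}_i$ cannot be too small where $q_i$ is bounded away from zero; more robustly, since the $h_i$ are diffeomorphisms onto crowned surfaces and the target areas are bounded, $\mathcal{L}_i$ is small wherever the Jacobian is comparable to $e_i$, and one argues that $\int_S \mathcal{L}_i = \int_S \mathcal{H}_i - \mathrm{Area}_\rho(h_i(S)) \leq \int_S \mathcal{H}_i$, so the whole problem reduces to bounding $\int_S \mathcal{H}_i$. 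That in turn follows from the uniform energy estimate already in hand: by Proposition \ref{ebound} (Uniform energy bounds), applied to the sequence of harmonic maps with prescribed principal part — or, more directly, by the comparison-map argument of Proposition \ref{ebound} applied with $S = X_m$ — the full energies $\mathcal{E}(h_i\vert_S)$ are uniformly bounded. I would simply invoke that. If one prefers a self-contained derivation, the comparison map that agrees with a fixed reference harmonic diffeomorphism on $S$ and with the model map $m$ near the puncture gives $\mathcal{E}(h_i) \leq \mathcal{E}(h_i\vert_{X \setminus S}) + \mathrm{const}$, and combining with the partially-free-boundary estimate of Proposition \ref{comparison} yields the claim exactly as in the proof of Proposition \ref{ebound}.

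The main obstacle, and the place where the crowned setting genuinely differs from Wolf's closed-surface case, is making sure the area term $\mathrm{Area}_\rho(h_i(S))$ is bounded \emph{independently of} $i$: the image surfaces $Y_i$ vary, and a priori could degenerate. But a crowned hyperbolic surface of fixed topological type ($g$, $k$ crowns, $m_i$ boundary cusps) has hyperbolic area equal to $2\pi(2g-2+k) + \pi\sum_i m_i$ by Gauss-Bonnet (ideal vertices contributing angle $0$), which is a constant; so $\mathrm{Area}_\rho(h_i(S)) \leq \mathrm{Area}_\rho(Y_i)$ is uniformly bounded regardless of degeneration. With that observation the argument closes: $\mathcal{E}(h_i\vert_S) = \mathrm{Area}_\rho(h_i(S)) + 2\int_S \mathcal{L}_i\, dA$, and a uniform bound on $\int_S \mathcal{L}_i$ follows from the uniform energy bound of Proposition \ref{ebound} together with $\int_S \mathcal{L}_i \leq \tfrac12 \mathcal{E}(h_i\vert_S)$, completing the proof. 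I would write it up in the order: (i) recall the splitting and the identities $\mathcal{H}_i - \mathcal{L}_i = \mathrm{Jac}$, $\mathcal{H}_i\mathcal{L}_i = \lvert q_i\rvert^2/\sigma^2$; (ii) bound $\mathrm{Area}_\rho(h_i(S))$ by Gauss-Bonnet; (iii) invoke Proposition \ref{ebound} for the uniform bound on total energy over a large compact piece; (iv) conclude.
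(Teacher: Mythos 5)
Your opening step --- splitting $\mathcal{E}(h_i\vert_S)=\int_S(\mathcal{H}_i-\mathcal{L}_i)+2\int_S\mathcal{L}_i$ and bounding the Jacobian term by the Gauss--Bonnet area of the image --- is exactly the paper's starting point. The gap is in how you handle $\int_S\mathcal{L}_i$. You reduce it to bounding $\int_S\mathcal{H}_i$; but since $\int_S\mathcal{H}_i=\tfrac12\left(\mathcal{E}(h_i\vert_S)+\int_S(\mathcal{H}_i-\mathcal{L}_i)\right)$, once the area term is controlled this is equivalent to the very statement you are proving, so the reduction is circular. You then close the circle by invoking Proposition \ref{ebound}, but that proposition does not apply to the present sequence: it concerns the maps constructed in \S4.1, namely solutions of Dirichlet problems on a compact exhaustion of $X\setminus p$ with a single \emph{fixed} target $Y$ and boundary values given by the model map $m$, and its proof uses precisely that construction (comparison with $m$ via the partially free boundary problem of Proposition \ref{comparison}). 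The maps in \eqref{hi-seq} are globally defined harmonic diffeomorphisms $h_i:X\setminus p\to Y_i$ with \emph{varying} targets, and no uniform comparison map is available for them --- indeed, ruling out degeneration of the $Y_i$ is the whole point of the properness argument, so assuming such control is circular at that level too. Your fallback of lower-bounding $\mathcal{H}_i$ via the Bochner equation is not carried out either, and on a subsurface with boundary the minimum of $\ln\mathcal{H}_i$ may sit on $\partial S$, so the usual maximum-principle bound does not come for free.

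The missing (and simple) step, which uses only ingredients you already listed, is a \emph{pointwise} bound on $\mathcal{L}_i$: since $h_i$ is an orientation-preserving diffeomorphism, the Jacobian $\mathcal{H}_i-\mathcal{L}_i$ is positive, so $\mathcal{L}_i\le\mathcal{H}_i$, and hence $\mathcal{L}_i^2\le\mathcal{H}_i\mathcal{L}_i$, which by \eqref{phi2} is controlled by $\lvert q_i\rvert^2$ (in the normalization of \eqref{hl}--\eqref{phi2}). Therefore $\int_S\mathcal{L}_i\,\sigma\,dzd\bar z$ is bounded by the $L^1$-norm of $q_i$ over $S$, and this is uniform in $i$ because the $q_i$ lie in the compact set $K_0\subset\Q(X,D,P)$ and $S$ is a fixed compact subsurface. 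Combined with your area bound this gives $\mathcal{E}(h_i\vert_S)\le C_1+2\lVert q_i\rVert_{L^1(S)}\le C$, which is the lemma; this is exactly the paper's argument (run there as a contradiction: unbounded energy would force $\lVert q_i\rVert_S\to\infty$, contradicting compactness of $K_0$).
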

\begin{proof}
Suppose not, that is, assume $\mathcal{E}(h_i\vert_S) \to \infty$ as $i\to \infty$.

For a harmonic map $h$ on $S$ we have that the Jacobian  $\mathcal{J} = \mathcal{H} - \mathcal{L}>0$  where $\mathcal{H}, \mathcal{L}$ are the holomorphic and anti-holomorphic energies introduced earlier (see \eqref{hl}). 
Thus we have:
\begin{equation*}
\int\limits_S \mathcal{H} \sigma dz d\bar{z}  + 2\pi \chi(S)  = \int\limits_S \mathcal{L} \sigma dz d\bar{z}  \leq  \int\limits_S \lvert \phi \rvert \sigma^2 dz d\bar{z}
\end{equation*}
where we have used \eqref{phi2}.

Adding the terms on either side of the first equality, and using the fact that the energy density $e= \mathcal{H} + \mathcal{L}$ (see \eqref{energy}), we then obtain that 
\begin{equation*}
\mathcal{E}(h) = \int\limits_ S e \sigma dz d\bar{z}  + 2\pi \chi(S)   \leq 2 \int\limits_S \lvert \phi \rvert \sigma^2 dz d\bar{z} =: \lVert \phi\rVert_S
\end{equation*}

Applying this inequality to the sequence of maps $h_i$, we obtain that the norms $\lVert q_i\rVert_S \to \infty$ as $i\to \infty$, which contradicts the fact that $q_i$ lies in a compact subset $K_0$ of $\Q(X,D,P)$.
\end{proof}

\bigskip

As  a consequence, we can show:

\begin{proof}[Proof of Proposition \ref{last-prop}]

 By the previous lemma, there is a uniform bound on the energy of ${h}_i$ on the fixed subsurface $S$. Applying the Courant-Lebesgue Lemma,  there is a constant $C_1>0$ (independent of $i$) such that  the length $l_{Y_i}(h_i(\gamma)) \leq C_1$ for any simple closed geodesic $\gamma$ on $X\setminus p$ with respect to a cusped hyperbolic metric. 
(Note that by the definition of $S$, such a geodesic must be contained in the subsurface $S$.) 

%Moreover, an elementary argument shows that there is a simple closed curve on $X\setminus p$ of uniformly bounded length  passing through any point $x\in S$ - namely, expand the radius $r$ of a closed embedded disk $D_r$ around $x$ until at $r=r_0$ the disk touches itself; two radial arcs from $x$ to the point of osculation then forms a closed loop of length $2r_0$  in $X \setminus p$ that is non-trivial in homotopy. There is a uniform bound of $r_0$, or equivalently, the area of the $D_{r_0}$ simply because the area of $X\setminus p$ is finite. 

%Since any such curve lies in an $r_0$ neighborhood of $S$, and there is a uniform bound of energy of $h_i$ on such a neighborhood $S_{r_0}$, and there is a uniform bound on the lengths of the images of these curves. 

%Thus in the universal cover, there is a uniform bound on the diameter of the images of a fixed lift of $S$, under the maps $\tilde{h}_i$ (for $i\geq 1$).

%(ii) Note that the boundary of the crown $\CC_i  \subset Y_i$ is geodesic and hence minimizes the length of all curves homotopic to it. The boundary curve $\partial S$ is homotopic to the boundary of the crown  $\CC_i$, and there is a uniform bound on the lengths of the image curves on $Y_i$ by the preceding argument. Hence there is a uniform bound on the boundary lengths of the crowns $\CC_i$.\\

Note that the uniform length bound also holds for images of the boundary curve $\partial S$, which are homotopic to the geodesic boundaries of the hyperbolic crown $\CC_i$ on $Y_i$.

Recall that a  marked hyperbolic surface with geodesic boundary is determined up to isometry by the lengths of a finite collection of simple closed curves including the boundary.

Thus the (un)crowned surfaces  $Y_i \setminus \CC_i \subset S $ sub-converge to a hyperbolic surface with geodesic boundary $\bar{Y}$ as $i\to \infty$. 

For the sub-convergence of the crowns $\CC_i$ on $Y_i$, we use the fact that $q_i$ lie in a compact subset of $\Q(X_i, D, P)$ for all $i\geq 1$, and in particular, $q_i\vert_U$ will sub-converge to a meromorphic quadratic differential $q_\infty$ on $U \cong \D$ with a pole of order $n$ at the origin.
The restriction of the harmonic maps $h_i$ to $U$ thus sub-converge to a harmonic map to the limiting hyperbolic crown $\CC$ with Hopf differential $q_\infty$ (\textit{cf.} Theorem \ref{asm}). 

This sub-convergence includes the boundary twist parameters which control the boundary identifications of the crown $\CC_i$  and surface $Y_i \setminus \CC_i$.  

Together, we obtain a subsequence of $Y_i$ converging to a crowned hyperbolic surface $Y$ as claimed.
\end{proof}

Thus $\Psi$ is proper, injective and continuous, and hence by the Invariance of Domain, is  homeomorphism, completing the proof of Theorem \ref{thm2-case}. As before, the argument extends to case of more than one marked point, completing the proof of Theorem \ref{thm2}.

\bibliographystyle{amsalpha}
\bibliography{Crown-refs}
 
\end{document}